\documentclass[a4paper,11pt,twoside]{amsart}

\topmargin=+3pt

\headsep=18pt

\oddsidemargin=-6pt
\evensidemargin=-6pt

\textwidth=467pt
\textheight=655pt

\usepackage{graphicx}
\usepackage{amsfonts}
\usepackage{amsmath}
\usepackage{amsthm}
\usepackage{amssymb}
\usepackage[all]{xy}
\usepackage{hyperref}
\usepackage{aliascnt}
\usepackage[mathscr]{euscript}
\usepackage[dvipsnames]{xcolor}

\usepackage{graphicx}
\usepackage{verbatim}

\usepackage{tikz}
\usepackage{tikz-cd}
\usetikzlibrary{arrows,calc,matrix,patterns,shadings,backgrounds,fit}

\newtheorem{thm}{Theorem}[section]

\newaliascnt{prop}{thm}
\newtheorem{prop}[prop]{Proposition}
\aliascntresetthe{prop}

\newaliascnt{lem}{thm}

\aliascntresetthe{lem}

\newaliascnt{cor}{thm}
\newtheorem{cor}[cor]{Corollary}
\aliascntresetthe{cor}

\theoremstyle{definition}

\newaliascnt{definition}{thm}
\newtheorem{definition}[definition]{Definition}
\aliascntresetthe{definition}

\newaliascnt{remark}{thm}
\newtheorem{remark}[remark]{Remark}
\aliascntresetthe{remark}

\newaliascnt{ex}{thm}
\newtheorem{ex}[ex]{Example}
\aliascntresetthe{ex}

\newaliascnt{qn}{thm}
\newtheorem{qn}[qn]{Question}
\aliascntresetthe{qn}

\newaliascnt{stp}{thm}

\aliascntresetthe{stp}

\newaliascnt{conj}{thm}
\newtheorem{conj}[conj]{Conjecture}
\aliascntresetthe{conj}

\newaliascnt{setup}{thm}

\aliascntresetthe{setup}

\newtheorem*{plan}{Plan of the paper}


\numberwithin{equation}{section}

\DeclareMathOperator{\rk}{rk} 

\newcommand{\rest}[1]{|_{#1}} 

\newcommand{\id}{\mathrm{id}}
\newcommand{\Hom}{\mathrm{Hom}}
\newcommand{\Ext}{\mathrm{Ext}}

\newcommand{\Aut}{\mathrm{Aut}}
\newcommand{\Ob}{\mathrm{Ob}}

\newcommand{\mor}[1]{\xrightarrow{#1}} 
\newcommand{\isomor}{\mor{\sim}} 


\newcommand{\ZZ}{\mathbb{Z}}

\newcommand{\EE}{\mathbb{E}}
\newcommand{\FF}{\mathbb{F}}

\newcommand{\UU}{\mathbb{U}}


\newcommand{\RHom}{\rd\Hom} 

\newcommand{\cat}[1]{{\mathscr{#1}}} 

\newcommand{\cA}{\cat{A}}

\newcommand{\cB}{\cat{B}}

\newcommand{\cD}{\cat{D}}

\newcommand{\cE}{\cat{E}}
\newcommand{\cF}{\cat{F}}

\newcommand{\cl}{\cat{L}}
\newcommand{\cL}{\cat{L}}

\newcommand{\cM}{\cat{M}}

\newcommand{\cN}{\cat{N}}

\newcommand{\cP}{\cat{P}}

\newcommand{\cS}{\cat{S}}
\newcommand{\ct}{\cat{T}}
\newcommand{\cT}{\cat{T}}

\def\Aut{\mathop{\mathrm{Aut}}\nolimits}

\def\ch{\mathop{\mathrm{ch}}\nolimits}

\def\Coh{\mathop{\mathrm{Coh}}\nolimits}

\def\deg{\mathop{\mathrm{deg}}}

\def\dim{\mathop{\mathrm{dim}}\nolimits}

\def\Ext{\mathop{\mathrm{Ext}}\nolimits}


\def\Hom{\mathop{\mathrm{Hom}}\nolimits}

\def\RHom{\mathop{\mathbf{R}\mathrm{Hom}}\nolimits}
\def\id{\mathop{\mathrm{id}}\nolimits}
\def\Id{\mathop{\mathrm{Id}}\nolimits}

\def\coker{\mathop{\mathrm{coker}}\nolimits}

\def\Ob{\mathop{\mathrm{Ob}}}

\def\rk{\mathop{\mathrm{rk}}}

\def\Spec{\mathop{\mathrm{Spec}}}

\newcommand{\scat}[1]{{\mathbf{#1}}} 

\newcommand{\D}{\mathrm{D}} 

\newcommand{\Db}{\D^b}

\newcommand{\Dp}{\scat{Perf}} 
\newcommand{\K}{\scat{K}} 

\newcommand{\Ku}{\K^+}

\newcommand{\C}{\scat{C}} 




\newcommand{\fun}[1]{\mathsf{#1}} 

\newcommand{\fF}{\fun{F}}

\newcommand{\fO}{\fun{O}}

\newcommand{\fS}{\fun{S}}

\def\C{\ensuremath{\mathbb{C}}}

\def\P{\ensuremath{\mathbb{P}}}
\def\Q{\ensuremath{\mathbb{Q}}}
\def\R{\ensuremath{\mathbb{R}}}
\def\Z{\ensuremath{\mathbb{Z}}}
\def\L{\ensuremath{\mathbb{L}}}

\def\AA{\ensuremath{\mathcal A}}
\def\BB{\ensuremath{\mathcal B}}

\def\EE{\ensuremath{\mathcal E}}
\def\FF{\ensuremath{\mathcal F}}

\def\HH{\ensuremath{\mathcal H}}
\def\II{\ensuremath{\mathcal I}}

\def\MM{\ensuremath{\mathcal M}}

\def\OO{\ensuremath{\mathcal O}}

\def\UU{\ensuremath{\mathcal U}}

\def\ZZ{\ensuremath{\mathcal Z}}

\def\vv{{\mathbf v}}

\def\llra{\hbox to 10mm{\rightarrowfill}}
\def\lllra{\hbox to 15mm{\rightarrowfill}}

\def\llla{\hbox to 10mm{\leftarrowfill}}
\def\lllla{\hbox to 15mm{\leftarrowfill}}

\def\K{\mathbb K}

\def\llra{\hbox to 10mm{\rightarrowfill}}
\def\lllra{\hbox to 15mm{\rightarrowfill}}
\def\Ku{\mathcal{K}\!u}

\def\wH{\widetilde{H}}
\def\llambda{\ensuremath{\boldsymbol{\lambda}}}

\def\Stab{\mathop{\mathrm{Stab}}}



\begin{document} 

	\title[Categorical Torelli theorems: results and open problems]{Categorical Torelli theorems: results and open problems}

	\author[Laura Pertusi and Paolo Stellari]{Laura Pertusi and Paolo Stellari}
	
	\address{L.P.: Dipartimento di Matematica ``F.\
		Enriques'', Universit{\`a} degli Studi di Milano, Via Cesare Saldini
		50, 20133 Milano, Italy}
	\email{laura.pertusi@unimi.it}
	\urladdr{\url{http://www.mat.unimi.it/users/pertusi}}

	\address{P.S.: Dipartimento di Matematica ``F.\
	Enriques'', Universit{\`a} degli Studi di Milano, Via Cesare Saldini
	50, 20133 Milano, Italy}
	\email{paolo.stellari@unimi.it}
    \urladdr{\url{https://sites.unimi.it/stellari}}
    
    \thanks{L.P.~was partially supported by the research project PRIN 2017 ``Moduli and Lie Theory''. P.S.~was partially supported by the ERC Consolidator Grant ERC-2017-CoG-771507-StabCondEn, by the research project PRIN 2017 ``Moduli and Lie Theory'', and by the research project FARE 2018 HighCaSt (grant number R18YA3ESPJ)}

	\keywords{Derived categories, semiorthogonal decompositions, Torelli theorems}

	\subjclass[2010]{14J45, 14J28, 14F05, 18E30}

\begin{abstract}
We survey some recent results concerning the so called Categorical Torelli problem. This is to say how one can reconstruct a smooth projective variety up to isomorphism, by using the homological properties of special admissible subcategories of the bounded derived category of coherent sheaves of such a variety. The focus is on Enriques surfaces, prime Fano threefolds and cubic fourfolds.
\end{abstract}

\maketitle

\setcounter{tocdepth}{1}
\tableofcontents

\section*{Introduction}

During the last decades, derived categories of coherent sheaves on smooth projective varieties have played a special role in algebraic geometry. In particular, their use in birational geometry and for the study of the geometry of moduli spaces has produced important and unexpected results.

One natural and related question is if a smooth projective variety can be reconstructed, up to isomorphism, from its derived category. Due to the seminal work by Bondal and Orlov \cite{BO} we know that this is indeed a theorem when the variety has canonical bundle which is either ample or anti-ample (meaning that its dual is ample). On the other hand, Mukai \cite{Muk:ab} showed that this is no longer the case when the canonical bundle is trivial.

Of course, one may start wondering how one can study the derived category of coherent sheaves and how one can extract geometric information from it. This is a fast growing research area where several fruitful ideas have come into the picture. Important results in this direction are due to the \emph{Russian school}. The idea is to decompose the derived category in smaller pieces provided by nontrivial admissible subcategories which naturally generate the derived category and whose meaning is intrinsically connected to the geometry of the variety. This led to the notion of \emph{semiorthogonal decomposition} which is certainly one of the main characters in this survey.

Semiorthogonal decompositions are not always available and when available they are not, in general, canonical. For example, again when the canonical bundle is trivial, the derived category is indecomposable. Nonetheless, when a semiorthogonal decomposition is given, then its components turn out to be extremely interesting. One special case, which is prominent in this paper, is when the derived category $\Db(X)$ of a smooth projective variety $X$ contains a bunch of very simple objects, which are called \emph{exceptional}, and a geometrically meaningful residual category, which we call \emph{Kuznetsov component}.

In this paper we focus our attention on these components. Indeed, the problem we want to deal with can be now formulated in the following slightly vague form:

\medskip

\noindent{\it Categorical Torelli problem.} Let $X_1$ and $X_2$ be smooth projective varieties over a field, in the same deformation class and with Kuznetsov components $\Ku(X_1)$ and $\Ku(X_2)$. Is it true that $X_1$ and $X_2$ are isomorphic if and only if their Kuznetsov components are equivalent?

\medskip

As we mentioned above, semiorthogonal components are not in general canonical. Thus when such a problem has a positive answer, the corresponding Kuznetsov components have to emerge from very special geometric situations. This will be extensively explained in the examples of interest.

The \emph{Categorial Torelli theorems} discussed in this paper are indeed the results that provide a positive answer to the Categorical Torelli problem above. As it turns out, we need to be more precise about the equivalence between the Kuznetsov component in the sense that, in some cases, it has to satisfy some additional property. We will discuss this along the paper and discover that some of these assumptions are probably removable once some related conjectures are proved. Others, unfortunately, cannot be avoided. Just to give a short summary, the Categorical Torelli theorems that we will review are the following geometric situations:
\begin{enumerate}
\medskip
    \item[(CT1)] Enriques surfaces with an equivalence between the Kuznetsov components which is of Fourier\textendash Mukai type (\autoref{thm:derived_torreli});
\medskip
    \item[(CT2)] Cubic threefolds with no further assumptions on the equivalence between the Kuznetsov components (\autoref{thm;CTTcubic3folds});
\medskip
    \item[(CT3)] Several additional prime Fano threefolds (\autoref{subsect:moreFano});
\medskip
    \item[(CT4)] Cubic fourfolds with an equivalence between the Kuznetsov components which is compatible with the so called degree shift functor (\autoref{thm:catTT4folds}).
\medskip
\end{enumerate}

One important feature of the above results is the variety of techniques that are used to prove them. Indeed, (CT1) is a consequence of a general statement (see \autoref{prop:extension}) which allows us to extend a Fourier\textendash Mukai equivalence between the Kuznetsov components of two Enriques surfaces to an equivalence of their bounded derived categories and then to apply what we call Derived Torelli theorem. This is somehow related to the possibility to enhance exact functors at the dg level. On the other hand, (CT2) and (CT3) use in an extensive way the notion of stability conditions which only recently were constructed on Kuznetsov components (see \autoref{subsect:cubic3folds1}). The case of cubic fourfolds (CT4) can be handled either by using Hodge theoretic techniques as in \cite{HR} or, again, by using stability conditions \cite{BLMS}. Both approaches will be discussed in \autoref{subsect:catThm4folds}. The reader may actually view this survey as an occasion to review the most recent developments in so many different directions and to appreciate their power in combination with the theory of semiorthogonal decompositions.

In a negative direction, the Categorical Torelli theorem in the above formulation does not hold in the case of Fano threefolds of index $1$. For instance, a consequence of the results in \cite{KP_cones} provides the existence of non-isomorphic but birational Gushel\textendash Mukai threefolds with equivalent Kuznetsov components. A refined version, which takes into account the preservation of special objects in the Kuznetsov components, has been recently proved in \cite{JLLZ, JLZ} (see Section \ref{subsect:moreFano} for more details). It becomes then natural to ask whether the existence of an isomorphism between the given varieties with equivalent Kuznetsov components in (CT1)--(CT4) is just a special instance of the following more general problem:

\medskip

\noindent{\it Birational Categorical Torelli problem.} Let $X_1$ and $X_2$ be smooth projective varieties over a field, in the same deformation class and with Kuznetsov components $\Ku(X_1)$ and $\Ku(X_2)$. Is it true that if $\Ku(X_1)$ and $\Ku(X_2)$ are equivalent, then $X_1$ and $X_2$ are birational?

\medskip

This will be carefully discussed in the paper but it is worth mentioning that a converse to the Birational Categorical Torelli problem should not hold true (see \autoref{rmk_onlyonedirection}).

As a related line of investigation, we recommend the reader to consult \cite{LieOls}, where the authors study the problem of recovering the birational class of a smooth projective variety from its bounded derived category. More precisely, they conjecture that two smooth projective varieties are birational if there exists a (strongly) filtered exact equivalence between their bounded derived categories (see \cite[Section 10]{LieOls} for some motivic foundations of this conjecture and more generally \cite{KLOS_book} for reconstruction problems). Another very interesting research topic which is tightly related to the discussion in this paper but which will not be covered in this survey is the so called infinitesimal version of the Categorical Torelli theorems for Fano threefolds (see \cite{JLLZ,JLLZ2}).

We conclude this presentation by pointing out that the paper is accompanied by a list of open problems in the form either of questions or of conjectures. Their relation to the existing results will be carefully explained, but we take the opportunity to stress that their role in this paper is as important as the one of the main results.  

\begin{plan}
The survey is organized as follows. In \autoref{sect:preliminaries} we recollect some basic definitions and examples of semiorthogonal decompositions, and of Fourier--Mukai functors. 

In \autoref{sect:sodgeoex} we focus on some special examples of smooth projective varieties of low dimension having a semiorthogonal decomposition with a nontrivial component, known as the Kunzetsov component. In particular, we consider Enriques surfaces, prime Fano threefolds of index $1$ and $2$ and cubic fourfolds, and we recall the properties of their Kuznetsov component. 

\autoref{sect:EnriquesSurf} is devoted to the proof of the Categorical Torelli theorem for Enriques surfaces following \cite{LNSZ,LSZ}. The proof makes use of a general criterion, explained in \autoref{subsect:extending}, which allows us to extend Fourier--Mukai equivalences among admissible subcategories appearing in semiorthogonal decompositions under suitable assumptions. Then in \autoref{subsect:specialobjects} we characterize $3$-spherical objects in the Kuznetsov component; this is used in the proof of the main theorem given in \autoref{subsect:RDTTEnriques}. 

\autoref{sect:introstab1} provides a quick introduction to the notion of (weak) stability conditions, the definition of stability manifold together with the associated group actions, the construction via tilt-stability on surfaces and on some threefolds, and the conjectural approach through generalized Bogomolov inequalities.

\autoref{sect:cubic3folds} is devoted to the case of cubic threefolds. In \autoref{subsect:cubic3folds1} we explain the first method to construct stability conditions on the Kuznetsov component of a cubic threefold and how to apply this result to prove the Categorical Torelli theorem; the main reference is \cite{BMMS}. In \autoref{subsect:introstab2} we review the more recent method, introduced in \cite{BLMS}, to induce stability conditions on admissible subcategories which are left orthogonal components to an exceptional collection in a triangulated category with a Serre functor. Then we introduce the notion of Serre-invariant stability conditions. This is applied to cubic threefolds in \autoref{subsect:cubic3folds2} to construct stability conditions on the associated Kuznetsov component which are Serre-invariant. In \autoref{subsec:cubictthreefolds3} we explain some applications of this result on Serre-invariant stability conditions to the study of the geometry of moduli spaces and to give an alternative proof of the Categorical Torelli theorem; the main references are \cite{soheylaetal,FP,PY}. In \autoref{subsect:moreFano} we recall the state of art about these questions on Serre-invariant stability conditions and Categorical Torelli theorem for the Kuznetsov component of prime Fano threefolds of index $2$ and $1$. 

Finally, in \autoref{sec:cubic4folds} we analyze the higher dimensional case of cubic fourfolds. We recall the construction of stability conditions on the Kuznetsov component with the method of \cite{BLMS}. Then we explain the two known ways to prove the Categorical Torelli theorem from \cite{HR} and \cite{BLMS}, and how to deduce the Classical Torelli theorem from it. We end by discussing the analogous questions in the case of Gushel--Mukai fourfolds.
\end{plan}


{\Small\subsection*{Statements \& Declarations}

\noindent \textit{Competing interests.}  The authors read and 
approved the final manuscript. The author has no relevant financial or 
non-financial interests to disclose.

\noindent \textit{Availability of data and material.} This paper has 
no associated data and material.}

\section{Semiorthogonal decompositions: general results}\label{sect:preliminaries}

This section is devoted to a quick discussion about some basic facts concerning semiorthogonal decompositions (see \autoref{subsec:gen}) and Fourier\textendash Mukai functors (see \autoref{subsect:FM}). We also recall some preliminary examples in \autoref{subsect:sodgeoex}.

\subsection{The main definitions}\label{subsec:gen}
Despite its relatively simple definition and even if it is one of the simplest examples of triangulated category, the bounded derived category $\Db(\cA)$ of an abelian category $\cA$ has a very rich and often mysterious structure. This remains true when $\cA$ is the category $\Coh(X)$ of coherent sheaves on a smooth projective variety $X$ defined over a field $\K$.

In the latter case, there are several approaches to the study of the structure of $\Db(X):=\Db(\Coh(X))$:
\begin{itemize}
    \item We can look at the way $\Db(X)$ is generated;
    \item Mimicking representation theory, we can look at the action of the autoequivalences group of $\Db(X)$ on a suitable vector space or lattice (e.g.\ the total cohomology of $X$);
    \item We can decompose $\Db(X)$ into smaller pieces and try to understand how the geometry of $X$ is encoded by those pieces.
\end{itemize}
The first strategy has a long history, initiated by the beautiful  paper \cite{BVdB} (see also \cite{R}). More recently, it was discovered that this is intimately related to the way $\Db(X)$ can be enhanced to higher categorical structures (see, for example, \cite{BLL,CNS,CSUni1,LO}).

The second viewpoint has been widely adopted in the case of K3 surfaces, abelian varieties or, more generally, varieties with trivial canonical bundle (see, for example the seminal papers \cite{Mu,Or}). The reason being that, when the canonical bundle is trivial, $\Db(X)$ is indeed indecomposable by \cite{BrEq} and thus the third strategy cannot be pursued. On the contrary, the autoequivalences group is usually very rich and intimately related to the topology of the stability manifold which we will discuss later.

In this paper, we are mainly interested in the third approach which can be made precise by introducing the following definition. Let $\cT$ be a triangulated category which, for simplicity, we assume from now on to be linear over a field $\K$. A \emph{semiorthogonal decomposition} for $\cT$, denoted by
	\begin{equation*}
	\cT = \langle \cD_1, \dots, \cD_m \rangle,
	\end{equation*}
is a sequence of full triangulated subcategories $\cD_1, \dots, \cD_m$ of $\cT$ such that: 
	\begin{enumerate}
		\item $\Hom(F, G) = 0$, for all $F \in \cD_i$, $G \in \cD_j$ and $i>j$;	\item For any $F \in \cT$, there is a sequence of morphisms
		\begin{equation*}  
		0 = F_m \to F_{m-1} \to \cdots \to F_1 \to F_0 = F,
		\end{equation*}
		such that $\pi_i(F):=\mathrm{Cone}(F_i \to F_{i-1}) \in \cD_i$ for $1 \leq i \leq m$. 
	\end{enumerate}
We call the subcategories $\cD_i$ \emph{components} of the decomposition.

\begin{remark}\label{rmk:funsod}
It is a nice and relatively easy exercise to verify that (1) above yields that the factors $\pi_i(F)$ in (2) are uniquely determined and functorial, for all $F\in\cT$ and for all $i=1,\ldots,m$. Hence, in presence of a semiorthogonal decomposition, we get the \emph{$i$-th projection functor}  $\pi_i\colon\cT\to\cD_i$.
\end{remark}

Given a semiorthogonal decomposition for $\cT$ as above, denote by $\alpha_i\colon\cD_i\hookrightarrow\cT$ the inclusion. We say that $\cD_i$ is \emph{admissible} if $\alpha_i$ has left adjoint $\alpha_{i}^*$ and right adjoint $\alpha_{i}^!$. In presence of a semiorthogonal decomposition
\[
\cT=\langle\cD_1,\cD_2\rangle,
\]
then $\pi_1$ and $\pi_2$ coincide with the left adjoint $\alpha_1^*$ and the right adjoint $\alpha_2^!$. Furthermore, if $\cD$ is an admissible subcategory of $\cT$, we set
\begin{equation}\label{eqn:orth}
{}^\perp\cD:=\{E\in\cT:\Hom(E,\cD)=0\}\qquad\cD^\perp:=\{E\in\cT:\Hom(\cD,E)=0\}
\end{equation}
to be the \emph{left orthogonal} and \emph{right orthogonal} subcategories of $\cD$, respectively.
These triangulated subcategories yield semiorthogonal decompositions
\[
\cT=\langle\cD^\perp,\cD\rangle=\langle\cD,{}^\perp\cD\rangle,
\]
when $\cD$ is admissible.

One special feature of the triangulated category $\cT=\Db(X)$, for $X$ a smooth projective variety, is that it has Serre functor. Recall that a \emph{Serre functor} of a triangulated category $\cT$ is an exact autoequivalence $\fS_\cT\colon\cT\to\cT$ inducing, for all $A$ and $B$ in $\cT$, an isomorphism
\[
\Hom(A,B)\cong\Hom(B,\fS_\cT(A))^\vee
\]
which is natural in both arguments. Such a functor is unique up to isomorphism of exact functors and thus we will refer to $\fS_\ct$ as `the' Serre functor of $\cT$.

\begin{ex}\label{ex:Serre}
If $X$ is a smooth projective variety, then the Serre functor $\fS_X:=\fS_{\Db(X)}$ takes the following explicit form
\[
\fS_X(-):=(-)\otimes\omega_X[\dim(X)],
\]
where $\omega_X$ is the dualizing sheaf of $X$. If $\alpha\colon\cD\hookrightarrow\Db(X)$ is an admissible subcategory, then $$\fS_\cD\cong\alpha^!\circ\fS_X\circ\alpha.$$ 
The latter is a general fact: if $\cT$ has Serre functor $\fS_\cT$, and it contains an admissible triangulated subcategory $\cD$, then $\cD$ has a Serre functor $\fS_\cD$ as well. The shape of $\fS_\cD$ is exactly the one above with $\fS_X$ replaced by $\fS_\cT$ (see \cite{B}).
\end{ex}

Recall that $\cT$ is $\K$-linear. An object $E\in\cT$ is \emph{exceptional} if $\Hom(E,E[p])=0$, for all integers $p\neq0$, and $\Hom(E,E)\cong \K$. A set of objects $\{E_1,\ldots,E_m\}$ in $\cT$ is an \emph{exceptional collection} if $E_i$ is an exceptional object, for all $i$, and $\Hom(E_i,E_j[p])=0$, for all $p$ and all $i>j$.
An exceptional collection $\{E_1,\ldots,E_m\}$ is
\begin{itemize}
\item \emph{orthogonal} if $\Hom(E_i,E_j[p])=0$, for all $i,j=1,\ldots,m$ with $i\neq j$ and for all integers $p$;
\item \emph{full} if the smallest full triangulated subcategory of $\cT$ containing the exceptional collection is equal to $\cT$;
\item \emph{strong} if $\Hom(E_i,E_j[p])=0$, for all $p\neq 0$ and all $i,j=1,\ldots,m$ with $i\neq j$.
\end{itemize}

Finally, assume that $\cT$ is a \emph{proper} $\K$-linear triangulated category. This means that 
\[
\dim_\K\left(\oplus_i\Hom(F,G[i])\right)<+\infty,
\]
for all $F$ and $G$ in $\cT$. Its \emph{numerical Grothendieck group} $\cN(\cT)$ is defined as the quotient
\[
\cN(\cT):=K(\cT)/ \ker \chi.
\]
Here $\chi$ denotes the Euler form on $K(\cT)$ defined by
\begin{equation} \label{eq_defeulerform}
\chi(-,-)=\sum_i(-1)^i \dim \Hom(-, -[i])
\end{equation}
and $K(\cT)$ stands for the \emph{Grothendieck group} of $\cT$, which is the free abelian group generated by isomorphism classes $[F]$ of objects $F \in \cD$ modulo the relation $[F]=[E]+[G]$ for every exact triangle $E \to F \to G$. Note that $K(-)$ and $\cN(-)$ are additive with respect to semiorthogonal decompositions.

\subsection{Basic geometric examples}\label{subsect:sodgeoex}

In this section we discuss the first examples of semiorthogonal decompositions of the derived categories of simple smooth projective varieties.

\begin{ex}[Points and exceptional objects]\label{ex:excobj}
It is an easy exercise to show that if $E$ is an exceptional object in a triangulated category $\cT$, then the smallest full triangulated subcategory $\langle E\rangle$ of $\cT$ containing $E$ is equivalent to the bounded derived category of a point. On the other hand, if $\cT$ is a proper triangulated category and $E\in\cT$ is exceptional, then $\langle E\rangle$ is an admissible subcategory of $\cT$ (see, for example, \cite[Proposition 2.6]{MSLectNotes}).
\end{ex}

In higher dimension we have two important and classical examples.

\begin{ex}[Projective spaces]\label{ex:Pn}
In the case of the $n$-dimensional projective space $\P^n$, a classical result of Beilinson \cite{Bei:Pn} shows that the set of line bundles
\begin{equation}\label{eqn:collPn}
\{\OO_{\P^n}(-n),\OO_{\P^n}(-n+1),\ldots,\OO_{\P^n}\}
\end{equation}
forms a full exceptional collection and so it yields a semiorthogonal decomposition
\[
\Db(\P^n)=\langle \OO_{\P^n}(-n),\OO_{\P^n}(-n+1),\ldots,\OO_{\P^n}\rangle.
\]
It should be noted that the collection \eqref{eqn:collPn} is also full and strong.
\end{ex}

\begin{ex}[Quadrics]\label{ex:quadrics}
Assume now that $Q$ is an $n$-dimensional smooth quadric in $\P^{n+1}$ defined by an equation $\{ q=0\}$. We assume $\mathrm{char}(\K)\neq 2$. According to \cite{Kap:Grass}, the category $\Db(Q)$ has a semiorthogonal decomposition by exceptional bundles whose explicit form depends on the parity of $n$. More precisely, if $n=2m+1$ is odd,
\[
\Db(Q)=\langle S,\OO_{Q},\OO_{Q}(1),\ldots,\OO_{Q}(n-1)\rangle,
\]
where $S$ is the spinor bundle on $Q$ defined as $\coker(\phi|_{Q})(-1)$ and $\phi\colon \OO_{\P^{n+1}}(-1)^{2^{m+1}}\to \OO_{\P^{n+1}}^{2^{m+1}}$ is such that  $\phi\circ(\phi(-1))=q\cdot \Id\colon \OO_{\P^{n+1}}(-2)^{2^{m+1}}\to \OO_{\P^{n+1}}^{2^{m+1}}$.

If $n=2m$ is even, then we get
\[
\Db(Q)=\langle S^-,S^+,\OO_{Q},\OO_{Q}(1),\ldots,\OO_{Q}(n-1)\rangle,
\]
where $S^-:=\coker(\phi|_{Q})(-1)$, $S^+:=\coker(\psi|_{Q})(-1)$, and $\phi,\psi\colon \OO_{\P^{n+1}}(-1)^{2^{m}}\to \OO_{\P^{n+1}}^{2^{m}}$ are such that $\phi\circ(\psi(-1))=\psi\circ(\phi(-1))=q\cdot \Id$. See \cite{Ottaviani} for more details on spinor bundles.
\end{ex}

\subsection{Fourier\textendash Mukai functors}\label{subsect:FM}
We now briefly recall how to define special classes of exact functors between admissible subcategories. The reader can have a look to \cite{CS:SurvFM} for a survey or to \cite{Huy} for an extensive treatment. Note that all functors are derived.

Let $X_1$ and $X_2$ be smooth projective varieties over a field $\K$ with admissible embeddings 
\[
\alpha_i:\cD_i\hookrightarrow\Db(X_i),
\]
for $i=1,2$.

\begin{definition}\label{def:FM}
An exact functor $\fF\colon\cD_1\to\cD_2$ is \emph{of Fourier\textendash Mukai type} (or a \emph{Fourier\textendash Mukai functor}) if there exists $\mathcal{E}\in\Db(X_1\times X_2)$ such that the composition $\alpha_2\circ \mathsf{F}$ is isomorphic to the restriction 
\[
\Phi_\EE|_{\cD_1}\colon\cD_1\to\Db(X_2).
\]
Here the exact functor $\Phi_{\EE}$ is given by
\[
\Phi_\mathcal{E}(-):=p_{2*}(\mathcal{E}\otimes p_1^*(-)),
\]
where $p_i$ is the $i$-th natural projection.
\end{definition}

By \cite[Theorem 7.1]{Kuz11}, the projection functor onto an admissible subcategory $\cD\hookrightarrow\Db(X)$ is of Fourier\textendash Mukai type. 
This motivates \cite[Conjecture 3.7]{Kuz07} which says that any exact equivalence $\fF\colon\cD_1\to\cD_2$ between admissible subcategories $\cD_i\hookrightarrow\Db(X_i)$, for $X_i$ smooth projective over $\K$, is of Fourier\textendash Mukai type. Indeed, \cite[Theorem 7.1]{Kuz11} is a special case of this conjecture for $\fF=\id$.

Here we propose the following restatement:

\begin{qn}\label{qn:FM1}
Is any exact fully faithful functor $\fF\colon\cD_1\to\cD_2$ between admissible subcategories $\cD_i\hookrightarrow\Db(X_i)$, for $X_i$ smooth projective over $\K$, of Fourier\textendash Mukai type?
\end{qn}

This is motivated by Orlov's result that any fully faithful exact functor $\mathsf{F}\colon\Db(X_1)\to\Db(X_2)$ is of Fourier\textendash Mukai type when $X_i$ is smooth projective over a field $\K$ (see \cite{Or}). It should be noted that in \cite{COS,CS} the assumptions on $\fF$ were weakened. In particular, assuming full is enough. More recently, \cite{Ol} extended Orlov's result to the smooth and proper case. Note that \autoref{qn:FM1} is related to \cite[Conjecture 3.7]{Kuz07}. 

Motivated by the recent work \cite{CSUni1,LO}, we can actually state a weaker version of \autoref{qn:FM1}:

\begin{qn}\label{qn:FM2}
Let $\fF\colon\cD_1\to\cD_2$ be an exact equivalence between admissible subcategories $\cD_i\hookrightarrow\Db(X_i)$, for $X_i$ smooth projective over $\K$. Is there a Fourier\textendash Mukai equivalence $\cD_1\cong\cD_2$?
\end{qn}

The problem above is mainly motivated by the recent developments about uniqueness of enhancements \cite[Corollary 9.12]{LO} (see also \cite[Section 7.2]{CSUni1} and the recent improvements in \cite{CNS}). The idea is that, given an equivalence $\fF\colon\Dp(X_1)\to\Dp(X_2)$ between the categories of perfect complexes on quasi-compact and quasi-separated schemes, one can replace $\fF$ with another equivalence which can be lifted to any dg model of $\Dp(X_i)$. Under additional suitable assumptions on $X_i$ (e.g.\ if $X_i$ is noetherian), the latter condition is equivalent to being of Fourier\textendash Mukai type, due to \cite{LS,To}.

We expect \autoref{qn:FM1} and \autoref{qn:FM2} to have negative answers in general also due to the results in \cite[Section 6.4]{CNS} (see, in particular, Corollary 6.12 and Remark 6.13 there). We will discuss later when one can positively answer them in the geometric settings we are interested in.

\section{Semiorthogonal decompositions for special projective varieties}\label{sect:sodgeoex}
In this section we would like to go beyond \autoref{ex:Pn} and \autoref{ex:quadrics} and consider semiorthogonal decompositions for $\Db(X)$ for special but very interesting smooth projective varieties. In most of the examples discussed in this section, the derived category will contain a small set of exceptional objects of geometric origin and a nontrivial admissible subcategory right orthogonal as in \eqref{eqn:orth} to the exceptional objects and which we will call \emph{Kuznetsov component}.

\subsection{Smooth projective curves}\label{subsect:curve}
It is well known that if $C$ is a smooth complex projective curve, then $\Db(C)$ determines $C$ up to isomorphism:

\begin{thm}[Derived Torelli theorem for curves]\label{thm:dertordercatcurves}
	Let $C_1$ and $C_2$ be smooth complex projective curves. Then $\Db(C_1)\cong\Db(C_2)$ if and only if $C_1\cong C_2$.
\end{thm}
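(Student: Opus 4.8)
The plan is to prove both implications of the Derived Torelli theorem for curves. The implication $C_1 \cong C_2 \Rightarrow \Db(C_1) \cong \Db(C_2)$ is trivial, since an isomorphism of varieties induces an equivalence of derived categories by pullback. So the content is the converse: assume $\Phi \colon \Db(C_1) \isomto \Db(C_2)$ is an exact equivalence and recover an isomorphism $C_1 \cong C_2$. The first step is to split into cases according to the genus $g$ of $C_1$, because the argument depends on the sign of the degree of the canonical bundle $\omega_{C_i}$.

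If $g \neq 1$, then $\omega_{C_1}$ is either ample (when $g \geq 2$) or anti-ample (when $g = 0$), so we are in the setting of the Bondal--Orlov reconstruction theorem: a smooth projective variety with ample or anti-ample canonical bundle is determined up to isomorphism by its bounded derived category. Applying this directly gives $C_1 \cong C_2$, and as a byproduct $C_2$ has the same genus, since the genus is a derived invariant (it can be read off, e.g., from the Hochschild homology or simply from the fact that $\Db$ determines the variety in this range). So the only genuinely remaining case is $g = 1$, i.e.\ $C_1$ an elliptic curve. Here I would first note that $g$ is a derived invariant — for instance because $\dim \mathrm{HH}^*(\Db(C))$ or the structure of the group of autoequivalences distinguishes the three ranges $g=0$, $g=1$, $g\geq 2$ — so $C_2$ is also an elliptic curve. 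Then one must show two elliptic curves with equivalent derived categories are isomorphic.

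For the elliptic curve case, the standard route is via the action of $\Phi$ on the numerical Grothendieck group $\cN(\Db(C_i)) \cong \ZZ^2$, equipped with the Euler pairing; on an elliptic curve this pairing is skew-symmetric (the canonical bundle is trivial, so $\chi(E,F) = -\chi(F,E)$), and the vector $(\rk, \deg)$ identifies $\cN$ with $\ZZ^2$ carrying the standard symplectic form. An equivalence $\Phi$ induces an isometry of this pairing, hence an element of $\SL_2(\ZZ)$. The key geometric input is that $\Phi$ takes the class of a skyscraper sheaf $\OO_x$ (a point-like object: a $1$-spherical object with the numerics $(0,1)$) to a shift of some object whose class is in the $\SL_2(\ZZ)$-orbit of $(0,1)$; composing $\Phi$ with a suitable autoequivalence of $\Db(C_2)$ — powers of the spherical twist along $\OO_{C_2}$ and tensoring by line bundles, which together realize all of $\SL_2(\ZZ)$ on $\cN$ — one reduces to the case where $\Phi$ sends skyscrapers to skyscrapers (up to shift). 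A standard argument (as in Bondal--Orlov's treatment of point objects, or Huybrechts' book) then shows such a $\Phi$ is induced by an isomorphism $C_1 \cong C_2$ composed with a shift and a line bundle twist, giving $C_1 \cong C_2$.

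The main obstacle is precisely the $g=1$ case: unlike $g \neq 1$, one cannot invoke Bondal--Orlov, and the derived category of an elliptic curve has a large autoequivalence group (an extension of $\SL_2(\ZZ)$ by the obvious autoequivalences), so the equivalence $\Phi$ need not visibly respect the geometry. The work is in normalizing $\Phi$ — using the classification of spherical objects, or equivalently the $\SL_2(\ZZ)$-action on $\cN$, together with the fact that skyscraper sheaves are characterized among objects by homological conditions ($\Hom^{\bullet}$-finiteness and the shape of $\RHom(\OO_x,\OO_x)$) — so that it manifestly comes from an isomorphism of curves. Alternatively, one can phrase the whole thing through the observation that an elliptic curve is recovered as a fine moduli space of stable objects of class $(0,1)$ (or $(1,0)$) in $\Db(C)$ with respect to a stability condition, and an equivalence transports stability conditions, moduli spaces, and universal families, hence yields $C_1 \cong C_2$; but this is essentially a repackaging of the same normalization step.
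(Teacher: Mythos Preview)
Your treatment of the case $g\neq 1$ is exactly what the paper does: invoke the Bondal--Orlov reconstruction theorem (stated in the paper as \autoref{thm:dertordercatFano}) to conclude directly. There is nothing to add there.

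For $g=1$ you take a genuinely different route. The paper does not normalize the equivalence at all; instead it appeals to \cite[Corollary 5.46]{Huy}, whose argument runs through cohomology: the equivalence induces a Hodge isometry on $H^1$, and then the \emph{classical} Torelli theorem for elliptic curves (an elliptic curve is determined by its weight-one Hodge structure) yields $C_1\cong C_2$. Your approach stays inside the derived category: you use the $\SL_2(\Z)$-action on $\cN(\Db(C_2))$ realized by spherical twists and line bundle tensors to normalize $\Phi$ so that skyscrapers go to shifts of skyscrapers, and then conclude via the point-object argument. Both are correct. The paper's route is shorter because it outsources the work to a classical statement, and it makes the dependence on $\K=\C$ completely transparent (Hodge theory). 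Your route is more intrinsic and self-contained categorically, but note that it also uses $\K=\C$ in a slightly hidden way: the normalization step requires a degree-$1$ line bundle on $C_2$, i.e.\ a rational point, which is automatic over $\C$ but not in general. This is consistent with \autoref{rmk:curvesdiffields} in the paper, which observes that the genus-$1$ case genuinely fails over some fields, so any valid argument must use something special about $\C$.

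One small cleanup: you do not need Hochschild homology to argue that $C_2$ also has genus $1$. If $g(C_2)\neq 1$ then Bondal--Orlov applied to $C_2$ already gives $C_1\cong C_2$, forcing $g(C_1)=g(C_2)$, a contradiction. So the reduction to both curves being elliptic is immediate.
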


As it is explained in the proof of \cite[Corollary 5.46]{Huy}, the delicate case which requires using the cohomology of the curve and then the classical Torelli theorem is when the genus of the curves is $1$. All the other cases follow from the following beautiful result by Bondal and Orlov \cite{BO}.

\begin{thm}[Derived Torelli theorem for (anti)Fano manifolds]\label{thm:dertordercatFano}
	Let $X_1$ and $X_2$ be smooth projective varieties with ample or antiample canonical bundle (i.e.\ either $\omega_{X_i}$ or $\omega_{X_i}^\vee$ is an ample line bundle). Then $\Db(X_1)\cong\Db(X_2)$ if and only if $X_1\cong X_2$.
\end{thm}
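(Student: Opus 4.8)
The plan is to prove the Bondal--Orlov reconstruction theorem (\autoref{thm:dertordercatFano}) by showing that the graded ring whose $\mathrm{Proj}$ recovers the variety can be read off intrinsically from the triangulated category $\Db(X)$ together with its Serre functor. The implication $X_1\cong X_2\Rightarrow\Db(X_1)\cong\Db(X_2)$ is trivial, so the content is the converse. First I would fix notation: given an equivalence $\fF\colon\Db(X_1)\isomto\Db(X_2)$, it commutes with Serre functors, i.e.\ $\fF\circ\fS_{X_1}\cong\fS_{X_2}\circ\fF$, since Serre functors are unique (\autoref{ex:Serre}). By \autoref{ex:Serre} the Serre functor on $\Db(X_i)$ is $(-)\otimes\omega_{X_i}[\dim X_i]$, so in particular $\fF$ matches up the powers $\fS_{X_1}^n$ with $\fS_{X_2}^n$, which encode $\omega_{X_i}^{\otimes n}[n\dim X_i]$.

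The key categorical notion is that of a \emph{point object}: an object $P\in\Db(X)$ is called a point object of codimension $d$ if $\fS_X(P)\cong P[d]$, $\Hom(P,P)\cong\K$, and $\Hom(P,P[i])=0$ for $i<0$. One shows that when $\omega_X$ is ample or anti-ample, the point objects are exactly the shifts $k(x)[m]$ of skyscraper sheaves of closed points $x\in X$ (with $d=\dim X$). The nontrivial inclusion is that every point object is of this form: using the bounded $t$-structure one reduces to an actual complex, and the condition $\fS_X(P)\cong P[\dim X]$ together with ampleness of $\pm\omega_X$ forces the cohomology sheaves to be supported on points and concentrated in a single degree; a spectral-sequence / support argument then pins it down to $k(x)[m]$. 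Similarly one defines \emph{invertible objects}: $L$ is invertible if for every point object $P$ there is an integer $s$ with $\Hom(L,P[s])\cong\K$ and $\Hom(L,P[i])=0$ for $i\neq s$; these turn out to be exactly the shifts $L\cong M[m]$ of honest line bundles $M$ on $X$. Both notions are defined purely in terms of the triangulated structure and the Serre functor, hence are preserved by $\fF$.

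Next I would reconstruct the variety. Fix a point object $P_0$ and an invertible object $L_0$ in $\Db(X_1)$, normalized so that $\Hom(L_0,P_0)\cong\K$ in degree $0$. Define inductively $L_n$ for $n\in\Z$ by the property that it is an invertible object with $\Hom(L_n,\fS_{X_1}^i(P))$ in the ``right'' degrees — concretely $L_n\cong L_0\otimes\omega_{X_1}^{-n}$ up to shift, recovered categorically via the $\fS_{X_1}$-twists of point objects. Then form the graded ring
\[
A(X_1,L_0):=\bigoplus_{n\ge 0}\Hom_{\Db(X_1)}(L_0,L_n),
\]
with multiplication given by composition after applying suitable twists; when $\omega_{X_1}^{\vee}$ (resp.\ $\omega_{X_1}$) is ample this is, up to the choice, the anticanonical (resp.\ canonical) ring of $X_1$, so $X_1\cong\mathrm{Proj}\,A(X_1,L_0)$. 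Since $\fF$ sends point objects to point objects and invertible objects to invertible objects, it sends the data $(P_0,L_0,\{L_n\})$ on $X_1$ to analogous data on $X_2$ and induces a graded ring isomorphism $A(X_1,L_0)\cong A(X_2,\fF L_0)$, whence $X_1\cong X_2$. One has to check that the ample/anti-ample hypothesis is genuinely used: it guarantees that the point and invertible objects are precisely the geometric ones and that the relevant graded ring is finitely generated with $\mathrm{Proj}$ equal to $X_i$ — this is exactly where Mukai's counterexample for $\omega_X$ trivial is evaded.

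The main obstacle, and the step I would spend the most care on, is the classification of point objects: proving that a point object must be $k(x)[m]$ rather than some exotic complex. This requires genuinely using that $\pm\omega_X$ is ample — without it the argument collapses. The remaining work (invertible objects, stability of these notions under $\fF$, and assembling the graded ring) is comparatively formal, modulo the standard but slightly delicate bookkeeping of shifts. For the genus-one curve case excluded from \autoref{thm:dertordercatFano}, \autoref{thm:dertordercaturves} is instead deduced by a separate Hodge-theoretic argument via the classical Torelli theorem, as indicated in the text; but that falls outside the present statement.
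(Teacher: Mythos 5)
Your proposal is correct and is precisely the classical Bondal--Orlov reconstruction argument (point objects characterized as shifted skyscraper sheaves, invertible objects as shifted line bundles, and recovery of $X$ as $\mathrm{Proj}$ of the (anti)canonical graded ring read off from powers of the Serre functor), which is exactly the proof the paper relies on: the survey gives no argument for \autoref{thm:dertordercatFano} itself and simply cites \cite{BO}. The step you single out as the delicate one---showing every point object is a shift $k(x)[m]$, which is where ampleness of $\omega_X$ or $\omega_X^\vee$ is genuinely used---is indeed the heart of that proof, and your sketch of it follows the standard route.
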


Apart from the case of $\P^1$ which is covered by \autoref{ex:Pn}, in genus greater than $0$ the triangulated category $\Db(C)$ does not have nontrivial semiorthogonal decompositions by \cite{Ok}.

\begin{remark}\label{rmk:curvesdiffields}
Note that \autoref{thm:dertordercatcurves} remains true over any algebraically closed field $\K$ when the genus of the curves is not $1$. This is because \autoref{thm:dertordercatFano} holds in this more general setting. On the other hand, when the genus is $1$, the assumption $\K=\C$ cannot be removed as there are nonisomorphic smooth projective curves of genus $1$ with equivalent derived categories (see \cite{AKW, ShinderZhang}).
\end{remark}

\subsection{Enriques surfaces}\label{subsect:Enriquesgeom}
Let $\K$ be an algebraically closed field of characteristic different from $2$. An \emph{Enriques surface} is a smooth projective surface $X$ defined over $\K$ such that $H^{1}(X,\OO_X)=0$ and the dualizing line bundle $\omega_X$ is nontrivial but $2$-torsion. An Enriques surface $X$ can be equivalently characterized as a quotient of a K3 surface by an involution acting without fixed points.

The derived category $\Db(X)$ of an Enriques surface determines the surface up to isomorphism in view of the following result which is a rewriting of  \cite[Proposition 6.1]{BM01} and \cite[Theorem 1.1]{HLT17}:

\begin{thm}[Derived Torelli theorem for Enriques surfaces]\label{thm:dertordercatEnr}
Let $X_1$ and $X_2$ be smooth projective surfaces defined over an algebraically closed field $\K$ of characteristic different from $2$. If $X_1$ is an Enriques surface and there is an exact equivalence $\Db(X_1)\cong\Db(X_2)$, then $X_1\cong X_2$.
\end{thm}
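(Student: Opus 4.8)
The plan is to show first, using only invariants of the triangulated category $\Db(X_1)$, that $X_2$ is necessarily an Enriques surface, and then to deduce the isomorphism by lifting the equivalence to the K3 covers and applying the Torelli theorem for Enriques surfaces. For the first step, write $\Phi\colon\Db(X_1)\to\Db(X_2)$ for the given exact equivalence. Since the Serre functor is intrinsic to a triangulated category, $\Phi$ intertwines $\fS_{X_1}$ and $\fS_{X_2}$; composing with the shift $[-2]$ and using the explicit description $\fS_{X_i}=(\farg)\otimes\omega_{X_i}[2]$ from \autoref{ex:Serre}, this says that $\Phi$ intertwines the autoequivalences $\sigma_i:=(\farg)\otimes\omega_{X_i}$, that is $\Phi\circ\sigma_1\circ\Phi^{-1}\cong\sigma_2$. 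Squaring this isomorphism and evaluating it on $\so_{X_2}$, and using $\omega_{X_1}^{\otimes 2}\cong\so_{X_1}$, I obtain $\omega_{X_2}^{\otimes 2}\cong\so_{X_2}$; and were $\omega_{X_2}$ trivial, then $\sigma_2\cong\id$ would force $\sigma_1\cong\id$ and hence $\omega_{X_1}\cong\so_{X_1}$, contradicting that $X_1$ is an Enriques surface. Thus $\omega_{X_2}$ is nontrivial and $2$-torsion. It remains to check that $H^1(X_2,\so_{X_2})=0$: in characteristic $0$ this is immediate from the derived invariance of Hochschild homology, since $\mathrm{HH}_1(X_1)=0$ (as $h^{0,1}(X_1)=h^{1,2}(X_1)=0$ for an Enriques surface) whereas the Hochschild\textendash Kostant\textendash Rosenberg decomposition exhibits $H^1(X_2,\so_{X_2})$ as a direct summand of $\mathrm{HH}_1(X_2)$; the positive characteristic case is handled by the crystalline methods of \cite{HLT17}. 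By the definition of an Enriques surface recalled above, $X_2$ is then an Enriques surface.

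Next I pass to the K3 covers $\pi_i\colon\widetilde X_i\to X_i$: these are the connected étale double covers attached to the nontrivial $2$-torsion bundles $\omega_{X_i}$, and are K3 surfaces since $\mathrm{char}(\K)\neq 2$, with fixed-point-free covering involutions $\iota_i$. Under the identification $\Db(X_i)=\Db_{\langle\iota_i\rangle}(\widetilde X_i)$, the autoequivalence $\sigma_i=(\farg)\otimes\omega_{X_i}$ is exactly twisting the equivariant structure by the sign character. Hence the compatibility $\Phi\circ\sigma_1\cong\sigma_2\circ\Phi$ established above (suitably normalized, which uses that $\K$ is algebraically closed of characteristic $\neq 2$ so that the intertwiner can be made compatible with $\sigma_i^2\cong\id$) is precisely what is needed for $\Phi$ to lift to the covers, by the standard Galois-descent/equivariantization formalism for Fourier\textendash Mukai functors (cf.\ \cite{BM01,HLT17}): one obtains a Fourier\textendash Mukai equivalence
\[
\widetilde\Phi\colon\Db(\widetilde X_1)\mor{\sim}\Db(\widetilde X_2)
\]
commuting up to isomorphism with $\iota_1^*$ and $\iota_2^*$. (Invoking Orlov's theorem one may equivalently phrase this at the level of the Fourier\textendash Mukai kernel $\EE\in\Db(X_1\times X_2)$: the relation above forces $\EE\otimes p_1^*\omega_{X_1}\cong\EE\otimes p_2^*\omega_{X_2}$, i.e.\ $\EE$ is fixed by the $2$-torsion line bundle $p_1^*\omega_{X_1}\otimes p_2^*\omega_{X_2}\dual$, which produces the kernel of $\widetilde\Phi$ on the relevant intermediate cover.)

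Finally I invoke the derived Torelli theorem for K3 surfaces (\cite{Mu,Or} over $\C$, and its crystalline counterpart in positive characteristic, as used in \cite{HLT17}): it shows that $\widetilde\Phi$ induces an isometry of Mukai lattices $\widetilde H(\widetilde X_1)\cong\widetilde H(\widetilde X_2)$ which respects the Hodge (resp.\ crystalline) structures and, because $\widetilde\Phi$ does, the involutions $\iota_1^*$ and $\iota_2^*$. Passing to the $\iota_i^*$-eigenspace decomposition recovers, together with the Hodge structure carried by the anti-invariant part, precisely the period of the Enriques surface $X_i$, so one gets an isometry of Enriques periods. After modifying it by elements of the Weyl group and a global sign so that it respects the ample cones (which can be arranged exactly because it is realised by an honest derived equivalence), the Torelli theorem for Enriques surfaces yields the desired isomorphism $X_1\cong X_2$. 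I expect this last step to be the main obstacle: turning the abstract (crystalline) lattice isometry delivered by the K3 Torelli theorem into an \emph{effective} isometry of Enriques periods in the precise form the Enriques Torelli theorem requires, i.e.\ carrying out the sign and chamber/ample-cone bookkeeping, and, in positive characteristic, having the crystalline forms of both the K3 and the Enriques Torelli theorems at one's disposal; this is the technical core of \cite{HLT17}. The reduction in the first paragraph, by contrast, is essentially routine, the only delicate input being the derived invariance of $H^1(\so_{X_2})$ away from characteristic $0$.
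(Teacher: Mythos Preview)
The paper does not give its own proof of this theorem; it merely records it as a restatement of \cite[Proposition~6.1]{BM01} (the characteristic-zero case) together with \cite[Theorem~1.1]{HLT17} (the extension to positive characteristic $\neq 2$). Your outline is precisely the strategy of those references: use the Serre functor to see that $\omega_{X_2}$ is nontrivial $2$-torsion, use a Hochschild-type invariant to get $H^1(X_2,\so_{X_2})=0$ and hence that $X_2$ is Enriques, lift the Fourier\textendash Mukai kernel to the K3 double covers to obtain an $\iota$-equivariant equivalence $\Db(\widetilde X_1)\cong\Db(\widetilde X_2)$, and then pass to the anti-invariant part of the Mukai lattice to obtain a Hodge (resp.\ crystalline) isometry of Enriques periods and apply the Torelli theorem for Enriques surfaces. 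The places where you flag difficulties---making the lattice isometry effective with respect to ample cones, and the crystalline substitutes needed when $\mathrm{char}(\K)>0$---are exactly the substantive content of \cite{BM01} and \cite{HLT17}, so your sketch is an accurate summary of the existing proof rather than an alternative to it.
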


Clearly the situation becomes much more involved when $\K$ has characteristic $2$. In this case the definition has to be slightly modified: an Enriques surface is a minimal smooth projective surface whose canonical bundle is numerically trivial and such that the second Betti number is $10$. If $\mathrm{char}(\K)\neq 2$, this definition coincides with the one above. But when $\mathrm{char}(\K)=2$, then one gets three families:
\begin{itemize}
    \item \emph{Classical Enriques surfaces:} they are characterized by the fact that $\dim(H^{1}(X,\OO_X))=0$;
    \item \emph{Singular Enriques surfaces:} in this case $\dim(H^{1}(X,\OO_X))=1$ and such a cohomology group carries a nontrivial action of the Frobenius;
    \item \emph{Supersingular Enriques surfaces:} in this case $\dim(H^{1}(X,\OO_X))=1$ and this cohomology group carries a trivial action of the Frobenius.
\end{itemize}
In the first case the canonical bundle is nontrivial and $2$-torsion while, in the latter two cases, the canonical bundle is trivial. Singular Enriques surfaces are again realized as quotients of K3 surfaces. The reader can have a look at \cite{CD89} and \cite{DK} for an extensive treatment of Enriques surfaces and \cite{D} for a shorter but informative one.

It is then natural to raise the following question:

\begin{qn}\label{qn:extDerTorChar2}
Is \autoref{thm:dertordercatEnr} still true when $\mathrm{char}(\K)=2$ for some/all of the three families above?
\end{qn}

\begin{remark}\label{rm:moreFMpart}
It is clear that we should not expect an analogue of \autoref{thm:dertordercatEnr} to hold for all smooth projective surfaces. Indeed, already for abelian surfaces \cite{Muk:ab} and K3 surfaces \cite{Og,St}, this is known to be false.
\end{remark}

For the rest of this section we stick to the case where $\K$ is algebraically close and $\mathrm{char}(\K)\neq 2$. We can further analyze $\Db(X)$ by means of the following result which is certainly well-known to experts.

\begin{prop}[{\cite[Proposition 3.5]{LNSZ}}]\label{prop:exceptionalcollectionexists}
Let $X$ be an Enriques surface over $\K$ as above. Then $\Db(X)$ contains an admissible subcategory $\cL=\langle\cL_1,\dots,\cL_c\rangle$, where $\cL_1,\dots,\cL_c$ are orthogonal admissible subcategories and
\[
\cL_i=\langle L^i_1,\dots,L^i_{n_i}\rangle.
\]
Here:
\begin{itemize}
\item[{\rm (1)}] $L_j^i$ is a line bundle such that $L^i_j=L^i_1\otimes\OO_X(R^i_1+\dots+R^i_{j-1})$, where $R^i_1,\dots,R^i_{j-1}$ form a chain of $(-2)$ rational curves of $A_{j-1}$ type\footnote{This is a pretty compact and standard way to say that the rational curves $R^i_1,\dots,R^i_{j-1}$ form a root basis of type $A_{j-1}$.}; 
\item[{\rm (2)}] $\{L^i_1,\dots,L^i_{n_i}\}$ is an exceptional collection; and
\item[{\rm (3)}] $n_1+\dots+n_c=10$.
\end{itemize}
\end{prop}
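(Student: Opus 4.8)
The plan is to produce, inside $\Db(X)$, as many mutually orthogonal $A_n$-chains of exceptional line bundles as possible, with total length $10$, using the intersection theory on the Enriques lattice $\Num(X)\cong U\oplus E_8(-1)$ together with the structure of $(-2)$-curves. First I would recall the basic numerology: for a line bundle $L$ on an Enriques surface, Riemann--Roch gives $\chi(L)=1+\tfrac12 L^2$, and by Serre duality plus $\omega_X$ being $2$-torsion and nontrivial one has $H^2(X,L)\cong H^0(X,\omega_X\otimes L^{-1})^\vee$. Hence $L$ is exceptional in $\Db(X)$, i.e.\ $\Hom^\bullet(L,L)=\K$, automatically for \emph{every} line bundle $L$ (since $\Hom^p(L,L)=H^p(X,\OO_X)$, and $H^1(X,\OO_X)=0$ by definition, $H^0=H^2=\K$ — wait, $H^2(X,\OO_X)=H^0(X,\omega_X)^\vee=0$ as $\omega_X\not\cong\OO_X$). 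So the content is entirely about the \emph{semiorthogonality and orthogonality} conditions, not exceptionality of individual bundles.

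Next I would set up the combinatorial engine. Given a chain $R^i_1,\dots,R^i_{n_i-1}$ of $(-2)$-curves of type $A_{n_i-1}$ (so $R^i_a\cdot R^i_b=-2\delta_{ab}+\delta_{|a-b|,1}$) and a starting line bundle $L^i_1$, set $L^i_j:=L^i_1\otimes\OO_X(R^i_1+\dots+R^i_{j-1})$ as in (1). The key computation is that $\Hom^\bullet(L^i_j,L^i_k)=H^\bullet(X,\OO_X(R^i_j+\dots+R^i_{k-1}))$ for $j<k$; one shows this effective sum of a sub-chain of $(-2)$-curves $D=R^i_j+\dots+R^i_{k-1}$ satisfies $D^2=-2$, $D\cdot K_X=0$, hence $\chi(\OO_X(D))=0$, and a standard vanishing argument (using that $D$ is a chain of smooth rational curves, so $H^0(\OO_X(D))=\K$ from the connected nodal configuration, $H^2=0$ again by Serre duality and $\omega_X\ne\OO_X$) forces $H^0=\K$, $H^1=\K$, $H^2=0$ — no wait, for an $A_n$ configuration $H^0(\OO_X(D))=\K$ actually, and then $H^1(\OO_X(D))=0$ is what makes the collection exceptional: $\Hom^p(L^i_j,L^i_k)=0$ for all $p$ when $j<k$, which is exactly what gives (2) together with the vanishing $\Hom^p(L^i_k,L^i_j)=0$ for $k>j$ in the \emph{other} order, i.e.\ the semiorthogonality $\langle L^i_1,\dots,L^i_{n_i}\rangle$. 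I would carefully unwind which direction of $\Hom$ vanishing is needed and check it follows from $H^\bullet(X,\OO_X(-D))$ computations via Serre duality; this is the routine-but-delicate core, and I would cite the lattice-theoretic description of $(-2)$-curves on an Enriques surface and Dolgachev's classification to guarantee such configurations and starting bundles exist.

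The remaining point — orthogonality \emph{between} the blocks $\cL_i$, and the total count $n_1+\dots+n_c=10$ — is where the geometry of the Enriques lattice does the real work, and I expect it to be the main obstacle. The idea is that the classes $[L^i_j]$ span, inside $\Num(X)$, a configuration of $(-2)$-vectors, and one needs the differences $[L^i_j]-[L^{i'}_{j'}]$ for $i\ne i'$ to pair to $\OO_X$-cohomology that vanishes in all degrees, which translates into a root-basis statement: the union of the chains forms a sub-root-system of $E_8\oplus U$-type of rank (at most) $10$, with the $c$ chains lying in mutually orthogonal $A_{n_i-1}$-components. Producing an actual such collection of \emph{line bundles} (not just lattice vectors) realizing $n_1+\dots+n_c=10$ requires knowing the $(-2)$-curves actually present on $X$; the correct statement is that \emph{some} such decomposition with the stated properties always exists because $\rho(X)=10$ and the effective/nef cone geometry always furnishes enough $(-2)$-classes (possibly with all $n_i=1$, i.e.\ $c=10$, as the trivial extreme case of ten orthogonal exceptional line bundles — which is the generic picture). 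I would organize the proof so that the $c=10$ case is handled first as a warm-up (ten pairwise $\Hom^\bullet$-orthogonal line bundles coming from a $\ZZ^{10}$-orthogonal set of classes), and then explain how clustering classes along $(-2)$-curve chains upgrades a block of $n_i$ of them into an $\cL_i$ without destroying orthogonality to the others — the verification of the latter being the step I would spend the most care on.
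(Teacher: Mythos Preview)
The paper does not give its own proof of this proposition; it merely cites \cite[Proposition~3.5]{LNSZ} and remarks that the statement ``is certainly well-known to experts,'' with the generic case attributed to Zube (see \autoref{ex:genricEnr}). So there is no proof in the paper to compare against, only the pointer to the literature.

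That said, your proposal has a genuine gap at the point that actually carries the weight. You correctly observe that every line bundle on an Enriques surface is exceptional, and you set up the relevant cohomology computations, but the \emph{existence} step is essentially deferred rather than proved. The claim that ``the effective/nef cone geometry always furnishes enough $(-2)$-classes'' and that one can always find ten line bundles whose pairwise differences are $(-2)$-classes with \emph{all} cohomology vanishing in both directions is precisely the nontrivial content. The lattice condition $(L-M)^2=-2$ only forces $\chi=0$; getting $H^0=H^1=H^2=0$ for both $L^{-1}\otimes M$ and $M^{-1}\otimes L$ (and their $\omega_X$-twists, via Serre duality) is a genuine geometric constraint on which $(-2)$-classes are effective, and this is exactly what requires the structure theory of Enriques surfaces (isotropic sequences, half-fibers of elliptic pencils, the $(-2)$-curve configurations). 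Your sketch of the generic case---ten orthogonal blocks---is the result of Zube cited in \autoref{ex:genricEnr}, not something that falls out of Riemann--Roch alone.

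Your within-block computation is also muddled: for the $A_{n_i-1}$ chain you need $\Hom^p(L^i_k,L^i_j)=0$ for $k>j$, i.e.\ $H^p(X,\OO_X(-D))=0$ for $D=R^i_j+\cdots+R^i_{k-1}$ effective, and the self-corrections in your text (``wait \ldots\ no wait'') never settle on the right vanishing. The correct argument uses that $-D$ is anti-effective so $H^0=0$, and $H^2\cong H^0(\omega_X(D))^\vee=0$ because $K_X+D$ is not effective for such chains on an Enriques surface---but this last point again needs input beyond pure lattice numerology. I would recommend consulting \cite{LNSZ} directly, where the argument is organised around known exceptional collections and the classification of $(-2)$-curve configurations rather than built from scratch.
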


We can illustrate the geometry attached to the collection $\cL$ in some interesting cases.

\begin{ex}\label{ex:genricEnr}
A generic Enriques surface does not contain $(-2)$-curves. Thus the collection in \autoref{prop:exceptionalcollectionexists} gets much simplified. In particular, with the above notation, we have $n_i=1$ for every $i=1, \dots, c$, and we get $10$ completely orthogonal blocks $\cL_i=\langle L_i\rangle$, where $L_i:= L^{i}_{1}$ is an exceptional line bundle, for all $i=1,\dots,10$.

If $\K=\C$ and $X$ does not contain $(-2)$-curves, then \cite{Zu} gives a very geometric interpretation of these orthogonal line bundles. Indeed, any ample polarization on $X$ of degree $10$ yields $10$ elliptic pencils each containing $2$ double fibers. Denote them by $F_i^+$ and $F_i^-$. Then we can take $L_i:=\OO_X(-F_i^+)$. One can prove that, for all $i$, we have the relation $F_i^+=F_i^-+K_X$, where $K_X$ is the canonical class. Using the Serre functor (see \autoref{ex:Serre}), one immediately sees that it is possible to change any $L_i$ to $\OO_X(-F_i^-)$ and still get a completely orthogonal collection of $10$ line bundles. In particular, $\Db(X)$ contains many distinct collections of exceptional objects, as we have at least $2^{10}=1024$ possible choices of orthogonal exceptional collections of line bundles in $\Db(X)$ (see \cite[Example 3.4]{LNSZ} for a more detailed discussion).
\end{ex}

Thus, if $X$ is an Enriques surface and $\cL$ is a collection of exceptional line bundles as in \autoref{prop:exceptionalcollectionexists}, then we get a semiorthogonal decomposition
\begin{equation}\label{eqn:sodEnriques}
\Db(X)=\langle\Ku(X,\cL),\cL\rangle.
\end{equation}
The admissible subcategory $\Ku(X,\cL):=\cL^\perp$ is referred to as the \emph{Kuznetsov component of $X$}. As the notation suggests, it is important to keep in mind that $\Ku(X,\cL)$ depends on $\cL$ and not just on $X$.

The Kuznetsov component is, at the moment, quite a mysterious subcategory. On one hand, it is certainly easy to show that it is nonzero. Indeed, its numerical Grothendieck group can be easily described:
\[
\cN(\Ku(X,\cL))\cong\Z\oplus\Z.
\]
Furthermore, if
\[
\kappa\colon\Ku(X,\cL)\hookrightarrow\Db(X)
\]
denotes the embedding with right adjoint $\kappa^!$, then the object
\begin{equation}\label{eqn:sphericalEnriques}
S_i:=\kappa^!(L^i_1)
\end{equation}
is nontrivial in $\Ku(X,\cL)$, for $L_1^i$ the line bundles in \autoref{prop:exceptionalcollectionexists}. On the other hand, three potentially interesting and related open problems are summarized by the following:

\begin{qn}\label{qn:KuzEnriques}
(i) Does the Serre functor $\fS_{\Ku(X,\cL)}$ have an explicit and computable description (other than the abstract one in \autoref{ex:Serre})?

(ii) Is $\Ku(X,\cL)$ indecomposable\footnote{This would mean that $\Ku(X,\cL)$ does not admit a nontrivial semiorthogonal decomposition.}?

(iii) Does \autoref{qn:FM1} or \autoref{qn:FM2}, with $\cD_i:=\Ku(X_i,\cL_i)$, have a positive answer for the Kuznetsov component $\Ku(X,\cL)$?
\end{qn}

In \autoref{subsect:specialobjects} we will comment on the action of $\fS_{\Ku(X,\cL)}$ on some special objects of $\Ku(X,\cL)$ and we will mention why a positive answer to \autoref{qn:KuzEnriques} (ii) may be interesting to provide yet another counterexample to the Jordan\textendash H\"older conjecture. A positive answer to \autoref{qn:KuzEnriques} (iii) (in either of the two forms) would yield a simpler statement for the Categorical Torelli theorem discussed later.

\begin{remark}\label{rmk:sodothersurfaces}
As we observed in the introduction, not all surfaces admit nontrivial semiorthogonal decompositions. Already for surfaces of Kodaira dimension $0$, K3 and abelian surfaces have indecomposable derived categories. Additional interesting results in dimension $2$ are contained in \cite[Section 5]{KO}.
\end{remark}

\subsection{Prime Fano threefolds}\label{subsect:Fano3foldsgeo}

We turn now to the case of Fano threefolds, i.e. smooth projective threefolds $X$ defined over an algebraically closed field $\K$ and such that $\omega_X^\vee=\OO_X(-K_X)$ is ample. We stick to the examples where the rank $\rho_X$ of the Picard group $\mathrm{Pic}(X)$ of $X$ is $1$, which are called \emph{prime Fano threefolds}. Under these assumptions, we denote by $H_X$ the primitive ample generator of $\mathrm{Pic}(X)$.

The classification of prime Fano threefolds was achieved in \cite{Is,MU} in characteristic $0$. These classification results have been extended to positive characteristic in \cite{SB}. These threefolds are classified by two numerical invariants. The first one is the \emph{index} which is the positive integer $i_X$ such that
\[
K_X=-i_X H_X.
\]
The second one is the \emph{degree} which is the positive integer $d:=H_X^3$.

It turns out that $i_X\in\{1,2,3,4\}$. The cases $i_X=3$ and $i_X=4$ correspond to $X=Q$ and $X=\P^3$, respectively, where $Q$ is a $3$-dimensional quadric. In both cases, we know all about $\Db(X)$ in view of \autoref{ex:Pn} and \autoref{ex:quadrics} (when $\mathrm{char}(\K) \neq 2$), respectively. Thus we can stick to $i_X=1,2$.

Prime Fano threefolds with index $1$ are organized in $10$ deformation types. Moreover, the degree of these Fano threefolds $d_X=2g_X-2$ is even and the deformation type is characterized by the choice of $2\leq g_X\leq 12$ but $g_X\neq 11$, where $g_X$ is called the \emph{genus} of $X$.

Now, for $g_X=2,3,4,5$ we consider the semiorthogonal decomposition
\begin{equation}\label{eqn:sodFano1}
\Db(X)=\langle\Ku(X),\OO_X\rangle.
\end{equation}
On the other hand, if $g_X$ is even and greater than $4$, then we have
\begin{equation}\label{eqn:sodFano2}
\Db(X)=\langle\Ku(X),\EE_2,\OO_X\rangle,
\end{equation}
where $\EE_2$ is a rank-$2$ stable vector bundle on $X$ whose existence is claimed in \cite{Muk92} (see \cite[Section 6]{BLMS} for a careful proof of this fact which is valid for algebraically closed fields of characteristic either $0$ or sufficiently large). For odd genus $g_X=7,9$, we use again \cite{Muk92} which yields a rank-$5$ and rank-$3$ vector bundle $\EE_5$ and $\EE_3$ and semiorthogonal decompositions
\begin{equation}\label{eqn:sodFano3}
\Db(X)=\langle\Ku(X),\EE_5,\OO_X\rangle\qquad\Db(X)=\langle\Ku(X),\EE_3,\OO_X\rangle,
\end{equation}
when $X$ has genus $7$ and $9$, respectively.

In all the above cases, the residual category $\Ku(X)$ is called \emph{Kuznetsov component}. It is worth pointing out that $\Ku(X)$ can be better understood in some interesting cases. What is known is summarized in the following table, where the third column indicates the reference where the semiorthogonal decomposition in the second column is provided. As above, the base field has characteristic either zero or sufficiently large.

\medskip

\begin{center}
	\begin{tabular}{r|l|l}
		\hline
		\multicolumn{3}{ c }{$\rho_X=1$ \& $i_X=1$}\\
		\hline
		$g_X$ &Semiorthogonal decomposition & Reference\\
		\hline
		12&$\Db(X_{22})=\langle\EE_4,\EE_3,\EE_2,\OO\rangle$ &{\cite[Thm.~4.1]{Kuz:Fano3folds}}\\
		10&$\Db(X_{18})=\langle\Db(C_2),\EE_2,\OO\rangle$ &
		{\cite[\S6.4]{Kuznetsov:Hyperplane}}\\
		9&$\Db(X_{16})=\langle\Db(C_3),\EE_3,\OO\rangle$ &{\cite[\S6.3]{Kuznetsov:Hyperplane}}\\
		8&$\Db(X_{14})=\langle\Ku(X_{14}),\EE_2,\OO\rangle$&
		{\cite{Kuz:V14}}\\
		7&$\Db(X_{12})=\langle\Db(C_7),\EE_5,\OO\rangle$ &{\cite[\S6.2]{Kuznetsov:Hyperplane}}\\
		6&$\Db(X_{10})=\langle\Ku(X_{10}),\EE_2,\OO\rangle$ &{\cite[Lem.~3.6]{Kuz:Fano3folds}}\\
		5&$\Db(X_{8})=\langle\Ku(X_8),\OO\rangle$ &\\
		4&$\Db(X_{6})=\langle\Ku(X_{6}),\OO\rangle$& \\
		3&$\Db(X_{4})=\langle\Ku(X_4),\OO\rangle$ &\\
		2&$\Db(X_{2})=\langle\Ku(X_2),\OO\rangle$ &\\
		\hline
	\end{tabular}
\end{center}

\medskip

\noindent In the table, $X_d$ denotes a prime Fano threefold of index $1$ and degree $d=2g-2$, $C_g$ denotes a smooth curve of genus $g$ while $\EE_i$ refer to vector bundles which are explicitly described in the references in the third column. It is worth to point out that in some cases the semiorthogonal decomposition of $\Db(X_6)$ can be refined. More precisely, note that $X_6$ is a complete intersection of a quadric hypersurface and a cubic hypersurface in $\P^5$. When the quadric cutting $X_6$ is smooth, there is a semiorthogonal decomposition
$$\Db(X_6)= \langle \cA_X, \cS, \OO \rangle,$$
where $\cS$ is the restriction to $X$ of a spinor bundle on the quadric.

Prime Fano threefolds with index $2$ are usually referred to as \emph{del Pezzo threefolds}. They all have a canonical semiorthogonal decomposition
\begin{equation}\label{eqn:sodFano4}
\Db(X)=\langle\Ku(X),\OO_X,\OO_X(H_X)\rangle,
\end{equation}
where $\Ku(X)$ is, as usual, the Kuznetsov component. The degree $d_X$ is subject to the bounds $1\leq d_X\leq 5$ and in some cases the Kuznetsov component can be further analyzed according to the following table:

\medskip

\begin{center}
	\begin{tabular}{r|l|l}
		\hline
		\multicolumn{3}{ c }{$\rho_X=1$ \& $i_X=2$}\\
		\hline
		$d_X$
		&Semiorthogonal decomposition &Reference\\
		\hline
		5 &$\Db(X_5)=\langle\FF_3, \FF_2,\OO, \OO(H_{X_5})\rangle$ &{\cite{orlov:Y5}}\\
		4 &$\Db(X_4)=\langle\Db(C_2),\OO,\OO(H_{X_4})\rangle$ &{\cite[Thm.~2.9]{BondalOrlov:Main}}\\
		3 &$\Db(X_3)=\langle\Ku(X_3),\OO,\OO(H_{X_3})\rangle$&\\
		2 &$\Db(X_2)=\langle\Ku(X_2),\OO,\OO(H_{X_2})\rangle$&\\
		1 &$\Db(X_1)=\langle\Ku(X_1),\OO,\OO(H_{X_1})\rangle$&\\
		\hline
	\end{tabular}
\end{center}

\medskip

In the table, $C_2$ is a smooth curve of genus $2$ and $\cF_3, \cF_2$ are vector bundles of rank $3$ and $2$, respectively, described explicitely in the reference in the third column. In the above list, when $i_X=2$ and $d_X=3$ we get the celebrated case of \emph{cubic threefolds} (i.e.\ smooth degree $3$ hypersurfaces in $\P^4$) which are going to be important examples that we will analyze in full detail. The Kuznetsov components of cubic threefolds yield examples of the following special class of triangulated categories.

\begin{definition}
(i) A triangulated category $\cT$ is a \emph{fractional Calabi\textendash Yau category} if $\cT$ has Serre functor $\fS_\cT$ and there exist positive integers $p$ and $q\neq 0$ such that $\fS_\cT^q=[p]$. The fraction $\frac{p}{q}$ is called the \emph{fractional dimension} of $\cT$.

(ii) A fractional Calabi\textendash Yau category where $q=1$ is a \emph{$p$-Calabi\textendash Yau category}. 
\end{definition}

The following result is going to be relevant later in the paper:

\begin{prop}\label{prop:cubic3foldseq}
Let $X$ be a cubic threefold. Then the Kuznetsov component $\Ku(X)$ in \eqref{eqn:sodFano4} is a fractional Calabi\textendash Yau of fractional dimension $\frac{5}{3}$.
\end{prop}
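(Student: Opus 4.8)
The plan is to compute the Serre functor of $\Ku(X)$ explicitly and then extract from it the fractional Calabi--Yau relation. First I would record the data: $X\subset\P^4$ is a cubic threefold, so $\omega_X=\OO_X(-2)$ and $\fS_X(-)=(-)\otimes\OO_X(-2)[3]$ by \autoref{ex:Serre}. The semiorthogonal decomposition \eqref{eqn:sodFano4} reads $\Db(X)=\langle\Ku(X),\OO_X,\OO_X(H_X)\rangle$ with $H_X$ the hyperplane class, and the embedding $\kappa\colon\Ku(X)\hookrightarrow\Db(X)$ is admissible with $\fS_{\Ku(X)}=\kappa^!\circ\fS_X\circ\kappa$, again by \autoref{ex:Serre}. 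So everything reduces to understanding the right adjoint $\kappa^!$, equivalently the right mutation functors past the exceptional pair $\langle\OO_X,\OO_X(H_X)\rangle$.

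The key step is to relate $\kappa^!$ to $\kappa^*$ (the left adjoint). Because $\Ku(X)$ is the left orthogonal $\langle\OO_X,\OO_X(H_X)\rangle^{\perp}$ inside $\Db(X)$ — sitting in front of the exceptional collection — one knows $\kappa^!=\kappa^*\circ(\text{right mutation through }\OO_X,\OO_X(H_X))\circ(\text{twist})$; more precisely, the composition of the dualizing functor $\fS_X$ with passing an object of $\Ku(X)$ to the other side of the exceptional collection can be expressed through the mutation functor $\mathbb{L}_{\OO_X}\mathbb{L}_{\OO_X(H_X)}$ (or, going the other way, $\mathbb{R}_{\OO_X(H_X)}\mathbb{R}_{\OO_X}$), up to the line bundle twist by $\OO_X(-2)[3]$ coming from $\fS_X$. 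The upshot, following Kuznetsov's analysis of Fano threefolds (see \cite{Kuz:Fano3folds, Kuznetsov:Hyperplane}), is a formula of the shape
\[
\fS_{\Ku(X)}\cong \mathbb{L}_{\OO_X}\circ(-\otimes\OO_X(H_X))[-1]
\]
restricted to $\Ku(X)$, or an equivalent rewriting in terms of the \emph{rotation} functor. One then has to iterate: compute $\fS_{\Ku(X)}^{\,3}$ and check, using the vanishing $\Hom^\bullet(\OO_X,\OO_X(j))$ for the relevant $j$ and the defining orthogonality of $\Ku(X)$, that the three line-bundle twists $\OO_X(H_X),\OO_X(2H_X),\OO_X(3H_X)$ together with the accumulated mutations collapse to a pure shift. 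The arithmetic $3\cdot(-1)+(\text{contribution of }\OO_X(3H_X)=\OO_X(-2)\otimes\text{something})$ must come out to $[5]$; this is exactly the bookkeeping that produces fractional dimension $\tfrac{p}{q}=\tfrac{5}{3}$.

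The main obstacle, and the only genuinely delicate point, is the identification of $\kappa^!$ in terms of an explicit mutation/twist functor and then the verification that the triple iterate degenerates to a shift: one must carefully track how the left mutation $\mathbb{L}_{\OO_X}$ interacts with the line bundle twists across three applications, using that $\Ku(X)$ is simultaneously left orthogonal to $\OO_X,\OO_X(H_X)$ and (after twisting by $\OO_X(H_X)$) related to orthogonality against $\OO_X(H_X),\OO_X(2H_X)$. I would organize this as: (i) write $\fS_{\Ku(X)}$ as $\kappa^!\fS_X\kappa$ and unfold $\kappa^!$ as the composite of the right adjoints to the two mutations; (ii) use the standard fact (Bondal--Kapranov, Kuznetsov) that for a Fano variety $Y=Y_d\subset\P^n$ with $\Db(Y)=\langle\Ku(Y),\OO,\dots,\OO(kH)\rangle$ the Serre functor of $\Ku(Y)$ equals the composition of the twist by $\omega_Y^{-1}=\OO(i_Y H)$ with suitable mutations, so that $\fS_{\Ku(Y)}^{\,q}=[p]$ with $\tfrac{p}{q}$ read off from $n$, $d$, and the number of exceptional objects; (iii) specialize $n=4$, $d=3$, two exceptional objects, $i_X=2$, and compute $\tfrac{p}{q}=\tfrac{5}{3}$, confirming $\fS_{\Ku(X)}^{\,3}\cong[5]$. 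Alternatively, and perhaps more cleanly, one can invoke the description of $\Ku(X)$ via the associated degree-$3$ hypersurface and the general fractional-Calabi--Yau formula for Kuznetsov components of hypersurfaces: for a smooth cubic hypersurface in $\P^{n+1}$ the residual category is fractional Calabi--Yau of dimension $\tfrac{(n+1)(3-2)}{3}-\text{(correction)}$, which for $n=3$ yields $\tfrac{5}{3}$; I would cite \cite{Kuz:V14, BLMS} for this computation rather than redo it.
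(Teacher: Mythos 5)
Your fallback route---note that $\Ku(X)$ has a Serre functor because it is admissible (\autoref{ex:Serre}) and then cite Kuznetsov's computation in \cite{Kuz:V14}---is exactly the proof given in the paper, which simply invokes \cite[Corollary 4.4]{Kuz:V14}; there is no need to redo the mutation bookkeeping. One caution about your sketch, though: the displayed guess $\fS_{\Ku(X)}\cong\L_{\OO_X}\circ(-\otimes\OO_X(H_X))[-1]$ cannot be right, since the actual relation from \cite[Lemma 4.1]{Kuz:V14} (recorded later in the paper as \eqref{eq_Serrefunctor}) is $\fS_{\Ku(X)}^{-1}\cong\bigl(\L_{\OO_X}\circ(-\otimes\OO_X(H_X))\bigr)^{2}[-3]$, which combined with the rotation relation $\bigl(\L_{\OO_X}\circ(-\otimes\OO_X(H_X))\bigr)^{3}\cong[2]$ yields $\fS_{\Ku(X)}^{3}\cong[5]$, whereas your formula together with that relation would force $\fS_{\Ku(X)}^{3}\cong[1]$; likewise the general hypersurface formula is $\frac{(n+2)(d-2)}{d}$, giving $\frac{5}{3}$ for $n=d=3$, not the expression you wrote. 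Since you ultimately defer these computations to \cite{Kuz:V14} (and \cite{BLMS}), this does not invalidate your proposal, but the explicit formulas as written would not survive the iteration you describe.
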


\begin{proof}
The admissible subcategory $\Ku(X)$ has Serre functor by \autoref{ex:Serre}. We can then simply apply \cite[Corollary 4.4]{Kuz:V14} (see also \cite[Corollary 4.3]{Kuz:V14} for a more general statement).
\end{proof}

The following question is then natural.

\begin{qn}\label{qn:indeccubic3folds}
Is $\Ku(X)$ indecomposable, when $X$ is a cubic threefold?
\end{qn}

\begin{remark}\label{rmk:fractCY}
A list of other prime Fano threefolds with Kuznetsov component which is a fractional Calabi\textendash Yau category can be deduced from \cite{KLect} (see Section 2.4 therein) and \cite{KP}. One interesting example is provided by \emph{quartic threefolds}, i.e.\ smooth hypersurfaces of degree $4$ in $\P^4$. According to the first table above ($\rho=i=1$ and $g=3$), if $X$ is a quartic threefold, we have a semiorthogonal decomposition as in \eqref{eqn:sodFano1}. In this case, $\Ku(X)$ is a fractional Calabi\textendash Yau category of fractional dimension $\frac{10}{4}$. We will comment on this case later.
\end{remark} 

After discussing the Categorical Torelli theorem for cubic threefolds, we will explain that \autoref{qn:FM2} has a positive answer for $\Ku(X)$, when $X$ is a cubic threefold (see \autoref{cor:eqbecFM}).

\subsection{Higher dimensional Fano manifolds: cubic fourfolds}\label{subsect:cubic4folds}

A \emph{cubic fourfold} is a smooth cubic hypersurface in $\P^5$. Here we assume that $X$ is defined over an algebraically closed field $\K$ such that $\mathrm{char}(\K)\neq 2$.

The derived category of a cubic fourfold $X$ has again a natural semiorthogonal decomposition
\begin{equation}\label{eqn:sodcubic4folds}
\Db(X)=\langle\Ku(X),\OO_X,\OO_X(H),\OO_X(2H)\rangle,
\end{equation}
where $H$ is the class of a hyperplane section of $X$. The admissible subcategory $\Ku(X)$ is the Kuznetsov component of $X$.

\begin{prop}\label{prop:cubic4foldKu1}
If $X$ is a cubic fourfold, then the Kuznetsov component $\Ku(X)$ is an indecomposable $2$-Calabi\textendash Yau category for which \autoref{qn:FM1} has a positive answer.
\end{prop}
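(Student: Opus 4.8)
The plan is to establish the three assertions in turn, each reducing to a result already available in the literature. First, for the $2$-Calabi--Yau property: by \autoref{ex:Serre} the admissible subcategory $\Ku(X)$ has Serre functor $\fS_{\Ku(X)}\cong\kappa^!\circ\fS_X\circ\kappa$, where $\kappa\colon\Ku(X)\hookrightarrow\Db(X)$ and $\fS_X=(-)\otimes\omega_X[4]$ with $\omega_X=\OO_X(-3H)$. The key computation, due to Kuznetsov, is that the composition of $\fS_X$ with the mutation functors arising from the semiorthogonal decomposition \eqref{eqn:sodcubic4folds} acts on $\Ku(X)$ as the shift $[2]$: concretely, one checks that left mutation through $\langle\OO_X,\OO_X(H),\OO_X(2H)\rangle$ composed with $(-)\otimes\OO_X(-3H)$ is isomorphic to the identity on $\Ku(X)$ up to the shift $[-2]$, so that $\fS_{\Ku(X)}=[2]$. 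I would simply cite this; it is the original observation that $\Ku(X)$ ``looks like'' the derived category of a K3 surface.

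Second, indecomposability. Here the cleanest route is via Hochschild cohomology: one computes $\HH^0(\Ku(X))\cong\K$ (the Kuznetsov component is \emph{connected}), together with the fact that $\Ku(X)$ is a $2$-Calabi--Yau category with $\HH^\bullet(\Ku(X))$ isomorphic, as a graded ring, to the Hochschild cohomology of a K3 surface. A nontrivial semiorthogonal decomposition $\Ku(X)=\langle\cA,\cB\rangle$ into admissible pieces would force, by additivity of Hochschild homology and the behaviour of Hochschild cohomology under such decompositions, a splitting incompatible with $\HH^0=\K$ and with the shape of the Serre functor (a nontrivial factor would have to be, e.g., generated by an exceptional object or itself fractional Calabi--Yau, and the $2$-Calabi--Yau property together with $\cN(\Ku(X))$ being of rank $23$ and carrying the Mukai pairing rules this out). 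Alternatively one invokes the deformation-theoretic argument: $\Ku(X)$ deforms, inside the family of cubic fourfolds, to $\Db(S)$ for $S$ a K3 surface in the Hassett-rational cases, and indecomposability is a closed/open condition in families of admissible subcategories; since $\Db(S)$ is indecomposable by \cite{BrEq}, so is the generic, hence every, $\Ku(X)$.

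Third, the positive answer to \autoref{qn:FM1}: any exact fully faithful functor $\fF\colon\Ku(X_1)\to\Ku(X_2)$ between Kuznetsov components of cubic fourfolds is of Fourier--Mukai type. The strategy is to extend $\fF$ to the ambient derived categories. Using that the projection functors onto $\Ku(X_i)$ are of Fourier--Mukai type (\cite[Theorem 7.1]{Kuz11}) and that $\langle\OO_{X_i},\OO_{X_i}(H),\OO_{X_i}(2H)\rangle$ is an exceptional collection, one builds from $\fF$ a functor $\widetilde{\fF}\colon\Db(X_1)\to\Db(X_2)$ by declaring it to agree with $\fF$ on $\Ku(X_1)$ and to send the exceptional objects to a suitable exceptional collection in $\Db(X_2)$, gluing via the semiorthogonality; the gluing data is controlled because $\Hom$-spaces between the exceptional bundles and $\Ku(X_i)$ are computable. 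The resulting $\widetilde{\fF}$ is fully faithful, and then Orlov's theorem (\cite{Or}), in the strengthened form of \cite{COS,CS} requiring only fullness, applies to conclude $\widetilde{\fF}$ is Fourier--Mukai; restricting back to $\Ku(X_1)$ shows $\fF$ is Fourier--Mukai.

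The main obstacle is the third step: extending $\fF$ to a fully faithful functor on all of $\Db(X_1)$ is not automatic, since one must produce the target exceptional collection together with the correct morphism spaces realizing the semiorthogonal gluing, and verify full faithfulness of the glued functor. This is where one genuinely uses the rigidity of the exceptional objects $\OO_{X_i}(jH)$ and the structure of $\Ku(X)$ as a $2$-Calabi--Yau category; in the references (\cite{HR}, \cite{BLMS}) this extension is carried out either through the lattice/Hodge-theoretic incarnation of $\Ku(X)$ or through the machinery of stability conditions, and I would follow one of those two routes rather than attempt the gluing by hand.
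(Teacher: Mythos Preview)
Your treatment of the first two claims is broadly in line with the paper's: the $2$-Calabi--Yau property is indeed Kuznetsov's computation (the paper simply cites \cite[Corollary 4.3]{Kuz:V14}), and for indecomposability the paper invokes the standard argument in \cite[Section 2.6]{KLect}. That argument is the one you gesture at in your first route: since $\fS_{\Ku(X)}\cong[2]$, any semiorthogonal decomposition $\Ku(X)=\langle\cA,\cB\rangle$ is automatically \emph{orthogonal} by Serre duality, and then connectedness (e.g.\ $\HH^0(\Ku(X))\cong\K$) forces one factor to vanish. Your alternative deformation argument, however, is not sound as stated: indecomposability of an admissible subcategory is not known to be an open or closed condition in families, so specialising from $\Db(S)$ does not give what you want.

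The real gap is in the third part. Your plan is to extend an arbitrary exact fully faithful functor $\fF\colon\Ku(X_1)\to\Ku(X_2)$ to a functor $\widetilde{\fF}\colon\Db(X_1)\to\Db(X_2)$ by ``gluing'' across the semiorthogonal decomposition, and then invoke Orlov's theorem. The problem is that gluing of triangulated functors across a semiorthogonal decomposition is precisely the kind of operation that does \emph{not} exist at the bare triangulated level: to build $\widetilde{\fF}$ you need functorial cones, i.e.\ a dg or Fourier--Mukai enhancement of $\fF$, which is exactly the conclusion you are trying to reach. (Compare \autoref{prop:extension}, which carries out such an extension but takes as \emph{hypothesis} that the functor is already Fourier--Mukai.) Your closing remark that \cite{HR} or \cite{BLMS} perform this extension is also not accurate; neither paper proves that arbitrary exact functors between Kuznetsov components are of Fourier--Mukai type.

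The paper instead cites \cite{LPZStrong}, whose method is quite different: one shows that $\Ku(X)$ admits a bounded t-structure with a suitable heart, and then appeals to the uniqueness-of-enhancements machinery of \cite{CNS,CSSupp} to conclude that any exact equivalence lifts to the dg level and is therefore of Fourier--Mukai type. This bypasses entirely the need to extend to the ambient $\Db(X_i)$.
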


\begin{proof}
The fact that $\Ku(X)$ is a $2$-Calabi\textendash Yau category is again a consequence of \cite[Corollary 4.3]{Kuz:V14}. Moreover, it is an indecomposable admissible subcategory due to the well-known argument in \cite[Section 2.6]{KLect}. The fact that \autoref{qn:FM1} has a positive answer is the content of \cite{LPZStrong} (based on \cite{CNS,CSSupp}).
\end{proof}

It was observed by Kuznetsov in \cite{Kuzcubics} that, in many cases, one can realize the Kuznetsov component of a cubic fourfold as the derived category of a K3 surface. For example, this happens for Pfaffian cubic fourfolds. Subsequent recent work carried out in \cite{AT} and \cite{BLMNPS} completely classified all cubic fourfolds whose Kuznetsov component is equivalent to the derived category of a K3 surface. This body of work is very much related to the following very influential conjecture (see \cite[Conjecture 1.1]{Kuzcubics}) which we state even though it is not directly related to the rest of this paper.

\begin{conj}[Kuznetsov]\label{conj:Kuzration}
A cubic fourfold $X$ is rational if and only if $\Ku(X)$ is equivalent to the bounded derived category of a K3 surface.	
\end{conj}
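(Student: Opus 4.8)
The plan is to strip \autoref{conj:Kuzration} of its categorical content and then attack the two resulting implications separately. The bridge is the classification of cubic fourfolds with ``categorically K3'' Kuznetsov component: by the Hodge-theoretic analysis of \cite{AT}, together with its derived-categorical upgrade in \cite{BLMNPS}, the category $\Ku(X)$ is equivalent to $\Db(S)$ for some projective K3 surface $S$ precisely when $X$ lies in a Hassett divisor $\mathcal C_d$ with $d$ satisfying the numerical condition $(\ast\ast)$ of Addington--Thomas. (In the direction $\Ku(X)\simeq\Db(S)\Rightarrow X\in\mathcal C_d$ one uses that such an equivalence is of Fourier--Mukai type, \autoref{prop:cubic4foldKu1}, to produce a Hodge isometry of Mukai lattices; and for a very general cubic fourfold the transcendental part of the Mukai Hodge structure of $\Ku(X)$ has rank $22$, too large to come from a projective K3, so $\Ku(X)\not\simeq\Db(S)$ there.) Granting this, \autoref{conj:Kuzration} becomes the purely geometric assertion that \emph{$X$ is rational if and only if $X\in\bigcup_{d\in(\ast\ast)}\mathcal C_d$}.

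I would first treat the ``if'' direction: every cubic in every $\mathcal C_d$ with $d$ satisfying $(\ast\ast)$ is rational. Since each $\mathcal C_d$ is irreducible and, by the specialization theorem of Kontsevich--Tschinkel, rationality is a closed condition in a smooth projective family, it suffices to exhibit a single rational member of each such $\mathcal C_d$. For the first few values this is known: $d=14$ is the classical case of Pfaffian cubics (Fano, Beauville--Donagi), and $d=26,38,42$ were handled by Russo--Staglian\`o via explicit birational maps defined by linear systems through special surfaces and by trisecant-flop constructions. The real difficulty is to make such a construction \emph{uniform} in $d$; the natural hope is to use the associated K3 surface $S$ --- or a moduli space of sheaves on it, or the LLSvS eightfold $Z(X)$, which under $(\ast\ast)$ is birational to $\mathrm{Hilb}^4(S)$ --- as the source of a rational parametrization of $X$, but no general mechanism of this kind is available, and I expect this to be the hard part of the ``if'' direction.

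The ``only if'' direction is the genuine obstacle. In contrapositive form it asserts that a cubic fourfold lying in no $\mathcal C_d$ with $d\in(\ast\ast)$ --- in particular the very general one --- is \emph{irrational}, whereas at present no smooth cubic fourfold whatsoever is known to be irrational. The classical tools are inapplicable: $H^3(X,\mathbb Z)=0$, so there is no Clemens--Griffiths intermediate-Jacobian obstruction as for cubic threefolds; $X$ has trivial Brauer group, no global holomorphic forms and universally trivial $\mathrm{CH}_0$, so decomposition-of-the-diagonal and unramified-cohomology methods give nothing; and $X$ is unirational. One therefore needs a genuinely new, essentially categorical, irrationality obstruction, and the line I would pursue is the one underlying Kuznetsov's original heuristic. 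Given a birational map $X\dashrightarrow\mathbb P^4$, resolve it by weak factorization into blow-ups along smooth centers $Z_i$; then the derived category of the resolution admits, on one side, a semiorthogonal decomposition assembled from $\Db(\mathbb P^4)$ and the $\Db(Z_i)$, and, on the other side, one assembled from $\Db(X)$ --- hence from $\Ku(X)$ together with three exceptional objects --- and the centers of the blow-ups performed on the $X$-side. Comparing the two decompositions, and using that $\Ku(X)$ is an indecomposable $2$-Calabi--Yau admissible subcategory (\autoref{prop:cubic4foldKu1}), one would like to conclude that $\Ku(X)$ must coincide with $\Db(S)$ for a K3 surface $S$ occurring among the centers $Z_i$. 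Making this rigorous is the crux of the whole problem: semiorthogonal decompositions are far from unique, $\Ku(X)$ could a priori be spread across several blow-up centers rather than isolated in one, and there is no control over which surfaces may occur; the strongest available results are at the level of the Grothendieck ring of varieties (Galkin--Shinder), where rationality of $X$ forces the class $[S]$ to appear --- strong evidence, but well short of a proof. I expect the eventual resolution of \autoref{conj:Kuzration} to hinge almost entirely on finding the right categorical irrationality obstruction for this ``only if'' direction.
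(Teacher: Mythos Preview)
The statement you are addressing is a \emph{conjecture}, and the paper does not prove it. Immediately after stating \autoref{conj:Kuzration}, the authors write that ``none of the two implications is clear'' and merely note its compatibility with the Harris--Hassett expectation and the partial results in \cite{RS1,RS2}. There is therefore no ``paper's own proof'' against which to compare your proposal.

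What you have written is not a proof either, and to your credit you essentially say so: for the ``if'' direction you reduce to producing one rational member of each $\mathcal C_d$ with $d$ satisfying $(\ast\ast)$ and concede that no uniform construction is known beyond finitely many values of $d$; for the ``only if'' direction you correctly observe that no smooth cubic fourfold is currently known to be irrational and that the cancellation argument comparing semiorthogonal decompositions across a weak factorization cannot be made rigorous with present technology. Your summary of the state of the art is accurate and well-informed, but it is a research outline for an open problem, not a proof proposal. If the task was to supply a proof, the honest answer is that none exists; your text should be reframed as a discussion of strategies and obstructions rather than as an argument establishing the conjecture.
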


None of the two implications is clear but the conjecture perfectly matches the classical Hodge theoretic still conjectural characterization of rational cubic fourfolds due to Harris and Hassett. Some important results in this direction are contained in \cite{RS1,RS2}.

\begin{remark}\label{rmk:twistK3s}
As it was first pointed out in \cite{Kuzcubics}, there are cubic fourfolds $X$ such that $\Ku(X)$ is equivalent to the derived category $\Db(S,\alpha)$ of twisted coherent sheaves, where $S$ is a K3 surface and $\alpha$ is an element in the Brauer group $\mathrm{Br}(S):=H^2(S,\OO_S^*)_\mathrm{tor}$ of $S$ (see \cite[Chapter 1]{Cal:thesis} for an extensive introduction to twisted coherent sheaves and their derived categories). The complete classification of all cubic fourfolds for which this is true was carried out in \cite{BLMNPS,Huy:cubics}. In view of \autoref{conj:Kuzration} it is certainly interesting to understand which geometric property of $X$ corresponds to having an equivalence $\Ku(X)\cong\Db(S,\alpha)$.
\end{remark}

Because of the many similarities with the derived categories of K3 surfaces---only few of which have been discussed here---the Kuznetsov component $\Ku(X)$ is an example of so called \emph{noncommutative K3 surfaces}.

\begin{remark}\label{rmk:autoeqKuz}
For actual K3 surfaces, the description of the autoequivalences group of their bounded derived categories is a challenging problem (see, for example, \cite{BB,HMS:K3,Mu,Or}). One could of course try to describe the autoequivalences group of a noncommutative K3 surface as well. We will go back to this issue later in the paper. For later use, we content ourselves with the observation from \cite{Kuz:V14} that, for a cubic fourfold $X$, the category $\Ku(X)$ has always an autoequivalence
\[
\fO_X\colon\Ku(X)\to\Ku(X)\qquad E\mapsto \iota^*(E\otimes\OO_X(H)),
\]
where $\iota^*$ is the left adjoint of the inclusion $\iota\colon\Ku(X)\hookrightarrow\Db(X)$ and $H$ is a hyperplane class. Such an autoequivalence is called \emph{degree shift} functor. It is not difficult to see that, by definition, $\fO_X$ is of Fourier\textendash Mukai type.
\end{remark}

Assume now that $\K=\C$. It was observed in \cite{AT} (see also \cite[Section 3.4]{MSLectNotes}) that the Kuznetsov component $\Ku(X)$ of a cubic fourfold $X$ is equipped with an even unimodular lattice $\wH(\Ku(X),\Z)$ with a weight-$2$ Hodge structure induced by the Hodge decomposition of $H^4(X,\C)$. Such a lattice is usually referred to as the \emph{Mukai lattice} of $X$.

For the convenience of the reader let us spell out some details in the construction. First observe that the topological K-theory $K_{\mathrm{top}}(X)$ of $X$ comes equipped with the pairing
\[
\chi(v_1,v_2):=p_*(v_1^\vee\otimes v_2)\in K_{\mathrm{top}}(\mathrm{pt})\cong\Z,
\]
where $p\colon X\to\mathrm{pt}$. As a group $\wH(\Ku(X),\Z)$ is defined as the set of classes in $K_{\mathrm{top}}(X)$ which are orthogonal, with respect to $\chi(-,-)$ to the classes of the three line bundles $\OO_X(iH)$, with $i=0,1,2$. This is nothing but the topological K-theory $K_{\mathrm{top}}(\Ku(X))$ of the admissible subcategory $\Ku(X)$.
Then the restriction of $(-,-):=-\chi(-,-)$ to $K_{\mathrm{top}}(\Ku(X))$ defines a pairing on it, called the \emph{Mukai pairing}.

\begin{remark}\label{rmk:cubicK3}
It was proved in \cite{AT} that the lattice $\wH(\Ku(X),\Z)$ is deformation invariant. Moreover, when $\Ku(X)$ is equivalent to the derived category of a K3 surface, $\wH(\Ku(X),\Z)$ is Hodge isometric to the Mukai lattice of the K3 surface. In particular, as a lattice, $\wH(\Ku(X),\Z)\cong U^{\oplus 4}\oplus E_8(-1)^{\oplus 2}$, where $U$ is the hyperbolic lattice and $E_8(-1)$ is the twist by $-1$ of the lattice corresponding to the root system $E_8$.
\end{remark}

The lattice $\wH(\Ku(X),\Z)$ comes with a weight-$2$ Hodge structure defined as follows. Consider the natural map
\begin{equation}\label{altrarotturadiballe}
\mathbf{v}\colon K_{\mathrm{top}}(X)\to H^*(X,\Q)
\end{equation}
which is usually called \emph{Mukai vector}. See \cite[Section 2]{AT} for more details. Taking its complexification, we then set
\[
\begin{split}
\wH^{1,1}(\Ku(X)):=&\mathbf{v}^{-1}\left(\bigoplus_p H^{p,p}(X)\right)\\\wH^{2,0}(\Ku(X)):=&\mathbf{v}^{-1}(H^{3,1}(X))\\\wH^{0,2}(\Ku(X)):=&\mathbf{v}^{-1}(H^{1,3}(X))
\end{split}
\]
Set $\wH_\mathrm{Hodge}(\Ku(X),\Z):=\wH^{1,1}(\Ku(X))\cap\wH(\Ku(X),\Z)$.

For later use, we are interested in describing special classes in $\wH_\mathrm{Hodge}(\Ku(X),\Z)$. To this extent, if we denote by $\iota^*\colon\Db(X)\to\Ku(X)$ the projection functor as above and by $\ell$ any line in $X$, we can consider the classes
\begin{equation}\label{eqn:lambdas}
\llambda_1:=[\iota^*(\OO_\ell(1))]\qquad\llambda_2:=[\iota^*(\OO_\ell(2))]
\end{equation}
in the numerical Grothendieck group $\cN(\Ku(X))$ of $\Ku(X)$.

\begin{remark}\label{rmk:cubicHodgeconj}
The integral Hodge conjecture for cubic fourfolds was originally proved in \cite[Theorem 18]{VoiHodge} and then reproved in \cite[Corollary 29.8]{BLMNPS} (see also \cite{PeHodge} for a general treatment). As for the Kuznetsov component, one can prove that $\wH_\mathrm{Hodge}(\Ku(X),\Z)$ is naturally isometric to the numerical Grothendieck group $\cN(\Ku(X))$ of the Kuznetsov component and thus not only it contains interesting classes as observed above but it entirely consists of algebraic classes (see \cite[Theorem 29.2]{BLMNPS}).
\end{remark}

It is then clear that the classes $\llambda_1$ and $\llambda_2$ are in $\wH_\mathrm{Hodge}(\Ku(X),\Z)\subseteq\wH(\Ku(X),\Z)$ as well. We set
\begin{equation}\label{eqn:A2}
A_2=\langle\llambda_1,\llambda_2\rangle
\end{equation}
to be the primitive sublattice of $\wH_\mathrm{Hodge}(\Ku(X),\Z)$ generated by the classes $\llambda_1$ and $\llambda_2$. With the choice of these generators, $A_2$ is the free $\Z$-module $\Z\oplus\Z$ with intersection form given by the matrix
\[
\left(\begin{array}{cc}2&-1\\-1&2\end{array}\right).
\]

\section{Enriques surfaces}\label{sect:EnriquesSurf}

In this section we want to state and prove the Categorical Torelli theorem for Enriques surfaces as in \cite{LNSZ,LSZ}. As we will see, we need to consider the semiorthogonal decompositions in \autoref{subsect:Enriquesgeom}. The key idea is to prove and use an extension result for Fourier\textendash Mukai equivalences between admissible subcategories. In particular, no stability conditions are needed here.

\subsection{Extending Fourier\textendash Mukai equivalences}\label{subsect:extending}
In this section we illustrate a general criterion which was proved in \cite{LNSZ}. It allows us to extend Fourier\textendash Mukai equivalences between admissible subcategories under some assumptions on the nature of the semiorthogonal decompositions.

\begin{prop}[{\cite[Propositions 2.4 and 2.5]{LNSZ}}]\label{prop:extension}
	Let $\alpha_1\colon\cD_1\hookrightarrow\Db(X_1)$ be an admissible embedding and let $E\in \!^\perp\!\cD_1$ be an exceptional object.
	Let $\Phi_\cE \colon \Db(X_1)\rightarrow \Db(X_2)$ be a Fourier\textendash Mukai functor with the property that $\Phi_\cE(^\perp\!\cD_1)\cong 0$.  Suppose further that
	\begin{itemize}
		\item[{\rm (a)}] $\Phi_\cE|_{\cD_1}$ is an equivalence onto an admissible subcategory $\cD_2$ with embedding $\alpha_2 \colon \cD_2\hookrightarrow\Db(X_2)$, and
		\item[{\rm (b)}] there is an exceptional object $F\in{}^\perp\!\cD_2$ and an isomorphism $\rho \colon \Phi_\cE(\alpha_1\alpha_1^!(E))\isomor\alpha_2\alpha^!_2(F)$, where $\alpha_i^!$ is the right adjoint of $\alpha_i$.
	\end{itemize}
	Then there exists a Fourier\textendash Mukai functor $\Phi_{\tilde{\cE}}\colon \Db(X_1)\rightarrow \Db(X_2)$ satisfying
	\begin{itemize}
		\item [\rm{(1)}] $\Phi_{\tilde{\cE}}(^\perp\langle \cD_1,E\rangle)\cong \mathsf 0$;
		\item [\rm{(2)}] $\Phi_{\tilde{\cE}}|_{\cD_1}\cong \Phi|_{\cD_1}$ and $\Phi_{\tilde{\cE}}(E)\cong F$;
		\item[\rm{(3)}] $\Phi_{\tilde{\cE}}|_{\langle \cD_1, E\rangle}$ is an equivalence onto $\langle \cD_2,F\rangle$.
	\end{itemize}
\end{prop}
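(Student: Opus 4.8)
The plan is to build the desired Fourier--Mukai kernel $\tilde{\cE}$ as a cone over a canonical morphism relating $\cE$ to the kernel of the projection onto $\langle \cD_1,E\rangle$, so that the new functor still kills $\cD_1^\perp$-type objects, agrees with $\Phi_\cE$ on $\cD_1$, and sends $E$ to $F$. First I would unwind the hypotheses. Since $\Phi_\cE$ kills ${}^\perp\!\cD_1$ and restricts to the equivalence $\Phi_\cE|_{\cD_1}\colon\cD_1\isomto\cD_2$ in (a), for every $G\in\Db(X_1)$ the object $\Phi_\cE(G)$ depends only on the $\cD_1$-component of $G$: indeed, writing the semiorthogonal decomposition $\Db(X_1)=\langle\cD_1,{}^\perp\!\cD_1\rangle$, the triangle $\alpha_1\alpha_1^!(G)\to G\to C$ with $C\in{}^\perp\!\cD_1$ shows $\Phi_\cE(G)\cong\Phi_\cE(\alpha_1\alpha_1^!(G))$. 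In particular $\Phi_\cE(E)\cong\Phi_\cE(\alpha_1\alpha_1^!(E))$, which by (b) is isomorphic to $\alpha_2\alpha_2^!(F)$; this is the object we need to ``correct'' to obtain $F$ on the nose.

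Next I would produce the correcting kernel. Consider the canonical triangle in $\Db(X_2)$ attached to the admissible embedding $\alpha_2\colon\cD_2\hookrightarrow\Db(X_2)$ and the exceptional object $F\in{}^\perp\!\cD_2$: writing $\langle\cD_2,F\rangle\subseteq\Db(X_2)$ for the (admissible) subcategory generated, the mutation/projection triangle reads
\[
\alpha_2\alpha_2^!(F)\lra F\lra T,
\]
where $T$ lies in $\langle\cD_2,F\rangle^{\perp}$-adjacent position; more precisely $T$ is the cone, and $\Hom(\cD_2,T)=0$ while $T$ pairs with $F$ only through this triangle. Using the isomorphism $\rho$ of (b), the left-hand term is identified with $\Phi_\cE(E)$. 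Now the three line bundles/objects $\cE$, the structure sheaf of the graph of a point, and the kernel representing $E\boxtimes(-)$-type projections live in $\Db(X_1\times X_2)$; concretely, $E$ being exceptional, the functor $\Db(X_1)\to\Db(X_2)$, $G\mapsto \RHom_{X_1}(E,G)\otimes F$ is Fourier--Mukai with kernel $E^\vee\boxtimes F$ (suitably derived), and sends $E\mapsto F$, $\cD_1\mapsto 0$ (since $\Hom(E,\cD_1)=0$ as $E\in{}^\perp\!\cD_1$ forces... here one uses that $\RHom(E,\cD_1)$ need not vanish, so instead one takes $G\mapsto\RHom(E,\pi_E(G))\otimes F$ where $\pi_E$ is the projection onto $\langle E\rangle$ inside $\langle\cD_1,E\rangle$; this is again Fourier--Mukai by \cite[Theorem 7.1]{Kuz11}). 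Call its kernel $\cE_E$. I would then set $\tilde{\cE}:=\Cone\bigl(\cE_E'\to\cE\bigr)$ or the appropriate convolution, where $\cE_E'$ is the kernel of $G\mapsto\Phi_\cE(\pi_E(G))$, arranged so that on $\cD_1$ the map is zero (hence $\Phi_{\tilde\cE}|_{\cD_1}\cong\Phi_\cE|_{\cD_1}$), and on $E$ the cone computes exactly $F$ via the triangle $\alpha_2\alpha_2^!(F)\to F\to T$ above combined with $\rho$.

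With $\tilde\cE$ in hand, the three conclusions follow formally. Property (2) is built in by the construction on the two building blocks $\cD_1$ and $E$, which generate $\langle\cD_1,E\rangle$; one checks compatibility on a general object of $\langle\cD_1,E\rangle$ using the projection triangle $\alpha_1\alpha_1^!(G)\to G\to(\text{shift of }E\text{-part})$ and the octahedral axiom. Property (1): any object in ${}^\perp\langle\cD_1,E\rangle$ is in particular in ${}^\perp\!\cD_1$, so $\Phi_\cE$ already kills it, and the correction term $\cE_E'$ also kills it since $\pi_E$ vanishes on it; hence $\Phi_{\tilde\cE}$ kills it. Property (3): $\Phi_{\tilde\cE}|_{\langle\cD_1,E\rangle}$ is fully faithful because it restricts to the equivalence $\Phi_\cE|_{\cD_1}\colon\cD_1\isomto\cD_2$, sends $E\mapsto F$ with $\End(F)\cong\K\cong\End(E)$ and $\Hom(E,E[p])=0=\Hom(F,F[p])$ for $p\neq0$, and the cross terms $\Hom(\cD_1,E[\bullet])\to\Hom(\cD_2,F[\bullet])$ are handled by the semiorthogonality $E\in{}^\perp\!\cD_1$, $F\in{}^\perp\!\cD_2$ together with the identification of the remaining $\Hom$'s via $\rho$; a fully faithful exact functor between admissible subcategories that is essentially surjective onto $\langle\cD_2,F\rangle$ (which it is, since the image contains $\cD_2$ and $F$) is an equivalence.

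The main obstacle I anticipate is \emph{not} the formal triangulated-category bookkeeping but the assertion that the corrected functor is again of Fourier--Mukai type and that the cone of the comparison morphism between kernels is the \emph{right} object realizing all of (1)--(3) simultaneously: one must write down an honest morphism of kernels in $\Db(X_1\times X_2)$ whose induced natural transformation of functors restricts correctly on both $\cD_1$ and $\langle E\rangle$, and verify that cones of functors are computed by cones of kernels in this situation (which is where one invokes that projection functors are Fourier--Mukai, \cite[Theorem 7.1]{Kuz11}, and that the convolution of Fourier--Mukai kernels is Fourier--Mukai). Pinning down this morphism of kernels — essentially exhibiting $\cE$ as glued from the kernel of $\Phi_\cE\circ\pi_{\cD_1}$ and a kernel supported on the $E$-part, then replacing the $E$-part by one adapted to $F$ via $\rho$ — is the technical heart of the argument.
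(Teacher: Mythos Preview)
The paper does not supply its own proof of this proposition: it is quoted from \cite{LNSZ}, and the only indication of method is \autoref{rmk:LKextending}, which says the argument is ``very much related to the gluing theory for dg categories and dg functors'' of Kuznetsov--Lunts \cite{KL}. So the intended route in the source passes through dg enhancements, not a bare triangulated cone construction.

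Your first paragraph contains a genuine error. You use the triangle $\alpha_1\alpha_1^!(G)\to G\to C$ and place $C\in{}^\perp\!\cD_1$; but $\alpha_1^!$ is the \emph{right} adjoint, so that triangle arises from the decomposition $\Db(X_1)=\langle\cD_1^\perp,\cD_1\rangle$, and the cone lies in $\cD_1^\perp$, not in ${}^\perp\!\cD_1$. The vanishing hypothesis $\Phi_\cE({}^\perp\!\cD_1)\cong 0$ therefore says nothing about $\Phi_\cE(C)$. In fact, since $E\in{}^\perp\!\cD_1$, the hypothesis gives $\Phi_\cE(E)=0$ outright, whereas $\Phi_\cE(\alpha_1\alpha_1^!(E))\cong\alpha_2\alpha_2^!(F)$ is typically nonzero (in the Enriques application these are the $3$-spherical objects $S_i$). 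So your claimed identification $\Phi_\cE(E)\cong\alpha_2\alpha_2^!(F)$ is false, and the narrative that one merely ``corrects'' $\Phi_\cE(E)$ along the triangle $\alpha_2\alpha_2^!(F)\to F\to T$ does not get off the ground as written.

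The broader strategy---produce $\tilde{\cE}$ by combining $\cE$ with a kernel realizing $E\mapsto F$ and taking a suitable cone---is in the right spirit, and you correctly flag the real difficulty: exhibiting an actual morphism of kernels in $\Db(X_1\times X_2)$ whose cone has the required effect on both $\cD_1$ and $\langle E\rangle$ simultaneously. This is precisely where purely triangulated arguments fail (cones of natural transformations of exact functors are not well-defined), and it is why the source works at the dg level. What hypothesis (b) actually buys is the gluing datum: via adjunction, $\rho$ identifies $\Hom_{X_1}(D,E)\cong\Hom_{\cD_1}(D,\alpha_1^!E)$ with $\Hom_{X_2}(\Phi_\cE D,F)\cong\Hom_{\cD_2}(\Phi_\cE D,\alpha_2^!F)$ for every $D\in\cD_1$, and this is exactly the compatibility needed to glue the equivalence $\Phi_\cE|_{\cD_1}$ with the assignment $E\mapsto F$ into a Fourier--Mukai functor on $\langle\cD_1,E\rangle$ at the enhanced level.
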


In general, such a criterion is not easy to apply. We will try to clarify this with a brief vague discussion. Assumption (b) is hard to verify in concrete examples and, in some cases, it might happen that (b) is not satisfied by a given Fourier\textendash Mukai equivalence $\Phi_\EE$ which must then be composed with some additional autoequivalence of $\cD_2$ (or $\cD_1$). Heuristically, we should expect to be able to apply such a result either when we have a good grip on the autoequivalence groups of $\cD_1$ and $\cD_2$ or when we have a good understanding of the object $\alpha_1\alpha_1^!(E)$. We will show in the next section that for Enriques surfaces we are in the second scenario.

Of course, the criterion can be iterated when the orthogonal complement of $\cD_i$ consists of more than one exceptional object. As we will see, this makes computations more complicated.

\begin{remark}\label{rmk:LKextending}
We conclude this section by pointing out that the above criterion is very much related to the gluing theory for dg categories and dg functors developed in \cite{KL}. The reader can have a look at \cite[Section 2.3]{LNSZ} for an extensive discussion.
\end{remark} 

\subsection{Special objects and their classification}\label{subsect:specialobjects}
As we mentioned, the Categorical Torelli theorem for Enriques surfaces will be obtained as an application of \autoref{thm:dertordercatEnr} and \autoref{prop:extension}. And, as we commented above, this is made possible by a complete understanding of the projection into the Kuznetsov component of the $10$ exceptional line bundles in $\cL$.

Let us begin with a general discussion.

\begin{definition}\label{def:sphpseudosph}
Let $\cT$ be a triangulated category that is linear over a field $\K$ and with Serre functor $\mathsf{S}_\cT$.
\begin{enumerate}

\item[(a)] An object $E$ in $\cT$ is \emph{$n$-spherical} if:
\begin{enumerate}
\item[(i)] There is an isomorphism of graded vector spaces $\RHom(E,E)\cong\K \oplus \K[-n]$;
\item[(ii)] $\mathsf{S}_\cT(E)\cong E[n]$.
\end{enumerate}
\item[(b)]An object $E$ in $\cT$ is \emph{$n$-pseudoprojective} if:
\begin{enumerate}
\item[(i)] There is an isomorphism of graded vector spaces $\RHom(E,E)=\K\oplus \K[-1]\oplus\dots\oplus \K[-n]$;
\item[(ii)] $\mathsf{S}_\cT(E)\cong E[n]$.
\end{enumerate}
\end{enumerate}
\end{definition}

Spherical objects were introduced and studied in \cite{ST}. They often appear in triangulated categories of Calabi\textendash Yau type and, more specifically, in the derived category of smooth projective Calabi\textendash Yau varieties. They naturally define special autoequivalences which are called spherical twists and which correspond, under Mirror Symmetry, to Dehn twists in the mirror Fukaya category.

The notions of spherical object and spherical twist have been widely extended and generalized. Actually, $n$-pseudoprojective objects are part of this more general picture. Indeed, the graded vector space of derived endomorphisms is, up to multiplying by $2$ the degree, the same as the graded vector space of the total cohomology of an $n$-dimensional complex projective space. Hence, $n$-pseudoprojective objects are slight generalizations of the kind of objects studied in \cite{HT,Krug}.

Let us go back to the geometric setting and let us assume that $X$ is an Enriques surface. As we explained in \autoref{subsect:Enriquesgeom}, we have a semiorthogonal decomposition
\begin{equation}\label{eqn:use1}
\Db(X)=\langle\Ku(X,\cL),\cL\rangle
\end{equation}
as in \eqref{eqn:sodEnriques}, where the $10$ exceptional line bundles in $\cL$ are as in \autoref{prop:exceptionalcollectionexists}. Without loss of generality, we can reorganize these exceptional objects to get a semiorthogonal decomposition
\begin{equation}\label{eqn:use2}
\cL=\langle\cL_1,\dots,\cL_c\rangle
\end{equation}
into blocks such that, if $c\neq 10$, then there is a positive integer $1\leq d\leq c$ such that $\cL_j$ consists of more than one object if $1\leq j\leq d$ and of just one object if $d<j\leq c$.

Consider now the corresponding objects $S_i\in\Ku(X,\cL)$ defined in \eqref{eqn:sphericalEnriques}. They provide a complete classification of $3$-spherical and $3$-pseudoprojective objects in $\Ku(X,\cL)$ according to the following result.

\begin{thm}[\cite{LNSZ}, Proposition 4.10 \& \cite{LSZ}, Theorem 2.7]\label{thm:Enrsphericals}
In the setting above, if $F$ is an object in $\Ku(X,\cL)$, then
\begin{enumerate}
    \item[{\rm (1)}] $F$ is $3$-spherical if and only if $F\cong S_j[k]$ for some $d<j\leq c$ and $k\in \Z$; \item[{\rm (2)}] $F$ is $3$-pseudoprojective if and only if $F\cong S_j[k]$ for some $1\leq j\leq d$ and $k\in \Z$.
\end{enumerate}
Furthermore, all these $3$-spherical and $3$-pseudoprojective objects are not isomorphic.
\end{thm}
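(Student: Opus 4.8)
The strategy is to reduce the classification to an explicit computation of the graded endomorphism algebra of the objects $S_i = \kappa^!(L^i_1)$ together with the action of the Serre functor on them, and then to show that these are the only objects realizing the required $\RHom$-dimensions. First I would compute $\RHom_{\Db(X)}(L^i_1, L^j_1)$ for all $i,j$ using the structure in \autoref{prop:exceptionalcollectionexists}: since the $L^i_1$ are line bundles on an Enriques surface, these Ext-groups are controlled by the intersection form, Riemann--Roch, Kodaira vanishing and Serre duality on $X$ (recall $\omega_X$ is $2$-torsion, so $\RHom(L, L) = \K$ when $L$ is exceptional). From the defining triangle for $\kappa^!$, namely the semiorthogonal decomposition \eqref{eqn:use1} applied to $L^i_1$, one obtains $S_i$ as an extension built from $L^i_1$ and the projections of $L^i_1$ onto the blocks $\cL_k$; the adjunction $\RHom_{\Ku}(\kappa^! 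A, \kappa^! B) \cong \RHom_{\Db(X)}(A, \kappa\kappa^! B)$ then lets one compute $\RHom_{\Ku(X,\cL)}(S_i, S_j)$ purely in terms of Ext-groups between line bundles in $\Db(X)$. The key numerical input is that within a block $\cL_j$ of length $n_j > 1$ the line bundles differ by sums of $(-2)$-curves of $A$-type, which is exactly the configuration that produces the $\K \oplus \K[-1] \oplus \cdots \oplus \K[-n]$ shape of a pseudoprojective object, whereas a block of length $1$ (complete orthogonality) produces the $\K \oplus \K[-n]$ shape of a spherical object.

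Next I would verify condition (ii) in \autoref{def:sphpseudosph} for the $S_i$: that $\mathsf{S}_{\Ku(X,\cL)}(S_j) \cong S_j[3]$. Here I would use the formula $\mathsf{S}_{\Ku(X,\cL)} \cong \kappa^! \circ \mathsf{S}_X \circ \kappa$ from \autoref{ex:Serre}, together with $\mathsf{S}_X(-) = (-)\otimes\omega_X[2]$ and the fact that $S_j = \kappa^!(L^j_1)$; the point is that $\kappa^!$ intertwines appropriately with tensoring by $\omega_X$ because $\cL$ is taken inside $\Db(X)$ in a way compatible with the Serre functor (one needs that $\cL\otimes\omega_X$ is again of the required form, which follows from the $2$-torsion of $\omega_X$ and the discussion around \autoref{ex:genricEnr}). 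The shift by $3 = 2 + 1$ reflects the $2$-dimensionality of $X$ plus the "extra" degree coming from the orthogonality structure, and this is the calculation done in \cite[Prop.~4.10]{LNSZ}.

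The main obstacle — and the genuinely new content over \cite{LNSZ}, supplied by \cite[Thm.~2.7]{LSZ} — is the \emph{converse}: showing that \emph{any} $3$-spherical or $3$-pseudoprojective object $F \in \Ku(X,\cL)$ must be isomorphic to some $S_j[k]$. For this I would argue at the level of the numerical Grothendieck group, using $\cN(\Ku(X,\cL)) \cong \Z \oplus \Z$ and the fact that the Euler form together with the Serre-functor condition $\mathsf{S}(F) \cong F[n]$ severely constrains the class $[F]$; one shows the class of $F$ must be $\pm[S_j]$ for exactly one $j$. Then one must upgrade this numerical statement to an isomorphism of objects: here the input is the classification of the blocks $\cL_j$ and a rigidity argument showing that an object with the prescribed (small) $\RHom(F,F)$ and the prescribed class is forced to be the projection of the corresponding line bundle. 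This last step is where one expects to need the finest information about $\Ku(X,\cL)$ — essentially that there are no "unexpected" rigid-up-to-the-right-degree objects — and it is the part of the argument that does not formally follow from the earlier results in the excerpt but relies on the detailed analysis in \cite{LNSZ, LSZ}. The final non-isomorphism assertion is then immediate: distinct $S_j$ have distinct classes in $\cN(\Ku(X,\cL))$ (or are distinguished by which block they come from), and $S_j[k] \cong S_j[k']$ forces $k = k'$ since $S_j$ is not periodic under shift.
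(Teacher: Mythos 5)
Your ``if'' direction (computing $\RHom(S_i,S_j)$ via adjunction from the Ext-algebra of the line bundles in \autoref{prop:exceptionalcollectionexists}, and checking the Serre-functor condition on the specific objects $S_j$) is essentially the right calculation and matches what is done in \cite{LNSZ}. The genuine gap is in your strategy for the converse, which is indeed the hard part. You propose to pin down an arbitrary $3$-spherical or $3$-pseudoprojective $F$ by its class in $\cN(\Ku(X,\cL))\cong\Z\oplus\Z$, arguing that $[F]$ must equal $\pm[S_j]$ for exactly one $j$, and you conclude the final non-isomorphism statement from ``distinct $S_j$ have distinct classes''. This cannot work: as the survey itself records in \autoref{rmk:nostabEnriques}, the objects $S_i$ of \eqref{eqn:sphericalEnriques} are \emph{numerically trivial}, so $[S_j]=0$ in $\cN(\Ku(X,\cL))$ for every $j$. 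Consequently the Euler-form constraint $\chi(F,F)=0$ (which holds automatically for both spherical and pseudoprojective objects) together with the class of $F$ gives essentially no information, it cannot single out a block index $j$, and the claim that the $S_j$ are distinguished by their numerical classes is false. A further difficulty is that the Serre functor of $\Ku(X,\cL)$ has no known explicit description (this is exactly \autoref{qn:KuzEnriques}(i)), so the condition $\fS_{\Ku(X,\cL)}(F)\cong F[3]$ for an \emph{arbitrary} $F$ cannot be exploited as directly inside $\Ku(X,\cL)$ as your plan assumes; it can be verified for the specific objects $S_j$, but that is a different matter.

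The actual arguments of \cite[Proposition 4.10]{LNSZ} and \cite[Theorem 2.7]{LSZ} avoid this by leaving $\Ku(X,\cL)$: one passes to the K3 double cover $\pi\colon Y\to X$, where $\omega_Y$ is trivial, and studies $\pi^*F$. Since $\pi_*\OO_Y\cong\OO_X\oplus\omega_X$, one computes $\RHom_Y(\pi^*F,\pi^*F)\cong\RHom_X(F,F)\oplus\RHom_X(F,F\otimes\omega_X)$, and the sphericality/pseudoprojectivity hypotheses on $F$ (including the Serre-functor condition, unwound via $\fS_X=(-)\otimes\omega_X[2]$) force $\pi^*F$ to be governed by spherical objects on the K3 surface $Y$, where the Mukai pairing is nondegenerate and stability and rigidity results are available; this is what ultimately identifies $F$ with one of the $S_j[k]$ and also distinguishes the various $S_j$ from one another (e.g.\ by Hom-computations against the exceptional line bundles rather than by numerical classes). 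So the missing idea in your proposal is precisely this reduction to the covering K3 surface; the rigidity you hope for does not follow from numerical data in $\Ku(X,\cL)$ itself.
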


As we pointed out in \cite[Remark 2.9]{LSZ}, this result (see also its easy consequence \cite[Corollary 2.8]{LSZ}) together with a positive answer to \autoref{qn:KuzEnriques} (ii) would provide another counterexample to the \emph{Jordan\textendash H\"older property} for semiorthogonal decompositions. Roughly, such a property predicts that if $X$ is a smooth projective variety then the semiorthogonal decompositions of $\Db(X)$ are essentially unique, up to a reordering of the components and up to equivalence. The fact that this property does not hold in general is not new due to the counterexamples in \cite{BBS,KuzJH}.

\subsection{The categorical Torelli theorem}\label{subsect:RDTTEnriques}
We are now ready to state the main result of this section which is a combination of \cite[Theorem A]{LNSZ} and \cite[Theorem A]{LSZ} (where we refer to the result below as the Refined Derived Torelli theorem).

\begin{thm}[Categorical Torelli theorem for Enriques surfaces]\label{thm:derived_torreli}
Let $X_1$ and $X_2$ be Enriques surfaces over an algebraically closed field $\K$ of characteristic different from $2$. If they possess semiorthogonal decompositions
\begin{equation*}\label{cond2}
\Db(X_i)=\langle \Ku(X_1,\cN_1),\cN_i\rangle,
\end{equation*}
where $i=1,2$ and $\cN_i$ is defined as in \autoref{subsect:specialobjects} (with $\cL$ replaced by $\cN_i$), and there exists an exact equivalence $\mathsf{F}: \Ku(X_1,\cN_1)\xrightarrow{\sim}\Ku(X_2,\cN_2)$ of Fourier\textendash Mukai type, then $X_1\cong X_2$.
\end{thm}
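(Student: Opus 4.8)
The plan is to combine the extension criterion \autoref{prop:extension} with the Derived Torelli theorem \autoref{thm:dertordercatEnr}, using the classification of special objects in \autoref{thm:Enrsphericals} as the tool that lets us verify hypothesis (b) at each stage. The strategy is as follows. By hypothesis, $\mathsf{F}\colon\Ku(X_1,\cN_1)\to\Ku(X_2,\cN_2)$ is a Fourier\textendash Mukai equivalence, so it is induced by a kernel $\cE\in\Db(X_1\times X_2)$; composing with the projection $\Db(X_1)\to\Ku(X_1,\cN_1)$ (which is of Fourier\textendash Mukai type by \cite[Theorem 7.1]{Kuz11}) we obtain a Fourier\textendash Mukai functor $\Phi_\cE\colon\Db(X_1)\to\Db(X_2)$ which kills $^\perp\Ku(X_1,\cN_1)=\cN_1$ and restricts to $\mathsf{F}$ on $\Ku(X_1,\cN_1)$. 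We then want to add the line bundles of $\cN_1$ back one at a time, at each step enlarging $\Phi_\cE$ to a Fourier\textendash Mukai functor that matches a line bundle of $\cN_2$, until we reach a Fourier\textendash Mukai equivalence $\Db(X_1)\cong\Db(X_2)$; then \autoref{thm:dertordercatEnr} gives $X_1\cong X_2$.

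First I would set up the induction. Order the exceptional line bundles in $\cN_1$ as $L_1,\dots,L_{10}$ so that $\cN_1=\langle L_1,\dots,L_{10}\rangle$, and write $\cD_1^{(k)}=\langle\Ku(X_1,\cN_1),L_1,\dots,L_k\rangle$; similarly order $\cN_2=\langle M_1,\dots,M_{10}\rangle$ and set $\cD_2^{(k)}=\langle\Ku(X_2,\cN_2),M_1,\dots,M_k\rangle$. We have $\cD_1^{(0)}=\Ku(X_1,\cN_1)$, $\cD_2^{(0)}=\Ku(X_2,\cN_2)$, and the base case is exactly the equivalence $\mathsf{F}$. Assume inductively that we have a Fourier\textendash Mukai functor $\Phi$ with $\Phi(^\perp\cD_1^{(k)})\cong 0$ restricting to an equivalence $\cD_1^{(k)}\isomto\cD_2^{(k)}$. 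To apply \autoref{prop:extension} with $\cD_1:=\cD_1^{(k)}$ and $E:=L_{k+1}$ (which lies in $^\perp\cD_1^{(k)}$ and is exceptional), hypothesis (a) holds by the inductive hypothesis, and what remains is hypothesis (b): we must produce an exceptional $F\in{}^\perp\cD_2^{(k)}$, necessarily of the form $M_{k+1}$ up to twist and shift, together with an isomorphism $\Phi(\alpha_1\alpha_1^!(L_{k+1}))\isomto\alpha_2\alpha_2^!(F)$, where $\alpha_i$ now denotes the inclusion of $\cD_i^{(k)}$.

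The crucial point is the identification of the object $\alpha_1\alpha_1^!(L_{k+1})$. When $\cD_1^{(k)}=\Ku(X_1,\cN_1)$ this is governed by the objects $S_i=\kappa^!(L^i_1)$ of \eqref{eqn:sphericalEnriques}, which by \autoref{thm:Enrsphericals} are precisely the $3$-spherical and $3$-pseudoprojective objects of $\Ku(X_1,\cN_1)$; since $\mathsf{F}$ is an equivalence it carries such objects to such objects, and by the classification on the $X_2$ side the image must be (a shift of) some $S'_j$, which is exactly $\kappa'^!$ of a line bundle in $\cN_2$. This produces the candidate $M_{k+1}$ and the required isomorphism $\rho$, after possibly composing $\mathsf{F}$ with an autoequivalence of $\Ku(X_2,\cN_2)$ permuting these special objects and with shifts; a small bookkeeping argument (using that the $3$-spherical versus $3$-pseudoprojective dichotomy matches the block structure, i.e. the position of $L_{k+1}$ among the $\cN_1$-blocks) ensures the match can be made consistently for all $k$ at once. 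For $k\geq 1$ one must re-run the analogous analysis inside the larger category $\cD_1^{(k)}$, which still has a Serre functor by \autoref{ex:Serre} and in which $\alpha_1\alpha_1^!(L_{k+1})$ can again be controlled via the same special-object calculus (this is where \cite[Theorem 2.7]{LSZ} and its corollary do the work); this iteration is what \autoref{prop:extension} is designed for, though as the authors warn it makes the computations heavier.

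The main obstacle will be verifying hypothesis (b) of \autoref{prop:extension} at every step, i.e. pinning down $\alpha_1\alpha_1^!(L_{k+1})$ precisely enough and arranging, perhaps after modifying $\mathsf{F}$ by autoequivalences, that its image under $\Phi$ is exactly $\alpha_2\alpha_2^!(M_{k+1})$ with a genuine isomorphism, not merely an abstract coincidence of invariants. This is precisely the point where the classification \autoref{thm:Enrsphericals} is indispensable: it converts a potentially intractable computation of projection functors into a statement about a short, completely understood list of objects. Once all ten line bundles have been re-attached we obtain a Fourier\textendash Mukai equivalence $\Phi_{\tilde\cE}\colon\Db(X_1)\isomto\Db(X_2)$, and \autoref{thm:dertordercatEnr} yields $X_1\cong X_2$, completing the proof.
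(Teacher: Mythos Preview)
Your approach is correct and is essentially the same as the paper's: extend the Fourier\textendash Mukai equivalence one exceptional object at a time via \autoref{prop:extension}, using \autoref{thm:Enrsphericals} to verify hypothesis (b), and finish with \autoref{thm:dertordercatEnr}.

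One simplification you miss, which the paper makes explicit (in the completely orthogonal case it treats in detail): because the line bundles in $\cN_i$ are mutually orthogonal, the projection of $L_{k+1}$ onto the larger category $\cD_1^{(k)}=\langle\Ku(X_1,\cN_1),L_1,\dots,L_k\rangle$ coincides with its projection onto $\Ku(X_1,\cN_1)$ itself. Hence $\alpha_1\alpha_1^!(L_{k+1})$ is literally the object $S_{k+1}\in\Ku(X_1,\cN_1)$ at \emph{every} stage, and no new classification inside the intermediate categories is required; the single classification \autoref{thm:Enrsphericals} suffices throughout. Your phrasing that one must ``re-run the analogous analysis inside the larger category $\cD_1^{(k)}$'' is therefore unnecessarily pessimistic in the orthogonal case. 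Relatedly, rather than composing $\mathsf{F}$ with autoequivalences of $\Ku(X_2,\cN_2)$ to align targets, the paper simply permutes the (orthogonal) exceptional objects $M_j$ in $\cN_2$, which is harmless. The genuinely more involved case is when $\cN_i$ contains blocks with more than one object (so pseudoprojective objects appear); there the paper also only sketches and defers the technicalities to \cite{LNSZ,LSZ}.
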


For simplicity, we illustrate the proof of \autoref{thm:derived_torreli} when $\cN_1$ and $\cN_2$ consist of $10$ completely orthogonal line bundles. Hence,
\[
\cN_i=\langle L_{i,1},\dots,L_{i,10}\rangle,
\]
for $i=1,2$. By \autoref{thm:Enrsphericals} (1), the projections $S_{i,j}$ of $L_{i,j}$ into $\Ku(X_i,\cN_i)$ are, up to shift and isomorphism, the only $3$-spherical objects in $\Ku(X_i,\cN_i)$. The more general case where $\cN_i$ contains blocks with more then one object is dealt similarly by using \autoref{thm:Enrsphericals} (2).

The idea is to extend the Fourier\textendash Mukai equivalence $\fF\colon\Ku(X_1,\cN_1)\to\Ku(X_2,\cN_2)$ step by step by adding all the $10$ exceptional objects in $\cN_i$. It is not difficult to see that, since these objects are completely orthogonal, it is enough to show how to add one of them. Thus, let us consider $L_{1,1}$ and its projection $S_{1,1}$. Since being $3$-spherical is a property which is invariant under equivalence, the object $\fF(S_{1,1})$ is $3$-spherical as well. By \autoref{thm:Enrsphericals} (1), up to shift, there is $j\in\{1,\dots,10\}$ and an isomorphism
\[
\fF(S_{1,1})\cong S_{2,j}.
\]
Again, by orthogonality, we can permute the exceptional objects $L_{2,j}$'s and assume, without loss of generality, that $j=1$.

Now, a direct application of \autoref{prop:extension} implies that the Fourier\textendash Mukai equivalence $\fF$ extends to a Fourier\textendash Mukai equivalence 
\[
\fF_1\colon\langle\Ku(X_1,\cN_1),L_{1,1}\rangle\to\langle\Ku(X_2,\cN_2),L_{2,1}\rangle.
\]
The same argument can be applied again for the other exceptional objects in $\cN_1$ and, in a finite number of steps, we get an equivalence $\Db(X_1)\cong\Db(X_2)$. Now we can just invoke \autoref{thm:dertordercatEnr} and deduce that $X_1\cong X_2$. The careful reader might have noticed that, a priori, the argument gets more complicated when we add more exceptional objects: one should classify $3$-spherical objects in categories which are larger that the original Kuznetsov component. But this is in reality much simpler because the line bundles $L_{i,j}$ are completely orthogonal and the projection of $L_{i,j}$ onto
\[
\langle\Ku(X_i,\cN_i),L_{1,i},\dots,L_{i,j-1}\rangle
\]
is the same as the projection onto $\Ku(X_i,\cN_i)$. Furthermore the situation gets even more involved when we deal with Enriques surfaces whose Kuznetsov component contains pseudoprojective objects. The detailed explanation about how these problems can be overcome are not suited for this survey and we refer to the original papers \cite{LNSZ} and \cite{LSZ}.

\begin{remark}\label{rmk:Enrconcl}
(i) The Categorical Torelli theorem above has a trivial converse: if $X_1$ and $X_2$ are isomorphic Enriques surfaces and $\Db(X_1)$ has a semiorthogonal decomposition as in \autoref{subsect:Enriquesgeom}, then $\Db(X_2)$ has a semiorthogonal decomposition of the same type and there is a Fourier\textendash Mukai equivalence $\Ku(X_1,\cL_1)\cong\Ku(X_2,\cL_2)$, for appropriate $\cL_1$ and $\cL2$. This is simply because any isomorphism $X_1\cong X_2$ induces a Fourier\textendash Mukai equivalence between the whole derived categories which then trivially restricts to a Fourier\textendash Mukai equivalence between the Kuznetsov components.

(ii) The technique used in the proof is quite powerful and it was also used in \cite{LNSZ} to give a new and simple proof of \cite[Conjecture 4.2]{IK} (see \cite[Theorem B]{LNSZ}).
\end{remark}

We conclude this section by observing that our proof of \autoref{thm:derived_torreli} is almost completely characteristic free. The only point where we use that $\K$ is not only algebraically closed but also such that $\mathrm{char}(\K)\neq 2$ is when we invoke \autoref{thm:dertordercatEnr}. Thus, if we can answer \autoref{qn:extDerTorChar2} in the positive, then we can extend \autoref{thm:derived_torreli} to Enriques surfaces over fields of characteristic $2$. Furthermore, if we provide a positive answer to \autoref{qn:KuzEnriques} (iii), we can avoid assuming that the equivalence between the Kuznetsov components is of Fourier\textendash Mukai type.

\section{A brief introduction to (weak) stability conditions}\label{sect:introstab1}

Stability conditions on triangulated categories have been defined for the first time by Bridgeland in \cite{Bri}, generalizing the notion of slope stability for sheaves on curves. Since then, the development of the theory has led to applications in classical algebraic geometry and in the study of moduli spaces of stable objects in admissible subcategories of the bounded derived category. In this section, we review the definition of (weak) stability conditions and the construction in the case of the bounded derived category of a smooth projective variety via tilt stability. Our main references are \cite{BMS,BLMNPS,Bri}.

\subsection{Definitions}\label{subsect:def}
Let $\K$ be an algebraically closed field of arbitrary characteristic. Let $X$ be a smooth projective variety over $\K$ and $\cD$ be a full admissible subcategory of $\Db(X)$. A (weak) stability condition on $\cD$ is essentially the data of the heart of a bounded t-structure and of a (weak) stability function detecting the semistable objects, satisfying certain compatibility conditions.

\begin{definition} \label{def:heart}
The {\it heart of a bounded $t$-structure} on $\cD$ is a full subcategory $\cA \subset \cD$ such that
\begin{enumerate}
\item[(i)] for $E, F\in \cA$ and $k<0$ we have $\Hom(E, F[k])=0$, and
\item[(ii)] for every object $E\in \cD$ there is a sequence of morphisms
$$
0 =E_0 \xrightarrow{\phi_1} E_1 \to \dots \xrightarrow{\phi_m} E_{m} =E
$$
such that $\mathrm{Cone}(\phi_{i})$ is of the form $A_{i}[k_i]$ for some sequence $k_1>k_2>\cdots>k_m$
of integers and objects $0 \neq A_i\in \cA$.
\end{enumerate} 
\end{definition}
\noindent Note that the heart of a bounded t-structure $\cA$ is not a triangulated category. By \cite{BBD} we have that $\cA$ is an abelian category. We denote by $K(\cA)$ the Grothendieck group of $\cA$. As an example, the abelian category $\Coh(X)$ of coherent sheaves on $X$ is the heart of a bounded t-structure on $\cD=\Db(X)$.

\begin{remark}
Given $E \in \cD$, the objects $A_i$ in Definition \ref{def:heart} are uniquely determined and functorial (also the integers $k_i$ are unique). They are called the cohomology objects of $E$ in the heart $\cA$.
\end{remark}

\begin{definition} \label{def:stabfctn}
Let $\cA$ be the heart of a bounded t-structure on $\cD$. A group homomorphism $Z \colon K(\cA) \rightarrow \C$ is a
\emph{weak stability function} on $\cA$ if for any $0 \neq E \in \cA$ we have $\Im Z(E) \geq 0$, and in the case that $\Im Z(E) = 0$, we have $\Re Z(E) \leq 0$. A \emph{stability function} on $\cA$ is a weak stability function $Z$ such that for any $0 \neq E \in \cA$ with $\Im Z(E) = 0$, we have $\Re Z(E) < 0$.
\end{definition}

Given a (weak) stability function $Z$, the \emph{slope} of $E \in \cA$ is 
$$\mu_{Z}(E)= 
\begin{cases}
-\frac{\Re Z(E)}{\Im Z(E)} & \text{if } \Im Z(E) > 0, \\
+ \infty & \text{otherwise,}
\end{cases}$$
and the \emph{phase} of $E$ is 
$$\phi(E)= 
\begin{cases}
\frac{1}{\pi}\text{Arg}(Z(E)) & \text{if } \Im Z(E)>0, \\
1 & \text{otherwise}. 
\end{cases}
$$
We point out that if $Z(E)=0$, then $\mu_Z(E)=+\infty$ and $\phi(E)=1$.  If $F=E[k]$ for $E \in \cA$, then $\phi(F)=\phi(E)+k$. 

Let $K(\cD)$ be the Grothendieck group of $\cD$. It is not difficult to see that $K(\cD)=K(\cA)$. Fix a finite rank free abelian group $\Lambda$ and a surjective morphism $\vv \colon K(\cD) \twoheadrightarrow \Lambda$. 
\begin{definition} \label{def_stabcond}
A \emph{weak stability condition} (with respect to $\vv$) on $\cD$ is a pair $\sigma=(\cA,Z)$, where $\cA$ is the heart of a bounded t-structure on $\cD$ and $Z \colon \Lambda \to \C$ is a group morphism called \emph{central charge}, satisfying the following properties:
\begin{enumerate}
    \item [(a)] The composition $K(\cA)=K(\cD) \xrightarrow{\vv} \Lambda \xrightarrow{Z} \C$ is a weak stability function on $\cA$ (we will write $Z(-)$ instead of $Z(\vv(-))$ for simplicity). 
    
    \noindent We say that an object $E \in \cD$ is $\sigma$-\emph{(semi)stable} if $E[k] \in \cA$ for some $k \in \Z$, and for every proper subobject $F \subset E[k]$ in $\cA$ we have $\mu_{Z}(F) < (\leq) \ \mu_{Z}(E[k]/F)$.
    \item [(b)] \emph{Harder--Narasimhan property}: Every object $E \in \cA$ has a filtration 
    $$0=E_0 \hookrightarrow E_1 \hookrightarrow \dots E_{m-1} \hookrightarrow E_m=E$$
    where $A_i:=E_i/E_{i-1} \neq 0$ is $\sigma$-semistable and $\mu_Z(A_1) > \dots > \mu_Z(A_m)$. 
    \item [(c)] \emph{Support property}: There exists a quadratic form $Q$ on $\Lambda \otimes \R$ such that the restriction of $Q$ to $\ker Z_{\R} \subset \Lambda \otimes \R$ is negative definite and $Q(E) \geq 0$ for all $\sigma$-semistable objects $E$ in $\cA$.
    \end{enumerate}
If $Z \circ \vv$ is a stability function, we say that $\sigma$ is a \emph{Bridgeland stability condition} on $\cD$ (with respect to $\vv$). \end{definition}

\begin{remark}
(i) It is possible to verify that the filtration in Definition \ref{def_stabcond}(b) is unique and functorial. Moreover, the Harder--Narasimhan property and \autoref{def:heart}(ii) imply that every object in $\cD$ has a filtration in $\sigma$-semistable ones, which are called HN factors. We denote by $\phi^+(E)$ (resp.\ $\phi^-(E)$) the largest (resp.\ smallest) phase of the HN factors of $0\neq E\in\cD$.

(ii) A (weak) stability condition $\sigma=(\cA, Z)$ determines a \textit{slicing}, i.e.\ a collection of full additive subcategories $\cP(\phi) \subset \cD$ for $\phi \in \R$, defined as follows:
\begin{enumerate}
\item[(1)] for $\phi \in (0,1]$, the subcategory $\cP(\phi)$ is the union of the zero object and all $\sigma$-semistable objects with phase $\phi$;
\item[(2)] for $\phi+n$ with $\phi \in (0,1]$ and $n \in \Z$, set $\cP(\phi+n):=\cP(\phi)[n]$.
\end{enumerate}
We will use the notation $\cP(I)$, where $I \subset \R$ is an interval, to denote the extension-closed subcategory of $\cD$ generated by the subcategories $\cP(\phi)$ with $\phi \in I$. By \autoref{def_stabcond} we have $\cP((0, 1])= \cA$.
\end{remark}

The notion of weak stability condition is very useful for the construction of Bridgeland stability conditions, as we will explain in \autoref{sec:tiltstab}. 

A Bridgeland stability condition is a stability condition in the sense of \cite{Bri}. Note that it is not clear whether there exist moduli spaces parametrizing semistable objects with a fixed class in $\Lambda$, since they do not have a GIT description. Following the recent developments in \cite{BLMNPS} about the theory of families of stability conditions and \cite{AH-LH} about the existence of good moduli spaces, we introduce the notion of stability condition with moduli spaces, which is a Bridgeland stability condition with ``well-behaved'' moduli functors. 

Assume that the base field $\K$ is of characteristic $0$. Given a Bridgeland stability condition $\sigma$ on $\cD$ with respect to $\vv$, fix $v \in \Lambda$ and $\phi \in \R$ such that $Z(v) \in \R_{>0}e^{i\pi\phi}$. Consider the functor 
$$\cM_\sigma(\cD, v) \colon (\text{Sch})^{\text{op}} \to \text{Gpd}$$
from the category of schemes over $\K$ to the category of groupoids, which associates to $T \in\Ob(\text{Sch})$ the groupoid $\cM_\sigma(\cD, v)(T)$ of all perfect complexes $E \in \D(X \times T)$ such that, for every point $t \in T$, the restriction $E_t$ of $E$ to the fiber $X \times \Spec(k(t))$ belongs to $\cD$, is $\sigma$-semistable of phase $\phi$ and $\vv(E_t)=v$.

\begin{definition} \label{def:stabcond}
A \emph{stability condition with moduli spaces} on $\cD$  (with respect to $\vv$) is a Bridgeland stability condition $\sigma=(\cA, Z)$ satisfying:
\begin{enumerate}
    \item [(d)] \emph{Openness:} For every $\K$-scheme $T$ and every perfect complex $E \in \D(X \times T)$, the set of points $\lbrace t \in T: E_t \in \cD \text{ and is } \sigma\text{-semistable} \rbrace$ is open.
    \item [(e)] \emph{Boundedness:} For every $v \in \Lambda$ the functor $\cM_\sigma(\cD, v)$ is bounded, i.e.\ there exists a pair $(B, \cE)$, where $B$ is a scheme of finite type over $\K$ and $\cE$ is an object in $\cM_{\sigma}(\cD, v)(B)$, such that for every $E \in \cM_{\sigma}(\cD, v)(\K)$ there exists a $\K$-rational point $b \in B$ satisfying $\cE_b \cong E$.
\end{enumerate}
\end{definition}

If $\sigma$ is a stability condition with moduli spaces on $\cD$ with respect to $\vv$, then by \cite[Theorem 21.24(3)]{BLMNPS}, which makes use of \cite{AH-LH}, for every $v \in \Lambda$ it follows that $\cM_{\sigma}(\cD, v)$ admits a good moduli space $M_\sigma(\cD, v)$ which is a proper algebraic space over $\K$.

A natural choice for the lattice $\Lambda$ is the numerical Grothendieck group $\cN(\cD)$ of $ \cD$. In fact, the numerical Grothendieck group is a free abelian group of finite rank. This follows from the fact that $\cD$ is an admissible subcategory of $\Db(X)$, thus we have the semiorthogonal decomposition $\Db(X)= \langle \cD, {}^\perp\cD \rangle$. Since $\cN(-)$ is additive, we have that $\cN(\cD)$ is a subgroup of the numerical Grothendieck group of $X$, which is a free abelian group of finite rank \cite[19.3.2]{Fulton}. This motivates the following definition.
\begin{definition}
A \emph{numerical} stability condition on $\cD$ is a stability condition with respect to the numerical Grothendieck group $\cN(\cD)$ of $ \cD$.
\end{definition}

\begin{ex}(Slope stability) \label{ex:slopestab}
Let $X$ be a smooth projective variety of dimension $n$ with ample class $H$. Define
$$\vv \colon K(\Db(X)) \to \Z^2,\quad \vv(E)=(H^n\ch_0(E), H^{n-1}\ch_1(E)),$$
where $\ch_0(E)$ and $\ch_1(E)$ stand for the rank and the first Chern class of $E$, respectively.
Set $\Lambda_H:=\text{Im}(\vv)$. Then the pair $\sigma_H=(\Coh(X), Z_H)$, where
$$Z_H \colon \Lambda_H \to \C, \quad Z_H(-)=-H^{n-1}\ch_1(-) + \sqrt{-1}H^n\ch_0(-),$$
defines a weak stability condition on $\Db(X)$ with respect to $\Lambda_H$. Indeed, if $E$ is a sheaf on $X$, then $H^{n}\ch_0(E) \geq 0$ and if it is $0$ (i.e.\ $E$ is a torsion sheaf), then $H^{n-1}\ch_1(E) \geq 0$. Moreover, by \cite[Lemma 2.4]{Bri} the HN property holds, and by \cite[Remark 2.6]{BLMS} the trivial form $Q=0$ fulfills the support property.

If $n=1$, i.e.\ $X$ is a curve, then $\sigma_H$ is a numerical stability condition on $\Db(X)$, recovering the classical notion of slope stability.
\end{ex}

\subsection{Stability manifold and actions}
Assume $\K$ is an algebraically closed field of arbitrary characteristic\footnote{In this section we could work in more generality without assuming $\K$ is algebraically closed, see \cite[Theorem 1.2]{BLMNPS}. However, we prefer to keep this condition to be compatible with the following sections.}. We denote by $\text{Stab}_{\Lambda}(\cD)$ the set of stability conditions on $\cD$ with respect to $\vv$. We consider on $\text{Stab}_{\Lambda}(\cD)$ the coarsest topology such that the maps $(\cA, Z) \mapsto Z$, $(\cA, Z) \mapsto \phi^+(E)$, $(\cA, Z) \mapsto \phi^-(E)$ are continuous for every $0\neq E\in \cD$.
A celebrated result of Bridgeland states that $\text{Stab}_{\Lambda}(\cD)$ has the structure of complex manifold.

\begin{thm}[Bridgeland Deformation Theorem, \cite{Bri}, \cite{BLMNPS}, Theorem 1.2] \label{thm:BrDefo}
The continuous map $\mathcal{Z}: \emph{Stab}_{\Lambda}(\cD) \to \Hom(\Lambda,\C)$ defined by $(\cA,Z) \mapsto Z$, is a local homeomorphism. In particular, the topological space $\emph{Stab}_{\Lambda}(\cD)$ is a complex manifold of dimension $\emph{rk}(\Lambda)$.
\end{thm}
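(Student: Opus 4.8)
The plan is to follow Bridgeland's deformation argument from \cite{Bri}, in the form adapted to the support property as in \cite[Appendix A]{BMS} (see also \cite{BLMNPS}). First I would pass from the description $\sigma=(\cA,Z)$ to the equivalent one in terms of the associated slicing, writing $\sigma=(\cP,Z)$ where $\cP=\{\cP(\phi)\}_{\phi\in\R}$ and the central charge sends $\cP(\phi)$ into $\R_{>0}e^{i\pi\phi}$. On $\Stab_\Lambda(\cD)$ I would introduce the generalized metric
\[
d(\sigma_1,\sigma_2):=\sup_{0\neq E\in\cD}\left\{\,|\phi^-_{\sigma_1}(E)-\phi^-_{\sigma_2}(E)|,\ |\phi^+_{\sigma_1}(E)-\phi^+_{\sigma_2}(E)|,\ \|Z_1-Z_2\|\,\right\},
\]
for a fixed norm $\|\cdot\|$ on $\Hom(\Lambda,\C)$, observe that it induces the given topology, and note that $\mathcal Z$ is $1$-Lipschitz from $d$ to the metric induced by $\|\cdot\|$. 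It then suffices to produce, for each $\sigma$, a neighbourhood on which $\mathcal Z$ is a homeomorphism onto an open subset of $\Hom(\Lambda,\C)$.

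Local injectivity is the easy half. If $\sigma_1=(\cP_1,Z)$ and $\sigma_2=(\cP_2,Z)$ share the same central charge and $d(\sigma_1,\sigma_2)<1$, then a $\sigma_1$-semistable object $E$ of phase $\phi$ satisfies $Z(E)\in\R_{>0}e^{i\pi\phi}$; since $d(\sigma_1,\sigma_2)<1$ forces $\phi^\pm_{\sigma_2}(E)\in(\phi-1,\phi+1)$, the value of $Z$ on each $\sigma_2$-HN factor of $E$ pins down its $\sigma_2$-phase, so $E$ is $\sigma_2$-semistable of phase $\phi$ and $\cP_1=\cP_2$.

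The heart of the proof, and the step I expect to be the main obstacle, is the deformation (local surjectivity) statement: given $\sigma=(\cP,Z)$ with support-property quadratic form $Q$ and $\varepsilon\in(0,\tfrac12)$, there is $\eta>0$ (depending on $Q$ and $\varepsilon$) such that every $W\in\Hom(\Lambda,\C)$ with $\|W-Z\|<\eta$ is the central charge of a stability condition $\sigma'=(\cP',W)$ with $d(\sigma,\sigma')<\varepsilon$. One builds $\cP'$ by deforming the slicing: for $\psi$ in a half-open window of length $1$ one declares $\cP'(\psi)$ to consist of the objects obtained as iterated extensions of $\sigma$-semistable objects whose $W$-phase is close to $\psi$, and re-runs the Harder--Narasimhan construction with respect to the $W$-slope. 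The crucial input is the support property: negative definiteness of $Q$ on $\ker Z_\R$ keeps the classes $\vv(E)$ of $\sigma$-semistable objects inside a cone bounded away from $\ker Z_\R$, so that (i) a perturbation of size $<\eta$ moves their phases by less than $\varepsilon$, and (ii) within any region of bounded mass only finitely many $W$-phases occur — this finiteness is precisely what makes the new HN filtrations terminate and the new slicing locally finite. One then checks that $(\cP',W)$ still satisfies the support property with a slightly enlarged quadratic form $Q'$ obtained from $Q$ by a perturbation controlled by $\eta$, again using that $Q$ is negative definite on $\ker Z_\R$ and $W$ is close to $Z$.

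Combining the two halves, $\mathcal Z$ restricts to a bijection between a neighbourhood of each $\sigma$ and an open ball in $\Hom(\Lambda,\C)$, bicontinuous because $\mathcal Z$ is $1$-Lipschitz in one direction and its inverse $W\mapsto\sigma'$ is continuous by uniqueness together with the estimate $d(\sigma,\sigma')<\varepsilon$ as $\|W-Z\|\to 0$. Since $\Hom(\Lambda,\C)\cong\C^{\rk\Lambda}$, these charts make $\Stab_\Lambda(\cD)$ a complex manifold of dimension $\rk(\Lambda)$, the transition functions being locally restrictions of the identity of $\Hom(\Lambda,\C)$ and hence holomorphic. The one extra technical point to nail down, beyond the deformation construction itself, is the uniformity of $\eta$ as $\sigma$ ranges over a small neighbourhood, which is what upgrades ``the image contains a neighbourhood of $\mathcal Z(\sigma)$'' to ``$\mathcal Z$ is an open map.''
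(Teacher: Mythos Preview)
The paper does not supply its own proof of this statement: it is quoted as a black box, with attribution to Bridgeland's original paper \cite{Bri} and to \cite[Theorem 1.2]{BLMNPS}, and the exposition moves on immediately to consequences (the wall-and-chamber structure and the group actions). So there is nothing in the paper to compare your argument against.

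That said, your outline is the standard Bridgeland argument, in the refined form that incorporates the support property (as in \cite[Appendix A]{BMS} and \cite{BLMNPS}): pass to slicings, use the generalized metric, get local injectivity from the rigidity of phases under a common central charge, and get local surjectivity by deforming the slicing and using the support-property cone to control phases and ensure termination of the new HN filtrations. This is exactly the proof the citations point to, so your proposal is correct and aligned with the intended reference; it simply goes further than the survey itself, which chooses not to reproduce the argument.
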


The support property implies that if we fix an element $v \in \Lambda$, then there is a locally-finite set of real codimension one submanifolds with boundary in $\text{Stab}_{\Lambda}(\cD)$, called \emph{walls}, where the set of semistable objects with class $v$ changes. The connected components of the complement in $\text{Stab}_{\Lambda}(\cD)$ of the union of walls for $v$ are called \emph{chambers}.

On $\Stab_{\Lambda}(\cD)$ we have the following group actions:

(i) (Right action of $\widetilde{\mathrm{GL}}^+_2(\R)$) Consider the connected group $\mathrm{GL}^+_2(\R)$ of $2 \times 2$ real matrices with positive determinant. Note that $\mathrm{GL}^+_2(\R)$ acts on the right by multiplication on $\Hom(\Lambda, \C)$ via the identification $\C \cong \R^2$. In order to lift this action to the stability manifold, we consider the universal covering space $\widetilde{\mathrm{GL}}^+_2(\R)$ of $\mathrm{GL}^+_2(\R)$, whose objects are pairs $(M, g)$ with $M \in \mathrm{GL}^+_2(\R)$, $g \colon \R \to \R$ an increasing function satisfying $g(\phi+1)=g(\phi)+1$, such that the induced actions of $M$ and $g$ on $(\R^2 \setminus \lbrace 0 \rbrace) /\R_{>0}= S^1$ are the same. For $\sigma=(\cA, Z) \in \Stab_{\Lambda}(\cD)$ and $(M, g) \in \widetilde{\mathrm{GL}}^+_2(\R)$, we define $\sigma \cdot (M, g)$ as the stability condition with heart $\cP((g(0), g(1)])$ and central charge $Z'=M^{-1} \circ Z$ (see \cite[Lemma 8.2]{Bri}). Concretely, the stability conditions $\sigma$ and $\sigma \cdot (M,g)$ have the same set of semistable objects, but with different phases.

(ii) (Left action of $\Aut_{\Lambda}(\cD)$) Consider the group $\Aut_{\Lambda}(\cD)$ of pairs $(\Phi, \Phi_\Lambda)$, where $\Phi$ is an exact autoequivalence of $\cD$ and $\Phi_\Lambda$ is an endomorphism of $\Lambda$ such that $\Phi_\Lambda \circ \vv = \vv \circ \Phi_*$. Here $\Phi_*$ is the automorphism of $K(\cD)$ induced by $\Phi$. For $(\Phi, \Phi_\Lambda) \in \Aut_{\Lambda}(\cD)$ and $\sigma \in \Stab_{\Lambda}(\cD)$, we define the stability condition $(\Phi, \Phi_\Lambda) \cdot \sigma=(\Phi(\cA), Z \circ \Phi_{\Lambda}^{-1})$. Note that if $\Lambda=\cN(\cD)$, then the endomorphism $\Phi_\Lambda$ is determined uniquely by $\Phi$, hence $\Aut_{\Lambda}(\cD)=\Aut(\cD)$ and one can talk about an action of $\Aut(\cD)$.

Assume that $\sigma$ is a stability condition with moduli spaces. We observe that, by definition, moduli spaces with respect to $\sigma \cdot (M, g)$ and $\Phi \cdot \sigma$ are isomorphic to moduli spaces with respect to $\sigma$.

\begin{ex}
(i) Let $X$ be a smooth projective curve of genus $\geq 1$ and set $\Lambda:=\cN(\Db(X))$. Then
$$\Lambda \cong H^0(X, \Z) \oplus H^2(X, \Z) \cong \Z^{\oplus 2}.$$ 
By \cite{Bri,Macri} the action of $\widetilde{\mathrm{GL}}^+_2(\R)$ is free and transitive, thus there is a unique orbit of numerical stability conditions with respect to the $\widetilde{\mathrm{GL}}^+_2(\R)$-action, i.e.\
\begin{equation} \label{eq:curvesemanuele}
\text{Stab}_{\Lambda}(\Db(X)) \cong \sigma_H \cdot \widetilde{\mathrm{GL}}^+_2(\R)    
\end{equation}
where $\sigma_H$ is the slope stability defined in \autoref{ex:slopestab}.

(ii) (Action of the Serre functor) The Serre functor $\fS_{\cD}$ on $\cD$ (see \autoref{ex:Serre}) defines an element in $\Aut_{\cN(\cD)}(\cD)=\Aut(\cD)$, thus we can consider its action on numerical stability conditions. In \autoref{subsect:introstab2} we will introduce the notion of Serre-invariant stability conditions, which are numerical stability conditions preserved by the Serre functor, up to the $\widetilde{\mathrm{GL}}^+_2(\R)$-action. In fact, this notion plays an important role in a proof of the Categorical Torelli theorem for cubic threefolds and in the study of the geometry of moduli spaces of stable objects in the Kuznetsov component, as we will see in \autoref{subsect:cubic3folds2}.
\end{ex}

\subsection{Tilt stability} \label{sec:tiltstab}
The construction of stability conditions is a difficult task, even in the case of $\Db(X)$ for a smooth projective variety $X$ of dimension $n \geq 3$ over an algebraically closed field $\K$. A conjectural approach is via the notion of tilt stability, whose definition is summarized in this section and which works perfectly for $X$ of dimension $n=2$ (see \cite{BriK3, AB}).

Let $(X,H)$ be a polarized smooth projective variety. We have seen in \autoref{ex:slopestab} that slope stability $\sigma_H=(\Coh(X), Z_H)$ defines a stability condition on $\Db(X)$ when $X$ is a curve. However, in higher dimension this is no longer true, as $Z_H$ vanishes on torsion sheaves supported in codimension $\geq 2$, and in higher dimensions slope stability is only a weak
stability condition. Actually, the choice of $\Coh(X)$ as heart is not the correct one, since by \cite[Lemma 2.7]{Toda} it cannot be the heart of a numerical stability condition if $\dim(X) \geq 2$. 

To overcome this problem, one can consider a new heart by tilting $\Coh(X)$. More precisely, let $\sigma=(\cA, Z)$ be a weak stability condition on a triangulated $\K$-linear category. Fix $s \in \R$ and define the following subcategories of $\cA$:
$$\cT^{s}_{\sigma}:= \lbrace E \in \cA: \text{ all HN factors } F \text{ of } E \text{ satisfy } \mu_Z(F)> s \rbrace,$$
$$\cF^{s}_{\sigma}:= \lbrace E \in \cA: \text{ all HN factors } F \text{ of } E \text{ satisfy } \mu_Z(F) \leq s \rbrace.$$
As proved in \cite{HRS}, the category
\[
\cA^s_{\sigma}:=\langle \cT^s_\sigma, \cF^s_\sigma[1] \rangle
\]
is the heart of a bounded t-structure on $\cT$ and is called the tilting of $\cA$ with respect to $\sigma$ at slope $s$. In this context $\langle-,-\rangle$ means the smallest full subcategory closed under extensions and containing the two additive subcategories $\cT^s_\sigma$ and $\cF^s_\sigma[1]$. Note that if $s > s'$, then we have 
\begin{equation} \label{eq_relationhearts}
\cA_\sigma^s \subset \langle \cA_\sigma^{s'}, \cA_\sigma^{s'}[1] \rangle.    
\end{equation}
Indeed, consider $F \in \cA$ and $\sigma$-semistable with $\mu_Z(F)> s$, which is an object in $\cA_\sigma^s$. Then $\mu_Z(F)>s'$, so $F \in \cA_\sigma^{s'}$. Otherwise, consider $F \in \cA$ and $\sigma$-semistable with $\mu_Z(F) \leq s$, so $F[1] \in \cA_\sigma^{s}$. If $\mu_Z(F) \leq s'$, then $F[1] \in \cA_\sigma^{s'}$, while if $\mu_Z(F) > s'$, then $F[1] \in \cA_\sigma^{s'}[1]$. By the definition of $\cA_\sigma^{s}$, we deduce the desired property. 

In the case of slope stability, we consider the heart
$$ \Coh^s(X):=\Coh(X)^s_{\sigma_H}$$
obtained by tilting $\Coh(X)$ with respect to $\sigma_H$ at slope $s$. In analogy to the curve case, define
\begin{equation}\label{eq:latticeLambda}
\vv \colon K(\Db(X)) \to \Q^3,\quad \vv(E)=(H^n\ch_0(E), H^{n-1}\ch_1(E), H^{n-2}\ch_2(E))    
\end{equation}
where $\ch_2(E)$ denotes the degree-$2$ part of the Chern character of $E$, and set $\Lambda_H:=\text{Im}(\vv)$. For $s, q \in \R$, define $Z_{s, q} \colon \Lambda_H \to \C$ by
$$Z_{s,q}(E)=-(H^{n-2}\ch_2(E) - qH^n\ch_0(E))+ \sqrt{-1}(H^{n-1}\ch_1(E)- sH^n\ch_0(E)).$$
The slope associated to $Z_{s,q}$ is denoted by $\mu_{s,q}$. We can now state the main result of this section.

\begin{thm}[\cite{AB,BMS,BMT,Bri}]
\label{thm:familytiltstability}
Let $X$ be a smooth projective variety of dimension $n \geq 2$. There is a continuous family of weak stability conditions with respect to $\vv$, parametrized by $$\Delta:=\left\{ (s, q) \in \R^2 : q> \frac{1}{2}s^2 \right\},$$ defined as
$$(s, q) \mapsto \sigma_{s, q}=(\emph{Coh}^{s}(X), Z_{s, q}),$$
with a locally-finite wall and chamber structure.

If $n=2$, i.e.\ $X$ is a surface, then $\lbrace \sigma_{s,q} \rbrace$ is a continuous family of stability conditions on $X$.
\end{thm}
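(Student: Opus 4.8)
The plan is to verify, one axiom at a time, the requirements of \autoref{def_stabcond} for each pair $\sigma_{s,q}=(\Coh^s(X),Z_{s,q})$, and then to assemble these pointwise statements into the assertion about the family over $\Delta$. First, the heart: since slope stability $\sigma_H$ is a weak stability condition with the Harder\textendash Narasimhan property (\autoref{ex:slopestab}), the subcategories $\cT^s_{\sigma_H}$ and $\cF^s_{\sigma_H}$ form a torsion pair on $\Coh(X)$, so by \cite{HRS} the tilt $\Coh^s(X)=\langle\cT^s_{\sigma_H},\cF^s_{\sigma_H}[1]\rangle$ really is the heart of a bounded $t$-structure on $\Db(X)$, with $K(\Coh^s(X))=K(\Db(X))$. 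One then checks that $Z_{s,q}$ is a weak stability function on it: writing $E\in\Coh^s(X)$ via the triangle $H^{-1}(E)[1]\to E\to H^0(E)$ with $H^0(E)\in\cT^s_{\sigma_H}$ and $H^{-1}(E)\in\cF^s_{\sigma_H}$, the inequality $\mu_H>s$ (resp.\ $\mu_H\le s$) on the HN factors yields $\Im Z_{s,q}(H^0(E))\ge 0$ and $\Im Z_{s,q}(H^{-1}(E)[1])\ge 0$, hence $\Im Z_{s,q}(E)\ge 0$; and when $\Im Z_{s,q}(E)=0$ one finds $H^0(E)$ is a sheaf supported in codimension $\ge 2$ and $H^{-1}(E)$ is a $\mu_H$-semistable torsion-free sheaf of slope exactly $s$, so $\Re Z_{s,q}(E)\le 0$ follows from $H^{n-2}\ch_2\ge 0$ on the former and from the classical Bogomolov inequality together with $q>\tfrac12 s^2$ on the latter. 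This gives only the weak inequality, which is precisely why $\sigma_{s,q}$ is in general a weak stability condition rather than a Bridgeland one.

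Next, the Harder\textendash Narasimhan property for $\sigma_{s,q}$. The subtle point is that $\Coh^s(X)$ need not be noetherian, so the curve argument cannot be transplanted directly; instead, following \cite{BMT,BMS}, one verifies the chain conditions guaranteeing that HN filtrations exist (no infinite chains of subobjects, resp.\ quotients, with strictly increasing, resp.\ decreasing, $\mu_{s,q}$), using that the relevant coordinates of $\vv$ have discrete image and that the pertinent families of subobjects are bounded.

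The support property is the heart of the matter, and it is the step I expect to be the real obstacle. The natural quadratic form is the Bogomolov discriminant
\[
\overline{\Delta}_H(E):=\bigl(H^{n-1}\ch_1(E)\bigr)^2-2\,\bigl(H^n\ch_0(E)\bigr)\bigl(H^{n-2}\ch_2(E)\bigr).
\]
Its restriction to $\ker Z_{s,q}\subset\Lambda_H\otimes\R$ is negative definite, since a nonzero element of $\ker Z_{s,q}$ has $\vv$-coordinates proportional to $(1,s,q)$, on which $\overline{\Delta}_H$ takes the value $s^2-2q<0$ exactly because $q>\tfrac12 s^2$. What remains, and what is genuinely hard, is the inequality $\overline{\Delta}_H(E)\ge 0$ for every $\sigma_{s,q}$-semistable object $E$: for the ones that are $\mu_H$-semistable sheaves this is the classical Bogomolov inequality, but for the honest two-term complexes that occur as tilt-semistable objects one must bootstrap from it, analysing the cohomology sheaves and the way $\overline{\Delta}_H$ varies across the wall-and-chamber structure (this is carried out in \cite{BMT,BMS}, going back to \cite{BriK3,AB} in the surface case). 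It is here that the hypotheses $n\ge 2$ and the presence of the polarization $H$ enter essentially.

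Finally, one packages this into a family. The central charges $Z_{s,q}$ depend polynomially, hence continuously, on $(s,q)\in\Delta$, and by \eqref{eq_relationhearts} the hearts for nearby values of $s$ differ by small tilts, so Bridgeland's deformation result (\autoref{thm:BrDefo}, \cite{Bri}) upgrades the pointwise statement to continuity of $(s,q)\mapsto\sigma_{s,q}$; the locally finite wall-and-chamber structure for a fixed class then follows from the (uniform) support property. For the surface case $n=2$ one refines the computation of the first paragraph: if $0\ne E\in\Coh^s(X)$ has $\Im Z_{s,q}(E)=0$, then $H^0(E)$ is zero-dimensional with $\ch_2(H^0(E))>0$ when nonzero, while the $\mu_H$-semistable torsion-free sheaf $H^{-1}(E)$ of slope $s$ satisfies $H^{n-2}\ch_2\le\tfrac12 s^2 H^n\ch_0<q\,H^n\ch_0$ by Bogomolov; since $E\ne 0$, at least one of the two pieces is nonzero, forcing $\Re Z_{s,q}(E)<0$. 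Hence $Z_{s,q}$ is an honest stability function, each $\sigma_{s,q}$ is a Bridgeland stability condition, and $\{\sigma_{s,q}\}$ is a continuous family of stability conditions, as claimed.
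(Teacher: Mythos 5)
Your outline is correct and follows exactly the route the paper points to: the paper gives no proof of this theorem, citing \cite{AB,BMS,BMT,Bri} and referring the reader to \cite[Sections 6.2, 6.3]{MS}, noting only that the classical Bogomolov inequality \eqref{eq_BI}, with the discriminant $\Delta_H$ as the quadratic form, furnishes the support property---which is precisely the step you also single out as the hard core and defer to the same references. The remaining ingredients of your sketch (the HRS tilt, the weak-stability-function check on $\Coh^s(X)$, HN via discreteness/boundedness, negative definiteness of $\Delta_H$ on $\ker Z_{s,q}$ since $\Delta_H(1,s,q)=s^2-2q<0$, and the sharpened surface-case computation) are the standard ones and are accurate, up to the harmless imprecision that $H^{-1}(E)$ need only have all its HN factors of slope $s$ rather than be $\mu_H$-semistable (the bound on $H^{n-2}\ch_2$ then follows factor by factor), and the minor point that the tilted heart is in fact noetherian for rational $s$, which is how the cited references obtain the HN property.
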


The proof of \autoref{thm:familytiltstability} is slightly involved and we refer to \cite[Sections 6.2, 6.3]{MS} for a detailed summary. We only mention that a key ingredient is the classical Bogomolov Inequality, which implies that every $\sigma_H$-semistable sheaf $E$ satisfies
\begin{equation}
\label{eq_BI}
\Delta_H(E):=(H^{n-1}\ch_1(E))^2 -2(H^n\ch_0(E))(H^{n-2}\ch_2(E)) \geq 0.
\end{equation}
Moreover, one can choose $\Delta_H$ as the quadratic form fulfilling the support property for $\sigma_{s,q}$. This also implies that $\sigma_{s,q}$ satisfies well-behaved wall-crossing: for fixed $v \in \Lambda_H$, the boundary of the locus in $\Delta$ where an object of class $v$ is stable is defined by a locally-finite set of submanifolds of real codimension one.

The weak stability conditions $\sigma_{s,q}$ defined in Theorem \ref{thm:familytiltstability} are called \emph{tilt-stability conditions}. A useful way to visualize tilt stability conditions has been introduced in \cite[Section 1]{LZ_birational}. Note that given $E \in \Db(X)$ such that $Z_{s,q}(E) \neq 0$, its truncated at degree $2$ Chern character $$(\ch_0(E), \ch_1(E), \ch_2(E))$$ defines the point
$$(H^n\ch_0(E):H^{n-1}\ch_1(E): H^{n-2}\ch_2(E))$$
in a projective plane $\mathbb{P}^2_{\R}$. If $\ch_0(E) \neq 0$, we consider the affine coordinates
$$\left ( s(E):=\frac{H^{n-1}\ch_1(E)}{H^n\ch_0(E)},\, q(E):= \frac{H^{n-1}\ch_2(E)}{H^n\ch_0(E)} \right ) \in \mathbb{A}^2_{\R}.$$
By \eqref{eq_BI} and \cite[Theorem 3.5]{BMS}, we have that $\sigma_{s,q}$-semistable objects correspond to points below the parabola $q= \frac{1}{2}s^2$ and points above parametrize tilt stability conditions. Furthermore, the phase of a $\sigma_{s,q}$-semistable object $E \in \Coh^s(X)$ is the angle between the line connecting $(s,q)$ and $(s(E), q(E))$ and the vertical half-ray from $(s,q)$ to $-\infty$ divided by $\pi$ (see Figure \ref{fig_0}).

\begin{figure}[htb]
\centering
\begin{tikzpicture}[domain=-6:6]
\draw[->] (-6.3,0) -- (6.3,0) node[right] {$s$};
\draw[->] (0,-2) -- (0, 5.5) node[above] {$q$};
\draw plot (\x,{0.125*\x*\x}) node[above] {$q = \frac{1}{2}s^{2}$};
\coordinate (O) at (0,0);

\coordinate (E) at (4, 0.9);
\node  at (E) {$\bullet$};
\draw (E) node [above right]  {$E=(s(E),q(E))$}; 

\coordinate (F) at (3, -1);
\node  at (F) {$\bullet$};
\draw (F) node [above right]  {$F=(s(F),q(F))$}; 

\node  at (1,1) {$\cdot$};
\draw (1,1) node [above]  {$(s,q)$}; 

\draw (E) -- (1,1);
\draw (F) -- (1,1);
\draw[dashed] (1,1) -- (1, -2);

\coordinate (D) at (-3, 3);
\draw (D) node [above]  {$\Delta$}; 
 
\end{tikzpicture}

\caption{We can compare the $\sigma_{s,q}$-slope of $\sigma_{s,q}$-semistable objects $E$ and $F$, using the picture: $E$ has larger slope than $F$ if and only if the ray connecting $E$ to $(s,q)$ is after the ray connecting $F$ with $(s,q)$ moving from the dashed ray in counterclockwise direction (see \cite[Lemma 2]{LZ_poisson}). \label{fig_0}}   
\end{figure}
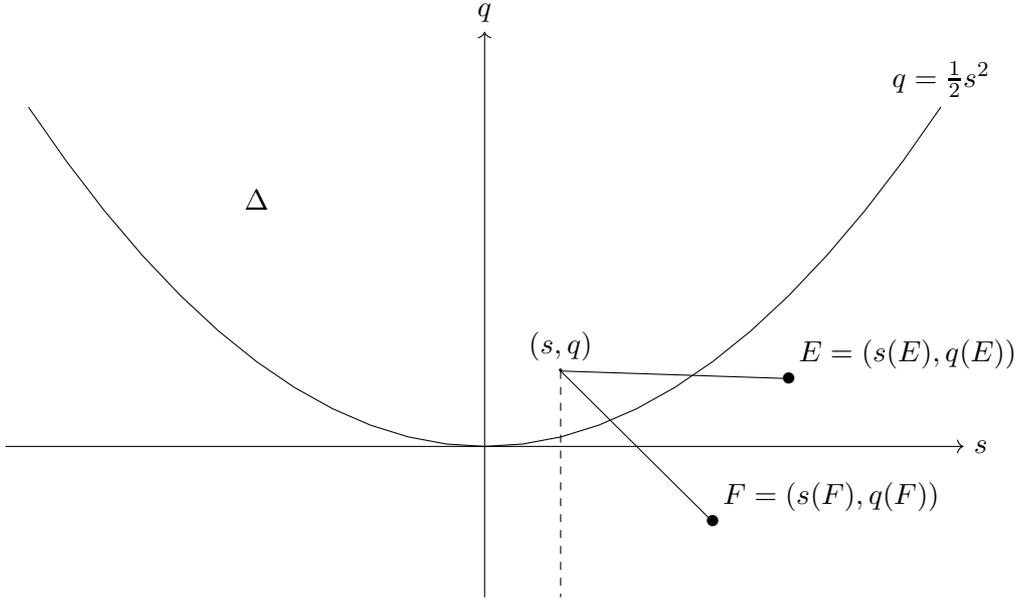

It is now natural to wonder whether the above procedure can be generalized to construct stability conditions when $X$ has dimension $>2$. In this case, it is easy to see that $Z_{s,q}$ does not define a stability function, as it vanishes on objects supported in codimension $\geq 3$. 

Assume $X$ is three-dimensional. In analogy to the surface case, in \cite{BMT} the authors consider a new heart obtained by tilting $\Coh^s(X)$ with respect to the tilt stability $\sigma_{s,q}$ and a weak stability function on it, involving the third Chern character of the objects. The key observation, made in \cite{BMS,BMT}, is that this new weak stability condition is a stability condition on $\Db(X)$ if and only if tilt semistable objects satisfy some quadratic inequality, called generalized Bogomolov inequality, involving the Chern characters till degree $3$. Following this approach, stability conditions have been constructed on the bounded derived categories of Fano threefolds  \cite{BMT,BMSZ,LiFano,Macri2,Schmidt}, abelian threefolds \cite{BMS, MP, MP2}, some
resolutions of finite quotients of abelian threefolds \cite{BMS}, and threefolds with nef tangent bundle \cite{Koseki_nef, BMSZ}. 

A further progress in this direction has been made in \cite{Li}, where a stronger Bogomolov inequality, namely an inequality involving the Chern characters up to degree $2$ of slope semistable torsion free sheaves (see \cite[Theorem 1.1]{Li} for the precise statement), has been proved in the case of quintic threefolds, giving the first highly nontrivial example of stability conditions on a strict (i.e.\ simply-connected) Calabi--Yau threefold. The involved strategy makes use (among other things) of a restriction lemma, first appeared in \cite{Fey}, which allows one to reduce to show a Clifford type bound for the dimensions of global sections of stable vector bundles on curves defined as complete intersections of two quadrics and a quintic hypersurface in $\P^4$. The existence of stability conditions is then obtained by using this stronger Bogomolov inequality to prove the following statement (see \cite[Conjecture 4.1]{BMS}): if $E$ is $\sigma_{s,q}$-semistable for a certain choice of $(s,q) \in \Delta$, then $E$ satisfies 
\begin{equation} \label{eq_conjBMS}
(2q-s^2)\Delta_H(E) + 4(H\ch_2^{s}(E))^2 -6H^2 \ch_1^s(E) \ch_3^s(E) \geq 0,
\end{equation}
where $\ch_1^s(E)=\ch_1(E)-sH\ch_0(E)$, $\ch_2^s(E)=\ch_2(E)- s H \ch_1(E) + \frac{s^2}{2}H^2 \ch_0(E)$, $\ch_3^s(E)=\ch_3(E) -s H \ch_2(E) + \frac{s^2}{2}H^2 \ch_1(E) -\frac{s^3}{6}H^3\ch_0(E)$.

This method has been recently generalized to the case of some three-dimensional weighted hypersurfaces in weighted projective spaces \cite{Koseki}, and to three-dimensional complete intersections of quartic and quadric hypersurfaces in $\P^5$ \cite{SLiu}. An interesting problem would be trying to adapt this argument to treat other examples of Calabi--Yau complete intersections in $\P^N$. For instance, the next case to address could be the following:

\begin{qn}
Let $X$ be a complete intersection of a cubic and two quadric hypersurfaces in $\P^6$. Is it possible to show a stronger Bogomolov inequality for slope semistable torsion free sheaves on $X$, similarly to \cite[Theorem 1.1]{Li}, \cite[Proposition 1.1]{SLiu}, which implies \eqref{eq_conjBMS}?
\end{qn}

We end this section with a sort of negative twist. Indeed, while the possibility to prove the generalized Bogomolov inequality \eqref{eq_conjBMS} is still plausible for threefolds with trivial canonical bundle, we know that \eqref{eq_conjBMS} is false in general. The first counterexample was provided in \cite{SchmidtCounter}. A replacement for this too optimistic guess was recently provided in \cite{BMICM}. In order to state the revised conjecture, we proceed as follows. Let $(X,H)$ be a polarized projective variety of dimension $n$. We denote by $\mathrm{CH}^{\bullet}_\mathrm{num}(X)$ the Chow ring of $X$ modulo numerical equivalence. Let $B\in\mathrm{NS}(X)\otimes\R$, where $\mathrm{NS}(X)=\mathrm{CH}^{1}_\mathrm{num}(X)$, let $\Gamma\in\mathrm{CH}^2_\mathrm{num}(X)\otimes\R$ be such that $\Gamma\cdot H^{n-2}=0$, and pick $\gamma_3,\dots\gamma_n$ to be arbitrary classes with $\gamma_i\in\mathrm{CH}_\mathrm{num}^i(X)\otimes\R$. Let
\begin{equation}\label{eqn:conjnew}
\gamma:=e^{-B}\cdot(1,0,\Gamma,\gamma_3,\dots,\gamma_n) \in \mathrm{CH}_\mathrm{num}^{\bullet}(X)\otimes\R
\end{equation}
and set $\ch^\gamma(-):=\gamma\cdot\ch(-)$. Using the class $\gamma$ and the modified Chern character, one can repeat the discussion above, define $\gamma$-slope semistable objects and thus the notion of $\gamma$-tilt stability $\sigma^\gamma_{s,q}$ for $(s,q) \in \Delta$ (see \cite[Section 4]{BMICM} for more details).

For an object $E\in\Db(X)$ we then set
\[
v^\gamma_H(E):=(H^n\cdot\ch^\gamma_0(E),H^{n-1}\cdot\ch^\gamma_1(E),\dots,H\cdot\ch^\gamma_{n-1}(E),\ch^\gamma_n(E)).
\]
Set $v^\gamma_i$ to be the $i$-th component of $v^\gamma_H(E)$. We can then state the following which is \cite[Conjecture 4.7]{BMICM}:

\begin{conj}[Bayer\textendash Macr\`i]\label{conj_BMICM}
Let $(X,H)$ be a smooth complex projective polarized variety. There exists a class $\gamma$ as in \eqref{eqn:conjnew} and an upper semicontinuous function $f:=f^\gamma\colon\R\to\R$ such that, for any $\sigma_{s,q}^\gamma$-semistable object $E$,
\[
\overline Q_{s,q}(E):=q((v^\gamma_1)^2-2v^\gamma_0v^\gamma_2)+s(3v^\gamma_0v^\gamma_3-v^\gamma_1v^\gamma_2)+(2(v^\gamma_2)^2-3v^\gamma_1v^\gamma_3)\geq 0,
\]
for all $(s,q)\in \Delta$, such that $q>f^\gamma(s)$.
\end{conj}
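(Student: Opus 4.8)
\emph{A plan of attack.} Since \autoref{conj_BMICM} was formulated precisely to replace the inequality \eqref{eq_conjBMS}, which is \emph{false} in general by \cite{SchmidtCounter}, one cannot hope for a direct computation; the plan is to adapt, and render dimension independent, the strategy of Li \cite{Li} for quintic threefolds that was later pushed through for other complete intersections in \cite{Koseki,SLiu}. The skeleton is an induction on $n=\dim X$ (or, as in the quoted references, a single reduction straight to curves via higher-degree complete-intersection subvarieties). The base case $n=2$ is essentially \autoref{thm:familytiltstability}: for a surface the terms involving $v^\gamma_3$ are absent and $\overline Q_{s,q}(E)$ reduces to $q\bigl((v^\gamma_1)^2-2v^\gamma_0v^\gamma_2\bigr)+2(v^\gamma_2)^2$, which is nonnegative on $\Delta$ because $q>0$ there and $(v^\gamma_1)^2-2v^\gamma_0v^\gamma_2=\Delta_H(E)\geq 0$ by the classical Bogomolov inequality \eqref{eq_BI}; here one may take $f^\gamma\equiv-\infty$. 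For the inductive step one wants: a $\sigma^\gamma_{s,q}$-semistable object $E$ on $X$, restricted to a sufficiently general member $Y$ of a suitable linear system, stays tilt-semistable on $Y$ for an induced twist $\gamma_Y$ at an induced point $(s',q')\in\Delta$, and $\overline Q^X_{s,q}(E)$ is bounded below by $\overline Q^Y_{s',q'}(E|_Y)$ plus error terms, coming from the conormal sequence of $Y\subset X$, which one arranges to be a nonnegative combination of lower-dimensional instances of $\overline Q$ and of \eqref{eq_BI}. The cutoff $f^\gamma$ then serves to excise the region near the parabola $q=\tfrac12 s^2$ where such a restriction statement, or the wall-and-chamber structure, is not yet under control.

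\emph{Producing $\gamma$ and $f^\gamma$, and reducing to curves.} The freedom built into \eqref{eqn:conjnew}---the $B$-field and the higher classes $\gamma_3,\dots,\gamma_n$---is exactly what should absorb the defect, already visible on threefolds through \cite{SchmidtCounter}, between the behaviour of $\ch_{\leq 3}$ on $X$ and on its subvarieties. I would build $\gamma_X$ recursively from $\gamma_Y$, choosing $B_X$, $\Gamma_X$ and $\gamma_{3,X},\dots$ so that restriction carries $\gamma_X$ to a positive multiple of $\gamma_Y$ and so that the error terms above acquire the right sign, and I would define $f^\gamma$ recursively as the upper envelope of the pull-back of $f^{\gamma_Y}$ and of the locus where the restriction argument degenerates, with upper semicontinuity automatic from this description. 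Iterating the restriction down to curves, and invoking the restriction lemma of \cite{Fey}, one is reduced---exactly as in \cite{Li,SLiu}---to a Clifford-type bound: for a smooth curve $C$ arising as a complete intersection of $X$ with general hypersurfaces, and a slope-semistable bundle $V$ on $C$ with slope in a controlled range, one needs an upper bound for $h^0(V)=\dim\Hom(\OO_C,V)$ that is quadratic in $\deg V$, $\rk V$ and $g(C)$ and sharp enough to close the induction. This step uses the projective geometry of $C\subset\P^N$: that $C$ lies on no surface of small degree in its linear span, Noether--Petri-type statements, and gonality estimates for complete-intersection curves.

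\emph{The main obstacle.} The genuinely hard part is precisely this curve-level Clifford bound together with the (currently non-constructive) choice of $\gamma$. Such bounds have so far been established only case by case---quintic threefolds \cite{Li}, some weighted hypersurfaces \cite{Koseki}, quartic$\,\cap\,$quadric threefolds in $\P^5$ \cite{SLiu}---each time exploiting ad hoc features of the specific complete intersection, with no uniform statement in sight for an arbitrary polarized $(X,H)$. Moreover, since \eqref{eq_conjBMS} is false, the twist $\gamma$ and the cutoff $f^\gamma$ are indispensable rather than a technical convenience, and even conjecturally there is no dimension-independent recipe for them: a uniform choice of $\gamma$ would in effect already encode the solution. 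For these reasons I would expect the next advances to come for specific families of complete intersections---for instance the cubic$\,\cap\,$quadric$\,\cap\,$quadric case in $\P^6$ raised above---rather than in the full stated generality.
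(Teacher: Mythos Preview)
The statement you are attempting to prove is a \emph{conjecture}, and the paper does not prove it. Immediately after stating \autoref{conj_BMICM} the paper says: ``We do not explain the origin of this conjecture here. We only remark that it is proved for the above mentioned cases (prime Fano threefolds, abelian threefolds, quintic threefolds, complete intersections of quadratic and quartic hypersurfaces in $\P^5$, the blow-up of $\P^3$ at a point, threefolds with nef tangent bundle) which justifies the way it is formulated.'' There is therefore nothing to compare your attempt against: the paper offers no proof, only a list of known special cases and the remark that the older conjecture \eqref{eq_conjBMS} implies this one for a particular $f^\gamma$.

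Your proposal is, by your own account, not a proof either but a research programme. You correctly identify the Li--Feyzbakhsh restriction-to-curves strategy as the only method that has produced results so far, and you correctly isolate the two genuine obstructions: the Clifford-type bound on complete-intersection curves, which has only been established case by case, and the non-constructive choice of $\gamma$. Your final paragraph is honest that the argument does not close. A couple of smaller points: your base case $n=2$ is not quite well-posed, since for a surface $v^\gamma_3$ is not defined by the formula preceding the conjecture (the vector $v^\gamma_H$ has $n+1$ components), so one has to either declare $v^\gamma_3=0$ by convention or regard the conjecture as meaningful only for $n\geq 3$; and your recursive construction of $\gamma_X$ from $\gamma_Y$ is purely heuristic---there is no argument that the ``error terms'' from the conormal sequence can actually be made nonnegative by any choice of the free parameters in \eqref{eqn:conjnew}. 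In short, what you have written is a reasonable summary of how one might try to attack the problem, consistent with the existing literature, but it is not a proof and the paper does not claim one exists.
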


We do not explain the origin of this conjecture here. We only remark that it is proved for the above mentioned cases (prime Fano threefolds, abelian threefolds, quintic threefolds, complete intersections of quadratic and quartic hypersurfaces in $\P^5$, the blow-up of $\P^3$ at a point, threefolds with nef tangent bundle) which justifies the way it is formulated. For an extensive and very interesting discussion, one can have a look at \cite[Section 4]{BMICM}. The key point is that, if $n=3$, the conjecture implies the existence of stability conditions on $\Db(X)$. Finally, it is clear that the original conjecture in \cite{BMS,BMT} implies \autoref{conj_BMICM} for some particular function $f^\gamma$ while the counterexamples mentioned above do not apply in this case.

\section{Cubic threefolds and beyond}\label{sect:cubic3folds}

As we mentioned in \autoref{subsect:Fano3foldsgeo}, if $X$ is a cubic threefold, we have a semiorthogonal decomposition
\[
\Db(X)=\langle\Ku(X),\OO_X,\OO_X(H)\rangle,
\]
where $H$ is a hyperplane section. In this section we investigate the existence of stability conditions on $\Ku(X)$ and discuss several applications including, of course, the Categorical Torelli theorem for these hypersurfaces.

\subsection{Cubic threefolds: first approach}\label{subsect:cubic3folds1}

In this section we want to recall the approach in \cite{BMMS} to the construction of stability conditions on the Kuznetsov component of a cubic threefold. Even though a much more modern technology is, at the moment, available (see \autoref{subsect:introstab2} and \autoref{subsect:cubic3folds2} below), this circle of ideas have interesting applications to geometric problems and it is an important illustration of what will be proven later about cubic fourfolds.

Assume that $X$ is defined over a field $\K$ which is algebraically closed with $\mathrm{char}(\K)\neq 2$. The starting point is the following observation. Fix a line $\ell_0$ in a cubic threefold $X\hookrightarrow\P^4$ and consider a plane $\P^2\subseteq\P^4$ which is skew with respect to $\ell_0$. The rational map $\pi_0\colon X\dashrightarrow\P^2$ given by the projection from $\ell_0$ can be resolved as follows. Consider the blow-up $\widetilde\P^4$ of $\P^4$ along $\ell_0$ and the strict transform $\widetilde X$ of $X$ inside $\widetilde\P^4$. They all sit in the commutative diagram
\[
\xymatrix{
\widetilde X\ar@{^{(}->}[rr]\ar[dr]_-{\widetilde\pi_0}&&\widetilde\P^4\ar[dl]^-{q}\\
&\P^2,
}
\]
where $q$ is the $\P^2$-bundle induced by the projection from $\ell_0$ and its restriction $\widetilde\pi_0$ to $\widetilde X$ is the corresponding conic fibration. We denote by $h$ the pull-back to $\widetilde X$ of a hyperplane section of $\P^2$, by $H$ the pull-back to $\widetilde X$ of a hyperplane section of $\P^4$ and by $e$ the exceptional divisor.

As a beautiful application of \cite{Kuz2} (see also \cite{ABB} for the case of arbitrary odd characteristic), the conic bundle $\widetilde\pi_0\colon\widetilde X\to \P^2$ yields a sheaf $\cB_0$ on $\P^2$ of even part of Clifford algebras. The explicit description of $\cB_0$ is not needed here but it is worth remembering that we have an isomorphism
\begin{equation}\label{eqn;B0threefolds}
\cB_0\cong\OO_{\P^2}\oplus\OO_{\P^2}(-h)\oplus\OO_{\P^2}(-2h)^{\oplus 2}
\end{equation}
of $\OO_{\P^2}$-modules. Moreover, $\cB_0$ is noncommutative and is an Azumaya algebra
away from the discriminant of the conic bundle. One can further consider the abelian category $\Coh(\P^2,\cB_0)$ of coherent $\cB_0$-modules and the corresponding derived category $\Db(\P^2,\cB_0)$.

It is clear from the discussion above that we can regard $\widetilde X$ both as a blow-up of $X$ and as a conic fibration on $\P^2$. In the first case, \cite{Or:projbun} yields a semiorthogonal decomposition
\begin{equation}\label{eqn:semiorlblow}
\Db(\widetilde X)=\langle\Ku(X),\OO_{\widetilde X}(h-H),\OO_{\widetilde X},\OO_{\widetilde X}(h),\OO_{\widetilde X}(H)\rangle,
\end{equation}
where $\Ku(X)$ is embedded via the pullback along the blowup morphism $\widetilde{X} \to X$. On the other hand, \cite[Theorem 4.2]{Kuz2} provides another semiorthogonal decomposition
\begin{equation}\label{eqn:semikuquad}
\Db(\widetilde X)=\langle\Db(\P^2,\cB_0),\OO_{\widetilde X},\OO_{\widetilde X}(h),\OO_{\widetilde X}(H)\rangle
\end{equation}
(see \cite[(2.7)]{BMMS}). These decompositions are obtained from the standard semiorthogonal decompositions by a sequence of mutations, using the equalities $e=H-h$ and $K_{\widetilde{X}}=-H-h$. We should note here that, in general, it is important to give an explicit description of the way $\Ku(X)$ and $\Db(\P^2,\cB_0)$ are embedded in $\Db(\widetilde X)$ in \eqref{eqn:semiorlblow} and \eqref{eqn:semikuquad} respectively. Here we can safely ignore this and just observe that a direct comparison of the semiorthogonal decompositions in \eqref{eqn:semiorlblow} and \eqref{eqn:semikuquad} yields a new one:
\begin{equation}\label{eqn:P2twist}
\Db(\P^2,\cB_0)=\langle\Ku(X),E\rangle,
\end{equation}
where $E:=\OO_{\widetilde X}(h-H)$ is an additional exceptional object. Again, we are harmlessly ignoring the fact that $\Ku(X)$ is embedded in $\Db(\P^2,\cB_0)$ in a nontrivial way.

Now, the semiorthogonal decomposition \eqref{eqn:P2twist} and the observation that $\Db(\P^2,\cB_0)$ behaves like the derived category of a (twisted) surface are the keys to apply the following general (and as such, at the moment, vague) idea:

\medskip

\noindent{\it Dimension reduction trick:} to study the geometric/homological properties of the Kuznetsov component $\Ku(X)$  of a Fano threefold/fourfold $X$, embed $\Ku(X)$ in the derived category of a smaller dimensional (twisted) variety.

\medskip

This is particularly rewarding when we want to construct stability conditions on $\Ku(X)$. Indeed, as we explained in the previous sections, we have standard strategies to construct stability conditions on surfaces, the situation is less clear but, by now, manageable for threefolds and mostly obscure in higher dimension. We show now how this works for the Kuznetsov component of cubic threefolds. Then in \autoref{subsect:introstab2} we will see that there is a more direct way to contruct stability conditions in the cubic threefold case, without relying on a dimension reduction trick. On the other hand, we will observe later that the dimension reduction trick is crucial while dealing with cubic fourfolds.

Let us now go back to $X$ a cubic threefold. As we observed in the previous section, a standard way to construct (weak) stability conditions on surfaces is by using slope stability and tilt with respect to it.

In the case of the abelian category $\Coh(\P^2,\cB_0)$, the fact that $\cB_0$ is an $\OO_{\P^2}$-algebra (see \eqref{eqn;B0threefolds}), allows us to define a forgetful functor
\[
\mathrm{Forg}\colon\Coh(\P^2,\cB_0)\to\Coh(\P^2)
\]
and thus the functions
\begin{align*}
\rk&:\cN(\P^2,\cB_0)\to\Z,\qquad\rk(F):=\rk(\mathrm{Forg}(F))\\
\deg&:\cN(\P^2,\cB_0)\to\Z,\qquad\deg(F):=\mathrm{c}_1(\mathrm{Forg}(F)) \cdot h,
\end{align*}
where $\cN(\P^2,\cB_0)$ stands for the numerical Grothendieck group of $\Db(\P^2,\cB_0)$.
For $F\in\Coh(\P^2,\cB_0)$ with $\rk(F)\neq0$, we define the slope
\[
\mu_h(F):=\deg(F)/\rk(F).
\]
As usual we can recast $\mu_h$ in terms of a central charge
\[
Z_h(-):=-\deg(-)+\sqrt{-1}\rk(-).
\]
Given $F\in\Coh(\P^2,\cB_0)$, when we say that $F$ is either torsion-free or torsion of dimension $d$, we always mean that $\mathrm{Forg}(F)$ has this property. It is then not difficult to show (see \cite[Section 2.3]{BMMS}) that the pair $\sigma_h=(\Coh(\P^2,\cB_0), Z_h)$ is a weak stability condition on $\Db(\P^2,\cB_0)$. 

Note that if $F \in \Coh(\P^2, \cB_0)$ has rank $0$, then $\mu_h(F)=+\infty$. Given this, we can apply the tilting procedure described in the previous section and define the two full additive subcategories
\begin{align*}
\cT_0:=\cT_{\sigma_h}^{-\frac{5}{4}}&= \left\{ F\in\Coh(\P^2,\cB_0)\,:\,\mu^-_h(F)>\mu_h(\cB_0)=-\frac{5}{4}\right\}\\
\cF_0:=\cF_{\sigma_h}^{-\frac{5}{4}}&= \left\{ F\in\Coh(\P^2,\cB_0)\,:\,\mu^+_h(F)\leq\mu_h(\cB_0)=-\frac{5}{4}\right\}.
\end{align*}
Tilting with respect to the pair $(\cF_0,\cT_0)$ yields the abelian category $\Coh^{-\frac{5}{4}}(\P^2,\cB_0)$, which is the heart of a bounded t-structure. Set
\[
\cA_0:=\Coh^{-\frac{5}{4}}(\P^2,\cB_0)\cap\Ku(X).
\]
It turns out that $\cA_0$ is the heart of a bounded t-structure on $\Ku(X)$ (see \cite[Lemma 3.4]{BMMS}).

\begin{remark}\label{rmk:linesFano3folds}
A useful computation shows that if $\ell\subseteq X$ is any line, then the ideal sheaf $I_\ell$ is contained in $\cA_0$ (see \cite[Proposition 3.3]{BMMS}).
\end{remark}

Consider now the function
\[
Z(-):=\rk(-)+\sqrt{-1}\left(\deg(-)+\frac{5}{4}\rk(-)\right)
\]
on the numerical Grothendieck group $\cN(\P^2,\cB_0)$ of $\Db(\P^2,\cB_0)$. The restriction $Z_0:=Z\rest{\cN(\Ku(X))}$ of $Z$ to the numerical Grothendieck group of $\Ku(X)$ 
is a stability function. For this, see \cite[Lemma 3.5]{BMMS}. This means that it satisfies the condition in \autoref{def:stabfctn}.

These preliminary observations allow us to conclude that we have stability conditions on $\Ku(X)$. Assume that the base field $\K$ has characteristic $0$. Applying the recent results in \cite[Proposition 25.3]{BLMNPS, AH-LH} one can further show that $\sigma_0$ is a stability condition with moduli spaces. For later use, we are also interested in studying special moduli spaces of stable objects. The precise result is the following:

\begin{thm}[\cite{BMMS}, Theorems 3.1 and 4.1]\label{thm:stab1cubic3folds}
In the assumptions above
\begin{enumerate}
    \item[{\rm (1)}] The pair $\sigma_0:=\left(Z_0,\cA_0\right)$ is a stability condition with moduli spaces on $\Ku(X)$.
    \item[{\rm (2)}] Let $\ell$ be any line in $X$. The moduli space $M_{\sigma_0}(\Ku(X),[I_\ell])$ of $\sigma_0$-stable objects in $\cA_0$ and with numerical class $[I_\ell]$ is isomorphic to the Fano surface of lines $F_1(X)$ in $X$.
\end{enumerate}
\end{thm}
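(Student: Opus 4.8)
The plan is to produce $\sigma_0$ via the dimension reduction trick and then study the moduli space inside the conic-bundle picture of \autoref{subsect:cubic3folds1}. For part~(1), two of the four axioms of a Bridgeland stability condition are already at our disposal: $\cA_0=\Coh^{-\frac54}(\P^2,\cB_0)\cap\Ku(X)$ is the heart of a bounded t-structure on $\Ku(X)$ (see \cite[Lemma~3.4]{BMMS}) and $Z_0$ is a stability function on it (see \cite[Lemma~3.5]{BMMS}). What remains is the Harder--Narasimhan property, the support property, and the upgrade to a stability condition with moduli spaces. For the HN property I would bootstrap from the weak stability condition $\sigma_h$ on $\Coh(\P^2,\cB_0)$ and its tilt: every object of $\cA_0$ has a finite HN filtration for the tilted weak stability, the image of $Z_0$ lies in a discrete subset of $\C$ since it is assembled from the integer-valued invariants $\rk$ and $\deg$, and these two facts feed into the standard chain argument (cf.\ \cite{Bri}) to produce HN filtrations in $(\cA_0,Z_0)$. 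For the support property one may invoke the classical Bogomolov inequality for $\mu_h$-semistable $\cB_0$-modules on $\P^2$ to exhibit an explicit quadratic form; in fact $\rk$ and $\deg$ are linearly independent functionals on the rank-two group $\cN(\Ku(X))$, so $Z_0$ is already injective on $\cN(\Ku(X))\otimes\R$ and the support property is automatic. Once $\sigma_0$ is a Bridgeland stability condition, the hypothesis $\mathrm{char}(\K)=0$ lets us transfer openness, boundedness, and the existence of a proper good moduli space from \cite[Proposition~25.3]{BLMNPS} (which rests on \cite{AH-LH}), exactly as recalled before the statement.

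For part~(2) I would follow the standard template for identifying a Bridgeland moduli space with a classical one. First, by \autoref{rmk:linesFano3folds} the ideal sheaf $I_\ell$, viewed as an object of $\Ku(X)$, lies in $\cA_0$, and one checks it is $\sigma_0$-stable: it is simple, and the numerical class $[I_\ell]$ is small enough that no nonzero proper subobject in $\cA_0$ can have slope $\geq\mu_{Z_0}(I_\ell)$. Second, the universal ideal sheaf of the universal line on $F_1(X)\times X$, pushed into $\Ku(X)$ by the (Fourier--Mukai) projection functor, is a flat family of $\sigma_0$-stable objects of class $[I_\ell]$, hence defines a morphism $f\colon F_1(X)\to M_{\sigma_0}(\Ku(X),[I_\ell])$. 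Third, $f$ is injective, because $\ell$ is recovered from $I_\ell$ --- for instance as the support of the cokernel of $I_\ell\hookrightarrow\OO_X$, or from the Chern data after re-embedding into $\Db(X)$. Fourth, $f$ is surjective: every $\sigma_0$-stable $F\in\cA_0$ with $[F]=[I_\ell]$ is the ideal sheaf of a line. I would prove this by transporting $F$ into $\Db(\P^2,\cB_0)$, bounding the $\Coh(\P^2,\cB_0)$-cohomology sheaves of $F$ by means of $\sigma_h$, and then using $\sigma_0$-stability together with the extremely restricted numerical possibilities to force a shift of $F$ to be a $\cB_0$-module of the type associated with a line on $X$ --- here Kuznetsov's description of $\Db(\P^2,\cB_0)$ via the conic bundle \cite{Kuz2} is the essential input. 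Finally, to upgrade $f$ to an isomorphism, a deformation-theoretic computation (using the fractional Calabi--Yau structure of \autoref{prop:cubic3foldseq}) shows $\Ext^1_{\Ku(X)}(I_\ell,I_\ell)\cong\K^{\oplus 2}$ and $\Ext^2_{\Ku(X)}(I_\ell,I_\ell)=0$, consistent with $\chi([I_\ell],[I_\ell])=-1$; hence $M_{\sigma_0}(\Ku(X),[I_\ell])$ is smooth of dimension~$2$ and $f$ induces an isomorphism on tangent spaces. A bijective morphism from the smooth projective surface $F_1(X)$ onto a smooth (hence normal) proper algebraic space is an isomorphism by Zariski's main theorem.

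The step I expect to be the main obstacle is the surjectivity in part~(2): excluding all exotic complexes in $\cA_0$ carrying the numerical invariants of $I_\ell$ is not a formal matter but a genuine case analysis that plays $\sigma_0$-stability on $\Ku(X)$ against the weak (tilt) stability on $\Coh(\P^2,\cB_0)$. The second most delicate point is the Harder--Narasimhan property in part~(1), because $\cA_0$ need not be Noetherian and the HN filtrations have to be manufactured from those in the ambient surface-like category $\Db(\P^2,\cB_0)$; the remaining ingredients --- the two cited lemmas, the support property, and the transfer of openness and boundedness --- are comparatively routine.
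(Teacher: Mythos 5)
Your proposal is correct and follows essentially the same route the paper relies on (namely \cite{BMMS}): the heart and central charge come from tilting $\Coh(\P^2,\cB_0)$ at slope $-\frac{5}{4}$ and restricting to $\Ku(X)$, the upgrade to a stability condition with moduli spaces is exactly the cited appeal to \cite[Proposition 25.3]{BLMNPS} and \cite{AH-LH}, and part (2) is the BMMS-style identification of $\sigma_0$-stable objects of class $[I_\ell]$ with ideal sheaves of lines, followed by the comparison with $F_1(X)$. The only points that still require the explicit computations of \cite{BMMS} are the linear independence of $(\rk,\deg)$ on the rank-two lattice $\cN(\Ku(X))$ (which, as you say, renders the support property automatic, and which can be checked e.g.\ from the nonsymmetry of the Euler form on $\cN(\Ku(X))$) and the case analysis establishing $\sigma_0$-stability of $I_\ell$ and the surjectivity statement, which you correctly single out as the genuinely hard step.
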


\begin{remark}\label{rmk:ulrich1}
The construction of (weak) stability conditions on $\Db(\P^2,\cB_0)$ and on $\Ku(X)$, for $X$ a cubic threefold, was realized in \cite{LMS} in a similar fashion as above, and it has been used to answer some very geometric questions. In particular, in \cite[Theorem B]{LMS} the authors reprove and generalize the main result of \cite{CH} about the nonemptiness of moduli spaces of Ulrich bundles of arbitrary rank on cubic threefolds. Let us recall that an \emph{Ulrich bundle} $E$ is 
an aCM (arithmetically Cohen\textendash Macaulay) bundle whose graded module $\bigoplus_{m\in\Z}H^0(X,E(m))$ has $3\rk(E)$ generators in degree $1$. On the other hand, a vector bundle $E$ on a cubic threefold $X$ is \emph{aCM} if $\mathrm{dim}\; H^i(X,E(j))= 0$, for $i = 1,2$ and all $j\in\Z$. 
We will come back to related questions in \autoref{subsec:cubictthreefolds3} where the results mentioned above are stated in a precise form (see also \cite[Remark 2.20]{LMS} for comments on the used normalization in the definition of Ulrich bundle).
\end{remark}

The result above is the key to prove the following which is indeed \cite[Theorem 1.1]{BMMS}.

\begin{thm}[Categorical Torelli theorem for cubic threefolds]\label{thm;CTTcubic3folds}
Let $X_1$ and $X_2$ be cubic threefolds defined over an algebraically closed field $\K$ with $\mathrm{char}(\K)\neq 2, 3$. Then there exists an exact equivalence $\Ku(X_1)\cong\Ku(X_2)$ if and only if $X_1\cong X_2$.
\end{thm}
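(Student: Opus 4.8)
The backward implication is immediate: an isomorphism $f\colon X_1\xrightarrow{\sim}X_2$ induces a Fourier\textendash Mukai equivalence $f^*\colon\Db(X_2)\xrightarrow{\sim}\Db(X_1)$ sending $\OO_{X_2}$ and $\OO_{X_2}(H)$ to $\OO_{X_1}$ and $\OO_{X_1}(H)$, hence restricting to an equivalence $\Ku(X_2)\xrightarrow{\sim}\Ku(X_1)$. The content is the forward implication, and the plan is to reconstruct the Fano surface of lines $F_1(X_i)$ from $\Ku(X_i)$ and then to appeal to the classical Torelli theorem. So suppose $\Phi\colon\Ku(X_1)\xrightarrow{\sim}\Ku(X_2)$ is an exact equivalence.

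First I would put on each $\Ku(X_i)$ the stability condition $\sigma_0^{(i)}=(Z_0,\cA_0)$ of \autoref{thm:stab1cubic3folds}(1) and recall from part (2) of that theorem that, for the class $[I_\ell]\in\cN(\Ku(X_i))$ of the ideal sheaf of a line (which lies in $\cA_0$ and is $\sigma_0^{(i)}$-stable, cf.\ \autoref{rmk:linesFano3folds}), one has $M_{\sigma_0^{(i)}}(\Ku(X_i),[I_\ell])\cong F_1(X_i)$. Next, $\Phi$ induces an isometry $\Phi_*$ between the numerical Grothendieck groups, which carries $\sigma_0^{(1)}$-semistable objects to $\Phi(\sigma_0^{(1)})$-semistable ones; since this lattice has only finitely many isometries, one checks that the class $[I_\ell]$ is determined up to sign and this finite group, so $\Phi_*[I_\ell]=\pm[I_{\ell'}]$. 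It then remains to identify $\Phi(\sigma_0^{(1)})$, as a stability condition on $\Ku(X_2)$, with $\sigma_0^{(2)}$ up to the $\widetilde{\mathrm{GL}}_2^+(\R)$\textendash action and up to autoequivalences of $\Ku(X_2)$ (both of which leave the moduli spaces unchanged, by the discussion of group actions in \autoref{sect:introstab1}); granting this together with representability of these moduli spaces, $\Phi$ yields $F_1(X_1)\cong M_{\sigma_0^{(1)}}(\Ku(X_1),[I_\ell])\cong M_{\sigma_0^{(2)}}(\Ku(X_2),\pm[I_{\ell'}])\cong F_1(X_2)$.

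The comparison of $\Phi(\sigma_0^{(1)})$ with $\sigma_0^{(2)}$ is the main obstacle: an abstract equivalence need not preserve a chosen stability condition, so one must prove a rigidity statement. The natural tool is that $\Ku(X)$ is a fractional Calabi\textendash Yau category of dimension $\tfrac{5}{3}$ (\autoref{prop:cubic3foldseq}): this forces $\Phi$ to commute with the Serre functors and, combined with the study of Serre-invariant stability conditions (developed later in the paper) together with a direct analysis of the objects of class $\pm[I_\ell]$ and of the heart $\cA_0$, shows that after composing $\Phi$ with a suitable autoequivalence the stability condition $\Phi(\sigma_0^{(1)})$ lands in the $\widetilde{\mathrm{GL}}_2^+(\R)$\textendash orbit of $\sigma_0^{(2)}$. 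Finally, with $F_1(X_1)\cong F_1(X_2)$ in hand, I would invoke the classical Torelli theorem for cubic threefolds of Clemens\textendash Griffiths and Tyurin: the Albanese of the Fano surface of lines is the intermediate Jacobian, it carries an irreducible principal polarization, and a smooth cubic threefold is recovered from this principally polarized abelian variety; hence $X_1\cong X_2$. The hypothesis $\mathrm{char}(\K)\neq 2$ enters through the Clifford-algebra/conic-bundle construction underlying $\sigma_0$ (see \eqref{eqn;B0threefolds}), and $\mathrm{char}(\K)\neq 3$ (with characteristic $0$, or a separate positive-characteristic argument, for the moduli-space formalism) through smoothness and the Torelli-type input.
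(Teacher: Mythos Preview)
Your broad strategy---realize the Fano surface of lines as a moduli space of stable objects in $\Ku(X)$ and then recover $X$ from $F_1(X)$---is exactly the paper's. But two steps diverge from what the paper actually does, and one of them contains a gap.

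\textbf{Comparing the stability conditions.} The original proof in \cite{BMMS}, which is what the paper sketches here, does \emph{not} invoke Serre-invariant stability conditions; that machinery is developed only later (\autoref{subsect:introstab2}--\autoref{subsec:cubictthreefolds3}) as an alternative route. Instead one argues directly: after composing $\Phi$ with a suitable power of the Serre functor and a shift, the image $\Phi'(I_\ell)$ of each ideal sheaf of a line has the numerical class of an ideal of a line in $X_2$, lies in the heart $\cA_0$, and is $\sigma_0$-stable. This is a delicate case analysis carried out in \cite[Sections 4--5]{BMMS}. Your appeal to Serre-invariance is a legitimate alternative, but you should be aware that it is not the argument being described at this point of the paper. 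Note also that since $\Phi$ is not assumed to be of Fourier\textendash Mukai type, one first obtains only a \emph{bijection} $F_1(X_1)\to F_1(X_2)$; the paper then invokes \cite[Section 5.2]{BMMS} to upgrade this to an isomorphism of varieties, a step you skip.

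\textbf{Recovering $X$ from $F_1(X)$.} Here you take a genuinely different route and leave a gap. You pass to the Albanese (the intermediate Jacobian) and appeal to Clemens\textendash Griffiths Torelli; but that theorem requires an isomorphism of \emph{principally polarized} abelian varieties, and you do not explain why the induced isomorphism $J(X_1)\cong J(X_2)$ respects the theta divisors. The paper's argument is cleaner and avoids this: by \cite{CG}, each $F_1(X_i)$ is a surface of general type whose canonical class \emph{equals} the restriction of the Pl\"ucker polarization; hence any abstract isomorphism $F_1(X_1)\cong F_1(X_2)$ automatically preserves the Pl\"ucker class. One then applies Chow's trick \cite[Proposition 4]{Ch}, which reconstructs a cubic hypersurface directly from its polarized Fano variety of lines, to conclude $X_1\cong X_2$. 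Your intermediate-Jacobian detour can be made to work (e.g.\ using the Clemens\textendash Griffiths description of the theta divisor via the difference map on $F_1(X)\times F_1(X)$), but as written it is incomplete.
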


\begin{proof}[Idea of proof]
One implication is trivial. Indeed, if $X_1\cong X_2$, then the isomorphism preserves the line bundles $\OO_{X_i}$ and $\OO_{X_i}(H_{X_i})$. Thus we get an induced exact equivalence $\Ku(X_1)\cong\Ku(X_2)$.

Let us now start with an exact equivalence $\fF\colon\Ku(X_1)\to\Ku(X_2)$. Let $\ell\subseteq X_1$ be a line, seen as a point in $F_1(X_1)$. By \autoref{thm:stab1cubic3folds} (2), the object $I_\ell$ is $\sigma_0$-stable in $\Ku(X_1)$. We would like to conclude that $\fF(I_\ell)$ is again the ideal sheaf of a line in $X_2$, but this is not true in general. Nevertheless, by the result of the discussion in \cite[Section 5.1]{BMMS} based on a delicate argument in \cite[Section 4]{BMMS}, we can compose $\fF$ with a suitable power of the Serre functor $\fS_{\Ku(X_2)}$ and a shift, in order to get another exact equivalence $\fF'\colon\Ku(X_1)\to\Ku(X_2)$ such that:
\begin{itemize}
    \item The class $[\fF'(I_\ell)]$ is in $\cN(\Ku(X_2))$ the class of the ideal of a line in $X_2$;
    \item $\fF'(I_\ell)$ is $\sigma_0$-stable and contained in the heart $\cA_0$ in $\Ku(X_2)$.
\end{itemize}
Thus, \autoref{thm:stab1cubic3folds} (2) implies that $\fF'$ induces a bijection between $F_1(X_1)$ and $F_1(X_2)$. By \cite[Section 5.2]{BMMS} such a bijection can be turned into an isomorphism $f\colon F_1(X_1)\to F_1(X_2)$. It was observed in \cite{CG} that $F_1(X_i)$ is a surface of general type whose canonical bundle coincides with the ample class induced by the Pl\"ucker embedding of the Grassmannian of lines in $\P^4$. Thus, the isomorphism $f$ must preserve the induced ample class given by the Pl\"ucker embedding. The trick in \cite[Proposition 4]{Ch} applies and thus $f$ induces an isomorphism $X_1\cong X_2$.
\end{proof}

One interesting consequence is the following.

\begin{cor}\label{cor:eqbecFM}
\autoref{qn:FM2} has a positive answer for cubic threefolds.
\end{cor}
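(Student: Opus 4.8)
The plan is to obtain this as an immediate corollary of \autoref{thm;CTTcubic3folds}, combined with the elementary fact that an isomorphism of smooth projective varieties induces a Fourier\textendash Mukai equivalence of their derived categories which is compatible with the semiorthogonal decompositions \eqref{eqn:sodFano4}. Let me stress at the outset that we are \emph{not} asserting that the given equivalence is of Fourier\textendash Mukai type --- that would be the much stronger \autoref{qn:FM1}, which is not addressed here --- but only that \emph{some} Fourier\textendash Mukai equivalence between $\Ku(X_1)$ and $\Ku(X_2)$ exists, which is exactly what \autoref{qn:FM2} asks for.

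Concretely, let $X_1$ and $X_2$ be cubic threefolds over an algebraically closed field $\K$ with $\mathrm{char}(\K)\neq 2,3$, and suppose $\fF\colon\Ku(X_1)\to\Ku(X_2)$ is an exact equivalence. First I would apply \autoref{thm;CTTcubic3folds}: the bare existence of $\fF$ already forces the existence of an isomorphism $g\colon X_1\xrightarrow{\sim}X_2$ (no compatibility between $g$ and $\fF$ is needed, and indeed $g$ is produced by an entirely different mechanism, namely the reconstruction of the Fano surface of lines via \autoref{thm:stab1cubic3folds}). Next, I would consider the pushforward $g_*\colon\Db(X_1)\to\Db(X_2)$, which is an exact equivalence of Fourier\textendash Mukai type with kernel the structure sheaf $\OO_{\Gamma_g}$ of the graph $\Gamma_g\subseteq X_1\times X_2$ of $g$. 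Since $X_1$ and $X_2$ have Picard rank one, the isomorphism $g$ necessarily matches the primitive ample generators, i.e.\ $g^*\OO_{X_2}(H_{X_2})\cong\OO_{X_1}(H_{X_1})$; hence, by the projection formula, $g_*\OO_{X_1}\cong\OO_{X_2}$ and $g_*\OO_{X_1}(H_{X_1})\cong\OO_{X_2}(H_{X_2})$. Therefore $g_*$ carries $\Db(X_1)=\langle\Ku(X_1),\OO_{X_1},\OO_{X_1}(H_{X_1})\rangle$ onto $\Db(X_2)=\langle\Ku(X_2),\OO_{X_2},\OO_{X_2}(H_{X_2})\rangle$, and since each Kuznetsov component is precisely the left orthogonal to the span of the two exceptional line bundles, $g_*$ restricts to an exact equivalence $\Ku(X_1)\xrightarrow{\sim}\Ku(X_2)$. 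By \autoref{def:FM}, applied with kernel $\OO_{\Gamma_g}$, this restriction is a Fourier\textendash Mukai equivalence, which is exactly what \autoref{qn:FM2} demands; compare the trivial converse discussed in \autoref{rmk:Enrconcl}(i).

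I do not expect any genuine obstacle: all the substance has already been absorbed into \autoref{thm;CTTcubic3folds}, whose proof relies on the existence of the stability condition $\sigma_0$ of \autoref{thm:stab1cubic3folds} and on the reconstruction of $X_i$ from its Fano surface of lines. The only points to keep track of are the two bookkeeping observations above: that Picard rank one forces $g$ to respect $\OO_{X_i}(H_{X_i})$, and that the restriction of a Fourier\textendash Mukai functor to an admissible component of a semiorthogonal decomposition is again of Fourier\textendash Mukai type, which is built into \autoref{def:FM}.
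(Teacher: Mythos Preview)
Your proof is correct and follows essentially the same approach as the paper's: apply \autoref{thm;CTTcubic3folds} to obtain an isomorphism $g\colon X_1\isomor X_2$, then observe that $g_*$ is a Fourier\textendash Mukai equivalence preserving the semiorthogonal decompositions \eqref{eqn:sodFano4}, hence restricting to a Fourier\textendash Mukai equivalence of Kuznetsov components (cf.\ \autoref{rmk:Enrconcl}(i)). One small slip: in the paper's conventions $\Ku(X)=\langle\OO_X,\OO_X(H_X)\rangle^\perp$ is the \emph{right} orthogonal, not the left, but this does not affect your argument.
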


\begin{proof}
Let $X_1$ and $X_2$ be cubic threefolds with an exact equivalence $\fF\colon\Ku(X_1)\to\Ku(X_2)$. By \autoref{thm;CTTcubic3folds}, there is an isomorphism $f\colon X_1\to X_2$ and, as observed in the proof of the result above, $f$ yields an exact equivalence between the Kuznetsov components. For the same reason as in \autoref{rmk:Enrconcl} (i), such an equivalence is of Fourier\textendash Mukai type.
\end{proof}

Let us conclude this section with the following question:

\begin{qn} \label{qn:cubic3foldsdiff}
Is it possible to prove \autoref{thm;CTTcubic3folds} by using the extension techniques that we discussed in \autoref{subsect:extending}?
\end{qn}

\subsection{Inducing stability conditions}\label{subsect:introstab2}

In this section we aim at explaining the methods introduced in \cite{BLMS}, which allow us to induce stability conditions on the Kuznetsov component of prime Fano threefolds defined in \autoref{subsect:Fano3foldsgeo}, so in particular of cubic threefolds, and, with a dimension reduction trick as in \autoref{subsect:cubic3folds1}, of cubic fourfolds. Then we introduce the notion of Serre-invariant stability conditions, whose existence in the context of cubic threefolds will have interesting consequences on the study of moduli spaces.

Let $\cT$ be a proper triangulated category which is linear over an algebraically closed field $\K$ and with Serre functor $\fS_{\cT}$. Assume $\cT$ has an exceptional collection $E_1, \dots, E_m$. Then we have a semiorthogonal decomposition of the form
$$\cT= \langle \cD, E_1, \dots, E_m \rangle$$
where $\cD:= \langle E_1, \dots, E_m \rangle^\perp$. Note that $\rk(\cN(\cD))= \rk(\cN(\cT))-m$ by additivility of the numerical Grothendieck group. This could vaguely suggest that $\cD$ has ``smaller dimension'' than $\cT$, so it might be easier to construct Bridgeland stability conditions on $\cD$ than on $\cT$.

In fact, the following key result provides a criterion which guarantees that a weak stability condition $\sigma$ on $\cT$ restricts to a Bridgeland stability condition on $\cD$.

\begin{prop}[\cite{BLMS}, Proposition 5.1] \label{prop:inducedstab}
Let $\sigma=(\cA, Z)$ be a weak stability condition on $\mathscr{T}$. 
Assume that the exceptional collection $\{E_1,\cdots ,E_m\}$ satisfies the following conditions: 
\begin{enumerate}
\item $E_i\in \cA$;
\item $\fS_{\mathscr{T}}(E_i)\in \cA[1]$; 
\item $Z(E_i)\neq0$ for all $i=1, \cdots, m$;
\item there are no objects $0\neq F\in \cA':=\cA\cap \cD$ with $Z(F)=0$, i.e., $Z':=Z|_{K(\cA')}$ is a stability function on $\cA'$.
\end{enumerate}
Then the pair $\sigma'=(\cA',Z')$ is a Bridgeland stability condition on $\cD$.
\end{prop}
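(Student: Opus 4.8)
The plan is to verify in turn the three defining conditions of a Bridgeland stability condition on $\cD$ (\autoref{def_stabcond}, together with the requirement that the central charge be a genuine stability function) for the pair $\sigma'=(\cA',Z')$, where $\cA':=\cA\cap\cD$ and $Z':=Z|_{K(\cA')}$. The three points to check are: (i) $\cA'$ is the heart of a bounded $t$-structure on $\cD$ and $Z'$ is a stability function on it; (ii) the Harder\textendash Narasimhan property; (iii) the support property. Essentially all the work, and the only place where hypotheses (1)--(2) enter, is point (i).

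The key preliminary remark is that for every $A\in\cA$ and every $i$ the graded vector space $\Hom^\bullet(E_i,A)$ is concentrated in cohomological degrees $0$ and $1$. Vanishing in negative degrees follows from $E_i\in\cA$ (hypothesis (1)) and \autoref{def:heart}(i); vanishing in degrees $k\geq 2$ follows from Serre duality, writing $\Hom(E_i,A[k])\cong\Hom(A,\fS_\cT(E_i)[-k])^\vee$ and using that $\fS_\cT(E_i)[-1]\in\cA$ by hypothesis (2), so that $\Hom(A,\fS_\cT(E_i)[-k])=\Hom(A,(\fS_\cT(E_i)[-1])[1-k])$ vanishes for $k\geq 2$. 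Granting this, I would prove that every $G\in\cD$ has all of its cohomology objects $H^j_\cA(G)$ with respect to the $t$-structure with heart $\cA$ again in $\cD$, by induction on the number of nonzero such objects: if $b$ is the top degree, apply $\Hom(E_i,-)$ to the truncation triangle $\tau^{<b}G\to G\to H^b_\cA(G)[-b]$; since $\Hom^\bullet(E_i,G)=0$ and, by the concentration remark, $\Hom^\bullet(E_i,\tau^{<b}G)$ is supported in degrees $\leq b$ while $\Hom^\bullet(E_i,H^b_\cA(G))$ is supported in degrees $\{0,1\}$, the long exact sequence forces $\Hom^\bullet(E_i,H^b_\cA(G))=0$ for all $i$, i.e.\ $H^b_\cA(G)\in\cD$; then $\tau^{<b}G\in\cD$ as well and the induction runs. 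It follows that the $\cA$-truncation triangles of any $G\in\cD$ lie in $\cD$ and exhibit the filtration required by \autoref{def:heart}(ii) for $\cA'$, while \autoref{def:heart}(i) for $\cA'$ is inherited from $\cA$; hence $\cA'$ is a heart.

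That $Z'$ is a genuine stability function (\autoref{def:stabfctn}) is then immediate: the inequality $\Im Z'\geq 0$ on $\cA'$, and the sign condition on $\Re Z'$ on the locus $\Im Z'=0$, are inherited from the weak stability function $Z$ on $\cA\supseteq\cA'$, and hypothesis (4) excludes $0\neq F\in\cA'$ with $Z'(F)=0$, which upgrades the non-strict inequality to a strict one in the boundary case; hypothesis (3) is what guarantees in addition that each $E_i$ has a well-defined finite slope. For the Harder\textendash Narasimhan and support properties I would argue by comparison with $\sigma$: one checks that a $\sigma'$-semistable object of $\cA'$ is still $\sigma$-semistable, so that the $\sigma$-Harder\textendash Narasimhan filtration of an arbitrary object of $\cA'$ consists of objects of $\cA'$ and is its $\sigma'$-filtration, and the quadratic form realizing the support property of $\sigma$ (\autoref{def_stabcond}(c)) restricts to one realizing the support property of $\sigma'$.

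I expect the main obstacle to be step (i): the concentration lemma for $\Hom^\bullet(E_i,-)$ is the conceptual core, and the inductive peeling-off of the top cohomology object must be organized with some care so that the degree bookkeeping in the long exact sequences goes through. A further subtlety, relevant for point (iii) and the comparison argument, is that $\cA'$ is not in general closed under subobjects and quotients inside $\cA$ — the obstruction being precisely the possibly non-vanishing groups $\Hom(E_i,A[1])$ for $A$ a subobject of an object of $\cA'$ — so that the assertion that $\sigma$-Harder\textendash Narasimhan data of an object of $\cA'$ remain in $\cA'$ is not formal and must again invoke the concentration lemma.
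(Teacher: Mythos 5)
Your treatment of the heart is correct and is essentially the argument the paper alludes to: the observation that $\Hom^{\bullet}(E_i,A)$ is concentrated in degrees $0,1$ for every $A\in\cA$ (negative degrees by hypothesis (1), degrees $\geq 2$ by Serre duality and hypothesis (2)), followed by the induction peeling off the top $\cA$-cohomology object of $G\in\cD$, does show that all cohomology objects of an object of $\cD$ lie in $\cD$, hence that $\cA'=\cA\cap\cD$ is the heart of a bounded t-structure on $\cD$; and hypothesis (4) indeed upgrades $Z'$ to a stability function on $\cA'$.

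The gap is in your comparison argument for the Harder--Narasimhan and support properties. The claim that a $\sigma'$-semistable object of $\cA'$ is still $\sigma$-semistable is false in general; only the reverse implication is formal (short exact sequences in $\cA'$ are short exact in $\cA$, so a $\sigma$-semistable object of $\cA'$ is $\sigma'$-semistable). Consequently the assertion that the $\sigma$-HN filtration of an object of $\cA'$ consists of objects of $\cA'$ is unjustified, and your concentration lemma cannot repair it, contrary to what you suggest at the end: for a short exact sequence $0\to A\to F\to B\to 0$ in $\cA$ with $F\in\cA'$, the long exact sequence only yields $\Hom(E_i,A)=0$ and $\Hom(E_i,B[1])=0$, while $\Hom(E_i,A[1])\cong\Hom(E_i,B)$ can be nonzero; so neither subobjects nor quotients of $F$ in $\cA$ need lie in $\cD$, and in particular the $\sigma$-HN factors of $F$ may leave $\cA'$. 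The same issue breaks your support-property step: restricting the quadratic form $Q$ requires $Q\geq 0$ on classes of $\sigma'$-semistable objects, which does not follow from the support property of $\sigma$ once the two notions of semistability diverge. This is exactly why the actual proof in \cite[Section 5]{BLMS} abandons naive restriction and instead runs the Harder--Narasimhan polygon and mass arguments to establish both properties for $\sigma'$ (and it is in that part that hypothesis (3), which your proposal never uses, is needed). So the heart/stability-function half of your plan stands, but the HN and support properties require a genuinely different argument.
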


Note that, if the weak stability condition $\sigma$ in the statement is with respect to a lattice $\Lambda$, then $\sigma'$ is defined over the sublattice $\Lambda'$ of $\Lambda$ determined by the image of $K(\cA')$. 

We point out that the conditions (1),(2) are used to show that the restriction $\cA'=\cA \cap \cD$ is the heart of a bounded t-structure on $\cD$, since they imply that the cohomology factors with respect to $\cA$ of an object in $\cD$ belong to $\cD$ as well. By (4) the restriction $Z'$ is a stability function on $\cA'$. For the proof of the Harder\textendash Narasimhan property and the support property the authors make use of the notions of Harder\textendash Narasimhan polygon and mass of an object in the heart. We suggest the interested reader to consult \cite[Section 5]{BLMS} for more details.

\begin{remark}\label{rmk:nostabEnriques}
We should not expect \autoref{prop:inducedstab} to be applicable in all situations where $\Db(X)$ has semiorthogonal decomposition with a Kuznetsov component which is residual to finitely many exceptional objects. One crucial example is when $X$ is an Enriques surface with a semiorthogonal decomposition as in \eqref{eqn:sodEnriques}. Indeed, more should be true: if that $X$ is very general, then $\Ku(X,\cl)$ should not carry Bridgeland stability conditions at all. The heuristic reason is the following. The objects $S_i\in\Ku(X,\cL)$ in \eqref{eqn:sphericalEnriques} are, at the same time, $3$-spherical and numerically trivial. The latter condition implies that the stability function of any stability condition on $\Ku(X,\cL)$ would map the class of $S_i$ to $0$. On the other hand, motivated by the case of K3 surfaces, one would expect that, if a stability condition $\sigma$ on $\Ku(X,\cL)$ exists, then the fact that $S_i$ is spherical implies that it is also $\sigma$-stable. These two facts together would immediately lead to a contradiction and thus to the fact that such a $\sigma$ cannot exist. Of course, it would be very interesting to make the previous argument rigorous and show that $S_i$ is stable in any stability condition on the Kuznetsov component.
\end{remark}

An interesting consequence of \autoref{prop:inducedstab} is the existence of Bridgeland stability conditions on the whole category $\cT$, using results in \cite{CP} about gluings of t-structures.

\begin{prop}[\cite{BLMS}, Proposition 5.13]\label{prop:gluing}
With the assumptions of \autoref{prop:inducedstab}, the pair $\sigma''=(\cA'', Z'')$ on $\cT$, where
$$\cA''= \langle \cA', E_1[1], \dots, E_m[m] \rangle$$
is the extension-closure and 
$$Z''|_{K(\cD)}=Z', \quad Z''(E_i)=(-1)^{i+1} \quad \text{for }i=1, \dots m,$$
is a Bridgeland stability condition on $\cT$.
\end{prop}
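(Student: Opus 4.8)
The plan is to verify the three defining properties of a Bridgeland stability condition for the pair $\sigma'' = (\cA'', Z'')$ on $\cT$, using as input the already-established fact (\autoref{prop:inducedstab}) that $\sigma' = (\cA', Z')$ is a Bridgeland stability condition on $\cD$, together with the hypotheses (1)--(4) on the exceptional collection $\{E_1, \dots, E_m\}$. The first step is to check that $\cA'' = \langle \cA', E_1[1], \dots, E_m[m]\rangle$ is the heart of a bounded t-structure on $\cT$. This is a gluing statement: given the semiorthogonal decomposition $\cT = \langle \cD, E_1, \dots, E_m\rangle$ and the bounded t-structures with hearts $\cA'$ on $\cD$ and $\langle E_i \rangle \simeq \Db(\mathrm{pt})$ on each $\langle E_i\rangle$, one glues them in the prescribed order with the shifts $E_i \mapsto E_i[i]$. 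The reference \cite{CP} on gluing of t-structures provides exactly this: the extension-closure of a compatible family of hearts along a semiorthogonal decomposition is again the heart of a bounded t-structure, provided the semiorthogonality conditions $\Hom^{\le 0}$ between the pieces (in the chosen shifts) hold. I would check these Hom-vanishings explicitly — the condition $\Hom(E_i, E_j[p]) = 0$ for $i > j$ and all $p$ (exceptionality of the collection) together with hypotheses (1),(2) for the interaction with $\cA'$ — and invoke \cite{CP} (or \cite[Section 5]{BLMS} directly) to conclude $\cA''$ is a heart.

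The second step is to verify that $Z''$ is a stability function on $\cA''$. By construction $Z''|_{K(\cD)} = Z'$, which is a stability function on $\cA'$ by hypothesis (4), and $Z''(E_i) = (-1)^{i+1}$, so $Z''(E_i[i]) = (-1)^i \cdot (-1)^{i+1} = -1$, which lies strictly in the left half-plane (real part negative, imaginary part zero), hence is an allowed value for a stability function on the objects $E_i[i]$ of phase $1$. Since every nonzero object of $\cA''$ is built by extensions from $\cA'$ and the $E_i[i]$, and the central charge is additive, $Z''(\cA'' \setminus 0)$ lands in the closed upper half-plane minus the positive real axis; the boundary case $\Im Z'' = 0$ forces the object to be an extension of copies of the $E_i[i]$ and of objects in $\ker Z'|_{\cA'}$, the latter being zero by (4), so $\Re Z'' < 0$ there. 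This gives the stability function property.

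The third step — and the one I expect to be the main obstacle — is the Harder--Narasimhan property together with the support property for $\sigma''$. The HN property cannot simply be inherited piece by piece, because an object of $\cA''$ mixes a part in $\cA'$ (whose phases relative to $Z'$ can be anything in $(0,1]$) with the $E_i[i]$ sitting exactly at phase $1$; one must show these interact well. The strategy, following \cite[Section 5]{BLMS} and \cite{CP}, is to use the fact that the $E_i[i]$ are at the extreme phase $1$ and that $\cA'$ already has HN filtrations: every object $F \in \cA''$ sits in a triangle relating its $\cD$-component (with its $\sigma'$-HN filtration) and a successive extension of the $E_i[i]$, and one refines this to an honest HN filtration by comparing slopes, using that $E_i[i]$ has the maximal phase. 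For the support property, one transports the quadratic form $Q'$ witnessing the support property for $\sigma'$ on $\Lambda'$ to a quadratic form $Q''$ on the lattice $\Lambda''$ (the image of $K(\cA'')$), arranging that it remains negative definite on $\ker Z''_\R$ — here one uses that the classes $[E_i]$ are not in $\ker Z''$ since $Z''(E_i) \ne 0$ — and that $Q'' \ge 0$ on all $\sigma''$-semistable objects; the semistable objects are either (shifts of) $\sigma'$-semistable objects in $\cD$, handled by $Q'$, or the $E_i[i]$ themselves, handled directly. The delicate point throughout is bookkeeping the slicing near phase $1$; I would lean on the Harder--Narasimhan polygon / mass-of-an-object techniques used in \cite[Section 5]{BLMS} to make the HN existence argument rigorous, and cite \cite{CP} for the general gluing framework. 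Once all three properties are in place, \autoref{def_stabcond} is satisfied and $\sigma'' = (\cA'', Z'')$ is a Bridgeland stability condition on $\cT$, completing the proof.
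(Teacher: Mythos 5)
Your outline follows essentially the same route the paper itself indicates: the survey does not reprove this statement but quotes it from [BLMS, Proposition 5.13] and points to the gluing-of-t-structures results of [CP], which is exactly the scheme you sketch (glued heart $\cA''$ via the semiorthogonal decomposition and the Hom-vanishings coming from exceptionality and hypotheses (1)--(2), then the stability-function check with $Z''(E_i[i])=-1$, then HN and support via the phase-$1$ extremality of the $E_i[i]$ and the HN-polygon/mass techniques of [BLMS, Section 5]). One tiny imprecision: in the boundary case $\Im Z''=0$ the factors lying in $\cA'$ need not belong to $\ker Z'$ --- hypothesis (4) of \autoref{prop:inducedstab} rather gives $\Re Z'<0$ for any such nonzero factor, which is what forces $\Re Z''<0$ --- but this does not affect the argument.
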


When $\cT=\Db(X)$, slope stability and tilt-stability, defined in \autoref{sec:tiltstab}, are weak stability conditions and we could ask whether they satisfy the conditions in \autoref{prop:inducedstab}. We will actually use the following tilting of $\sigma_{s,q}$ defined in Theorem \ref{thm:familytiltstability}. For $\mu \in \R$, let $u$ be the unit vector in the upper half plane with $\mu=- \frac{\Re(u)}{\Im(u)}$. We denote by 
$$\Coh^\mu_{s,q}(X):=\Coh^s(X)^\mu_{\sigma_{s,q}}$$
the heart of a bounded t-structure obtained by tilting $\Coh^s(X)$ with respect to $\sigma_{s,q}$ at $\mu_{s,q}=\mu$. Set 
$$Z^\mu_{s,q}:= \frac{1}{u}Z_{s,q}.$$
If we were working with Bridgeland stability conditions, the above process would correspond to consider $u$ as an element of $\text{GL}_1(\C) \subset \text{GL}_2^+(\R)$, lift it to $\tilde{u} \in \widetilde{\text{GL}}^+_2(\R)$ and act on $\sigma_{s,q}$ by $\tilde{u}$. On the other hand, the $\widetilde{\text{GL}}^+_2(\R)$-action is not a priori well-defined on weak stability conditions, because of the existence of objects with vanishing central charge. For instance, consider an extension $E$ of the form
$$F[1] \to E \to T,$$
where $F$ is $\sigma_{s,q}$-semistable in $\Coh^s(X)$ with $\mu_{s,q}(F) \leq \mu$ and $T \in \Coh(X)$ supported in codimension $>2$. Then $E \in \Coh^\mu_{s,q}(X)$, but $E$ is not in $\Coh^s(X)$ and thus cannot be $\sigma_{s,q}$-semistable. This suggests that there could be objects which are semistable with respect to $\sigma_{s,q}^{\mu}$, but not with respect to $\sigma_{s,q}$ (see \cite[Lemma 5.4]{PetRota} for a concrete example). This prevents us to have a well-defined $\widetilde{\text{GL}}^+_2(\R)$-action on these weak stability conditions. Nevertheless, we have the following result.

\begin{prop}[\cite{BLMS}, Proposition 2.15] \label{prop:doubletiltstab}
Let $X$ be a smooth projective variety. The pair $\sigma^\mu_{s,q}=(\Coh^\mu_{s,q}(X), Z^\mu_{s,q})$ is a weak stability condition on $\Db(X)$ with respect to $\vv$ defined in \eqref{eq:latticeLambda}.
\end{prop}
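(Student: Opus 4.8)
The plan is to check the three defining properties of a weak stability condition (\autoref{def_stabcond}) for $\sigma^\mu_{s,q}=(\Coh^\mu_{s,q}(X),Z^\mu_{s,q})$, taking as input that $\sigma_{s,q}=(\Coh^s(X),Z_{s,q})$ is already a weak stability condition whose support property is witnessed by the quadratic form $\Delta_H$ of \eqref{eq_BI}, by \autoref{thm:familytiltstability}. That $\Coh^\mu_{s,q}(X)$ is the heart of a bounded $t$-structure on $\Db(X)$ is immediate: by construction it is the HRS tilt of $\Coh^s(X)$ along the torsion pair whose torsion (resp.\ torsion-free) part consists of the objects all of whose $\sigma_{s,q}$-Harder--Narasimhan factors have $\mu_{s,q}$-slope $>\mu$ (resp.\ $\le\mu$), and the existence of $\sigma_{s,q}$-Harder--Narasimhan filtrations, part of \autoref{thm:familytiltstability}, is exactly what makes this a torsion pair, so \cite{HRS} applies.

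Next, the weak stability function property. Write $u=\cos\theta+\sqrt{-1}\sin\theta$ with $\theta\in(0,\pi)$, so that $\mu=-\cot\theta$ and $Z^\mu_{s,q}=e^{-\sqrt{-1}\theta}Z_{s,q}$ is simply the rotation of $Z_{s,q}$ by the angle $-\theta$. Every nonzero object of $\Coh^\mu_{s,q}(X)$ is an iterated extension of $\sigma_{s,q}$-semistable objects $G$ with $\mu_{s,q}(G)>\mu$ and of shifts $G[1]$ of $\sigma_{s,q}$-semistable objects $G$ with $\mu_{s,q}(G)\le\mu$. Since $\sigma_{s,q}$-semistability pins down the argument of $Z_{s,q}(G)$, a short computation---using only that $\cot$ is decreasing on $(0,\pi)$---shows that for each such generator one has $\Im Z^\mu_{s,q}(\cdot)\ge 0$, and that $\Re Z^\mu_{s,q}(\cdot)\le 0$ in the boundary case $\Im Z^\mu_{s,q}(\cdot)=0$ (which occurs precisely for the $G[1]$ with $\mu_{s,q}(G)=\mu$ and for the $G$ with $Z_{s,q}(G)=0$, the latter existing when $\dim X\ge 3$). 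Additivity of $\Im Z^\mu_{s,q}$ along extensions promotes the inequalities to all of $\Coh^\mu_{s,q}(X)$, giving property (a) with respect to the lattice $\Lambda_H$ of \eqref{eq:latticeLambda}.

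The substance of the proof is the Harder--Narasimhan property (b) and the support property (c), and here the enabling observation is that $\ker Z^\mu_{s,q}=\ker Z_{s,q}$, since multiplying by $1/u\in\C^\ast$ does not alter the kernel; consequently $\Delta_H$---negative definite on $\ker Z_{s,q}$ and nonnegative on $\sigma_{s,q}$-semistable objects---remains a legitimate candidate quadratic form. For (c) one must show $\Delta_H(E)\ge 0$ for every $\sigma^\mu_{s,q}$-semistable $E$; the difficulty, as flagged before the statement, is that such an $E$ need not be $\sigma_{s,q}$-semistable. I would decompose $E$ according to the $\sigma_{s,q}$-Harder--Narasimhan filtrations of its torsion and shifted torsion-free parts inside $\Coh^\mu_{s,q}(X)$, observe that $\sigma^\mu_{s,q}$-semistability confines the $\mu_{s,q}$-slopes of these factors to a narrow window bracketing $\mu$, and conclude by the usual convexity estimate for $\Delta_H$ along extensions of objects with nearby slopes, in the spirit of the tilting statements of \cite{BMS}. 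Property (b) then follows by the standard argument of \cite{Bri}: the $\Delta_H$-bound, together with the discreteness of $\vv(K(\Db(X)))$ in $\Lambda_H$, rules out infinite Harder--Narasimhan filtrations, using the Harder--Narasimhan polygon and mass estimates recalled in \cite{BLMS}. The main obstacle is thus concentrated in step (c)---controlling $\Delta_H$ on $\sigma^\mu_{s,q}$-semistable objects that are not $\sigma_{s,q}$-semistable, and bootstrapping this to the existence of finite Harder--Narasimhan filtrations---while the $t$-structure statement is formal and the weak stability function statement is a direct computation.
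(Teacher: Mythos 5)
The survey itself contains no argument for this statement (it is quoted from \cite[Proposition 2.15]{BLMS}), so the comparison is with that proof. Your formal steps match it: the double tilt is a heart by \cite{HRS}, the rotated charge is a weak stability function by a direct cone computation, and since $\ker Z^{\mu}_{s,q}=\ker Z_{s,q}$ the form $\Delta_H$ is the right candidate for the support property. The genuine gap is in your step (c), and it propagates into (b). The mechanism you propose --- ``$\sigma^{\mu}_{s,q}$-semistability confines the $\mu_{s,q}$-slopes of the $\sigma_{s,q}$-HN factors to a narrow window bracketing $\mu$, then conclude by the usual convexity estimate for $\Delta_H$ along extensions of objects with nearby slopes'' --- is not an argument that exists. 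Already at the level of classes in $\Lambda_H$, take $a=(1,0,0)$ and $b=(1,\varepsilon,\tfrac{\varepsilon^{2}}{2}-\delta)$ with $0<4\delta<\varepsilon^{2}$: then $\Delta_H(a)\geq 0$, $\Delta_H(b)>0$, the slopes can be made arbitrarily close, yet $\Delta_H(a+b)=-\varepsilon^{2}+4\delta<0$. Nonnegativity of $\Delta_H$ only propagates along extensions when the central charges of the pieces lie on a common ray (equal phase) or one piece lies in $\ker Z$; ``nearby'' slopes are not enough. Moreover the ``narrow window'' claim is not what semistability gives: every object of $\Coh^{\mu}_{s,q}(X)$ with $\Im Z^{\mu}_{s,q}=0$ is automatically $\sigma^{\mu}_{s,q}$-semistable, and it is exactly among these phase-one objects that the new semistable objects appear.

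What a correct completion (and, in substance, the cited proof) uses is a precise description of the $\sigma^{\mu}_{s,q}$-semistable objects rather than a window estimate. Comparing slopes of the subobject $\HH^{-1}(E)[1]$ and the quotient $\HH^{0}(E)$ (cohomology taken in $\Coh^{s}(X)$), and within each of them the extreme $\sigma_{s,q}$-HN factor, one shows: if $\Im Z^{\mu}_{s,q}(E)>0$ then $E$ is, up to shift, either $\sigma_{s,q}$-semistable or an extension of an object $T$ with $Z_{s,q}(T)=0$ by $F[1]$ with $F$ $\sigma_{s,q}$-semistable of slope $\leq\mu$; if $\Im Z^{\mu}_{s,q}(E)=0$ then all $\sigma_{s,q}$-HN factors of $\HH^{-1}(E)$ have slope exactly $\mu$ (so $\HH^{-1}(E)$ is $\sigma_{s,q}$-semistable, not merely ``close to $\mu$'') and $Z_{s,q}(\HH^{0}(E))=0$. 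Any object of $\Coh^{s}(X)$ killed by $Z_{s,q}$ is automatically $\sigma_{s,q}$-semistable with class in $\ker Z_{s,q}$, so the original support property forces its class in $\Lambda_H$ to vanish (it is a torsion sheaf supported in codimension at least $3$, cf.\ \cite[Lemma 2.16]{BLMS}). Hence in every case $\Delta_H(E)$ equals $\Delta_H$ of a genuinely $\sigma_{s,q}$-semistable object, or is zero, and the support property is inherited with no convexity estimate at all. Your HN step also needs different input: the polygon and mass arguments of \cite[Section 5]{BLMS} are designed for the stability conditions induced on Kuznetsov components, and $\Im Z^{\mu}_{s,q}$ is in general not discrete on $\Lambda_H$; the standard route for the double tilt is noetherianity of $\Coh^{\mu}_{s,q}(X)$ (going back to \cite{BMT}) together with the structure of semistable objects just described. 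So your architecture is right, but the crucial inequality is not established by the argument you outline.
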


We end this paragraph by introducing a notion which will be very useful in the case of prime Fano threefolds.

\begin{definition} \label{def_Serreinv}
A (Bridgeland) stability condition $\sigma$ on a triangulated category $\cT$ with Serre functor $\fS_{\cT}$ is \emph{Serre-invariant} (or \emph{$\fS_{\cT}$-invariant}), if there exists $\widetilde{g} \in \widetilde{\text{GL}}^+_2(\R)$ such that
$$\fS_{\cT} \cdot \sigma = \sigma \cdot \widetilde{g}.$$
\end{definition}

\begin{remark}
Note that the property of $\fS_{\cT}$-invariance is stable under the $\widetilde{\text{GL}}^+_2(\R)$-action.
\end{remark}

\begin{remark}
One may wonder why we use the word invariant in the previous definition, which would be more suitable to identify a stability condition which is fixed by the Serre functor. However, recall that stability conditions in the same orbit with respect to the $\widetilde{\text{GL}}^+_2(\R)$-action have the same set of semistable objects. In particular, (if they exist!) the corresponding moduli spaces are isomorphic. From this viewpoint, we are interested in distinguishing or identifying stability conditions, up to the $\widetilde{\text{GL}}^+_2(\R)$-action.
\end{remark}

\begin{ex} \label{ex:curvesSi}
If $X$ is a curve of genus $g(X)$, then the slope stability $\sigma_H$ (see \autoref{ex:slopestab}) is a Serre-invariant stability condition on $\Db(X)$. Actually, if $g(X) \geq 1$, then there is a unique $\widetilde{\text{GL}}^+_2(\R)$-orbit of stability conditions on $\Db(X)$, so the condition in Definition \ref{def_Serreinv} is trivially satisfied. Anyway, the fact that slope stability is Serre-invariant can be checked directly as follows. The Serre functor of $\Db(X)$ is 
$$\fS_{X}(-)= (-) \otimes \omega_X[1].$$
Tensoring by the line bundle $\omega_X$ preserves the slope stability of a coherent sheaf and $\fS_X(\Coh(X))=\Coh(X)[1]$. On the level of central charges, since $(\rk(\omega_X), \deg(\omega_X))=(1, 2g(X)-2)$, we have
$$Z_H \circ (\fS_X^{-1})_*= \deg(-) + (2-2g(X))\rk(-) - \sqrt{-1} \rk(-).$$
Then the matrix $M= 
\begin{pmatrix}
-1 & 2g(X)-2 \\
0 & -1
\end{pmatrix}
$ satisfies $Z_H \circ (\fS_X^{-1})_*=M^{-1} \circ Z_H$. Now note that
$$M \begin{pmatrix}
1 \\
0
\end{pmatrix}= \begin{pmatrix}
-1 \\
0
\end{pmatrix}, \quad M \begin{pmatrix}
0 \\
1
\end{pmatrix}= \begin{pmatrix}
2g(X)-2 \\
-1
\end{pmatrix}.$$
This implies that, if $\cP$ is the slicing defined by $\sigma_H$, then there exists a lifting $(g,M) \in \widetilde{\text{GL}}^+_2(\R)$ of $M$, such that $\cP((g(0), g(1)]) \subset \Coh(X)[1]$ (since $\Coh(X)=\cP((0, 1])$ and $M e^{i\pi\phi} \in \R_{>0}e^{i \pi g(\phi)}$). Since they are both hearts of bounded t-structures, we have the equality $\cP((g(0), g(1)])=\Coh(X)[1]$.
\end{ex}

One could first wonder whether an admissible subcategory $\cD$ of $\Db(X)$ of a smooth projective variety $X$ admits Serre-invariant stability conditions. This has been recently proved to be false in \cite{KP}, using the notion of Serre dimension, in the case of the Kuznetsov component of almost all Fano complete intersections. Anyway, we could focus on prime Fano threefolds and consider the following less general question:

\begin{qn} \label{quest:existeSi}
When does the Kuznetsov component of a prime Fano threefold with index $1$ or $2$ admit Serre-invariant stability conditions?
\end{qn}

We will see in the next section that \autoref{quest:existeSi} has a positive answer for the Kuznetsov components of cubic threefolds and some other prime Fano threefolds. In these cases, we will also see that there is a unique $\widetilde{\text{GL}}^+_2(\R)$-orbit of Serre-invariant stability conditions. Motivated by this, we could also ask the following: 

\begin{qn} \label{quest:uniqueSi}
Assume that $\cD$ is an admissible subcategory of $\Db(X)$ which admits Serre-invariant stability conditions. If $\cN(\cD)$ has rank $2$, is there a unique orbit of Serre-invariant stability conditions with respect to the $\widetilde{\text{GL}}^+_2(\R)$-action?  
\end{qn}

The question above is very much related to several expectations concerning the topology of the stability manifold of $\cD$. In particular, one my wonder whether such a manifold is connected. A positive answer to \autoref{quest:uniqueSi} may give a positive indication in the direction of this expectation as well.

\subsection{Cubic threefolds: a modern view}\label{subsect:cubic3folds2}

Let us now apply the methods introduced in \autoref{subsect:introstab2} to the Kuznetsov component of a cubic threefold $X$ defined over an algebraically closed field $\K$. 

Consider first the slope stability $\sigma_H=(\Coh(X), Z_H)$. Unfortunately, it does not satisfy the assumptions in \autoref{prop:inducedstab} using the exceptional collection $\OO_X$, $\OO_X(H_X)$. Indeed, we see for instance that 
$$\fS_X(\OO_X)=\OO_X(-2H_X)[3] \in \Coh(X)[3].$$
Let us try with the weak stability conditions $\sigma_{s,q}=(\Coh^s(X), Z_{s,q})$ on $\Db(X)$ for $(s,q)$ in the set $\Delta=\left\{ (s, q) \in \R^2 : q> \frac{1}{2}s^2 \right\}$ (see \autoref{sec:tiltstab}). Note that $\OO_X$, $\OO_X(H_X)$ are slope stable, as they are line bundles, with slope $0$ and $1$, respectively. Choosing $s< 0$, we have $\OO_X$, $\OO_X(H_X) \in \Coh^s(X)$. On the other hand, we observe that
if $s< -1$, then $\fS_X(\OO_X(H_X)) = \OO_X(-H_X)[3] \in \Coh^s(X)[3]$, while if $-1 \leq s < 0$, then $\OO_X(-H_X)[3] \in \Coh^s(X)[2]$. For later use, we point out that we have shown:
\begin{equation} 
\label{rotturadiballe}
\text{if } -1 \leq s < 0, \text{ then } \OO_X, \OO_X(H_X), \OO_X(-2H_X)[1], \OO_X(-H_X)[1] \in \Coh^s(X).
\end{equation}

Even if we cannot apply directly \autoref{prop:inducedstab}, this computation suggests that if we tilt another time, we could find a suitable heart. In fact, consider the weak stability condition $\sigma^{\mu}_{s,q}$ defined in \autoref{prop:doubletiltstab}. 

\begin{thm}[{\cite[Theorem 6.8]{BLMS}}]
\label{thm_BLMSstability}
Assume that $(s,q) \in \Delta$ and $q < -\frac{1}{2}s$. Then the pair
\begin{equation*}
\sigma^\mu_{s,q}|_{\Ku(X)}=(\Coh^{\mu}_{s,q}(X) \cap \Ku(X), Z^\mu_{s,q}|_{\cN(\Ku(X))})
\end{equation*}
is a stability condition with moduli spaces on $\Ku(X)$ 
with respect to the lattice $\cN(\Ku(X))$, for every $\mu \in \R$ satisfying 
\begin{equation} \label{eq_valuesmu}
\mu_{s,q}(\OO_X(-H_X)[1]) \leq \mu < \mu_{s,q}(\OO_X).    
\end{equation}
Moreover, for $\mu$, $\mu'$ satisfying \eqref{eq_valuesmu}, the stability conditions $\sigma^\mu_{s,q}|_{\Ku(X)}$, $\sigma^{\mu'}_{s,q}|_{\Ku(X)}$ belong to the same orbit with respect to the $\widetilde{\emph{GL}}^+_2(\R)$-action.
\end{thm}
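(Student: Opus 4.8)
The plan is to apply \autoref{prop:inducedstab} to the weak stability condition $\sigma^\mu_{s,q}$ of \autoref{prop:doubletiltstab} on $\cT=\Db(X)$ with the exceptional collection $\{E_1,E_2\}=\{\OO_X,\OO_X(H_X)\}$, so that $\cD=\Ku(X)=\langle\OO_X,\OO_X(H_X)\rangle^\perp$. First I would fix $(s,q)\in\Delta$ with $q<-\frac12 s$ (in particular forcing $-1\le s<0$ once one also imposes $q>\frac12 s^2$, which is where the bound on $q$ enters), and I would record from \eqref{rotturadiballe} that $\OO_X,\OO_X(H_X),\OO_X(-2H_X)[1],\OO_X(-H_X)[1]\in\Coh^s(X)$. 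The first real task is to check conditions (1)--(3) of \autoref{prop:inducedstab} for the \emph{doubly} tilted heart $\Coh^\mu_{s,q}(X)$: condition (1) says $\OO_X,\OO_X(H_X)\in\Coh^\mu_{s,q}(X)$, which follows because these are $\sigma_{s,q}$-stable objects of $\Coh^s(X)$ (being slope-stable line bundles, hence $\sigma_{s,q}$-stable by the standard large-volume / wall-and-chamber analysis of Theorem~\ref{thm:familytiltstability}) of $\mu_{s,q}$-slope $<\mu$ by the left inequality in \eqref{eq_valuesmu}, so they lie in the tilting subcategory $\cT^\mu_{\sigma_{s,q}}\subset\Coh^\mu_{s,q}(X)$; here one uses $\mu<\mu_{s,q}(\OO_X)$ and that $\mu_{s,q}(\OO_X(H_X))$ relative to $\mu_{s,q}(\OO_X)$ is controlled by the sign of $\ch_1$. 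For condition (2) one computes $\fS_X(\OO_X)=\OO_X(-2H_X)[3]$ and $\fS_X(\OO_X(H_X))=\OO_X(-H_X)[3]$, and must show both lie in $\Coh^\mu_{s,q}(X)[1]$; equivalently $\OO_X(-2H_X)[1],\,\OO_X(-H_X)[1]\in\Coh^\mu_{s,q}(X)$, which again follows from \eqref{rotturadiballe} together with the placement of their $\mu_{s,q}$-slopes, using the right inequality $\mu_{s,q}(\OO_X(-H_X)[1])\le\mu$ in \eqref{eq_valuesmu} to land them in $\cF^\mu_{\sigma_{s,q}}[1]$ or $\cT^\mu_{\sigma_{s,q}}$ as appropriate. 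Condition (3), $Z^\mu_{s,q}(E_i)\ne0$, is immediate since $u\ne0$ and $Z_{s,q}$ is nonzero on line bundles.

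The next step — and the one I expect to be the \textbf{main obstacle} — is condition (4): there must be no nonzero $F\in\cA':=\Coh^\mu_{s,q}(X)\cap\Ku(X)$ with $Z^\mu_{s,q}(F)=0$, i.e. $Z^\mu_{s,q}|_{\cN(\Ku(X))}$ is a genuine stability function on $\cA'$. Equivalently, $Z_{s,q}$ does not vanish on nonzero objects of $\cA'$. A nonzero $F\in\cA'$ with $Z_{s,q}(F)=0$ would, by the HN filtration with respect to $\sigma^\mu_{s,q}$ and the shape of $Z_{s,q}$, be built from $\sigma_{s,q}$-semistable pieces forced to have $Z_{s,q}=0$, which are supported in codimension $\ge 3$ — but $X$ is a threefold, so such a piece would be (shifts of) a skyscraper-type sheaf; one then argues that such objects cannot be orthogonal to the exceptional pair, i.e. cannot lie in $\Ku(X)$, because $\mathrm{Hom}^\bullet(\OO_X, \OO_x)\ne0$ for any point $x$. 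This uses the concrete geometry of $X\subset\P^4$ and the projection formula; it is the place where the argument is genuinely specific to cubic (prime Fano) threefolds rather than formal. Once (1)--(4) hold, \autoref{prop:inducedstab} yields that $\sigma^\mu_{s,q}|_{\Ku(X)}=(\cA',Z')$ is a Bridgeland stability condition on $\Ku(X)$, defined over the sublattice of $\cN(\Db(X))$ cut out by $K(\cA')$, which one identifies with all of $\cN(\Ku(X))$ by additivity of $\cN(-)$ under the semiorthogonal decomposition (the image of $K(\Ku(X))$ in $\cN(\Db(X))$ is exactly the orthogonal to $[\OO_X],[\OO_X(H_X)]$, and $Z_{s,q}$ factors through $\cN$).

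It remains to upgrade ``Bridgeland stability condition'' to ``stability condition with moduli spaces'' and to prove the $\widetilde{\mathrm{GL}}^+_2(\R)$-orbit statement. For the moduli-space part I would invoke the same machinery as in \autoref{thm:stab1cubic3folds}(1): openness and boundedness follow from \cite[Proposition 25.3]{BLMNPS} (built on \cite{AH-LH}), since $\sigma^\mu_{s,q}$ on $\Db(X)$ is part of a family of (weak) stability conditions with the required openness and boundedness, and these properties are inherited by the induced stability condition on the admissible subcategory $\Ku(X)$; one simply checks the hypotheses of that proposition, which is routine given Theorem~\ref{thm:familytiltstability} and \autoref{prop:doubletiltstab}. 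Finally, for $\mu,\mu'$ both satisfying \eqref{eq_valuesmu}, the hearts $\Coh^\mu_{s,q}(X)$ and $\Coh^{\mu'}_{s,q}(X)$ are obtained from the \emph{same} $\sigma_{s,q}$ by tilting at two different slopes lying in an interval containing no $\mu_{s,q}$-slope of a $\sigma_{s,q}$-semistable object in $\Ku(X)$ (this is exactly what the interval in \eqref{eq_valuesmu} is arranged to guarantee, since the only relevant ``boundary'' slopes come from $\OO_X$ and $\OO_X(-H_X)[1]$, whose contributions have been quotiented out); hence the set of $\sigma^\mu_{s,q}|_{\Ku(X)}$-semistable objects is independent of $\mu$, and the central charges $Z^\mu_{s,q},Z^{\mu'}_{s,q}$ differ by multiplication by $u'/u\in\mathrm{GL}_1(\C)\subset\mathrm{GL}^+_2(\R)$, which lifts to $\widetilde{\mathrm{GL}}^+_2(\R)$ and carries one stability condition to the other. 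This gives the last claim of the theorem.
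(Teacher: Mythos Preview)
Your overall strategy coincides with the paper's: verify conditions (1)--(4) of \autoref{prop:inducedstab} for the weak stability condition $\sigma^\mu_{s,q}$ and the pair $\OO_X,\OO_X(H_X)$, then invoke \cite{BLMNPS,AH-LH} for the moduli-space upgrade, and finish the orbit statement by a rotation argument. Two execution errors need fixing.

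First, your shift in condition (2) is off by one. You need $\fS_X(\OO_X)=\OO_X(-2H_X)[3]\in\Coh^\mu_{s,q}(X)[1]$, i.e.\ $\OO_X(-2H_X)[2]\in\Coh^\mu_{s,q}(X)$, not $\OO_X(-2H_X)[1]$. From \eqref{rotturadiballe} the object $\OO_X(-2H_X)[1]$ already lies in $\Coh^s(X)$; the slope comparison $\mu_{s,q}(\OO_X(-2H_X)[1])<\mu_{s,q}(\OO_X(-H_X)[1])\le\mu$ places it in $\cF^\mu_{\sigma_{s,q}}$, so its further shift $\OO_X(-2H_X)[2]$ lands in $\cF^\mu_{\sigma_{s,q}}[1]\subset\Coh^\mu_{s,q}(X)$; likewise for $\OO_X(-H_X)$. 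You also swapped ``left'' and ``right'' when invoking \eqref{eq_valuesmu}, and wrote ``slope $<\mu$'' where you meant $>\mu$: it is the \emph{right} inequality $\mu<\mu_{s,q}(\OO_X)<\mu_{s,q}(\OO_X(H_X))$ that puts $\OO_X,\OO_X(H_X)$ into $\cT^\mu_{\sigma_{s,q}}$, while the \emph{left} inequality handles the Serre images. Second, your justification of $\sigma_{s,q}$-stability of the four line bundles (``standard large-volume / wall-and-chamber analysis'') is vague; the paper's argument is that each has $\Delta_H=0$, hence sits on the parabola $q=\tfrac12 s^2$ and is $\sigma_{s,q}$-stable for every $(s,q)\in\Delta$ by \cite[Corollary~3.11(a)]{BMS}. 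This also gives the slope ordering above, and the constraint $q<-\tfrac12 s$ is precisely what makes $\mu_{s,q}(\OO_X(-H_X)[1])<\mu_{s,q}(\OO_X)$ so that the interval in \eqref{eq_valuesmu} is nonempty. Your treatment of condition (4), of the moduli-space upgrade, and of the $\widetilde{\mathrm{GL}}^+_2(\R)$-orbit claim matches the paper.
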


\begin{figure}[htb]
\centering
\begin{tikzpicture}[domain=-6:6]
\draw[->] (-6.3,0) -- (6.3,0) node[right] {$s$};
\draw[->] (0,-2) -- (0, 5.5) node[above] {$q$};
\draw plot (\x,{0.125*\x*\x}) node[above] {$q = \frac{1}{2}s^{2}$};
\coordinate (O) at (0,0);
\node  at (O) {$\bullet$};
\draw (O) node [above right]  {$\mathcal{O}_{X}$};

\coordinate (OH) at (-4,2);
\node  at (OH) {$\bullet$};
\draw (-4,2) node [above right]  {$\mathcal{O}_{X}(-H_X)$};

\coordinate (OK) at (4,2);
\node  at (OK) {$\bullet$};
\draw (4,2) node [above left]  {$\mathcal{O}_{X}(H_X)$};

\coordinate (OH2) at (-6,4.5);
\node  at (OH2) {$\bullet$};
\draw (-6,4.5) node [above right]  {$\mathcal{O}_{X}(-2H_X)$};

\draw[line width=0.55mm, black] (OH)--(O);
\draw[line width=0.55mm,domain=-4:0,color=black] plot (\x,{0.125*\x*\x});

\node  at (-2,0.8) {$\bullet$};
\draw (-2,0.8) node [below left]  {$(s,q)$};

\draw[dashed] (-2,0.8) -- (-2, -2);

\draw (-2,0.8) -- (O);
\draw[dashed] (OH) -- (-2,0.8);
\draw (-2,0.8) -- (0,-0.4);

\draw[fill=gray!42] (-2,0.8) -- (O) -- (0,-0.4);

\end{tikzpicture}

\caption{The boundary of the region of points $(s,q)$ inducing stability conditions on $\Ku(X)$ is represented in bold. For $(s,q)$ in this region the slope with respect to $\sigma_{s,q}$ of $\OO_X$ is bigger than the slope of $\OO_X(-H_X)[1]$. Note that the sharp angle in grey between the interval connecting the point $(s,q)$ to the point $\OO_X$ and continuation of the interval connecting $\OO_X(-H)$ to $(s,q)$ beyond $(s,q)$ is the region that corresponds to the values of $\mu$ for which $\sigma_{s,q}^\mu|_{\Ku(X)}$ is a stability condition on $\Ku(X)$. \label{fig_1}}  
\end{figure}
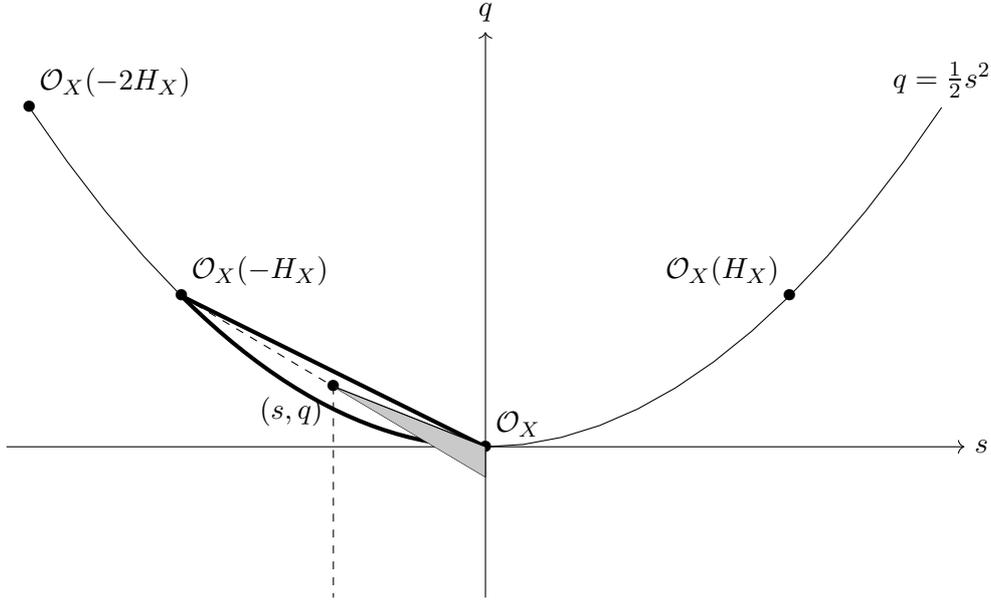

\begin{proof}
By \eqref{rotturadiballe} we have that $\OO_X$, $\OO_X(H_X)$, $\OO_X(-2H_X)[1]$, $\OO_X(-H_X)[1]$ belong to $\Coh^s(X)$ for $-1 \leq s < 0$. Since they define points on the parabola $q=\frac{1}{2}s^2$, by \cite[Corollary 3.11(a)]{BMS}, they are $\sigma_{s,q}$-stable for every $(s,q) \in \Delta$. Comparing the slopes with respect to $\sigma_{s,q}$ via a picture as in Figure \ref{fig_1} (or by a direct computation), we deduce that 
$$\mu_{s,q}(\OO_X(-2H_X)[1]) < \mu_{s,q}(\OO_X(-H_X)[1]) < \mu_{s,q}(\OO_X) < \mu_{s,q}(\OO_X(H_X))$$
for $(s,q)$ as in the statement. Thus if $\mu$ satisfies \eqref{eq_valuesmu}, then $\OO_X$, $\OO_X(H_X)$, $\OO_X(-2H_X)[2]$, $\OO_X(-H_X)[2]$ belong to $\Coh^\mu_{s,q}(X)$. Note that an object $E \in \Coh^\mu_{s,q}(X)$ with $Z_{s,q}(E)=0$ is a torsion sheaf supported in codimension $3$ by \cite[Lemma 2.16]{BLMS}, which is not in $\Ku(X)$ since $\Hom(\OO_X, E) \neq 0$. We can then apply \autoref{prop:inducedstab} which proves that $\sigma_{s,q}^\mu|_{\Ku(X)}$ is a Bridgeland stability condition on $\Ku(X)$ with respect to 
$$\mbox{Im}(K(\Ku(X)) \to \cN(X)) \cong \cN(\Ku(X)) \cong \Z^{2}.$$ 
Then $\sigma_{s,q}^\mu|_{\Ku(X)}$ is a numerical stability condition on $\Ku(X)$ and \cite{AH-LH,BLMNPS} (see, in particular, Proposition 25.3 in the latter paper) implies that it is a stability condition with moduli spaces. We leave to the reader the check of the second part of the statement, using \cite[Lemma 4.3]{BLMS} and considering the rotation between two sides of the sharp angle on Figure \ref{fig_1} which is provided by the segment connecting the point $(s,q)$ to $\OO_X$ and the segment which is the continuation of the one connecting $\OO_X(-H_X)$ to $(s,q)$.
\end{proof}

We denote the induced stability conditions on $\Ku(X)$ by
\begin{equation} \label{eq_stabsigmasq}
\sigma(s,q, \mu)=(\cA(s,q, \mu), Z(s,q, \mu)),    
\end{equation}
where
$$\cA(s,q, \mu):=\Coh^{\mu}_{s,q}(X) \cap \Ku(X), \quad Z(s,q, \mu):=Z^\mu_{s,q}|_{\cN(\Ku(X))}.$$
As we explain below, these stability conditions satisfy the nice property of being Serre-invariant, answering positively \autoref{quest:existeSi} for cubic threefolds.

From now on, we assume $\K=\C$ as in the papers we are referring to, although the same proof should apply in more generality over an algebraically closed field of characteristic $0$. 

\begin{thm}[\cite{PY}, Corollary 5.5] \label{thm_withsong}
Let $\sigma$ be a stability condition on $\Ku(X)$ in the same orbit of $\sigma(s,q,\mu)$ defined in \eqref{eq_stabsigmasq}, with respect to the $\widetilde{\emph{GL}}^+_2(\R)$-action. Then $\sigma$ is Serre-invariant.
\end{thm}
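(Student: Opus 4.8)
The strategy is to reduce the Serre-invariance of an arbitrary stability condition $\sigma$ in the $\widetilde{\text{GL}}^+_2(\R)$-orbit of $\sigma(s,q,\mu)$ to a single representative, and then to verify the defining identity $\fS_{\Ku(X)}\cdot\sigma = \sigma\cdot\widetilde g$ for that representative by a direct computation on the slicing. The first reduction is immediate: by the remark following \autoref{def_Serreinv}, the property of being Serre-invariant is stable under the $\widetilde{\text{GL}}^+_2(\R)$-action, so it suffices to treat one $\sigma(s,q,\mu)$, and by the last sentence of \autoref{thm_BLMSstability} we may even fix $\mu$ freely in the allowed range and fix any convenient $(s,q)$ with $q<-\tfrac12 s$. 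So the real content is: \emph{show that $\fS_{\Ku(X)}\cdot\sigma(s,q,\mu)$ lies in the $\widetilde{\text{GL}}^+_2(\R)$-orbit of $\sigma(s,q,\mu)$.}

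The key input is \autoref{prop:cubic3foldseq}: $\Ku(X)$ is fractional Calabi--Yau of dimension $5/3$, i.e.\ $\fS_{\Ku(X)}^3 \cong [5]$. This already constrains the possible behaviour of $\fS_{\Ku(X)}$ on stability conditions drastically. Concretely, I would argue as follows. First, $\fS_{\Ku(X)}$ acts on $\cN(\Ku(X))\cong\Z^2$ and one computes this action explicitly (it has finite order up to shift, compatibly with $\fS^3=[5]$, and in fact acts as $-\,$(rotation of order $3$) on $\cN$ after the identification with $A_2$-type data — this is where one uses the explicit generators, e.g.\ classes of ideal sheaves of lines as in \autoref{rmk:linesFano3folds}, together with \autoref{thm:stab1cubic3folds}(2)). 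Hence on the level of central charges, $Z(s,q,\mu)\circ(\fS_{\Ku(X)})_*^{-1}$ differs from $Z(s,q,\mu)$ by an element of $\text{GL}^+_2(\R)$. The nontrivial part is to match the \emph{hearts}: one must produce $\widetilde g\in\widetilde{\text{GL}}^+_2(\R)$ lifting that matrix such that $\fS_{\Ku(X)}(\cA(s,q,\mu))$ coincides with the heart obtained by rotating $\cA(s,q,\mu)$ by $\widetilde g$. For this I would combine: (a) the fact that $\fS_{\Ku(X)}^3=[5]$, which forces the image heart to be one of only finitely many candidates (the heart is rotated by a fixed total amount after applying $\fS$ three times), and (b) an explicit description of a few stable objects of extreme phase in $\cA(s,q,\mu)$ — the images under $\iota^*$ of $\OO_X$, $\OO_X(H_X)$ and their twists appearing in \eqref{rotturadiballe}, together with the ideal sheaves of lines — whose images under $\fS_{\Ku(X)}$ can be computed via \autoref{ex:Serre} (namely $\fS_X = (-)\otimes\omega_X[3] = (-)(-2H_X)[3]$ followed by the projection, as already done in \autoref{subsect:cubic3folds2}). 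Pinning down where these generators land pins down the rotated heart, and one checks the two hearts agree because both are hearts of bounded $t$-structures containing the same collection of objects at the appropriate phases.

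The main obstacle, and the step that requires genuine care rather than bookkeeping, is step (b): controlling the heart $\cA(s,q,\mu)$ well enough to see that $\fS_{\Ku(X)}$ sends it into a single $\widetilde{\text{GL}}^+_2(\R)$-translate of itself, rather than merely into the extension closure of two consecutive translates. Equivalently, one needs a uniform bound on the ``width'' of $\fS_{\Ku(X)}$ acting on phases. Here the fractional Calabi--Yau property does the heavy lifting: since $\fS^3_{\Ku(X)}=[5]$ acts on phases as the shift by $5$, the average phase shift of $\fS_{\Ku(X)}$ is $5/3$, and a semicontinuity/rigidity argument (using that $\cN(\Ku(X))$ has rank $2$, so the stability manifold is a surface and the relevant orbit is understood) upgrades this ``average'' statement to the ``on the nose'' statement that $\fS_{\Ku(X)}\cdot\sigma$ and $\sigma$ have the same semistable objects up to the action. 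This last rigidity step is exactly the point where one must invoke, following \cite{PY}, the detailed structure of $\Ku(X)$: one shows any stability condition in the orbit has a spanning set of $\sigma$-stable objects whose phases under $\fS$ can be pinned to lie in an interval of length $<1$, forcing $\fS_{\Ku(X)}(\cA(s,q,\mu))=\cA(s,q,\mu)\cdot\widetilde g[k]$ for a single $\widetilde g$ and integer $k$, which is the assertion. I would then conclude by citing \cite[Corollary 5.5]{PY} for the clean statement, having indicated that the proof is this combination of the fractional Calabi--Yau identity with an explicit analysis of stable objects of minimal and maximal slope in the induced heart.
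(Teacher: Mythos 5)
Your reductions at the start are fine (Serre-invariance is stable under the $\widetilde{\mathrm{GL}}^+_2(\R)$-action, so one representative suffices), and the central-charge half of the argument is unproblematic: since $\fS_{\Ku(X)}$ acts on the rank-two lattice $\cN(\Ku(X))$ with $\fS_*^3=-\id$, the composition $Z(s,q,\mu)\circ(\fS_{\Ku(X)})_*^{-1}$ does differ from $Z(s,q,\mu)$ by a matrix in $\mathrm{GL}^+_2(\R)$. The genuine gap is exactly where you locate it, the matching of hearts, and the mechanism you propose there does not work. The relation $\fS_{\Ku(X)}^3=[5]$ does not force $\fS_{\Ku(X)}\cdot\sigma(s,q,\mu)$ to be a ``rotation'' of $\sigma(s,q,\mu)$, nor does it cut down the image heart to finitely many candidates: a priori $\fS_{\Ku(X)}\cdot\sigma$ could lie in a completely different $\widetilde{\mathrm{GL}}^+_2(\R)$-orbit, with the three iterates only returning to $\sigma[5]$ after composition. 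The statement that ``the average phase shift is $5/3$'' plus an unspecified semicontinuity/rigidity argument is not a proof, and the fractional Calabi--Yau property cannot do the heavy lifting by itself: quartic threefolds also have fractional Calabi--Yau Kuznetsov components, yet Serre-invariance of the induced stability conditions is an open question in that case (see \autoref{subsect:moreFano}). Likewise, two hearts of bounded t-structures that contain a common finite collection of objects at prescribed phases need not coincide; the standard argument requires an actual inclusion of one heart in (a shift/tilt of) the other. Finally, closing the argument ``by citing \cite[Corollary 5.5]{PY}'' is circular, since that is precisely the statement being proved.

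The paper's proof (following \cite{PY}) takes a different and more concrete route at this point: by \cite[Lemma 4.1]{Kuz:V14} one has $\fS_{\Ku(X)}^{-1}=(\L_{\OO_X}\circ\Phi)\circ(\L_{\OO_X}\circ\Phi)[-3]$ with $\Phi=-\otimes\OO_X(H_X)$, so it suffices to prove invariance under the single rotation functor $\L_{\OO_X}\circ\Phi$, which is explicit. The hard step is then to show that $\L_{\OO_X}(\Phi(\cA(-\tfrac12,q,-\tfrac12)))$ is a tilt of $\cA(-\tfrac12,q,-\tfrac12)$; this is done by tracking how tensoring by $\OO_X(H_X)$ and mutating past $\OO_X$ interact with the double-tilted heart, the key technical input being \cite[Lemma 3]{LZ_poisson}, which controls the $\sigma_{s',q'}$-semistable factors of a $\sigma_{s,q}$-semistable object when $(s,q)$ is deformed to $(s',q')$. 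Only after this heart comparison does one match central charges by a matrix in $\mathrm{GL}^+_2(\R)$ and conclude as in the second half of \autoref{thm_BLMSstability}. Your proposal omits precisely this explicit analysis, which is the actual content of the theorem; to repair it you would need to supply an argument of this type rather than appeal to $\fS_{\Ku(X)}^3=[5]$ and rigidity.
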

\begin{proof}[Idea of proof]
Since the Serre functor commutes with the $\widetilde{\text{GL}}^+_2(\R)$-action, it is enough to show the statement for $\sigma(-\frac{1}{2},q, -\frac{1}{2})$ with $\frac{1}{8} < q < \frac{1}{4}$. By \cite[Lemma 4.1]{Kuz:V14} the Serre functor of $\Ku(X)$ satisfies the relation
\begin{equation}
\label{eq_Serrefunctor}
\fS_{\Ku(X)}^{-1}=(\L_{\OO_X} \circ \Phi) \circ (\L_{\OO_X} \circ \Phi)[-3],
\end{equation}
where $\Phi(-)=(-) \otimes \OO_X(H_X)$. Thus we can further reduce to prove the statement for the autoequivalence $\L_{\OO_X} \circ \Phi$. Recall that $\L_{\OO_X}$ is the \emph{left mutation in the exceptional object $\OO_X$}. This means that if $\alpha$ denotes the embedding of the admissible subcategory generated by $\OO_X$ into $\Db(X)$, the functor $\L_{\OO_X}$ is defined by the canonical triangle of exact functors
\[
\alpha\alpha^!\to\id_{\Db(X)}\to\L_{\OO_X}
\]
where the first arrow is given by adjunction.

The second step is to show that the heart $\L_{\OO_X}(\Phi(\cA(-\frac{1}{2},q, -\frac{1}{2})))$ is a tilt of $\cA(-\frac{1}{2},q, -\frac{1}{2})$. This is the hardest part in the proof. One key ingredient is \cite[Lemma 3]{LZ_poisson}, which allows us to control the slope of the semistable factors with respect to $\sigma_{s',q'}$ of a $\sigma_{s,q}$-semistable object in $\Coh^s(X)$, when deforming $(s,q)$ to $(s',q')$.

Finally, it is not difficult to show that there exists $M \in \text{GL}^+_2(\R)$ such that $ Z(s,q, -\frac{1}{2}) \circ (\L_{\OO_X} \circ \Phi)_*^{-1}= M^{-1}Z(s,q, -\frac{1}{2})$. Then we can use a similar argument to the one in the proof of the second part of \autoref{thm_BLMSstability}, to get the statement.
\end{proof}

In this case we can also give a positive answer to \autoref{quest:uniqueSi}, as a consequence of the following general criterion, which provides sufficient conditions in order for a fractional Calabi\textendash Yau category to admit a unique $\widetilde{\text{GL}}^+_2(\R)$-orbit of Serre-invariant stability conditions.

\begin{thm}[\cite{FP}, Theorem 3.2] \label{thm_critSi}
Let $\cT$ be a proper $\K$-linear triangulated category over a field $\K$. Assume $\cT$ satisfies the following conditions: 
\begin{enumerate}
	\item\label{C1}Its Serre functor $\fS_{\cT}$ satisfies $\fS_{\cT}^r = [k]$ when $0 < k/r < 2$; 
	\item\label{C2} The numerical Grothendieck group $\cN(\cT)$ is of rank 2 and 
	$$\ell_{\cT} := \max\{\chi(v, v) \colon 0 \neq v \in \cN(\cT) \} < 0,$$
	where $\chi$ is the Euler form defined in \eqref{eq_defeulerform}.
	\item\label{C3} There is an object $Q \in \cT$ satisfying 
	 	\begin{equation}\label{minimal}
	   -\ell_{\cT} +1 \leq  \mathrm{dim}\Ext^1(Q, Q) <  -2\ell_{\cT} +2.
	\end{equation}
\end{enumerate}
Let $\sigma_1$ and $\sigma_2$ be Serre-invariant numerical stability conditions on $\cT$. Then there exists $\tilde{g} \in \widetilde{\emph{GL}}^+_2(\mathbb{R})$ such that $\sigma_1= \sigma_2 \cdot \tilde{g}$. 
\end{thm}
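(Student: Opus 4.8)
The plan is to normalize, via the $\widetilde{\GL}^+_2(\R)$-action, so that $\sigma_1$ and $\sigma_2$ have the same central charge, and then to show that a Serre-invariant stability condition is rigid once its central charge is fixed. For the normalization: on a category with $\rk\cN(\cT)=2$ the support property together with local finiteness of walls forces the central charge of every numerical stability condition to be injective on $\cN(\cT)\otimes\R\cong\R^2$, so, viewing central charges as invertible real $2\times 2$ matrices of one (canonically fixed) orientation, they form a single orbit under left multiplication by $\GL^+_2(\R)$; since $\widetilde{\GL}^+_2(\R)\twoheadrightarrow\GL^+_2(\R)$, after replacing $\sigma_2$ by $\sigma_2\cdot\tilde g$ for a suitable $\tilde g$ I may assume $Z_1=Z_2=:Z$. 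As the phase of a $\sigma$-semistable object lying in the heart is already determined by $\arg Z$, it then suffices to prove $\cA_1=\cA_2$; I will in fact show the slicings coincide, from which $\sigma_1=\sigma_2$ and hence the theorem follow.

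Next I would use Serre-invariance. Writing $\fS_\cT\cdot\sigma=\sigma\cdot(M,g)$ for a Serre-invariant $\sigma=(\cA,Z)$, the relation $\fS_\cT^r=[k]$ from \eqref{C1} forces $M^r=(-1)^kI$, so the real matrix $M$ is conjugate to the rotation by $k\pi/r$ --- here the bound $0<k/r<2$ is exactly what rules out the degenerate possibilities and fixes this angle --- and therefore $\fS_\cT$ carries $\sigma$-semistable objects to $\sigma$-semistable objects while advancing phases ``by $k/r$''; in particular $\fS_{\cT,*}$ acts on $\cN(\cT)$ with finite order ($\fS_{\cT,*}^r=(-1)^k$), so each orbit of classes is finite. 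From the rank and definiteness hypothesis \eqref{C2}, the classes realizing the maximum $\ell_\cT$ of the negative-definite form $v\mapsto\chi(v,v)$ form a small explicit set $V$ of primitive classes, and a short Euler-characteristic computation --- using that \eqref{C1} confines the self-$\Ext$ of an object to a band of length $<2$ --- converts the inequalities $-\ell_\cT+1\le\ext^1(Q,Q)<-2\ell_\cT+2$ of \eqref{C3} into the statement that $[Q]$ is a primitive class with $\chi([Q],[Q])=\ell_\cT$, i.e.\ $[Q]\in V$.

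Then comes the heart of the argument, a structural lemma valid for any Serre-invariant $\sigma$: every class in $V$ is represented by a $\sigma$-stable object, unique up to shift, and all $\sigma$-semistable objects have phase in a bounded window around the phases of these extremal-class objects. Its proof proceeds by Harder--Narasimhan / Jordan--H\"older bookkeeping --- between consecutive HN factors $A,B$ one has $\Hom^{<0}(A,B)=0$ and, by Serre duality and \eqref{C1}, $\Ext^{\ge 2}(A,B)=0$, whence $\ext^1(A,B)=-\chi(A,B)$ --- which, combined with the negative-definiteness of \eqref{C2} and the $\ext^1$-bounds of \eqref{C3}, restricts the possible configurations and numbers of factors to the point of forcing the claim. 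Granting this, the proof finishes: since $Z_1=Z_2=Z$, a $\sigma_1$-stable and a $\sigma_2$-stable object of a fixed class $v\in V$ have equal central charge and hence equal phase modulo $1$; propagating along the finite $\fS_{\cT,*}$-orbit of $v$ using the ``advance by $k/r$'' of the Serre rotation removes the integer ambiguity, so $\sigma_1$ and $\sigma_2$ agree on the extremal-class stable objects and then, by the window bound, on every semistable object. Hence $\cP_1(\phi)=\cP_2(\phi)$ for all $\phi$, so $\cA_1=\cA_2$, and as $Z_1=Z_2$ the slicings coincide; undoing the normalization gives the required $\tilde g\in\widetilde{\GL}^+_2(\R)$ with $\sigma_1=\sigma_2\cdot\tilde g$.

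The main obstacle is the structural lemma. All three hypotheses are used there at once: the vanishings $\Hom^{<0}=\Ext^{>2}=0$ between HN factors and the Serre-duality identity for $\Ext^2$ come from the fractional Calabi--Yau bound $0<k/r<2$ of \eqref{C1}; the control on $\ext^1$ and on the number of HN/JH factors comes from the inequalities in \eqref{C3}; and the classification of admissible configurations of classes comes from the negative-definiteness of the Euler form in \eqref{C2}. I expect the combinatorial bookkeeping welding these together, rather than any single conceptual step, to be the crux --- with the rigidity of the Serre rotation (the point that upgrades ``phase modulo $1$'' to ``phase'') a close second.
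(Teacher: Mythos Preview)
The paper does not prove this theorem; it is quoted from \cite{FP} (Theorem 3.2 there) and only accompanied by a paragraph of informal motivation. That motivation does align with your sketch: the paper explains that the hypotheses are designed so that $\cT$ behaves like a noncommutative curve, and that $Q$ and $\fS_\cT(Q)$ play the role that skyscraper sheaves and line bundles play in Macr\`i's argument \cite{Macri} for curves --- namely, they are $\sigma$-stable with controlled phase for every Serre-invariant $\sigma$. This is precisely your ``structural lemma'', and your identification of it as the crux matches the paper's commentary. So there is no proof in the paper to compare against beyond this.

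One point in your outline that would need justification, and which the paper's brief discussion does not address, is the orientation claim in your normalization step. You assert that any two central charges $Z_1,Z_2\colon\cN(\cT)\otimes\R\to\R^2$ lie in a single $\mathrm{GL}^+_2(\R)$-orbit because they share ``one canonically fixed orientation'', but you do not say what fixes it; a priori $Z_1$ and $Z_2$ could differ by a matrix of negative determinant, and then no $\tilde g\in\widetilde{\mathrm{GL}}^+_2(\R)$ can make them agree. In practice this is handled by first proving, from \eqref{C1} and Serre-invariance, the phase inequality $\phi(E)<\phi(\fS_\cT E)<\phi(E)+2$ for every $\sigma$-stable $E$ (compare \eqref{eq_fasi} in this paper for the cubic-threefold case), which pins down the sense of the rotation $M$ attached to $\fS_\cT$ and hence the orientation of $Z$. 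Without making this step explicit your reduction to $Z_1=Z_2$ is incomplete.
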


\begin{remark}
Note that \cite[Theorem 3.2]{FP} also deals with the case $r=2$ and $k=4$. In that situation, we need the additional condition that there are two objects $Q_1, Q_2 \in \cT$ satisfying \eqref{minimal} such that $Q_1$ is not isomorphic to $Q_2$ or $Q_2[1]$, $\Hom(Q_2, Q_1) \neq 0$ and $\Hom(Q_1, Q_2[1]) \neq 0$. 
\end{remark}

The assumptions in the statement could seem unnatural at a first glance, but actually they are not. Indeed, in \eqref{C1} we require $\cT$ to be fractional Calabi\textendash Yau of dimension $< 2$, while in \eqref{C2} we require $\cN(\cT)$ to have rank $2$ as a noncommutative curve (although the analogy with curves is not totally correct, as for instance in one case
the numerical Grothendieck group is negative definite, while in the latter it is not, as pointed out by the referee). Condition \eqref{C3} is also inspired by the case of curves: the objects $Q$ and $\fS_{\cT}(Q)$ for $\cT$ play a similar role as the one of the skyscraper sheaves and line bundles on a curve in Macr\`i's proof \cite{Macri}, e.g.\ they are stable with controlled phase.

We now see how to apply the above criterion to $\Ku(X)$ of a cubic threefold.

\begin{cor} \label{cor_uniqueness} 
\autoref{quest:uniqueSi} has a positive answer for the Kuznetsov component $\Ku(X)$ of a cubic threefold $X$.
\end{cor}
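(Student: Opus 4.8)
The plan is to prove the corollary by checking that $\cT=\Ku(X)$ satisfies the three hypotheses of \autoref{thm_critSi}; the existence of Serre-invariant numerical stability conditions on $\Ku(X)$ — which \autoref{quest:uniqueSi} presupposes — is guaranteed by \autoref{thm_withsong}, so the only remaining work is this verification.

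Condition \eqref{C1} is exactly \autoref{prop:cubic3foldseq}: the Kuznetsov component of a cubic threefold is fractional Calabi--Yau of dimension $\tfrac{5}{3}$, i.e.\ $\fS_{\Ku(X)}^{3}=[5]$, and $0<\tfrac{5}{3}<2$; note that we land in the case $r=3$, $k=5$, so the extra hypothesis required when $r=2$, $k=4$ (see the remark following \autoref{thm_critSi}) does not enter. For condition \eqref{C2}, recall that $\cN(\Ku(X))$ is free of rank $2$; a direct computation of the Euler form \eqref{eq_defeulerform} (for instance on the classes of the projections into $\Ku(X)$ of $\OO_\ell$ and $\OO_\ell(1)$ for a line $\ell\subset X$) shows that, in a suitable basis, $\chi(-,-)$ is represented by $\begin{pmatrix}-1&-1\\0&-1\end{pmatrix}$, so that $\chi(v,v)=-(a^{2}+ab+b^{2})$ for $v=(a,b)$. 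Since $a^{2}+ab+b^{2}\ge 1$ for every $0\neq v\in\Z^{2}$, the form $\chi$ is negative definite on $\cN(\Ku(X))\otimes\R$ and $\ell_{\Ku(X)}=\max\{\chi(v,v):0\neq v\}=-1$.

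For condition \eqref{C3} I would take $Q=I_\ell$, the ideal sheaf of a line $\ell\subset X$. First, $I_\ell$ genuinely lies in $\Ku(X)$: using $0\to I_\ell\to\OO_X\to\OO_\ell\to 0$ together with $H^{>0}(X,\OO_X)=0$, Kodaira vanishing for $\OO_X(-H)=\omega_X(H)$, and Serre duality, one checks that $\RHom_X(\OO_X,I_\ell)=\RHom_X(\OO_X(H),I_\ell)=0$, hence $I_\ell\in\langle\OO_X,\OO_X(H)\rangle^{\perp}=\Ku(X)$ and $\Ext^{i}_{\Ku(X)}(I_\ell,I_\ell)=\Ext^{i}_X(I_\ell,I_\ell)$. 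Moreover $\dim\Ext^{1}_{\Ku(X)}(I_\ell,I_\ell)=2$: this follows either from the normal-bundle computation ($\mathcal{N}_{\ell/X}$ is $\OO_{\P^1}^{\oplus 2}$ or $\OO_{\P^1}(1)\oplus\OO_{\P^1}(-1)$, so $h^{0}=2$, $h^{1}=0$, and $\mathcal{H}om(I_\ell,I_\ell)=\OO_X$ forces $\Ext^{0}=\C$, $\Ext^{3}=0$), or from \autoref{thm:stab1cubic3folds}(2), which identifies the Fano surface $F_1(X)$ — smooth of dimension $2$ — with a moduli space of objects in $\Ku(X)$ whose Zariski tangent space at $[I_\ell]$ is $\Ext^{1}_{\Ku(X)}(I_\ell,I_\ell)$. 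Thus $\RHom_{\Ku(X)}(I_\ell,I_\ell)\cong\C\oplus\C^{2}[-1]$ and $\dim\Ext^{1}(Q,Q)=2$ lies in the window $[-\ell_{\Ku(X)}+1,\,-2\ell_{\Ku(X)}+2)=[2,4)$ of \eqref{minimal}. Hence all hypotheses of \autoref{thm_critSi} are met, so any two Serre-invariant numerical stability conditions on $\Ku(X)$ lie in a single $\widetilde{\mathrm{GL}}^+_2(\R)$-orbit; combined with \autoref{thm_withsong} this is precisely a positive answer to \autoref{quest:uniqueSi} for $\Ku(X)$.

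Since the substantial input — \autoref{thm_withsong} and the general criterion \autoref{thm_critSi} — is already available, the proof is essentially a bookkeeping of invariants, and the one point genuinely worth checking is that the two numerical quantities fit together: the criterion applies only because $\ell_{\Ku(X)}=-1$ exactly, which makes \eqref{minimal} read $\dim\Ext^{1}(Q,Q)\in\{2,3\}$, and this is met on the nose by the ideal sheaf of a line thanks to the two-dimensionality and smoothness of the Fano surface of lines. The mild technical points are the Euler-form computation on $\cN(\Ku(X))$ and the verification that $I_\ell$ — and not merely its projection — lies in $\Ku(X)$ with the expected graded endomorphism ring; both are routine but should be recorded.
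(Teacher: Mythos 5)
Your proposal is correct and follows essentially the same route as the paper: both verify the three hypotheses of \autoref{thm_critSi} using \autoref{prop:cubic3foldseq} for the fractional Calabi--Yau condition, the rank-$2$ numerical Grothendieck group with Euler form $\begin{pmatrix}-1&-1\\0&-1\end{pmatrix}$ (giving $\ell_{\Ku(X)}=-1$), and the ideal sheaf $I_\ell$ of a line with $\dim\Ext^1(I_\ell,I_\ell)=2$ as the object satisfying \eqref{minimal}. The extra details you record (that $I_\ell$ itself lies in $\Ku(X)$, and the normal-bundle/Fano-surface justification of the $\Ext^1$ computation) are consistent with the paper, which cites these facts from \cite{BMMS} and \cite{Kuz:Fano3folds} rather than reproving them.
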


\begin{proof}
By  \autoref{prop:cubic3foldseq} we know that $\Ku(X)$ is fractional Calabi\textendash Yau of dimension $5/3 < 2$. By \cite[Proposition  2.7]{BMMS} and \cite{Kuz:Fano3folds}, the numerical Grothendieck group $\cN(\Ku(X))$ is a rank-$2$ lattice and a basis is given by
\begin{equation*}
  \cN(\Ku(X)) = \Z\, [\II_{\ell}] \oplus \Z\, [\fS(\II_{\ell})],
\end{equation*}
where $\II_{\ell}$ is the ideal sheaf of a line $\ell$ in $X$, with respect to which the Euler form $\chi$ is represented by
\begin{equation}
\label{eq_eulerform}
\begin{pmatrix}
 -1 & -1\\0 & -1   
\end{pmatrix}.
\end{equation}
An easy computation shows that $\chi(v,v) \leq -1$ for every $0\neq v\in \cN(\Ku(X))$, thus $\ell_{\Ku(X)}=-1$. Since $\mathrm{dim}\Ext^1(\II_\ell, \II_\ell)=2$, we see that
$$\mathrm{dim}\Ext^1(\II_\ell, \II_\ell)=1- \ell_{\Ku(X)}=2.$$
Thus \autoref{thm_critSi} applies to $\Ku(X)$ and implies the statement.
\end{proof}

In the next section, we will see how to apply these results on Serre-invariant stability conditions to study the properties of moduli spaces of stable objects in $\Ku(X)$ and to reprove the Categorical Torelli theorem.

\subsection{Cubic threefolds: applications} \label{subsec:cubictthreefolds3}

Assume $\K=\C$. The first application of the results presented in the previous section concerns the relation between the stability conditions $\sigma(s,q, \mu)$, defined in \eqref{eq_stabsigmasq}, and the stability condition $\sigma_0$, introduced previously in \autoref{thm:stab1cubic3folds}. Similarly to \autoref{thm_withsong}, we can show the following statement.

\begin{thm}[\cite{FP}, Theorem 5.4] \label{thm_withsoheyla}
The stability condition $\sigma_0$ on $\Ku(X)$ introduced in \autoref{thm:stab1cubic3folds} is Serre-invariant.
\end{thm}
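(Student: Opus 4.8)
The goal is to prove that the stability condition $\sigma_0=(Z_0,\cA_0)$ on $\Ku(X)$ constructed via the conic bundle / Clifford algebra picture in \autoref{thm:stab1cubic3folds} is Serre-invariant. The plan is to combine two facts established earlier: first, that $\sigma_0$ is a numerical stability condition on $\Ku(X)$ with moduli spaces (\autoref{thm:stab1cubic3folds}(1)), and second, that the stability conditions $\sigma(s,q,\mu)$ coming from the double-tilting construction are Serre-invariant (\autoref{thm_withsong}). Since $\Ku(X)$ of a cubic threefold satisfies all the hypotheses of \autoref{thm_critSi} — it is fractional Calabi--Yau of dimension $5/3<2$ by \autoref{prop:cubic3foldseq}, its numerical Grothendieck group $\cN(\Ku(X))$ has rank $2$ with $\ell_{\Ku(X)}=-1$, and the ideal sheaf $\II_\ell$ of a line satisfies $\dim\Ext^1(\II_\ell,\II_\ell)=2=1-\ell_{\Ku(X)}$ (these are precisely the computations carried out in the proof of \autoref{cor_uniqueness}) — there is, up to the $\widetilde{\mathrm{GL}}^+_2(\R)$-action, a \emph{unique} Serre-invariant stability condition on $\Ku(X)$.

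So the proof reduces to showing that $\sigma_0$ lies in the same $\widetilde{\mathrm{GL}}^+_2(\R)$-orbit as one of the $\sigma(s,q,\mu)$ — once that is known, Serre-invariance of $\sigma_0$ follows from Serre-invariance of $\sigma(s,q,\mu)$ (\autoref{thm_withsong}), because the property of being Serre-invariant is stable under the $\widetilde{\mathrm{GL}}^+_2(\R)$-action (as remarked after \autoref{def_Serreinv}). First I would recall that both $\sigma_0$ and $\sigma(s,q,\mu)$ are numerical stability conditions on $\Ku(X)$ with respect to the same lattice $\cN(\Ku(X))\cong\Z^2$. The key geometric input is \autoref{thm:stab1cubic3folds}(2): the moduli space $M_{\sigma_0}(\Ku(X),[\II_\ell])$ is the Fano surface of lines $F_1(X)$, so in particular the objects $\II_\ell$ (which lie in the heart $\cA_0$ by \autoref{rmk:linesFano3folds}) are $\sigma_0$-stable. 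On the other hand, one checks — exactly as in the classical cubic threefold literature, e.g.\ using that $\II_\ell$ has the right Chern character and is a torsion-free sheaf — that $\II_\ell$ is also stable with respect to the induced stability conditions $\sigma(s,q,\mu)$, and in fact in a controlled phase. The point then is that a numerical stability condition on a category with $\cN$ of rank $2$ is, up to the $\widetilde{\mathrm{GL}}^+_2(\R)$-action, essentially pinned down by which objects are stable, together with a normalization of phases; using the uniqueness result \autoref{thm_critSi} one does not even need to identify $\sigma_0$ and $\sigma(s,q,\mu)$ directly — it suffices to verify that $\sigma_0$ \emph{is} Serre-invariant by an independent argument, or to show $\sigma_0$ is in the orbit.

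Concretely, the cleanest route is the following. One observes that $\fS_{\Ku(X)}\cdot\sigma_0$ is again a numerical stability condition with moduli spaces on $\Ku(X)$, and that its heart and central charge can be computed from the explicit description of $\fS_{\Ku(X)}$ via the relation \eqref{eq_Serrefunctor} together with the mutation description of $\Ku(X)$ inside $\Db(\P^2,\cB_0)$ in \eqref{eqn:P2twist}. Tracking the action of $\fS_{\Ku(X)}$ on $\cN(\Ku(X))$ in the basis $\{[\II_\ell],[\fS(\II_\ell)]\}$ and on the central charge $Z_0$, one exhibits explicitly a matrix $M\in\mathrm{GL}^+_2(\R)$ with $Z_0\circ(\fS_{\Ku(X)}^{-1})_*=M^{-1}\circ Z_0$, and then, arguing as in \autoref{ex:curvesSi} or as in the second part of \autoref{thm_BLMSstability}, lifts $M$ to $\widetilde g\in\widetilde{\mathrm{GL}}^+_2(\R)$ and identifies the tilted heart $\fS_{\Ku(X)}(\cA_0)$ with $\cP_0((g(0),g(1)])$, using that both are hearts of bounded $t$-structures once one knows the inclusion in one direction. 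This gives $\fS_{\Ku(X)}\cdot\sigma_0=\sigma_0\cdot\widetilde g$, which is the definition of Serre-invariance.

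The main obstacle is the heart-identification step: showing that applying $\fS_{\Ku(X)}$ (a composition of tensoring by $\OO_X(H_X)$, left mutations through $\OO_X$, and a shift) carries the heart $\cA_0$ — which is defined by a tilt inside the Clifford-algebra category $\Db(\P^2,\cB_0)$, a \emph{different} ambient description from the one used for $\sigma(s,q,\mu)$ — onto a $\widetilde{\mathrm{GL}}^+_2(\R)$-translate of itself. This requires either a careful compatibility argument between the two dimension-reduction pictures (the conic bundle picture of \autoref{subsect:cubic3folds1} and the double-tilt picture of \autoref{subsect:cubic3folds2}), or else an abstract argument: one can circumvent the computation entirely by invoking the uniqueness theorem \autoref{thm_critSi} to conclude that \emph{any} stability condition in the (nonempty, by \autoref{thm_withsong}) orbit of Serre-invariant stability conditions is forced, and then showing $\sigma_0$ coincides with $\sigma(s,q,\mu)$ by matching a single stable object of known phase, namely $\II_\ell$, whose moduli space is $F_1(X)$ for both. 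I expect the actual proof in \cite{FP} to follow this second, more economical path, reducing everything to the rank-$2$ numerical constraint plus the identification of $\II_\ell$ as a common stable object.
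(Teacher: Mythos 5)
Your second, ``abstract'' route --- the one you say you expect the actual proof to follow --- is circular, and this is a genuine gap. \autoref{thm_critSi} only compares two stability conditions that are \emph{both already known} to be Serre-invariant; it says nothing about an arbitrary numerical stability condition, so it cannot be invoked to conclude that $\sigma_0$ is ``forced'' into the Serre-invariant orbit. The only non-circular way to exploit \autoref{thm_withsong} is to prove outright that $\sigma_0$ lies in the same $\widetilde{\mathrm{GL}}^+_2(\R)$-orbit as some $\sigma(s,q,\mu)$, but your suggestion for doing this --- matching the single stable object $\II_\ell$, with $F_1(X)$ as its moduli space for both --- does not suffice: two numerical stability conditions on a rank-$2$ lattice are in the same orbit only if \emph{all} their semistable objects coincide, and no statement in the paper (or elsewhere) pins a stability condition down, up to the $\widetilde{\mathrm{GL}}^+_2(\R)$-action, by the stability of one class plus a phase normalization. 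Note also that in the paper's logical order the orbit statement is \autoref{cor_relationoldandnewstability}, which is \emph{deduced from} the present theorem together with \autoref{thm_withsong} and \autoref{cor_uniqueness}; your route would need an independent proof of it, which is at least as hard as the theorem itself.

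The correct approach is the first, concrete one you sketch but do not carry out, and it is the one used in \cite{FP} (the survey signals this by saying the proof is ``similarly to \autoref{thm_withsong}''): one works with the family $\overline{\sigma}(b,w)$ of \eqref{bar-1}, induced on $\Ku(X)$ from tilt-type stability conditions on $\Db(\P^2,\cB_0)$ and lying in the orbit of $\sigma_0$, and then shows directly, using the relation \eqref{eq_Serrefunctor}, that applying $\L_{\OO_X}\circ(-\otimes\OO_X(H_X))$ carries the heart onto a tilt of a heart in the same family and acts on the central charge through a matrix in $\mathrm{GL}^+_2(\R)$, exactly as in the proof of \autoref{thm_withsong} (where the hard step is controlled by deformation/wall-crossing arguments in the style of \cite[Lemma 3]{LZ_poisson}). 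You correctly identify this heart-identification step --- the compatibility between the Clifford-algebra picture defining $\cA_0$ and the mutation description of $\fS_{\Ku(X)}$ on $\Db(X)$ --- as the main obstacle, but you leave it entirely unaddressed and instead fall back on the circular shortcut; since that step is the whole content of the proof, the proposal does not establish the theorem.
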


\begin{cor} \label{cor_relationoldandnewstability}
The stability conditions $\sigma_0$ and $\sigma(s,q, \mu)$ defined in \eqref{eq_stabsigmasq} on $\Ku(X)$ are in the same $\widetilde{\emph{GL}}^+_2(\R)$-orbit. 
\end{cor}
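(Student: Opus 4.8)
The plan is to deduce the statement purely formally from the uniqueness result \autoref{cor_uniqueness}, once we know that both stability conditions are Serre-invariant. First I would record that $\sigma_0$ and $\sigma(s,q,\mu)$ are both \emph{numerical} stability conditions on $\Ku(X)$: the family $\sigma(s,q,\mu)$ is constructed in \autoref{thm_BLMSstability} as a stability condition with respect to $\cN(\Ku(X))$, and $\sigma_0=(Z_0,\cA_0)$ with $Z_0=Z|_{\cN(\Ku(X))}$ is numerical by \autoref{thm:stab1cubic3folds}. This is exactly the setting in which \autoref{cor_uniqueness} (the positive answer to \autoref{quest:uniqueSi} for $\Ku(X)$) guarantees a \emph{single} $\widetilde{\mathrm{GL}}^+_2(\R)$-orbit of Serre-invariant numerical stability conditions.

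Next I would invoke Serre-invariance for each of the two stability conditions. For $\sigma(s,q,\mu)$ this is \autoref{thm_withsong}: every stability condition in its $\widetilde{\mathrm{GL}}^+_2(\R)$-orbit, and in particular $\sigma(s,q,\mu)$ itself, is $\fS_{\Ku(X)}$-invariant. For $\sigma_0$ this is \autoref{thm_withsoheyla}. Combining the two with the uniqueness of the Serre-invariant orbit produces $\tilde g\in\widetilde{\mathrm{GL}}^+_2(\R)$ with $\sigma_0=\sigma(s,q,\mu)\cdot\tilde g$, which is the assertion.

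The only non-formal input is \autoref{thm_withsoheyla}, and that is where the actual work sits: its proof runs parallel to that of \autoref{thm_withsong}, the delicate point being to show that the relevant autoequivalence (a power of $\fS_{\Ku(X)}$, reduced via \eqref{eq_Serrefunctor} to $\L_{\OO_X}\circ\Phi$) sends the heart $\cA_0$ of $\sigma_0$ to a tilt of $\cA_0$, and then to match central charges by an explicit element of $\mathrm{GL}^+_2(\R)$. Granting \autoref{thm_withsoheyla} and \autoref{thm_withsong}, the present corollary is immediate, so I do not anticipate any further obstacle; if one wished to bypass \autoref{thm_withsoheyla} one could instead try to check directly that $\sigma_0$ lies in the orbit described by \autoref{thm_BLMSstability}, but comparing hearts built by such different routes — one via tilting on $X$, the other via the Clifford algebra $\cB_0$ on $\P^2$ — looks considerably harder than verifying Serre-invariance.
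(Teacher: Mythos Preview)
Your proposal is correct and follows exactly the paper's own approach: the paper's proof is the single sentence ``It is a consequence of \autoref{thm_withsong} and \autoref{thm_withsoheyla} and \autoref{cor_uniqueness}.'' Your additional remarks about both stability conditions being numerical and your discussion of the work hidden inside \autoref{thm_withsoheyla} are accurate elaborations, but the argument itself is identical.
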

\begin{proof}
It is a consequence of \autoref{thm_withsong} and \autoref{thm_withsoheyla} and \autoref{cor_uniqueness}.
\end{proof}

\begin{remark}
Consider the continuous family of Bridgeland stability conditions on $\Db(\P^2,\cB_0)$, parameterized by 
\begin{equation*}
    U := \left\{(b,w) \in \mathbb{R}^2 \colon \ w > \frac{b^2}{2} + \frac{11}{32}  \right\},
\end{equation*}
given by  
\begin{equation*}
(b,w) \in U \mapsto \overline{\sigma}_{b,w} =(\Coh^b(\P^2, \cB_0) ,\ \overline{Z}_{b,w} = -\ch_2 + w \ch_0 +\sqrt{-1} (\ch_1 -b\ch_0)).
\end{equation*}
By using \autoref{prop:inducedstab}, one can show \cite[Proposition 5.3]{FP} that for any $(b,w) \in U$ with $-\frac{5}{4}\leq b < -\frac{3}{4}$, the pair
\begin{equation}\label{bar-1}
\overline{\sigma}(b,w) = (\cA_b:=\Coh^b(\P^2, \cB_0) \cap \Ku(X) ,\ \overline{Z}_{b,w}|_{\cN(\cA_b)})
\end{equation}
is a stability condition on $\Ku(X)$ which is in the $\widetilde{\text{GL}}^+_2(\R)$-orbit of $\sigma_0$ (see Figure \ref{fig_U}).

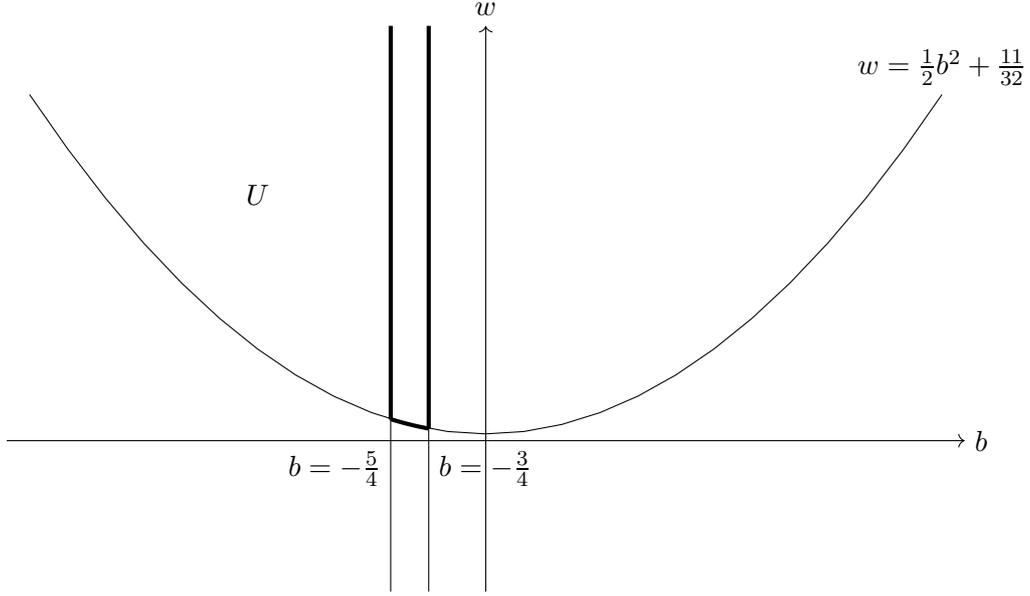
\begin{figure}[htb]
\centering
\begin{tikzpicture}[domain=-6:6]
\draw[->] (-6.3,0) -- (6.3,0) node[right] {$b$};
\draw[->] (0,-2) -- (0, 5.5) node[above] {$w$};
\draw plot (\x,{0.125*\x*\x+0.09}) node[above] {$w = \frac{1}{2}b^{2}+\frac{11}{32}$};
\coordinate (O) at (0,0);

\coordinate (D) at (-3, 3);
\draw (D) node [above]  {$U$}; 
 
\draw (-0.75,-2) -- (-0.75, 5.5);
\draw (-1.25,-2) -- (-1.25, 5.5);

\coordinate (A) at (-0.75, 0);
\draw (A) node [below right] {$b=-\frac{3}{4}$};

\coordinate (B) at (-1.25, 0);
\draw (B) node [below left] {$b=-\frac{5}{4}$};

\draw[line width=0.55mm,domain=-1.25:-0.75,color=black] plot (\x,{0.125*\x*\x+0.09});

\draw[line width=0.55mm, black] (-1.25,0.28) -- (-1.25, 5.5);

\draw[line width=0.55mm, black] (-0.75,0.16) -- (-0.75, 5.5);
\end{tikzpicture}

\caption{We represent the region $U$ parametrizing the weak stability conditions $\overline{\sigma}_{b,w}$, which is the region $w>\frac{1}{2}b^2+\frac{11}{32}$. The strip with bold boundary corresponding to the region above the parabola in the interval $-\frac{5}{4} \leq b < -\frac{3}{4}$ represents those inducing the stability conditions $\overline{\sigma}(b,w)$ on $\Ku(X)$. \label{fig_U}}   
\end{figure}
\end{remark}

An interesting consequence of these results is that all the known stability conditions on $\Ku(X)$ are Serre-invariant, hence belong to the same $\widetilde{\text{GL}}^+_2(\R)$-orbit. A positive answer to the following (hard) question would complete the analogy of $\Ku(X)$ with the curve case in \cite{Macri}.

\begin{qn}
Let $\sigma$ be a stability condition on $\Ku(X)$ of a cubic threefold $X$. Is $\sigma$ in the same orbit of $\sigma(s,q, \mu)$? Equivalently, is $\sigma$ Serre-invariant? 
\end{qn}

The second application, which makes use of \autoref{cor_relationoldandnewstability}, concerns the study of Ulrich bundles on $X$, whose definition is recalled in \autoref{rmk:ulrich1}. Note that an Ulrich bundle $E$ belongs to $\Ku(X)$. Indeed, by \cite[Lemma 2.19]{LMS} we have
$$\ch(E)=\left(d, 0, -\frac{d}{3}H_X^2, 0\right)=d\ch(I_\ell),$$
where $d$ is the rank of $E$. Note that Ulrich bundles are Gieseker semistable \cite[Proposition 2.8]{CH}. The Gieseker stability of $E$ implies that $\Hom(\OO_X(kH_X), E) = 0$ for $k \geq 0$. Since $\chi(\OO_X(kH_X), E) = 0$ for $k =0, 1$, the definition of Ulrich bundles implies that $\Hom(\OO_X(kH_X), E[3]) = 0$. We conclude that $E \in \Ku(X)$.

By using the embedding of $\Ku(X)$ in $\Db(\P^2, \cB_0)$, the following existence result has been proved.

\begin{thm}[\cite{LMS}, Theorem B]
The moduli space of stable Ulrich bundles of rank $d \geq 2$ on $X$ is non-empty and smooth of dimension $d^2+1$. 
\end{thm}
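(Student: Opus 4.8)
The plan is to transport the statement, via the embedding $\Psi\colon\Ku(X)\hookrightarrow\Db(\P^2,\cB_0)$ coming from \eqref{eqn:P2twist}, to a moduli problem for modules over the sheaf of algebras $\cB_0$ on the \emph{surface} $\P^2$, where deformation theory is transparent. The first step is a translation lemma: since every rank-$d$ Ulrich bundle $E$ lies in $\Ku(X)$ with $\ch(E)=d\,\ch(I_\ell)$, one checks that $\Psi(E)$ is (up to shift) a torsion-free $h$-Gieseker-semistable $\cB_0$-module with Chern character $d$ times that of $\Psi(I_\ell)$, and conversely that a Gieseker-stable $\cB_0$-module with this class lying in the subcategory $\Psi(\Ku(X))$ comes from an Ulrich bundle. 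Membership in $\Psi(\Ku(X))$ is cut out in $\Db(\P^2,\cB_0)$ by the vanishing of $\Hom$ against the finitely many exceptional objects right-orthogonal to $\Ku(X)$, hence is an open and closed condition on moduli. Thus the moduli space of stable rank-$d$ Ulrich bundles is identified with a union of connected components of the projective moduli space $\overline{\cM}$ of $h$-Gieseker-semistable $\cB_0$-modules with Chern character $d\cdot\ch(\Psi(I_\ell))$, and I may work with $(\P^2,\cB_0)$ from now on.

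For smoothness and the dimension formula, the key point is that $\Db(\P^2,\cB_0)$ has a Serre functor of the shape $(-)\otimes\omega_{\P^2}[2]$ (in this respect it behaves like the derived category of a surface, cf.\ \cite{Kuz2}) and that $\omega_{\P^2}\cong\OO_{\P^2}(-3h)$ is anti-ample. Hence for every Gieseker-semistable $\cB_0$-module $F$ with the class above one has $\Hom_{\cB_0}(F,F\otimes\omega_{\P^2})=0$ — the reduced Hilbert polynomial of $F\otimes\omega_{\P^2}$ is strictly smaller than that of $F$ — so $\Ext^2_{\cB_0}(F,F)\cong\Hom_{\cB_0}(F,F\otimes\omega_{\P^2})^\vee=0$ and deformations are unobstructed. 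Consequently $\overline{\cM}$ is smooth at every point $[E]$ corresponding to a stable object, with tangent space $\Ext^1_{\cB_0}(\Psi E,\Psi E)$. Since $\Hom_X(E,E)=\K$ (stable, hence simple), $\Ext^i_X(E,E)=0$ for $i\geq 2$ (the vanishing just proved, together with $\Ext^3_X(E,E)\cong\Hom_X(E,E\otimes\omega_X)^\vee=0$ by Gieseker stability), and $\Ext^i_X(E,E)\cong\Ext^i_{\cB_0}(\Psi E,\Psi E)$ because $\Ku(X)$ is a full subcategory of both $\Db(X)$ and $\Db(\P^2,\cB_0)$, an Euler characteristic count gives
\[
\dim_\K\Ext^1_X(E,E)=1-\chi_X(E,E)=1-d^2\,\chi_{\Ku(X)}\big([I_\ell],[I_\ell]\big)=1+d^2,
\]
using the Euler form of $\cN(\Ku(X))$ in the basis $\{[I_\ell],[\fS(I_\ell)]\}$ recorded in \eqref{eq_eulerform} of \autoref{cor_uniqueness}, whose $(1,1)$-entry equals $-1$. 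This proves smoothness and $\dim=d^2+1$.

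The genuine difficulty is non-emptiness. I would argue by induction on $d$, disposing of the base cases $d=2$ and $d=3$ by explicit constructions of stable Ulrich bundles of these ranks (cf.\ \cite{CH}). For $d\geq 4$, choose stable Ulrich bundles $E'$ of rank $d-2$ (induction hypothesis) and $E''$ of rank $2$, with $E'\not\cong E''$, and form a non-split extension $0\to E'\to E\to E''\to 0$; such an extension exists because $\Ext^1_X(E'',E')\neq 0$ — indeed $\chi_X(E'',E')<0$ while $\Hom_X(E'',E')=0$ and $\Ext^2_X(E'',E')=0$ by the computation of the previous step. A short diagram chase shows that $E$ is \emph{simple}, and additivity of cohomology along the sequence shows $E$ is an aCM bundle of rank $d$ with the required $3d$ generators in degree $1$, i.e.\ an Ulrich bundle — though, being an extension of two stable objects of equal reduced Hilbert polynomial, $E$ is only strictly semistable. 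Now $\Ext^2_X(E,E)=0$ as before, so $[E]$ is a smooth point of $\overline{\cM}$, at which $\overline{\cM}$ has dimension $\dim_\K\Ext^1_X(E,E)=1+d^2$; on the other hand the locus of strictly semistable modules in $\overline{\cM}$ consists, up to $S$-equivalence, of direct sums of stable modules of smaller class, so it is covered by images of products of lower-rank moduli spaces and has dimension bounded by a linear function of $d$, hence strictly smaller than $1+d^2$. Since $\overline{\cM}$ is smooth of dimension $1+d^2$ near $[E]$ while the non-stable locus is a proper closed subset there, a general deformation of $E$ is a \emph{stable} Ulrich bundle of rank $d$, completing the induction. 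I expect this last step — the dimension estimate for the strictly semistable locus, the simplicity and smoothness bookkeeping, and the construction of the rank-$2$ and rank-$3$ base cases — to be the main obstacle, the rest being the comparison \eqref{eqn:P2twist} and standard deformation theory. (Alternatively, one may identify Ulrich bundles of class $d[I_\ell]$ with the $\sigma_0$-stable objects of that class in the heart $\cA_0$ of \autoref{thm:stab1cubic3folds} and deduce non-emptiness from the analogous statement for moduli of $\sigma_0$-stable objects.)
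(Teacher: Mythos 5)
The survey itself gives no proof of this statement -- it is quoted from \cite[Theorem B]{LMS}, whose strategy (embed $\Ku(X)$ into $\Db(\P^2,\cB_0)$ via the conic fibration, use Serre duality on the twisted plane to kill $\Ext^2$, compute $\ext^1=1-\chi=d^2+1$ from the Euler form, and prove non-emptiness by the Casanellas--Hartshorne induction on rank via extensions of lower-rank stable Ulrich bundles) is exactly the route you propose. You also correctly isolate the technical core, namely the ``translation lemma'' identifying Ulrich bundles with torsion-free Gieseker-(semi)stable $\cB_0$-modules of class $d\cdot v$, which occupies a substantial part of \cite{LMS} and is only asserted here. Two smaller inaccuracies: membership in $\Ku(X)$, i.e.\ orthogonality to the finitely many exceptional objects, is an \emph{open} condition by semicontinuity but not closed, so the Ulrich moduli space is an open subscheme of the $\cB_0$-module moduli space (as in the proof of \autoref{thm_ulrich} in this survey), not a union of connected components; and the strictly semistable locus has dimension roughly $a^2+b^2+ab+1=d^2+1-ab$ for $a+b=d$, which is quadratic in $d$, not linear -- though the strict inequality you need still holds.

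The genuine gap is in the last step of the induction. You place the strictly semistable extension $E$ of $E''$ by $E'$ in the coarse moduli space and claim that this is a smooth point of dimension $\ext^1(E,E)=d^2+1$. At a properly semistable point the coarse space only sees the S-equivalence class $[E'\oplus E'']$, it is not a fine moduli space there, and neither its smoothness nor its local dimension is computed by $\Ext^1(E,E)$ of your chosen non-split representative; moreover, asserting that the moduli space has dimension $d^2+1$ near that point before knowing any stable object of class $d[I_\ell]$ exists is circular. The standard repair -- and what \cite{CH} and \cite{LMS} actually do -- is to deform $E$ itself inside a modular family (an open piece of a Quot scheme, or equivalently the moduli stack): since $E$ is simple and $\Ext^2(E,E)=0$, that family is smooth of dimension $\ext^1(E,E)=d^2+1$ at $E$; stability and the Ulrich condition are open there; and the locus of non-stable Ulrich deformations is covered by families of extensions of lower-rank stable Ulrich bundles of dimension at most $(a^2+1)+(b^2+1)+(ab-1)<d^2+1$, so a general deformation of $E$ is a stable Ulrich bundle of rank $d$. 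With this correction (and granting the translation lemma and the rank $2$ and $3$ base cases from the literature), your argument coincides with the cited proof.
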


The property of Serre-invariance of $\sigma_0$ and $\sigma(s,q, \mu)$ and the uniqueness result allows us to prove the irreducibility of the moduli space of Ulrich bundles. 

\begin{thm}[\cite{FP}, Theorem 6.1]
\label{thm_ulrich}
The moduli space $\mathfrak{M}^{U}_{d}$ of Ulrich bundles of rank $d \geq 2$ on $X$ is irreducible.
\end{thm}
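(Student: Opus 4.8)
The plan is to realize $\mathfrak{M}^{U}_{d}$ as a non-empty open subset of a moduli space of stable objects in $\Ku(X)$ with respect to a Serre-invariant stability condition, and then to prove that this ambient moduli space is irreducible, from which the irreducibility of $\mathfrak{M}^{U}_{d}$ follows at once.

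First, by \autoref{cor_relationoldandnewstability} together with \autoref{cor_uniqueness}, the stability conditions $\sigma_0$, $\sigma(s,q,\mu)$ and $\overline{\sigma}(b,w)$ on $\Ku(X)$ all lie in a single $\widetilde{\mathrm{GL}}^+_2(\R)$-orbit, so the associated moduli functors and moduli spaces coincide; I write $\sigma$ for any representative. Recall from the discussion preceding the statement that a rank-$d$ Ulrich bundle $E$ lies in $\Ku(X)$ and satisfies $[E]=d[I_\ell]$. I would first check that every Ulrich bundle $E$ of rank $d$ is $\sigma$-stable: the aCM and Ulrich conditions put (a shift of) $E$ in the heart $\cA_0$ of \cite{BMMS}, and Gieseker stability of $E$, combined with the description of $\cA_0$ inside $\Db(\P^2,\cB_0)$ from \cite{BMMS,LMS}, promotes Gieseker stability to $\sigma$-stability (the rank-one analogue, that $I_\ell$ is $\sigma$-stable, is part of \autoref{thm:stab1cubic3folds}). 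Since being a shift of an Ulrich bundle is an open condition on families of $\sigma$-stable objects of class $d[I_\ell]$, this produces an open immersion $\mathfrak{M}^{U}_{d}\hookrightarrow M_\sigma(\Ku(X),d[I_\ell])$; by the cited existence and smoothness result $\mathfrak{M}^{U}_{d}$ is smooth of pure dimension $d^2+1=1-\chi(d[I_\ell],d[I_\ell])$, using $\chi([I_\ell],[I_\ell])=-1$ from \eqref{eq_eulerform}, which is the expected dimension of the whole moduli space.

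Next, it remains to prove that $M_\sigma(\Ku(X),d[I_\ell])$ is irreducible, since then its non-empty open subset $\mathfrak{M}^{U}_{d}$ is dense and irreducible. I would argue by induction on $d$, the base case $d=1$ being $M_\sigma(\Ku(X),[I_\ell])\cong F_1(X)$ from \autoref{thm:stab1cubic3folds}, which is irreducible. For $d\geq 2$: because $\sigma$ is Serre-invariant and $\cN(\Ku(X))$ has rank $2$, every $\sigma$-semistable object of class $d[I_\ell]$ has Jordan--H\"older factors whose numerical classes are positive integer multiples of $[I_\ell]$ — their central charges must be positively proportional to $Z([I_\ell])$, and the support property forces the classes themselves to be proportional in the rank-two lattice $\cN(\Ku(X))$. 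Hence the strictly semistable locus is built by iterated extensions of $\sigma$-stable objects of classes $k[I_\ell]$ with $1\leq k<d$, which by the inductive hypothesis are parametrized by irreducible spaces. I would then show that every stable Ulrich bundle of rank $d$ specializes, inside $M_\sigma(\Ku(X),d[I_\ell])$, to a non-split extension $0\to E'\to E\to I_\ell\to 0$ in $\cA_0$ with $E'$ semistable of class $(d-1)[I_\ell]$; the locus of such extensions fibers over $F_1(X)\times M_\sigma(\Ku(X),(d-1)[I_\ell])$ with fibers open in projectivized $\Ext^1$-spaces, hence is irreducible, and it lies in the closure of a single connected component of the smooth stable locus. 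A dimension count — each component of the stable locus has dimension $d^2+1$ and must meet this extension locus — then forces $M_\sigma(\Ku(X),d[I_\ell])$ to be connected; since its smooth part is dense, it is irreducible.

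The main obstacle is the specialization argument of the last paragraph: one must rule out a component of $M_\sigma(\Ku(X),d[I_\ell])$ disjoint from the extension/decomposable boundary, i.e.\ show that a general rank-$d$ Ulrich bundle genuinely degenerates to an extension of lower-rank Ulrich-type objects by $I_\ell$, with the relevant $\Ext$-groups and the stability of the extension under control along the whole family. This is exactly where Serre-invariance is indispensable: \autoref{cor_uniqueness} lets one compute with the most convenient representative of the unique orbit, the rank-two lattice constrains the possible Jordan--H\"older classes, and Serre-invariance governs the $\Ext$-computations uniformly. Should the degeneration be hard to produce directly, an alternative route is to transport the problem, through the embedding $\Ku(X)\hookrightarrow\Db(\P^2,\cB_0)$ of \cite{BMMS,LMS}, into a moduli space of Gieseker-semistable $\cB_0$-modules on $\P^2$ and appeal to irreducibility results for moduli of sheaves on the plane.
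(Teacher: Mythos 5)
Your first step coincides with the paper's: Ulrich bundles of rank $d$ lie in $\Ku(X)$, are (semi)stable for the Serre-invariant stability conditions, and the Ulrich condition being open gives an open immersion $\mathfrak{M}^U_d\hookrightarrow M_{\sigma}(\Ku(X),d[I_\ell])$, so everything reduces to irreducibility of the ambient Bridgeland moduli space. But your proposed proof of that irreducibility has a genuine gap at exactly the point you flag yourself: the claim that every irreducible component of the stable locus of $M_\sigma(\Ku(X),d[I_\ell])$ contains the extension locus (objects fitting in $0\to E'\to E\to I_\ell\to 0$) in its closure. A dimension count cannot supply this: nothing a priori prevents a component of dimension $d^2+1$ consisting entirely of stable objects and disjoint from the strictly semistable boundary, and since strictly semistable points are identified up to S-equivalence in the good moduli space, your ``fibration over $F_1(X)\times M_\sigma(\Ku(X),(d-1)[I_\ell])$ with fibers open in projectivized $\Ext^1$'' does not even describe a locus of the moduli space in the way the induction needs. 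Producing the required degeneration (or, equivalently, connectedness) is the whole content of the theorem, and it is not established; the usual way to get it is a deformation argument resting on the vanishing of $\Ext^2$, which you never bring into play inside $\Ku(X)$.

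The paper's actual route is the one you relegate to a closing sentence, and it is not an ``appeal to irreducibility results for moduli of sheaves on the plane'': using Serre-invariance of $\sigma(s,q,\mu)$ and the uniqueness of the orbit (\autoref{cor_uniqueness}, \autoref{cor_relationoldandnewstability}), one applies $\fS_{\Ku(X)}$ to replace the class $d[I_\ell]$ by $dv$ with $v=[\fS_{\Ku(X)}(I_\ell)]$, identifies $M_{\sigma_0}(\Ku(X),dv)$ with the moduli space $\mathfrak{M}_d$ of Gieseker-semistable objects of $\Coh(\P^2,\cB_0)$ of class $dv$ (this identification is where the embedding $\Ku(X)\hookrightarrow\Db(\P^2,\cB_0)$ and the specific class matter), and then runs the standard argument of \cite[Theorem 2.12]{LMS} and \cite[Lemma 4.1]{KLS}, whose key input is $\Ext^2$-vanishing for the relevant $\cB_0$-modules, to conclude that $\mathfrak{M}_d$ is irreducible; irreducibility of $\mathfrak{M}^U_d$ then follows from the open immersion. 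So to repair your proposal you would either have to carry out the $\Ext^2$-vanishing/degeneration analysis yourself inside the Bridgeland moduli space (essentially redoing the KLS-type argument there), or execute the transfer to Gieseker moduli of $\cB_0$-modules with the correct class --- neither of which is done in the text as written.
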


\begin{proof}[Idea of proof]
The first step consists in proving that all Ulrich bundles $E$ of rank $d$ are $\sigma(s,q,\mu)$-semistable. Since the condition of being Ulrich is open, we get an open embedding 
$$\mathfrak{M}^{U}_{d} \hookrightarrow M_{\sigma(s,q, \mu)}(\Ku(X), d[I_{\ell}]),$$
where the latter is the moduli space of $\sigma(s,q,\mu)$-semistable objects in $\Ku(X)$ with numerical class $d[I_{\ell}]$ (see \autoref{subsect:def}). Now using that $\sigma(s,q,\mu)$ is Serre-invariant and that $\sigma_0$ is in the same $\widetilde{\text{GL}}^+_2(\R)$-orbit of $\sigma(s,q,\mu)$, we have the isomorphisms between moduli spaces 
$$M_{\sigma(s,q,\mu)}(\Ku(X), d[I_{\ell}]) \cong M_{\sigma(s,q,\mu)}(\Ku(X), d[\fS_{\Ku(X)}(I_{\ell})]) \cong M_{\sigma_0}(\Ku(X), dv),$$
where $v:=[\fS_{\Ku(X)}(I_{\ell})]$ in $\cN(\Ku(X)) \subset \cN(\Db(\P^2, \cB_0))$.

The last step is to show that $M_{\sigma_0}(\Ku(X), dv)$ is identified to the moduli space $\mathfrak{M}_d$ of Gieseker semistable sheaves in $\Coh(\P^2, \cB_0)$ with class $dv$. A standard argument (see \cite[Theorem 2.12]{LMS}, \cite[Lemma 4.1]{KLS}) whose key point is the vanishing of $\Ext^2$ for sheaves in this moduli space, allows us to show that $\mathfrak{M}_d$ is irreducible, and thus $\mathfrak{M}^{U}_{d}$ is such (see \cite[Proposition 6.4]{FP} for more details).
\end{proof}

Let us now discuss some interesting properties of moduli spaces of stable objects in $\Ku(X)$ with respect to a Serre-invariant stability condition $\sigma$, e.g.\ for $\sigma(s,q,\mu)$ and $\sigma_0$. We have already seen in \autoref{thm:stab1cubic3folds} that the Fano surface of lines is identified with a moduli space of $\sigma_0$-stable objects in $\Ku(X)$. We mention the following more general statement.

\begin{thm}[\cite{FP}, Theorem 4.5, \cite{PY}, Lemma 5.16]\label{thm-moduli-spaces}
Let $\sigma$ be a Serre-invariant stability condition on $\Ku(X)$.  
\begin{enumerate}
    \item The moduli space $M_{\sigma}(\Ku(X), [I_{\ell}])$ is isomorphic to the moduli space of slope-stable sheaves on $X$ with Chern character $\ch(I_{\ell}) = 1 -\frac{H_X^2}{3}$.
    \item The moduli space $M_{\sigma}(\Ku(X), [\fS_{\Ku(X)}(I_{\ell})])$ is isomorphic to the moduli space of slope-stable sheaves on $X$ with Chern character $2 -H_X-\frac{H_X^2}{6}+ \frac{H_X^3}{6}$.
    \item The moduli space $M_{\sigma}(\Ku(X), [\fS_{\Ku(X)}^2(I_{\ell})])$ is isomorphic to the moduli space of  large volume limit stable complexes of character $\ch(\II_\ell)-\ch(\fS_{\Ku(X)}(I_{\ell}))$ \footnote{Inspired by \cite{Toda_bogomolov} and to simplify the exposition, we give the following definition. A two-term complex $E \in \Db(X)$ supported in degree $0$ and $-1$ is said to be large volume limit stable if $\HH^{-1}(E)$ is a line bundle, $\HH^0(E)$ is a sheaf supported in dim $\leq 1$, and $\Hom(T, E) = 0$ for every torsion sheaf $T$ supported in dimension $\leq 1$. By \cite[Lemma 3.12 and Lemma 3.13(ii)]{Toda_bogomolov}, a complex $E \in \Db(X)$ is large volume limit stable if and only if $E$ lies in $\Coh^{s}(X)$ and is $\sigma_{s,q}$-stable for $s > \mu_H(E)$ and $q \gg 0$. The condition on the rank of $\HH^{-1}(E)$ could be relaxed, but we restrict to this case which is the one relevant to our discussion.}.
\end{enumerate}
The moduli spaces $M_{\sigma}(\Ku(X), [I_{\ell}])$, $M_{\sigma}(\Ku(X), [\fS_{\Ku(X)}(I_{\ell})])$, $M_{\sigma}(\Ku(X), [\fS_{\Ku(X)}^2(I_{\ell})])$ are isomorphic to the Fano surface of lines $F_1(X)$ in $X$.
\end{thm}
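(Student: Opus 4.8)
The plan is to reduce all three assertions to a single, conveniently chosen Serre-invariant stability condition, to transport the geometric description along the Serre functor, and finally to recognise the resulting objects as classical sheaves and complexes on $X$.

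\textbf{Reduction to one stability condition.} By \autoref{cor_uniqueness} all Serre-invariant stability conditions on $\Ku(X)$ lie in a single $\widetilde{\mathrm{GL}}^+_2(\R)$-orbit, and the $\widetilde{\mathrm{GL}}^+_2(\R)$-action leaves the set of (semi)stable objects unchanged, hence identifies the associated moduli spaces. Thus it suffices to prove the statements for $\sigma=\sigma_0$ (the stability condition of \autoref{thm:stab1cubic3folds}), which is Serre-invariant by \autoref{thm_withsoheyla} and, by \autoref{cor_relationoldandnewstability}, lies in the same orbit as the $\sigma(s,q,\mu)$ of \autoref{thm_withsong}.

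\textbf{Moving the moduli spaces by the Serre functor.} Serre-invariance gives $\widetilde g\in\widetilde{\mathrm{GL}}^+_2(\R)$ with $\fS_{\Ku(X)}\cdot\sigma=\sigma\cdot\widetilde g$, so the autoequivalence $\fS_{\Ku(X)}$ sends $\sigma$-(semi)stable objects of numerical class $v$ to $(\sigma\cdot\widetilde g)$-(semi)stable objects of class $\fS_{\Ku(X)}(v)$, i.e. again to $\sigma$-(semi)stable ones, and therefore induces isomorphisms
\[
M_\sigma(\Ku(X),[I_\ell])\ \xrightarrow{\sim}\ M_\sigma(\Ku(X),[\fS_{\Ku(X)}(I_\ell)])\ \xrightarrow{\sim}\ M_\sigma(\Ku(X),[\fS^2_{\Ku(X)}(I_\ell)]).
\]
Using the Euler form on $\cN(\Ku(X))$ recorded in the proof of \autoref{cor_uniqueness}, together with $\fS^3_{\Ku(X)}=[5]$ from \autoref{prop:cubic3foldseq}, one computes $[\fS^2_{\Ku(X)}(I_\ell)]=[\fS_{\Ku(X)}(I_\ell)]-[I_\ell]$, which is exactly the class appearing in part (3). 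Combined with \autoref{thm:stab1cubic3folds}(2), which identifies $M_{\sigma_0}(\Ku(X),[I_\ell])$ with the Fano surface of lines $F_1(X)$, this already proves that all three moduli spaces are isomorphic to $F_1(X)$.

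\textbf{Recognising the objects on $X$.} It remains to match each moduli space with a classical one on $X$. For (1): a slope-stable torsion-free sheaf on $X$ of Chern character $1-\tfrac13 H_X^2$ has rank one and trivial determinant, hence is the ideal sheaf of a line, so $F_1(X)$ with its tautological family of ideal sheaves represents this moduli functor. For (2) and (3) one computes $\fS_{\Ku(X)}(I_\ell)$ and $\fS^2_{\Ku(X)}(I_\ell)$ explicitly: by \eqref{eq_Serrefunctor} one has $\fS_{\Ku(X)}=[3]\circ\mathsf{R}^{-2}$ with $\mathsf{R}=\L_{\OO_X}\circ(-\otimes\OO_X(H_X))$, and carrying out the mutations with the help of the geometry of the line $\ell\subset X$ should identify $\fS_{\Ku(X)}(I_\ell)$, up to shift, with a slope-stable rank-$2$ sheaf $E_\ell$ of Chern character $2-H_X-\tfrac16 H_X^2+\tfrac16 H_X^3$, and $\fS^2_{\Ku(X)}(I_\ell)$, up to shift, with a two-term complex $F_\ell$ of the type described in the footnote (with $\mathcal{H}^{-1}$ a line bundle and $\mathcal{H}^{0}$ supported in dimension $\le 1$) of Chern character $\ch(I_\ell)-\ch(\fS_{\Ku(X)}(I_\ell))$. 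A wall-crossing comparison between $\sigma(s,q,\mu)$ and, respectively, Gieseker stability on $X$ and the large-volume-limit chamber of tilt stability $\sigma_{s,q}$ (using the characterisation of the latter recalled in the footnote) would then show that $\sigma$-(semi)stability in these two classes coincides with slope-stability of the $E_\ell$ and with large-volume-limit stability of the $F_\ell$; finally one checks that the isomorphisms of the previous step are compatible with families of such objects, which yields (2) and (3).

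\textbf{Main obstacle.} The genuinely delicate point is this last step: computing $\fS_{\Ku(X)}(I_\ell)$ and $\fS^2_{\Ku(X)}(I_\ell)$ through the mutation formula \eqref{eq_Serrefunctor} uniformly for every line $\ell\subset X$ (including lines of the second type), carrying out the wall-crossing analysis that matches the abstract $\sigma$-stability on $\Ku(X)$ with the classical notions of stability on $X$, and upgrading the resulting bijections to scheme isomorphisms at the level of moduli functors.
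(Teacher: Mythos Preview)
The paper is a survey and does not give its own proof of this theorem; it simply cites \cite[Theorem 4.5]{FP} and \cite[Lemma 5.16]{PY}. So there is no ``paper's proof'' to compare against directly, and I will instead comment on the logical structure of your proposal.

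Your argument for the final sentence is clean and correct: reduce to a single Serre-invariant stability condition via \autoref{cor_uniqueness}, use Serre-invariance to produce isomorphisms between the three moduli spaces, and invoke \autoref{thm:stab1cubic3folds}(2) to identify the first one with $F_1(X)$. This is exactly the right mechanism and is how the cited sources treat that part.

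For the identifications (1)--(3) with classical moduli spaces, however, your plan mixes two logically distinct strategies without completing either one. The first is: compute $\fS^k_{\Ku(X)}(I_\ell)$ explicitly via \eqref{eq_Serrefunctor}, so that the $\sigma$-moduli space is set-theoretically $\{\fS^k(I_\ell):\ell\in F_1(X)\}$, and then quote a \emph{classical} result identifying the slope-stable (resp.\ large-volume-limit stable) moduli space with $F_1(X)$. You never supply this classical input, and it is not automatic --- for (2) it amounts to knowing that every slope-stable sheaf of Chern character $2-H_X-\tfrac{1}{6}H_X^2+\tfrac{1}{6}H_X^3$ arises as $E_\ell$ for some line. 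The second strategy is the wall-crossing comparison you mention: show directly that $\sigma$-(semi)stability coincides with the classical notion for objects of the given class, after which the identification of moduli spaces is immediate and the explicit computation of $\fS^k(I_\ell)$ becomes unnecessary. This is the route actually taken in the cited references, and it is what your ``Main obstacle'' paragraph is gesturing at --- but as written your Step~3 is only a programme, not an argument.

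One small correction in (1): ``rank one with trivial determinant'' only tells you $E\cong I_Z$ for some closed subscheme $Z$; to conclude that $Z$ is a line you also need $\ch_3=0$ (ruling out embedded points) and the purely one-dimensional degree-one constraint, which you should state.
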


Note that $\pm [I_\ell]$, $\pm [\fS_{\Ku(X)}(I_{\ell})]$, $\pm [I_\ell]-[\fS_{\Ku(X)}(I_{\ell})]$ are the only vectors of square $-1$ in $\cN(\Ku(X))$. We also have the following result.

\begin{thm}
Let $\sigma$ be a Serre-invariant stability condition on $\Ku(X)$. Then non-empty moduli spaces of $\sigma$-stable objects in $\Ku(X)$ are smooth.
\end{thm}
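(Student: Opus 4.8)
The plan is to deduce smoothness from a standard obstruction-theory argument, using the fact that $\Ku(X)$ is a fractional Calabi--Yau category together with the Serre-invariance of $\sigma$. First I would recall that for a moduli space of stable objects in a $\K$-linear triangulated category, the tangent space at a point $[E]$ is $\Ext^1(E,E)$ and the obstruction space is $\Ext^2(E,E)$; hence it suffices to show $\Ext^2(E,E)=0$ for every $\sigma$-stable object $E\in\Ku(X)$ lying in a non-empty moduli space. Since $E$ is stable, $\Hom(E,E)=\K$ (simplicity of stable objects), and by Serre duality in $\Ku(X)$ — which has a Serre functor by \autoref{ex:Serre} — we have $\Ext^2(E,E)\cong\Hom(E,\fS_{\Ku(X)}(E)[-2])^\vee$.

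The key step is then to analyze $\fS_{\Ku(X)}$. By \autoref{prop:cubic3foldseq}, $\Ku(X)$ is fractional Calabi--Yau of dimension $5/3$, i.e. $\fS_{\Ku(X)}^3\cong[5]$. One wants to transfer the vanishing question for $E$ to a question about $\fS_{\Ku(X)}(E)$ and $\fS_{\Ku(X)}^2(E)$, all of which are again $\sigma$-stable because $\sigma$ is Serre-invariant (the Serre functor sends $\sigma$-semistable objects to $\sigma$-semistable objects, as it acts, up to $\widetilde{\mathrm{GL}}^+_2(\R)$, trivially on the set of semistable objects). Next I would use a phase argument: for $\sigma$-stable objects $A,B$ of phases $\phi(A),\phi(B)$, one has $\Hom(A,B)=0$ whenever $\phi(A)>\phi(B)$, and $\Hom(A,B[k])=0$ for $k<0$ when $\phi(A)\le\phi(B)$. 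Tracking how the phase of $\fS_{\Ku(X)}(E)$ compares to that of $E$ — controlled precisely because $\fS_{\Ku(X)}$ acts through an explicit element $\widetilde g\in\widetilde{\mathrm{GL}}^+_2(\R)$ and iterating three times gives the shift $[5]$, so $3(\phi(\fS_{\Ku(X)}(E))-\phi(E))=5$, hence $\phi(\fS_{\Ku(X)}(E))=\phi(E)+5/3$ — one computes that $\Hom(E,\fS_{\Ku(X)}(E)[-2])$ sits in a range of negative ``relative shift'' $5/3-2=-1/3<0$ between two stable objects, forcing it to vanish unless $E\cong\fS_{\Ku(X)}(E)[-2]$, which is impossible since $\phi(E)\ne\phi(E)+5/3$. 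Therefore $\Ext^2(E,E)=0$.

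Finally I would assemble these observations: the obstruction space vanishes at every point, so the moduli functor $\cM_\sigma(\Ku(X),v)$ is unobstructed, hence the good moduli space $M_\sigma(\Ku(X),v)$ (which exists and is a proper algebraic space by \cite{AH-LH,BLMNPS}, since $\sigma$ is Serre-invariant and therefore in the same orbit as the stability conditions with moduli spaces of \autoref{thm_BLMSstability}) is smooth wherever it is non-empty. The main obstacle I anticipate is making the phase bookkeeping rigorous: one must be careful that the equality $\phi(\fS_{\Ku(X)}(E))=\phi(E)+5/3$ holds on the nose for \emph{every} $\sigma$-stable $E$ (not just up to adjusting by $\widetilde{\mathrm{GL}}^+_2(\R)$), which requires knowing that the central charge transforms by an explicit scalar; this is exactly where Serre-invariance and the rank-$2$ structure of $\cN(\Ku(X))$ (with the Euler form \eqref{eq_eulerform}) are essential, and where one should invoke the uniqueness of the Serre-invariant orbit from \autoref{cor_uniqueness} to reduce to a single explicit $\sigma$ for the computation. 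An alternative, perhaps cleaner, route is to observe directly that $\ext^1(E,E)-\hom(E,E)-\ext^2(E,E)$ equals $-\chi(E,E)$ (when higher $\Ext$'s vanish, which again follows from the phase argument since $\fS_{\Ku(X)}$ only shifts by $5/3$ in three steps), and then use that $\chi(v,v)=\ext^2(E,E)-2$ forces $\ext^2(E,E)$ to be governed purely by the lattice; combined with $\Hom(E,\fS_{\Ku(X)}(E))$ being at most one-dimensional and non-vanishing only in a controlled degree, one pins down $\ext^2(E,E)=0$.
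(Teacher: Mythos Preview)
Your approach coincides with the paper's: reduce smoothness to $\Ext^2(E,E)=0$, rewrite this via Serre duality as a $\Hom$ between the $\sigma$-stable objects $E$ and $\fS_{\Ku(X)}(E)$, and kill it by comparing phases. The gap is your claimed equality $\phi(\fS_{\Ku(X)}(E))=\phi(E)+5/3$. If $\widetilde g=(M,g)$ witnesses $\fS_{\Ku(X)}\cdot\sigma=\sigma\cdot\widetilde g$, then the $\sigma$-phase of $\fS_{\Ku(X)}(E)$ is $g(\phi(E))$, and $\fS_{\Ku(X)}^3=[5]$ only gives $g^{\circ 3}(\phi)=\phi+5$; since $g$ is merely an increasing map with $g(\phi+1)=g(\phi)+1$, this does not force $g$ to be translation by $5/3$. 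Your formula $3(\phi(\fS_{\Ku(X)}(E))-\phi(E))=5$ tacitly assumes the three successive phase increments are equal, which is unjustified. The suggested detour through \autoref{cor_uniqueness} does not help either: even for the explicit $\sigma(s,q,\mu)$ the relevant $g$ is not shown to be a pure translation.

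The paper instead uses the \emph{inequality} $\phi(E)<\phi(\fS_{\Ku(X)}(E))<\phi(E)+2$ (citing \cite[Lemma 5.9]{PY}), which is exactly what is needed: for $i\ge 2$ one has $\phi(E[i])>\phi(\fS_{\Ku(X)}(E))$, hence $\Ext^i(E,E)\cong\Hom(E[i],\fS_{\Ku(X)}(E))^\vee=0$. This inequality is in fact elementary from the properties of $g$: if $g(\phi_0)\ge\phi_0+2$, then monotonicity and $g(\phi+n)=g(\phi)+n$ give $g^{\circ 3}(\phi_0)\ge\phi_0+6>\phi_0+5$, a contradiction; similarly $g(\phi_0)\le\phi_0$ would force $g^{\circ 3}(\phi_0)\le\phi_0$. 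Once you replace your exact phase shift by this inequality, the rest of your argument goes through and is identical to the paper's. The ``alternative route'' via the Euler form at the end is unnecessary and, as written, does not by itself produce the required vanishing.
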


\begin{proof}
Let $E$ be a $\sigma$-stable object in $\Ku(X)$. Up to shifting, we can assume that $E$ is in the heart of $\sigma$. Since $\sigma$ is Serre-invariant, we have that $\fS_{\Ku(X)}(E)$ is $\sigma$-stable. By using the relation $\fS_{\Ku(X)}^3=[5]$, it is not difficult to show (see \cite[Lemma 5.9]{PY}) that the phases with respect to $\sigma$ satisfy
\begin{equation} \label{eq_fasi}
\phi(E)< \phi(\fS_{\Ku(X)}(E)) < \phi(E)+2.    
\end{equation}
As a consequence, by Serre duality we have
\[
\Ext^i(E,E)=\Hom(E, E[i]) \cong \Hom(E[i], \fS_{\Ku(X)}(E))=0,
\]
for $i \geq 2$, $i<0$. Indeed, the objects $E[i]$, $\fS_{\Ku(X)}(E)$ are $\sigma$-stable and if $i \geq 2$, then $\phi(E[i]) > \phi(\fS_{\Ku(X)}(E))$ by \eqref{eq_fasi}. Thus there are no nontrivial morphisms from $E[i]$ to $\fS_{\Ku(X)}(E)$. The vanishing for $i<0$ follows from $E$ being in the heart. Since $E$ is stable, we have that $\mathrm{dim}\Hom(E,E)=1$, so $\mathrm{dim}\Ext^1(E,E)=1-\chi(E,E)$ is constant. Since $\Ext^1(E,E)$ is identified with the tangent space to the moduli space at the point corresponding to $E$ and the obstruction is given by $\Ext^2(E,E)$ (see \cite{Lieblich}, \cite[Lemma 4.8]{PeHodge}) which is vanishing, we conclude that the moduli space $M_\sigma(\Ku(X),[E])$, where $[E] \in \cN(\Ku(X))$, is smooth of dimension $1-\chi(E,E)$ at the point corresponding to $E$.
\end{proof}

Last but not least, we focus on the Categorical Torelli theorem. Using the same strategy as in \autoref{thm;CTTcubic3folds}, it is possible to reprove this theorem by using any Serre-invariant stability condition and by applying the modular description of the Fano surface of lines by \autoref{thm-moduli-spaces} (see \cite[Theorem 5.17]{PY}). We end this section by explaining an alternative beautiful proof, given in \cite{soheylaetal}, which is based on a description of the desingularization of the theta divisor of the intermediate Jacobian of $X$ as a moduli space of stable sheaves on $X$, and then of semistable objects in $\Ku(X)$. 

Recall that the intermediate Jacobian of $X$ is the complex torus
$$J(X):= H^1(X, \Omega^2_X) / H_3(X, \Z).$$
It has the structure of a principally polarized abelian variety of dimension $5$ and plays a key role in the seminal paper \cite{CG} for the proof of the (classical) Torelli Theorem and the nonrationality of $X$. If $T$ denotes the closure in the Hilbert scheme of the subvariety parametrizing twisted cubic curves in $X$, then we have the Abel\textendash Jacobi map $\alpha \colon T \to J(X)$ defined by $\alpha(t)=t-H_X^2$ (which is an element of $J(X)$ through the cycle class map). Then by \cite[Proposition 4.2]{Beau} the image of $\alpha$ in $J(X)$ is a theta divisor $\Theta \subset J(X)$ and its generic fiber is isomorphic to $\P^2$. In fact, the linear span of a twisted cubic $C$ is $\langle C \rangle\cong \P^3$, so $C$ is contained in the cubic surface $S=\langle C \rangle \cap X$ for $C$ general. Then the generic fiber of $\alpha$ is the $\P^2$ of twisted cubic curves which are linearly equivalent to $C$ on $S$.

Let $M_X(v')$ be the moduli space of Gieseker stable sheaves on $X$ with Chern character $3 -H -\frac{1}{2}H^2+ \frac{1}{6}H^3$, hence having numerical class $v':=[I_\ell]+ [\fS_{\Ku(X)}(I_\ell)]$. Note that the projection $K_x$ in $\Ku(X)$ of the skyscraper sheaf of a point $x \in X$ has numerical class $v'$ and it turns out to be $\sigma$-stable with respect to any Serre-invariant stability condition $\sigma$ on $\Ku(X)$, which motivates the choice of this particular moduli space to recover the isomorphism class of $X$. Consider the Abel\textendash Jacobi morphism $\beta \colon M_X(v') \to J(X)$ defined by $\beta(E)=\tilde{c}_2(E)-H_X^2$, where $\tilde{c}_2(E)$ corresponds to the second Chern class $c_2(E)$ of $E$ via the cycle map. The key ingredient is the following result.

\begin{thm}[\cite{soheylaetal}, Theorem 7.1] \label{thm_soheyla1}
The moduli space $M_X(v')$ is smooth and irreducible of dimension $4$. The Abel-Jacobi morphism $\beta$ maps $M_X(v')$ birationally onto the theta divisor $\Theta$. More precisely, $M_X(v')$ is the blow-up of $\Theta$ in its singular point. The exceptional divisor is isomorphic to the cubic threefold $X$, and parametrizes non-locally free sheaves in $M_X(v')$.
\end{thm}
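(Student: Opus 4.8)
The plan is to identify $M_X(v')$ as a moduli space whose points can be analyzed concretely, relate it to the theta divisor $\Theta$ via the Abel--Jacobi morphism $\beta$, and then extract the blow-up structure and the description of the exceptional locus. First I would recall that the numerical class $v' = [I_\ell] + [\fS_{\Ku(X)}(I_\ell)]$ has self-intersection $\chi(v',v') = -2$ with respect to the Euler form \eqref{eq_eulerform} of $\cN(\Ku(X))$; combined with the fact that projections $K_x$ of skyscraper sheaves $\OO_x$ into $\Ku(X)$ have class $v'$ and are $\sigma$-stable for any Serre-invariant $\sigma$ (which follows from the phase inequality \eqref{eq_fasi} and the structure of $\cN(\Ku(X))$), one sees $M_X(v')$ is nonempty. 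The expected dimension is $1 - \chi(v',v') = 3$ for the $\sigma$-moduli space, but here we want the full Gieseker moduli $M_X(v')$ of sheaves on $X$ with Chern character $3 - H - \tfrac12 H^2 + \tfrac16 H^3$; its dimension should come out to $4$ once one accounts for the extra parameter (roughly, $\Ext^1$ on $X$ versus on $\Ku(X)$ differs by the exceptional collection contributions). Smoothness and irreducibility of dimension $4$ should be established by a deformation-theoretic computation: for $E$ Gieseker stable in $M_X(v')$ one shows $\Ext^2_X(E,E) = 0$ (using Serre duality on $X$, stability, and the explicit numerical class), so the moduli space is smooth of the expected dimension; irreducibility should follow either by a direct parameter count or by transporting it through the identification with a $\sigma_0$-moduli space inside $\Db(\P^2,\cB_0)$ as in the proof of \autoref{thm_ulrich}.

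Second, I would analyze the Abel--Jacobi morphism $\beta\colon M_X(v')\to J(X)$, $\beta(E) = \tilde c_2(E) - H_X^2$, and compare it with the morphism $\alpha\colon T\to J(X)$ from twisted cubics whose image is the theta divisor $\Theta$ by \cite[Proposition 4.2]{Beau}. The key is a correspondence: a general sheaf $E$ in $M_X(v')$ should be, up to twist, related to the ideal sheaf of a twisted cubic curve $C\subset X$ (indeed the numerical class $3 - H - \tfrac12 H^2 + \tfrac16 H^3$ is exactly that of a sheaf built from such a $C$, e.g. a suitable elementary modification or extension). Concretely, over the locus where $C$ spans a $\P^3$ meeting $X$ in a smooth cubic surface $S$, the fiber of $\alpha$ is the $\P^2$ of curves linearly equivalent to $C$ on $S$; one checks that passing to the associated stable sheaf collapses this $\P^2$ to a point, so $\beta$ factors through $\alpha$ and is birational onto $\Theta$. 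This is the conceptual heart: one must show $\beta$ is generically injective with image $\Theta$, which amounts to reconstructing $E$ from the cycle $\tilde c_2(E)$ generically, and that the generic fiber of $M_X(v')\to\Theta$ is a single point even though the generic fiber of $T\to\Theta$ is a $\P^2$.

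Third, I would locate the singular point of $\Theta$ and show $M_X(v')\to\Theta$ is its blow-up. By Beauville's theory the theta divisor of the intermediate Jacobian of a cubic threefold has a unique singular point, with tangent cone a cubic cone isomorphic to the projectivized tangent cone, and in fact the blow-up of $\Theta$ at this point is smooth with exceptional divisor $\cong X$ (this is essentially the classical statement relating $X$ to the singularity of $\Theta$, going back to Beauville and Mumford). So the plan is: (i) show $\beta$ is an isomorphism over $\Theta_{\mathrm{sm}}$; (ii) analyze $\beta^{-1}$ of the singular point and show it is a divisor isomorphic to $X$, parametrizing exactly the non-locally-free sheaves in $M_X(v')$ (these are the sheaves with a torsion-like defect at a point $x\in X$, i.e. related to the projections $K_x$ — the skyscraper at $x$ picks out the point of $X$); (iii) invoke smoothness of $M_X(v')$ from the first step together with the universal property of blow-up (the morphism $\beta$ to the smooth ambient $J(X)$ factoring appropriately through $\Theta$, with $M_X(v')$ smooth and the fiber over the singular point a Cartier divisor) to conclude $M_X(v')$ is the blow-up.

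The main obstacle I expect is step two: proving that $\beta$ is birational onto $\Theta$ with the generic fiber a single reduced point, i.e. the reconstruction of a generic stable sheaf $E\in M_X(v')$ from its second Chern class. This requires a careful study of how stable sheaves with this numerical invariant arise from twisted cubics (or, dually, from the $\Ku(X)$-description via $\fS_{\Ku(X)}$), and showing that the $\P^2$-ambiguity present for the Hilbert-scheme parametrization disappears at the level of sheaves. A secondary difficulty is the precise identification of the exceptional divisor with $X$ itself (rather than merely a variety birational to it) and the verification that it is exactly the non-locally-free locus; this should follow by exhibiting, for each $x\in X$, a canonical non-locally-free sheaf with class $v'$ whose singular support is $\{x\}$ and checking these fill out a divisor.
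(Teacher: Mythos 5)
Two issues, one numerical and one structural. The numerical one: with respect to the Euler form \eqref{eq_eulerform}, writing $e_1=[\II_\ell]$ and $e_2=[\fS_{\Ku(X)}(\II_\ell)]$, one computes $\chi(v',v')=\chi(e_1+e_2,e_1+e_2)=-1-1+0-1=-3$, not $-2$. Consequently the expected dimension $1-\chi(v',v')$ of the moduli space on the Kuznetsov component is already $4$, the same as that of the Gieseker moduli space $M_X(v')$; there is no ``missing parameter'' to be explained away, and your proposed explanation (that $\Ext^1$ on $X$ and on $\Ku(X)$ differ by exceptional-collection contributions) cannot be correct for these objects, since \autoref{thm_soheyla2} identifies $M_\sigma(\Ku(X),v')$ with $M_X(v')$ outright. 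Relatedly, the $\sigma$-stability of the projections $K_x$ is not a formal consequence of \eqref{eq_fasi}; it is itself a nontrivial point established in \cite{soheylaetal}.

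The structural gap is more serious: what you defer as ``the main obstacle'' is the actual content of the proof, and your outline supplies no mechanism for it. As the survey indicates immediately after the statement, the proof in \cite{soheylaetal} rests on classical facts about $\Theta$ (Beauville's description of its unique triple point, \cite{Beau}) combined with a wall-crossing analysis in tilt/weak stability that classifies \emph{all} Gieseker-stable sheaves of class $v'$; from this classification one reads off smoothness and irreducibility, the factorization of the Abel--Jacobi map $\alpha$ from twisted cubics through $\beta$, the birationality of $\beta$ onto $\Theta$, and the fact that the non-locally-free sheaves are precisely the projections $K_x$ of skyscraper sheaves, sweeping out a divisor isomorphic to $X$. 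Your deformation-theoretic step only yields smoothness once $\Ext^2(E,E)=0$ is known for every sheaf in $M_X(v')$, which again requires knowing what these sheaves are; and the appeal to the universal property of the blow-up does not close the argument: smoothness of $M_X(v')$ plus the contraction of some divisor to the singular point does not identify $M_X(v')$ with the blow-up unless one has already proved that $\beta$ is an isomorphism over the smooth locus of $\Theta$ and that the fiber over the node is exactly the non-locally-free locus and is isomorphic to $X$ --- precisely the statements the sheaf classification delivers. Quoting the classical resolution of $\Theta$ by one blow-up is a legitimate input, but it is not a substitute for the comparison between $M_X(v')$ and that resolution, so as it stands the proposal is an outline of the statement rather than a proof of it.
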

The proof of the above theorem makes use of classical results on the properties of $\Theta$ together with techniques of wall-crossing to describe the objects in $M_X(v')$. In particular, the embedding of $X$ in $M_X(v')$ is obtained by sending a point $x \in X$ to the projection $K_x$ in $\Ku(X)$ of the corresponding skyscraper sheaf. The objects $K_x$ sweep a divisor in this moduli space. Also note that the Abel--Jacobi map $\alpha$ factors through $\beta$.

The second key ingredient is the identification of $M_X(v')$ with a moduli space of semistable objects in $\Ku(X)$.

\begin{thm}[\cite{soheylaetal}, Theorem 8.7] \label{thm_soheyla2}
Let $\sigma$ be a Serre-invariant stability condition on $\Ku(X)$. Then we have the isomorphism
$$M_\sigma(\Ku(X), v') \cong M_X(v').$$
\end{thm}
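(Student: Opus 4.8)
The plan is to prove \autoref{thm_soheyla2} by showing that every Gieseker stable sheaf $E \in M_X(v')$ lies in $\Ku(X)$ and is $\sigma$-stable, and conversely that every $\sigma$-semistable object with class $v'$ is such a sheaf, so that the two moduli functors are identified. First I would check the containment $M_X(v') \subseteq \Ku(X)$: this is a cohomology vanishing computation analogous to the one carried out for Ulrich bundles in \autoref{subsec:cubictthreefolds3}. Since $\ch(E) = 3 - H - \tfrac12 H^2 + \tfrac16 H^3$ and $E$ is Gieseker stable, one has $\chi(\OO_X(kH_X), E) = 0$ for $k = 0$ (and the analogous Euler characteristics for $k=-1$), and Gieseker stability forces $\Hom(\OO_X(kH_X), E) = 0$ and, via Serre duality on $X$ together with stability of $E$, also $\Hom(\OO_X(kH_X), E[i]) = 0$ for $i = 1,2,3$; hence $E \in \langle \OO_X, \OO_X(H_X)\rangle^\perp = \Ku(X)$. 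Here the point made in \autoref{thm_soheyla1} that the non-locally-free sheaves in $M_X(v')$ form exactly the exceptional divisor $\cong X$ should be kept in mind, since the vanishing argument must be uniform over all of $M_X(v')$.

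Next I would invoke Serre-invariance of $\sigma$ to establish the stability comparison. The key structural input is that $\fS_{\Ku(X)}^3 = [5]$ (\autoref{prop:cubic3foldseq}), which forces, for any $\sigma$-stable $F$ in the heart, the phase inequality $\phi(F) < \phi(\fS_{\Ku(X)}(F)) < \phi(F) + 2$ exactly as in the smoothness argument of \autoref{subsec:cubictthreefolds3}; this implies $\Ext^{\leq 0}(F,F) = \K$ and $\Ext^{\geq 2}(F,F) = 0$, so $\mathrm{ext}^1(F,F) = 1 - \chi(v',v')$. Since $v' = [I_\ell] + [\fS_{\Ku(X)}(I_\ell)]$, the Euler form \eqref{eq_eulerform} gives $\chi(v',v') = -3$, so $\sigma$-stable objects of class $v'$ move in a smooth $4$-dimensional family — matching the dimension in \autoref{thm_soheyla1}. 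Then I would argue that a general point of $M_X(v')$, namely the projection $K_x$ of a skyscraper sheaf $\OO_x$, is $\sigma$-stable: this can be seen by a wall-crossing / large-volume-limit argument (as in the proof of \autoref{thm_soheyla1}), checking there is no wall for the class $v'$ between the tilt-stability chamber where $\OO_x$ projects to a stable object and the chamber defining $\sigma$, using that all Serre-invariant stability conditions lie in a single $\widetilde{\mathrm{GL}}^+_2(\R)$-orbit (\autoref{cor_uniqueness}) and the explicit description of induced stability conditions in \autoref{thm_BLMSstability}. By openness of (semi)stability and irreducibility of $M_X(v')$ (from \autoref{thm_soheyla1}), one gets an open immersion $M_X(v') \hookrightarrow M_\sigma(\Ku(X), v')$.

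Finally I would show this open immersion is surjective, hence an isomorphism. The target $M_\sigma(\Ku(X), v')$ is a proper algebraic space (it is a moduli space for a stability condition with moduli spaces, by \autoref{thm_BLMSstability} and \cite{BLMNPS}), so it suffices to show the image is also closed, or equivalently that both spaces are proper and irreducible of the same dimension $4$ and the map is an isomorphism onto a connected component. Concretely: the image is open; by properness of $M_X(v')$ (again \autoref{thm_soheyla1}, $M_X(v')$ being the blow-up of the compact $\Theta$) its image is also closed in $M_\sigma(\Ku(X),v')$; since $M_\sigma(\Ku(X),v')$ is a proper algebraic space and, via the standard deformation argument, smooth of dimension $1 - \chi(v',v') = 4$ at every point, and the image of $M_X(v')$ is a nonempty open and closed smooth $4$-dimensional subspace, we conclude $M_X(v') \xrightarrow{\sim} M_\sigma(\Ku(X), v')$, provided $M_\sigma(\Ku(X),v')$ is connected.

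The main obstacle I expect is precisely this last connectedness/surjectivity point: showing that \emph{every} $\sigma$-semistable object of class $v'$ actually comes from a Gieseker stable sheaf on $X$ — equivalently, ruling out that $M_\sigma(\Ku(X),v')$ has extra components or strictly semistable loci not accounted for by $M_X(v')$. This requires a careful wall-crossing analysis for the class $v'$ (which is not primitive-of-square $-1$ but has $\chi(v',v') = -3$, so a priori strictly semistable objects could appear), tracking the heart $\cA(s,q,\mu)$ back through the double-tilt construction to $\Coh(X)$ and identifying the semistable objects with their cohomology sheaves. Controlling potential destabilizing subobjects — in particular showing no object in $\Ku(X)$ of class $[I_\ell]$ or $[\fS_{\Ku(X)}(I_\ell)]$ can appear as a Jordan–Hölder factor of a $\sigma$-semistable object of class $v'$ within the relevant chamber, using that $M_\sigma(\Ku(X),[I_\ell]) \cong F_1(X)$ and $\chi([I_\ell], v') \neq 0$ — is where the real work lies, and is the step I would expect to occupy most of the detailed proof in \cite{soheylaetal}.
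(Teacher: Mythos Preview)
Your strategy is a plausible direct attack, but it is \emph{not} the route the paper takes. The paper's proof does not compare Gieseker stability to $\sigma$-stability for the class $v'$ at all. Instead it exploits Serre-invariance in a different way: since $\sigma$ is Serre-invariant, the Serre functor $\fS_{\Ku(X)}$ is an autoequivalence sending $\sigma$-semistable objects to $\sigma$-semistable objects, so it induces an isomorphism
\[
M_\sigma\bigl(\Ku(X),\,\fS_{\Ku(X)}^{-1}(v')\bigr)\;\cong\;M_\sigma(\Ku(X),v').
\]
One computes $\fS_{\Ku(X)}^{-1}(v')=2[I_\ell]-[\fS_{\Ku(X)}(I_\ell)]$, and the moduli space $M_\sigma(\Ku(X),2[I_\ell]-[\fS_{\Ku(X)}(I_\ell)])$ is already completely described in \cite[Theorem~1.2]{APR} (see also \cite[Proposition~8.5]{soheylaetal}). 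The proof is then finished by computing the images under $\fS_{\Ku(X)}$ of the objects in that known moduli space and matching them with the sheaves in $M_X(v')$.

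The comparison: your approach tries to run the wall-crossing for the class $v'$ itself, and you correctly identify the hard step as ruling out extra components or strictly semistable objects in $M_\sigma(\Ku(X),v')$. The paper sidesteps this entirely by transporting the problem, via the Serre functor, to a numerical class for which the moduli space has already been worked out in the literature. What the paper's route buys is economy: no new wall-crossing analysis, no connectedness argument for $M_\sigma(\Ku(X),v')$. What your route would buy, if carried out, is a self-contained description of $M_\sigma(\Ku(X),v')$ not relying on \cite{APR}; but the obstacle you flag (surjectivity/connectedness) is genuine and is precisely what the paper's trick avoids.
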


\begin{proof}[Idea of proof]
Since $\sigma$ is Serre-invariant, the Serre functor $\fS_{\Ku(X)}$ induces an isomorphism
\[
M_\sigma(\Ku(X), 2[I_\ell]-[\fS_{\Ku(X)}(I_\ell])) \cong M_\sigma(\Ku(X), v'), \quad E \mapsto \fS_{\Ku(X)}(E),
\]
where $2[I_\ell]-[\fS_{\Ku(X)}(I_\ell]=\fS^{-1}_{\Ku(X)}(v')$. The moduli space $M_\sigma(\Ku(X), 2[I_\ell]-[\fS_{\Ku(X)}(I_\ell]))$ has been fully described in \cite[Theorem 1.2]{APR} (see also \cite[Proposition 8.5]{soheylaetal}). The proof then follows by computing the image via $\fS_{\Ku(X)}$ of the objects.
\end{proof}

As a consequence, the authors get a new proof of the Categorical Torelli \autoref{thm;CTTcubic3folds} (see \cite[Theorem 8.1]{soheylaetal}):

\begin{proof}[New proof of \autoref{thm;CTTcubic3folds}]
As usual, there is only one nontrivial implication to prove. Assume that there is an exact equivalence $\Phi \colon \Ku(X_1) \isomor \Ku(X_2)$ (not necessarily of Fourier--Mukai type). Up to composing with a power of the Serre functor and shifting, we can assume $\Phi_*(v')=v'$, namely that $\Phi$ maps the numerical class of the projection $K_{x_1}$ of the skyscraper sheaf of a point $x_1 \in X_1$ to the numerical class of $K_{x_2}$ for a point $x_2 \in X_2$, using that the Serre functor acts transitively on all classes that have the same square as $v'$ with respect to $\chi$ (see \cite[Lemma 8.3]{soheylaetal}). Since $\sigma$ is Serre invariant and the Serre functor commutes with equivalences, we have that $\Phi \cdot \sigma$ is Serre invariant. \autoref{thm_soheyla2} applied two times gives the isomorphisms between moduli spaces
$$M_{X_1}(v') \cong M_\sigma(\Ku(X_1), v') \cong M_{\Phi \cdot \sigma}(\Ku(X_2), v') \cong M_{X_2}(v').$$
By \cite[Lemma 7.5]{soheylaetal} we have that $X_1$ and $X_2$ are the unique rationally connected divisors in $M_{X_1}(v')$ and $M_{X_2}(v')$, respectively, which get contracted by any morphism to a complex abelian variety. Thus the above chain of isomorphisms implies $X_1 \cong X_2$.
\end{proof}

\subsection{More prime Fano threefolds}\label{subsect:moreFano}

After the success of the above techniques in the case of cubic threefolds, it is natural to wonder whether we can get similar results for other prime Fano threefolds of index $1$ and $2$ (see \autoref{subsect:Fano3foldsgeo} for the classification and properties). The goal of this section is to summarize the state of art in these cases concerning the existence and uniqueness of Serre-invariant stability conditions and the Categorical Torelli theorem. We assume the base field is $\K= \C$ for simplicity.

Let us start by considering the index $2$ case. We denote by $Y_d$ a prime Fano threefold of index $2$ and degree $1 \leq d \leq 5$ \footnote{Note that these prime Fano threefolds of index $2$ were denoted by $X_d$ in \autoref{subsect:Fano3foldsgeo}. The sudden change of notation is motivated by the fact that soon we will need discuss the relation between prime Fano threefolds of index $1$ and $2$.} and take its Kuznetsov component $\Ku(Y_d):= \langle \OO_{Y_d}, \OO_{Y_d}(H_{Y_d}) \rangle^{\perp}$. The strategy of \cite{BLMS}, reviewed in \autoref{subsect:cubic3folds2} for cubic threefolds, works more generally for $Y_d$ and allows one to construct stability conditions on $\Ku(Y_d)$, which we denote by $\sigma(s,q,\mu)$. We have the following results:
\begin{enumerate}
\item By \cite[Proposition 5.7]{PY} the stability conditions $\sigma(s,q,\mu)$ on $\Ku(Y_d)$ are Serre-invariant for every $1 \leq d \leq 5$.
\item By \cite[Theorem 4.25]{JLLZ} (see also \cite[Remark 3.7]{FP}) there is a unique $\widetilde{\text{GL}}^+_2(\R)$-orbit of Serre-invariant stability conditions on $\Ku(Y_d)$ for every $2 \leq d \leq 5$.
\item The Categorical Torelli theorem holds for general $Y_2$ by \cite[Theorem 1.3]{APR}, and for every $Y_2$ under the assumption that the equivalence is of Fourier\textendash Mukai type by \cite[Corollary 6.1]{BT}. Note that by \cite{LPZStrong} every equivalence is of Fourier\textendash Mukai type in this case.
\item The Categorical Torelli theorem holds for $Y_4$ by \cite{BO}, and for $Y_5$ since it is unique up to isomorphism by \cite{Isk}.
\end{enumerate}
The case $d=1$ remains mysterious. Recall that $Y_1$ is a hypersurface of degree $6$ in the weighted projective space $\P(1,1,1,2,3)$, known as the Veronese double cone. The Serre functor of $\Ku(Y_1)$ satisfies $\fS_{\Ku(Y_1)}^3=[7]$ by \cite[Corollary 4.2]{Kuz_CY}. As a consequence, we cannot apply the criterion of \autoref{thm_critSi} and the homological dimension of the heart of a Serre-invariant stability condition is $3$. This makes this case quite different from the others. A remarkable difference is that this time the Fano surfaces of lines is an irreducible component of a moduli space of stable objects in $\Ku(Y_1)$ and in \cite{PetRota} the authors classify all the objects in this moduli space. We can then formulate the following questions:
\begin{qn}
Is there a unique $\widetilde{\text{GL}}^+_2(\R)$-orbit of Serre-invariant stability conditions on $\Ku(Y_1)$?
\end{qn}

\begin{qn}
Does the Categorical Torelli theorem holds for $Y_1$?
\end{qn}

We now focus on the index $1$ case and denote by $X_d$ such a prime Fano threefold of degree $d=2g-2$, where $2 \leq g \leq 12$, $g \neq 11$. We first consider $g \geq 6$, so that the Kuznetsov component is defined by $\Ku(X_d):=\langle \EE_r, \OO_{X_d} \rangle^\perp$, where $\EE_r$ is an exceptional vector bundle of rank $r$ on $X_d$ (see Section \ref{subsect:Fano3foldsgeo}). Again by \cite{BLMS}, there are stability conditions $\sigma(s,q,\mu)$ on $\Ku(X_d)$ defined following the same procedure explained in Section \ref{subsect:cubic3folds2} for cubic threefolds. In these cases we have the following results:
\begin{enumerate}
\item By \cite[Theorem 3.18]{PR} the stability conditions $\sigma(s,q,\mu)$ on $\Ku(X_d)$ are Serre-invariant for every $d=10, 14, 18, 22$. Among all, the most interesting case is $d=10$, i.e.\ when $X_d$ is a Gushel\textendash Mukai threefold.
\item Denote by $\MM^i_d$ the moduli space of Fano threefolds of index $i$ and degree $d$ for $i=1,2$. By \cite[Theorem 3.3]{Kuz:Fano3folds}, for $d=3, 4, 5$ there is a correspondence $\ZZ_d \subset \MM^2_d \times \MM^1_{4d+2}$, which we call Kuznetsov's correspondence, dominant over each factor, such that for every point $(Y_d, X_{4d+2}) \in \ZZ_d$, there is an equivalence 
$$\Ku(Y_d) \cong \Ku(X_{4d+2}).$$
By this equivalence and the results for $Y_3$, $Y_4$, $Y_5$, we deduce that $\Ku(X_d)$ has a unique orbit of Serre-invariant stability conditions for $d=14, 18, 22$. Moreover, by \cite[Theorem 4.25]{JLLZ} (see also \cite[Corollary 4.5]{PR}) this uniqueness result holds also for $\Ku(X_{10})$. In the remaining cases, i.e.\ $d=12, 16$, we have $\Ku(X_d)$ is equivalent to the bounded derived category of a curve of genus $7$ and $3$, respectively. In particular, by \eqref{eq:curvesemanuele} the stability manifold of $\Ku(X_d)$ is identified with the $\widetilde{\text{GL}}^+_2(\R)$-orbit of the slope stability on the curve. Thus there is a unique orbit of Serre-invariant stability conditions and actually every stability condition $\sigma$ is Serre-invariant, using that the action of any autoequivalence commutes with the action of $\widetilde{\text{GL}}^+_2(\R)$. We conclude that for every $d \geq 10$, there exists a non-empty and unique orbit of Serre-invariant stability conditions. 
\item The Categorical Torelli theorem does not hold for $X_d$ in the form stated above. For instance, by \cite{KP_cones} it is known that there are Gushel\textendash Mukai threefolds with equivalent Kuznetsov components but which are not isomorphic (only birational). Nevertheless, a refined version of the Categorical Torelli theorem has been proved in \cite{JLLZ} for Gushel\textendash Mukai threefolds and more generally in \cite{JLZ} for every $X_d$ with $d \geq 10$, making use (among all) of the existence and uniqueness of Serre-invariant stability conditions. The precise statement is the following. Consider $X_d$ and $X_d'$ two prime Fano threefolds of index $1$ and same degree $d \geq 10$. Denote  by $i^!$ and $i^{'!}$ the right adjoints of the embedding functors $\Ku(X_d) \subset \Db(X_d)$ and $\Ku(X_d') \subset \Db(X_d')$, respectively. Assume there is an equivalence $\Phi \colon \Ku(X_d) \cong \Ku(X_d')$ such that 
$$\Phi(i^!(\EE_r)) \cong i^{'!}(\EE_r).$$ 
Then $X_d \cong X_d'$. In other words, (the other direction is trivial) the knowledge of $i^{!}(\EE_r)$ is necessary and sufficient to reconstruct $X_d$ from $\Ku(X_d)$ for $d \in \lbrace 10, 12, 14, 16, 18, 22 \rbrace$.
\end{enumerate}

As noted in the introduction to \cite{JLLZ}, the Categorical Torelli theorem for cubic threefolds and the Kuznetsov's correspondence, discussed in item (2) above, imply that the birational class of $X_{14}$ is determined by its Kuznetsov component. Inspired by \cite[Conjecture 1.7]{KP_cones}, we can formulate the following question, which is nothing but the Birational Categorical Torelli theorem for Gushel\textendash Mukai threefolds:

\begin{qn}[\cite{JLLZ}, Question 1.1(1)]
Let $X_1$ and $X_2$ be prime Fano threefolds of index $1$ and degree $10$. Assume that there is an equivalence $\Phi \colon \Ku(X_1) \isomor \Ku(X_2)$ (of Fourier\textendash Mukai type). Is it true that $X_1$ and $X_2$ are birational?
\end{qn}

As noted by the referee $X_{12}$, $X_{16}$, $X_{18}$, $X_{22}$ are rational (over an algebraically closed field), so the above question is void in these cases. Furthermore, this question has been positively answered in \cite[Theorem 1.5]{JLLZ} for general ordinary Gushel\textendash Mukai threefolds.

Let us consider the remaining Fano threefolds $X_d$ with $d=2, 4, 6, 8$, where $\Ku(X_d):= \langle \OO_{X_d} \rangle^\perp$. Even if the construction in \cite{BLMS} still applies and there are stability conditions $\sigma(s,q,\mu)$ on $\Ku(X_d)$, the situation is far less understood. Note that $\cN(\Ku(X_d))$ has rank $3$ in this case, so the stability manifold of $\Ku(X_d)$ has a bigger dimension than in the previous cases and $\Ku(X_d)$ does not look like a noncommutative curve. Up to now, we know the following results:
\begin{enumerate}
\item By \cite[Remark 1.2]{JLZ} the Fano threefold $X_d$ is a moduli space of stable objects with respect to the stability conditions $\sigma(s,q,\mu)$ on $\Ku(X_d)$. As one could expect, the objects in the moduli spaces are shifts of ideal sheaves of points in $X_d$.
\item If $d=6$, then $X_6$ is a smooth intersection of a quadric and a cubic hypersurfaces. Then by \cite[Corollary 1.9]{KP} there does not exist Serre-invariant stability conditions on $\Ku(X_6)$.
\item If $d=4$, then $X_4$ is either a quartic threefold (see \autoref{rmk:fractCY}) or a double cover of a quadric hypersurface $Q$ in $\P^4$ ramified in the intersection of $Q$ with a quartic. Assume $X_4$ belongs to the second class. Using the recent results in \cite{KP}, we can compute explicitly the Serre functor of $\Ku(X_4)$ and show that the induced stability conditions $\sigma(s,q,\mu)$ cannot be Serre-invariant (see \cite{HMPS}).
\end{enumerate}

We can focus on the case of quartic threefolds and ask the following questions:
\begin{qn}
If $X$ is a quartic threefold, are the stability conditions $\sigma(s,q,\mu)$ Serre-invariant for some $(s,q)$?
\end{qn}

\begin{qn} \label{qn_catTorquartics}
Is there a version of the Categorical Torelli theorem for quartic threefolds? Does the birational Categorical Torelli theorem hold for quartic threefolds?
\end{qn}

In these notes we have seen many methods to show the Categorical Torelli theorem for the Kuznetsov components of Enriques surfaces and cubic threefolds. The first tentative to approach \autoref{qn_catTorquartics} could be trying to adapt one of these techniques to the case of quartic threefolds. This is the line of investigation we are following in the work in progress \cite{HMPS}.

\section{Cubic fourfolds} \label{sec:cubic4folds}

Let us now move back to the case when $X$ is a cubic fourfold defined over a field $\K$ which is algebraically closed with $\mathrm{char}(\K)\neq 2$. Let us recall from \autoref{subsect:cubic4folds} that we have a semiorthogonal decomposition
\begin{equation*}\label{eqn:sodcub4folds}
\Db(X)=\langle\Ku(X),\OO_X,\OO_X(H),\OO_X(2H)\rangle,
\end{equation*}
where $H$ is a hyperplane class. The aim of this section is to show that $\Ku(X)$ carries stability conditions which we will use to prove both a categorical and a classical Torelli theorem for these Fano fourfolds. We conclude this section with a brief discussion on Gushel\textendash Mukai fourfolds (see \autoref{sec_GM4}) which, quite surprisingly, are at the same time very close in spirit to cubic fourfolds but very different for some key features.

\subsection{Stability conditions on the Kuznetsov component}\label{subsect:stabKuz4folds}

In this section we want to prove that $\Ku(X)$ carries stability conditions. Ideally, we would like to apply the techniques discussed in \autoref{subsect:introstab2} but what prevents us from a successful output is the fact that $X$ has dimension $4$ while the inducing strategy works perfectly fine for threefolds.

Thus the idea is to embed $\Ku(X)$ into the derived category of a new threefold whose construction is intimately related to the geometry of $X$ as in \autoref{subsect:cubic3folds1}.

More precisely, we follow \cite[Section 7]{BLMS}. Now, as in the case of cubic threefolds, we pick a line $\ell\subseteq X$ which is not contained in a plane in $X$ and we consider the projection $\pi_\ell\colon X\dashrightarrow\P^3$ onto a skew $3$-dimensional projective plane. We further consider the blow-up $\widetilde X:=\mathrm{Bl}_\ell(X)$ of $X$ in $\ell$ which makes the rational map $\pi_\ell$ into an actual morphism
\[
\widetilde\pi_\ell\colon\widetilde X\to\P^3
\]
whose fibers are conics.

On the categorical side, the conic fibration $\widetilde\pi_\ell$ yields a sheaf $\BB_0$ of even parts of Clifford algebras which is analogous to the one considered in \autoref{subsect:cubic3folds1}
(in particular, it is noncommutative and generically Azumaya). As in  \autoref{subsect:cubic3folds1} the fact that we can either view $\widetilde X$ as a conic fibration or as a blow-up, gives two fully faithful embeddings
\[
\Ku(X)\hookrightarrow\Db(X)\hookrightarrow\Db(\widetilde X)\qquad\Db(\P^3,\BB_0)\hookrightarrow\Db(\widetilde X).
\]
A direct comparison between the two inclusions shows that the embedding of the Kuznetsov component can be
made compatible by mutations with the one of $\Db(\P^3,\BB_0)$. Hence, according to \cite[Proposition 7.7]{BLMS}, we get a semiorthogonal decomposition
\begin{equation}\label{eqn:sodcubci4foldsn1}
\Db(\P^3,\BB_0)=\langle\Ku(X),E_1,E_2,E_3\rangle,
\end{equation}
where $E_i$ is an exceptional $\BB_0$-coherent sheaf. As in \eqref{eqn:P2twist}, we omit the explicit description of the embedding $\Ku(X)\hookrightarrow\Db(\P^3,\BB_0)$ which is indeed relevant for computations but useless for the purposes of this paper.

\begin{remark}\label{rmk:diff4folds3folds}
Despite the analogy between \eqref{eqn:P2twist} for cubic threefolds and \eqref{eqn:sodcubci4foldsn1} for cubic fourfolds, the complexity of the twisted projective space $(\P^3,\BB_0)$ and of its derived category has secretly increased a lot. Indeed, while the numerical Grothendieck group of $\Db(\P^2,\BB_0)$ has always rank $3$ for all cubic threefolds (see \cite[Proposition 2.12]{BMMS}), the rank of the numerical Grothendieck group of $\Db(\P^3,\BB_0)$ varies when the cubic fourfold varies.

This is, indeed, not surprising because the Kuznetsov component behaves like a noncommutative curve (with a rather simple cohomology) for cubic threefolds while it behaves like a noncommutative K3 surface (hence with a rich cohomology) for cubic fourfolds.
\end{remark}

In order to construct stability conditions on $\Ku(X)$ we are now in a good position to apply the dimension reduction trick described in \autoref{subsect:cubic3folds1}: indeed, $\Ku(X)$ is now an admissible subcategory of the derived category of a (twisted) threefold with residue category generated by three exceptional objects.

To make this precise, we proceed as in the cubic threefold case and take the forgetful functor $\mathrm{Forg}\colon\Db(\P^3,\cB_0)\to\Db(\P^3)$. The \emph{twisted Chern character} is then defined as
\[
\mathrm{ch}_{\BB_0} (-) := \mathrm{ch}(\mathrm{Forg}(-)) \left( 1 - \frac{11}{32}L\right),
\]
where $L$ is the class of a line in $\P^3$. We denote by $\mathrm{ch}_{\BB_0,i}$ the degree $i$ component of $\mathrm{ch}_{\cB_0}$. Since this is a cohomology class of $\P^3$, it gets naturally identified with a rational number.

\begin{remark}\label{rmk:11/32}
The mysterious numerical correction $\frac{11}{32}$ is needed to provide a Bogomolov inequality for the twisted derived category $\Db(\P^3,\BB_0)$. The role of such an inequality is to give the correct quadratic inequality in the support property for the Kuznetsov component. This is similar to \eqref{eq_BI}. For the purposes of this paper the numerical correction above can be ignored.
\end{remark}

Next, we define
\[
\vv' \colon K(\Db(\P^3, \BB_0)) \to \Q^3,\qquad \vv'(E):=\left(\mathrm{ch}_{\BB_0,0}(E),\mathrm{ch}_{\BB_0,1}(E),\mathrm{ch}_{\BB_0,2}(E)\right)\in\Q^{\oplus 3}
\]
and consider the lattice $\Lambda_{\BB_0}:=\text{Im}(\vv')$. Finally, we denote by $\Lambda_{\BB_0,\Ku(X)}$ the image of the composition of the natural maps $K(\Ku(X))\to K(\Db(\P^3,\BB^0))\to\Lambda_{\BB_0}$. The main result is then the following.

\begin{thm}[\cite{BLMS}, Theorem 1.2, \cite{BLMNPS}, Proposition 25.3]\label{thm:stab4folds}
If $X$ is a cubic fourfold, then there are stability conditions with moduli spaces on $\Ku(X)$ with respect to $\Lambda_{\BB_0,\Ku(X)}$.
\end{thm}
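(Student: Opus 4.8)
The plan is to run the inducing machinery of \autoref{prop:inducedstab} on the semiorthogonal decomposition \eqref{eqn:sodcubci4foldsn1}
\[
\Db(\P^3,\BB_0)=\langle\Ku(X),E_1,E_2,E_3\rangle,
\]
taking $\cT=\Db(\P^3,\BB_0)$, $\cD=\Ku(X)$ and the exceptional collection $\{E_1,E_2,E_3\}$. The point of the dimension reduction trick of \autoref{subsect:cubic3folds1} is precisely that, although $X$ is a fourfold, $\Ku(X)$ has been realized as an admissible subcategory of the bounded derived category of the \emph{twisted threefold} $(\P^3,\BB_0)$, with residual category generated by finitely many exceptional objects; so it is enough to produce a weak stability condition on $\Db(\P^3,\BB_0)$ and then restrict it. Note that, crucially, \autoref{prop:inducedstab} manufactures a \emph{genuine} Bridgeland stability condition on the residue category $\Ku(X)$ out of a merely \emph{weak} one on $\Db(\P^3,\BB_0)$, so no generalized Bogomolov inequality on the threefold is needed here --- only the classical one for $\BB_0$-modules.

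First I would construct weak stability conditions on $\Db(\P^3,\BB_0)$. Using the forgetful functor $\mathrm{Forg}\colon\Db(\P^3,\BB_0)\to\Db(\P^3)$ and the twisted Chern character $\mathrm{ch}_{\BB_0}(-)=\mathrm{ch}(\mathrm{Forg}(-))(1-\frac{11}{32}L)$, one defines rank, degree and $\mathrm{ch}_{\BB_0,2}$ on $\cN(\Db(\P^3,\BB_0))$ and the morphism $\vv'$ with image $\Lambda_{\BB_0}$, exactly as in \autoref{ex:slopestab} and \eqref{eq:latticeLambda}. Slope stability on $\Coh(\P^3,\BB_0)$ is then a weak stability condition; tilting it at a slope $s$ gives a heart $\Coh^s(\P^3,\BB_0)$, and
\[
Z_{s,q}(-)=-(\mathrm{ch}_{\BB_0,2}(-)-q\,\mathrm{ch}_{\BB_0,0}(-))+\sqrt{-1}(\mathrm{ch}_{\BB_0,1}(-)-s\,\mathrm{ch}_{\BB_0,0}(-))
\]
defines, for $(s,q)\in\Delta$, a weak stability condition $\sigma_{s,q}=(\Coh^s(\P^3,\BB_0),Z_{s,q})$ entirely parallel to \autoref{thm:familytiltstability}. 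The essential input is a Bogomolov-type inequality for $\sigma_H$-semistable $\BB_0$-modules, which also supplies the quadratic form for the support property; the numerical correction $1-\frac{11}{32}L$ in $\mathrm{ch}_{\BB_0}$ is precisely the normalization that makes such an inequality hold (cf.\ \autoref{rmk:11/32}). A further tilt of $\sigma_{s,q}$, exactly as in \autoref{prop:doubletiltstab}, then produces weak stability conditions $\sigma^\mu_{s,q}=(\Coh^\mu_{s,q}(\P^3,\BB_0),Z^\mu_{s,q})$ on $\Db(\P^3,\BB_0)$ with respect to $\Lambda_{\BB_0}$.

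Next I would apply \autoref{prop:inducedstab}, choosing the parameters $(s,q,\mu)$ so that conditions (1)--(4) hold for $\sigma=\sigma^\mu_{s,q}$ and the objects $E_1,E_2,E_3$: that each $E_i$ lies in $\Coh^\mu_{s,q}(\P^3,\BB_0)$ and each $\fS_{\cT}(E_i)$ in $\Coh^\mu_{s,q}(\P^3,\BB_0)[1]$, that $Z^\mu_{s,q}(E_i)\neq0$, and that no nonzero object of $\Coh^\mu_{s,q}(\P^3,\BB_0)\cap\Ku(X)$ is sent to $0$ by $Z^\mu_{s,q}$. The membership and non-vanishing statements follow by computing the twisted Chern characters of the $E_i$ and locating them in the iterated tilts, in the spirit of \eqref{rotturadiballe}. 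For the Serre condition one needs the explicit form of the Serre functor of $\Db(\P^3,\BB_0)$, which --- since $\BB_0$ is a sheaf of algebras on the threefold $\P^3$ --- is again of the shape ``twist by an invertible $\BB_0$-bimodule and shift by $3$''; one checks that it carries each $E_i$ into $\Coh^\mu_{s,q}(\P^3,\BB_0)[1]$ for suitable $(s,q,\mu)$. Finally, an object $F\in\Coh^\mu_{s,q}(\P^3,\BB_0)$ with $Z^\mu_{s,q}(F)=0$ must be a torsion $\BB_0$-module supported in codimension $3$, hence has a nonzero morphism to or from one of the $E_i$ and so cannot lie in $\Ku(X)=\langle E_1,E_2,E_3\rangle^\perp$. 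Thus \autoref{prop:inducedstab} yields a Bridgeland stability condition $\sigma^\mu_{s,q}|_{\Ku(X)}$ on $\Ku(X)$ with respect to the sublattice $\Lambda_{\BB_0,\Ku(X)}$ cut out by the image of $K(\Ku(X))$.

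Finally, to upgrade this to a stability condition \emph{with moduli spaces} in the sense of \autoref{def:stabcond} one invokes \cite[Proposition 25.3]{BLMNPS} and \cite{AH-LH}: openness holds because both $\Ku(X)$ and the heart are cut out by cohomological conditions and the Bogomolov bound of the first step controls the semistable objects, while boundedness of the functors $\cM_\sigma(\Ku(X),v)$ follows from the support property. This gives the statement. The main obstacle, I expect, is the first step together with the Serre-functor verification: the whole tilt-theoretic construction of \autoref{thm:familytiltstability} and \autoref{prop:doubletiltstab} must be transplanted to the noncommutative setting of $(\P^3,\BB_0)$, the crux being the Bogomolov inequality for $\BB_0$-modules (encoded in the $\frac{11}{32}$-correction), which also underpins the support property and boundedness. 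Unlike the cubic threefold case, where one could work directly on $\Db(X)$ with the ordinary tilt-stability of \autoref{thm:familytiltstability}, here one cannot hope for a single uniform computation: by \autoref{rmk:diff4folds3folds} the numerical Grothendieck group of $\Db(\P^3,\BB_0)$ varies with $X$.
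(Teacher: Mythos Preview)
Your proposal is correct and follows exactly the approach the paper sketches in the paragraphs leading up to the theorem (and which is carried out in detail in \cite[Sections 7--9]{BLMS}): embed $\Ku(X)$ in $\Db(\P^3,\BB_0)$ via the conic-fibration picture, build double-tilt weak stability conditions on $\Db(\P^3,\BB_0)$ using the twisted Chern character $\ch_{\BB_0}$ and a Bogomolov inequality for $\BB_0$-modules, apply \autoref{prop:inducedstab} with the exceptional collection $E_1,E_2,E_3$, and then invoke \cite[Proposition 25.3]{BLMNPS} together with \cite{AH-LH} for the moduli-space upgrade. One small correction to your closing remark: while $\cN(\Db(\P^3,\BB_0))$ does vary with $X$ (\autoref{rmk:diff4folds3folds}), the truncated lattice $\Lambda_{\BB_0}\subset\Q^3$ and the images of the $E_i$ and $\fS_{\cT}(E_i)$ in it do not, so the verification of conditions (1)--(4) of \autoref{prop:inducedstab} is in fact a single uniform computation valid for every cubic fourfold.
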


It is worth pointing out that the result is actually more precise. Following Bridgeland's notation, a \emph{full numerical stability condition} on $\Ku(X)$ is a stability condition on $\Ku(X)$ with respect to the lattice $\widetilde{H}_\mathrm{Hodge}(\Ku(X),\Z) \cong \cN(\Ku(X))$ whose definition is recalled in Section \ref{subsect:cubic4folds} (see \autoref{rmk:cubicHodgeconj}). For later use, a Mukai vector is an element in the image of the restriction $\vv\colon K(\Ku(X))\to \widetilde{H}_\mathrm{Hodge}(\Ku(X),\Z)$ of $\vv'$. We denote by $\mathrm{Stab}^m(\Ku(X))$ the set of full numerical stability conditions with moduli spaces on $\Ku(X)$. Note that the twisted Chern character induces a natural map $u\colon\widetilde{H}_\mathrm{Hodge}(\Ku(X),\Z)\to \Lambda_{\BB_0,\Ku(X)}$.

\begin{ex}\label{ex:fullstab}
It is explained in \cite[Section 9]{BLMS}, that if $\sigma=(\AA,Z)$ is a stability condition constructed in \autoref{thm:stab4folds}, then the pair $\sigma_{\Ku(X)}:=(\AA,Z\circ u)$ is in $\mathrm{Stab}^m(\Ku(X))$ which is then nonempty.
\end{ex}

\autoref{thm:stab4folds} and \autoref{prop:gluing} yield the following

\begin{cor}\label{cor:stabcub}
Let $X$ be a cubic fourfold. Then $\Db(X)$ has a stability condition.
\end{cor}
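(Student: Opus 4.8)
The plan is to deduce the statement from \autoref{thm:stab4folds} together with the gluing procedure for stability conditions recalled in \autoref{prop:gluing}, applied to the semiorthogonal decomposition
\[
\Db(X)=\langle\Ku(X),\OO_X,\OO_X(H),\OO_X(2H)\rangle.
\]
By \autoref{thm:stab4folds} (and \autoref{ex:fullstab}) the Kuznetsov component $\Ku(X)$ carries a Bridgeland stability condition $\sigma'=(\cA',Z')$. Since $\{\OO_X,\OO_X(H),\OO_X(2H)\}$ is an exceptional collection and $\Ku(X)=\langle\OO_X,\OO_X(H),\OO_X(2H)\rangle^{\perp}$, \autoref{prop:gluing} yields a Bridgeland stability condition $\sigma''=(\cA'',Z'')$ on $\Db(X)$, with heart the extension closure
\[
\cA''=\langle\cA',\OO_X[1],\OO_X(H)[2],\OO_X(2H)[3]\rangle
\]
and central charge $Z''$ restricting to $Z'$ on $K(\Ku(X))$ and sending $\OO_X\mapsto 1$, $\OO_X(H)\mapsto -1$, $\OO_X(2H)\mapsto 1$. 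Granting that the hypotheses of \autoref{prop:gluing} are met, this gives the claim.

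The work is therefore entirely in checking that the hypotheses of \autoref{prop:inducedstab}, on which \autoref{prop:gluing} rests, are satisfied for the triple $(\Db(X),\Ku(X),\{\OO_X,\OO_X(H),\OO_X(2H)\})$. Conditions (3) and (4) --- that the auxiliary central charge on $\Db(X)$ not vanish on the $\OO_X(iH)$, and that its restriction to $\cA\cap\Ku(X)$ be a stability function --- are unproblematic, the latter because that restriction is forced to coincide with $Z'$, which is a stability function by \autoref{thm:stab4folds}. The delicate points are conditions (1) and (2): one needs a weak stability condition $\sigma=(\cA,Z)$ on the \emph{fourfold} $\Db(X)$ whose heart $\cA$ contains $\OO_X,\OO_X(H),\OO_X(2H)$ and satisfies $\fS_X(\OO_X(iH))=\OO_X((i-3)H)[4]\in\cA[1]$. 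A double tilt $\Coh^{\mu}_{s,q}(X)$ of $\Coh(X)$, as in \autoref{prop:doubletiltstab}, cannot serve here, since its objects have cohomology (for the standard $t$-structure) in degrees $\{-2,-1,0\}$, whereas $\OO_X((i-3)H)[4]$ is concentrated in degree $-4$. One instead proceeds either by iterating the tilting construction one further time, threading the parameters so that $\OO_X,\OO_X(H),\OO_X(2H)$ lie in the torsion part at each stage while $\OO_X(-H),\OO_X(-2H),\OO_X(-3H)$ lie in the torsion-free part, or --- more conceptually --- by invoking directly the gluing of $t$-structures of \cite{CP} that underlies \autoref{prop:gluing}: after acting on $\sigma'$ by a suitable element of $\widetilde{\mathrm{GL}}^+_2(\R)$ one arranges the phases of the $\sigma'$-semistable objects to lie in a half-open interval of length $1$ transversal to the three shifted exceptional objects, and then glues $\sigma'$ with the obvious stability conditions on $\langle\OO_X\rangle$, $\langle\OO_X(H)\rangle$, $\langle\OO_X(2H)\rangle$. (Note the contrast with the threefold $\Db(\P^3,\BB_0)$ of \eqref{eqn:sodcubci4foldsn1}, where a double tilt does suffice.)

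Once the glued heart $\cA''$ and the extended central charge $Z''$ are in place, the Harder--Narasimhan property follows from the gluing construction (via Harder--Narasimhan polygons, as in \cite{BLMS}), and the support property is obtained by taking as quadratic form on $\cN(\Db(X))\otimes\R$ an appropriate combination of the support-property form for $\sigma'$ on $\cN(\Ku(X))$ provided by \autoref{thm:stab4folds} with negative-definite forms on the rank-one summands generated by the classes $[\OO_X(iH)]$, chosen so that its restriction to $\ker Z''_{\R}$ is negative definite and it is nonnegative on all $\sigma''$-semistable objects (these being, up to shift, either $\sigma'$-semistable objects of $\Ku(X)$, the $\OO_X(iH)$, or extensions among them). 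The hard part will be exactly the verification that conditions (1) and (2) of \autoref{prop:inducedstab} can be met on the fourfold $\Db(X)$ --- equivalently, the finitely many shift and $\widetilde{\mathrm{GL}}^+_2(\R)$-rotation choices in the gluing, or the construction of the auxiliary weak stability condition on $\Db(X)$ satisfying the Serre-compatibility condition --- all the remaining verifications being routine once this is granted.
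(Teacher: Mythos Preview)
Your route is to apply \autoref{prop:gluing} directly to $\Db(X)=\langle\Ku(X),\OO_X,\OO_X(H),\OO_X(2H)\rangle$, and you correctly observe that its hypotheses (those of \autoref{prop:inducedstab}) would demand a weak stability condition on the \emph{fourfold} $\Db(X)$ whose heart contains both $\OO_X(iH)$ and $\fS_X(\OO_X(iH))[-1]=\OO_X((i-3)H)[3]$, hence has cohomological amplitude at least four; a double tilt cannot supply this, and you do not carry out either of your proposed workarounds. This is not the paper's route. In the paper, \autoref{prop:gluing} is applied to $\cT=\Db(\P^3,\BB_0)$ with the decomposition \eqref{eqn:sodcubci4foldsn1}: the hypotheses of \autoref{prop:inducedstab} for this twisted \emph{threefold} are exactly what is verified in the proof of \autoref{thm:stab4folds} (a double tilt of $\Coh(\P^3,\BB_0)$ suffices there), so \autoref{prop:gluing} applies with no new work. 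The ``hard part'' you isolate --- manufacturing a weak stability condition on $\Db(X)$ satisfying (1)--(2) --- is precisely the obstacle the dimension-reduction trick of \autoref{subsect:stabKuz4folds} is designed to bypass.

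You have, however, brushed up against a genuine looseness in the one-line deduction: read literally, \autoref{prop:gluing} in that setup yields a stability condition on $\Db(\P^3,\BB_0)$, not on $\Db(X)$. Closing that last gap does require a further gluing --- morally your ``Option (b)'', gluing the heart $\cA'\subset\Ku(X)$ with suitable shifts of $\OO_X,\OO_X(H),\OO_X(2H)$ inside $\Db(X)$ via \cite{CP} --- but the only input needed there is that $\cA'$ have finite cohomological amplitude in $\Db(X)$ (automatic, as everything lives in the derived category of a smooth projective variety), not the Serre condition (2) on the fourfold you are trying to arrange; the HN and support properties then follow as in \cite[Section 5]{BLMS}. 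Your triple-tilt ``Option (a)'' is not viable as written: one would first need the second tilt on a fourfold, equipped with a suitable central charge, to be a weak stability condition against which to tilt a third time, and no such statement is available.
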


Furthermore, we consider the natural continuous map
\[
\eta\colon\mathrm{Stab}^m(\Ku(X))\to\widetilde{H}_\mathrm{Hodge}(\Ku(X),\C)
\]
defined in the following way. First, using the pairing on $\widetilde H(\Ku(X),\Z))$ (see \autoref{rmk:cubicK3}), we get a natural identification between $\Hom(\widetilde{H}_\mathrm{Hodge}(\Ku(X),\Z),\C)$ and the vector space $\widetilde{H}_\mathrm{Hodge}(\Ku(X),\C)=\widetilde{H}_\mathrm{Hodge}(\Ku(X),\Z)\otimes_\Z\C$. Then $\eta$ is nothing but the continuous map $\mathcal{Z}$ in \autoref{thm:BrDefo} composed with such an identification.

Set now $\mathcal{P}\subseteq\widetilde{H}_\mathrm{Hodge}(\Ku(X),\C)$ to be the open subset consisting of vectors whose
real and imaginary parts span positive-definite two-planes in $\widetilde{H}_\mathrm{Hodge}(\Ku(X),\R)$. Then we set
\[
\mathcal{P}_0:=\mathcal{P}\setminus\bigcup_{\delta\in\Delta}\delta^\perp,
\]
where $\Delta:=\{\delta\in\widetilde{H}_\mathrm{Hodge}(\Ku(X),\Z):(\delta,\delta)=-2\}$. Note that for $\sigma_{\Ku(X)}$ as in \autoref{ex:fullstab} we have, by \cite[Proposition 9.10]{BLMS},
\begin{equation}\label{eqn:imageeta}
\eta(\sigma_{\Ku(X)})\in(A_2\otimes\C)\cap\mathcal{P}\subseteq\mathcal{P}_0.
\end{equation}
Here $A_2$ is the lattice in \eqref{eqn:A2}. Let $\mathcal{P}_0^+$ be the connected component of $\mathcal{P}_0$ which contains $\eta(\sigma_{\Ku(X)})$, for $\sigma_{\Ku(X)}$ as in \autoref{ex:fullstab}. In addition, let $\mathrm{Stab}^\dagger(\Ku(X))$ be the connected component of $\mathrm{Stab}^m(\Ku(X))$ which contains $\sigma_{\Ku(X)}$.

The complete result is then the following.

\begin{thm}[\cite{BLMNPS}, Theorem 29.1 and \cite{BLMS}, Proposition 9.9]\label{thm:stab4folds2}
The preimage $\eta^{-1}(\cP_0^+)$ contains the connected component $\mathrm{Stab}^\dagger(\Ku(X))$.
Moreover, the restriction $\eta\colon\mathrm{Stab}^\dagger(\Ku(W))\to\cP_0^+$ is a covering map.
\end{thm}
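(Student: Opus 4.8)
The plan is to reduce the statement to the analogous result for moduli spaces of stable objects on an actual K3 surface, via deformation of the cubic fourfold, and then to combine it with Bridgeland's deformation theorem together with the period-domain description of $\mathrm{Stab}^\dagger$ of a K3 surface. First I would recall that $\Ku(X)$ is, by \autoref{prop:cubic4foldKu1}, an indecomposable $2$-Calabi--Yau category whose Mukai lattice $\widetilde H(\Ku(X),\Z)$ carries a weight-$2$ Hodge structure with $\widetilde H_{\mathrm{Hodge}}(\Ku(X),\Z)\cong\cN(\Ku(X))$. The key structural input is that \autoref{thm:stab4folds} produces a nonempty set $\mathrm{Stab}^m(\Ku(X))$ and that \autoref{thm:BrDefo} gives that $\eta$ (the composite of $\mathcal Z$ with the lattice identification) is a local homeomorphism onto its image; so $\mathrm{Stab}^\dagger(\Ku(X))$ is a complex manifold of dimension $\mathrm{rk}\,\widetilde H_{\mathrm{Hodge}}(\Ku(X),\Z)$ and $\eta$ restricted to it is a local homeomorphism. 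The content of the theorem is therefore the two global statements: (i) $\eta$ maps $\mathrm{Stab}^\dagger$ into the distinguished connected component $\cP_0^+$ of the complement of the $(-2)$-hyperplanes, and (ii) this map is a covering of $\cP_0^+$.

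For (i), the strategy is: first observe, using \eqref{eqn:imageeta} from \cite[Proposition 9.10]{BLMS}, that the base point $\sigma_{\Ku(X)}$ of \autoref{ex:fullstab} already has $\eta(\sigma_{\Ku(X)})\in\cP_0$; then show that $\eta(\mathrm{Stab}^\dagger)\subseteq\cP$, i.e.\ that for any full numerical stability condition in this component the real and imaginary parts of the central charge span a positive-definite plane — this is the ``support property forces positivity'' argument (the quadratic form $Q$ underlying the support property has the right signature on $\widetilde H_{\mathrm{Hodge}}(\Ku(X),\R)$, exactly as in the K3 case, cf.\ \cite{BMS}), so one gets $\eta(\mathrm{Stab}^\dagger)\subseteq\cP$, and since $\eta$ is continuous and $\mathrm{Stab}^\dagger$ is connected with base point mapping into $\cP_0^+$, one is left to check that $\eta$ never hits a $\delta^\perp$ with $\delta$ a $(-2)$-class. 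That last point is the heart of (i): a stability condition on $\Ku(X)$ for which some $(-2)$-class $\delta$ satisfies $Z(\delta)=0$ cannot exist, because a spherical class of square $-2$ in a $2$-Calabi--Yau category always has a $\sigma$-semistable representative (standard for Bridgeland stability on K3-type categories), and a semistable object with $Z=0$ violates the definition of a stability function. This forces $\eta(\mathrm{Stab}^\dagger)\subseteq\cP_0^+$.

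For (ii), the covering-map statement, I would deform. By \autoref{thm:stab4folds2} as quoted, the statement for cubic fourfolds is asserted to follow from \cite[Theorem 29.1]{BLMNPS}; concretely, one uses the family version of stability conditions in \cite{BLMNPS} over the moduli of cubic fourfolds, together with the fact that for the Hodge-special cubic fourfolds with $\Ku(X)\simeq\Db(S)$ for a K3 surface $S$ the statement is Bridgeland's theorem for $\Db(S)$: there $\mathrm{Stab}^\dagger(\Db(S))\to\cP_0^+$ is a covering onto the period domain, by \cite{BriK3}. The deformation argument then proceeds by a standard path-lifting/properness scheme: (a) $\eta$ is a local homeomorphism by \autoref{thm:BrDefo}; (b) to upgrade to a covering it suffices to verify the path-lifting property, i.e.\ every path in $\cP_0^+$ starting at a point in the image lifts to $\mathrm{Stab}^\dagger$; (c) this is where the support property (uniform over the family, with the $\frac{11}{32}$-twisted Bogomolov inequality of \autoref{rmk:11/32} ensuring the quadratic form behaves well) prevents the lifted path from running to the boundary of $\mathrm{Stab}^\dagger$ in finite time, and wall-and-chamber finiteness prevents accumulation of walls; (d) connectedness of $\cP_0^+$ then gives surjectivity onto $\cP_0^+$. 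The main obstacle, and the step I expect to require the most care, is exactly (c): showing the lift of a path does not escape the space, i.e.\ establishing the properness/completeness of $\eta$ over $\cP_0^+$. This is the analogue of the hard estimate in \cite{BriK3}, and in the noncommutative/twisted setting it relies crucially on the deformation-invariance of the Mukai lattice (\autoref{rmk:cubicK3}, \cite{AT}) so that the bound from the K3 locus propagates to all cubic fourfolds, plus the fact that $\Ku(X)$ is a $2$-Calabi--Yau category so that the only classes of negative square are $(-2)$-classes, which are precisely the ones removed in passing from $\cP$ to $\cP_0$.
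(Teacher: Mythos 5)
You should first be aware that the survey does not actually prove this theorem: it is imported wholesale from the references, with the division of labour roughly as follows. \cite[Proposition 9.9]{BLMS} is a general covering criterion: once one knows that all stability conditions in a connected component satisfy the support property with respect to a quadratic form of signature $(2,\mathrm{rk}-2)$ on the relevant lattice (here the Mukai pairing on $\wH_\mathrm{Hodge}(\Ku(X),\Z)$), the restriction of $\eta$ to any connected component of $\eta^{-1}(\cP_0^+)$ is a covering of $\cP_0^+$. The genuinely geometric input, namely the containment $\mathrm{Stab}^\dagger(\Ku(X))\subseteq\eta^{-1}(\cP_0^+)$, is \cite[Theorem 29.1]{BLMNPS} and rests on the whole machinery of stability conditions in families. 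Your outline reproduces this two-step architecture (positivity plus avoidance of the $(-2)$-walls, then path lifting), so the shape of the argument is right; the problem is that the points you dismiss as standard are precisely where the content lies.

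Concretely: (a) the assertion that ``a spherical class of square $-2$ in a $2$-Calabi--Yau category always has a $\sigma$-semistable representative'' is not a formal fact about $2$-Calabi--Yau categories, and for $\Ku(X)$ it is essentially equivalent in difficulty to the theorem itself. In \cite{BLMNPS} it is obtained by deforming over the moduli space of cubic fourfolds to the locus where $\Ku(X)\cong\Db(S,\alpha)$ for a twisted K3 surface, where non-emptiness of moduli of semistable objects is known, and then propagating non-emptiness in the family; quoting it as standard leaves the main point unproved. (b) \autoref{thm:stab4folds} only provides the support property with respect to the rank-$\leq 3$ lattice $\Lambda_{\BB_0,\Ku(X)}$; the upgrade to full numerical stability conditions whose support property is governed by the Mukai form on all of $\wH_\mathrm{Hodge}(\Ku(X),\Z)$, and its propagation to every $\sigma$ in the component, is a separate step (this is the content of \cite[Section 9]{BLMS}, not of the definition of $\mathrm{Stab}^m$), so ``the quadratic form has the right signature, exactly as in the K3 case'' is an assertion, not an argument. (c) Your final claim that in a $2$-Calabi--Yau category ``the only classes of negative square are $(-2)$-classes'' is false as a lattice statement: $\wH_\mathrm{Hodge}(\Ku(X),\Z)$ contains classes of arbitrarily negative even square. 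What is true, and what the properness/path-lifting step actually needs, is that $\sigma$-stable objects have classes of square $\geq -2$, so the only hyperplanes that can obstruct lifting a path in $\cP_0^+$ are the $\delta^\perp$ with $(\delta,\delta)=-2$; making this degeneration analysis precise is again part of the cited proofs (Bridgeland's argument adapted in \cite{BLMS,BLMNPS}), and it is not a consequence of the support property together with local finiteness of walls alone. Finally, the appeal to ``deformation-invariance of the Mukai lattice'' (\autoref{rmk:cubicK3}) so that ``the bound from the K3 locus propagates'' is too vague: what is transported along the family in \cite{BLMNPS} are the stability conditions and their moduli spaces themselves, not merely a numerical estimate.
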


\subsection{Categorical Torelli theorem}\label{subsect:catThm4folds}

In this section we want to discuss the following result which is one of the main results of \cite{HR}.

\begin{thm}[Categorical Torelli theorem for cubic fourfolds]\label{thm:catTT4folds}
Let $X_1$ and $X_2$ be cubic fourfolds. Then $X_1\cong X_2$ if and only if there is an equivalence $\Phi\colon\Ku(X_1)\isomor\Ku(X_2)$ such that $\fO_{X_2}\circ\Phi=\Phi\circ\fO_{X_1}$.
\end{thm}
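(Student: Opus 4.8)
The strategy is the one that works for K3 surfaces (Mukai, Orlov) transported to the noncommutative setting, and the equivariance condition $\fO_{X_2}\circ\Phi=\Phi\circ\fO_{X_1}$ is precisely what allows the transport. First I would observe that an equivalence $\Phi\colon\Ku(X_1)\isomor\Ku(X_2)$ commuting with the degree shift functors induces an isometry of the Mukai lattices $\Phi_*\colon\wH(\Ku(X_1),\Z)\isomor\wH(\Ku(X_2),\Z)$ which is moreover a Hodge isometry (it sends $\wH^{2,0}$ to $\wH^{2,0}$, since it is induced by an exact functor hence is compatible with the complexified Mukai vector and the Hodge decomposition coming from $H^4(X_i,\C)$). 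The commutation with $\fO_{X_i}$ forces $\Phi_*$ to intertwine the actions of the degree shift on cohomology; in particular, $\Phi_*$ preserves the distinguished rank-two sublattice $A_2=\langle\llambda_1,\llambda_2\rangle$ of \eqref{eqn:A2}, because $A_2$ is characterized lattice-theoretically together with the $\fO_{X_i}$-action (the class $\llambda_1-\llambda_2$ is, up to sign, the image of a fixed natural class under $1-\fO_{X_i}$-type operators). This is the first key step.

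Next I would invoke the Torelli theorem for cubic fourfolds in its Hodge-theoretic form (Voisin): $X_1\cong X_2$ if and only if there is a Hodge isometry $H^4(X_1,\Z)\isomor H^4(X_2,\Z)$ respecting the classes $h_1^2$ and sending $h_1^2\mapsto h_2^2$, equivalently a Hodge isometry of the primitive cohomologies $H^4_{\mathrm{prim}}$ compatible with the polarizations. The bridge between $\Phi_*$ on the Mukai lattice and such a Hodge isometry of $H^4$ is the content of the construction recalled in \autoref{subsect:cubic4folds}: there is an exact sequence / orthogonal decomposition relating $\wH(\Ku(X_i),\Z)$, the classes $[\OO_{X_i}(jH)]$, and $K_{\mathrm{top}}(X_i)$, and via the Mukai vector $\mathbf{v}$ the lattice $\wH_{\mathrm{Hodge}}^\perp$ inside $\wH$ recovers (a twist of) primitive $H^4$. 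The point is that once $\Phi_*$ preserves $A_2$, its restriction to $A_2^\perp\subseteq\wH(\Ku(X_i),\Z)$ is a Hodge isometry, and $A_2^\perp$ is Hodge-isometric to $H^4_{\mathrm{prim}}(X_i,\Z)(-1)$ (up to a fixed shift), with the $A_2$-polarization data matching $h_i^2$. So $\Phi_*|_{A_2^\perp}$ produces exactly the Hodge isometry needed to feed into Torelli, yielding $X_1\cong X_2$. The converse is immediate: an isomorphism $f\colon X_1\isomor X_2$ induces $f^*\colon\Db(X_2)\isomor\Db(X_1)$ preserving the semiorthogonal decomposition \eqref{eqn:sodcubic4folds} and the line bundles $\OO(jH)$, hence restricts to an equivalence $\Ku(X_1)\isomor\Ku(X_2)$ commuting with $\fO_{X_i}$ since the latter is defined purely in terms of $\otimes\OO(H)$ and the projection $\iota^*$, both preserved by $f^*$.

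The step I expect to be the main obstacle is making precise that the commutation with the degree shift functor genuinely pins down the $A_2$-sublattice and forces $\Phi_*$ to respect the polarization class, rather than merely an abstract Hodge isometry of $\wH$. An arbitrary Hodge isometry of $\wH(\Ku(X),\Z)$ need not descend to a geometric isomorphism — there are autoequivalences of $\Ku(X)$ acting nontrivially on $\wH$ (degree shift itself, spherical-type twists) — so one really needs the $\fO$-equivariance to cut down the group of allowed lattice isometries to those of the form $\pm(\text{Hodge isometry of }H^4\text{ fixing }h^2)$. Concretely this amounts to: (i) identifying $\fO_{X,*}$ on $\wH(\Ku(X),\Z)$ explicitly (it acts on $A_2$ by the order-three rotation swapping $\llambda_1,\llambda_2,-\llambda_1-\llambda_2$-type cycle and trivially, up to sign, on $A_2^\perp$ modulo the comparison isometry), and (ii) a lattice-theoretic lemma showing the centralizer of $\fO_{X,*}$ in the isometry group of $\wH$ preserves $A_2$ and acts on $A_2^\perp$ through $O(H^4_{\mathrm{prim}})$. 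Granting (i)–(ii), which are the technical heart borrowed from \cite{HR}, the rest is assembling known results. An alternative route, also indicated in the excerpt, bypasses Hodge theory entirely and runs through stability conditions: using \autoref{thm:stab4folds} and \autoref{thm:stab4folds2}, transport a stability condition along $\Phi$, use $\fO$-equivariance plus the covering-map description of $\mathrm{Stab}^\dagger(\Ku(X))$ over $\cP_0^+$ to match up distinguished stability conditions, then identify a moduli space of stable objects in $\Ku(X_i)$ with a geometric variety (a Fano variety of lines, or the cubic itself) and conclude as in the cubic threefold case; this is the approach of \cite{BLMS} and would be the fallback if one wishes to avoid the delicate lattice computation.
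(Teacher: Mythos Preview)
Your proposal is correct and matches the paper's second proof (``Idea of proof 2'') essentially step for step: $\Phi$ is automatically of Fourier--Mukai type by \autoref{prop:cubic4foldKu1}, hence induces a Hodge isometry $\Phi^H$ of Mukai lattices; the commutation with $\fO_{X_i}$ forces $\Phi^H$ to preserve the $A_2$-lattice (the paper cites \cite[Proposition~3.12]{Huy:cubics} for this, which is your step (i)--(ii)); restricting to $A_2^\perp\cong H^4_{\mathrm{prim}}$ gives the Hodge isometry required by classical Torelli.

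Two remarks. First, the paper also gives a genuinely different first proof (following \cite{HR}) that avoids classical Torelli and any lattice argument: one builds a graded ring $\mathrm{HH}^*(\Ku(Y),\fO_Y)$ out of morphisms between the Fourier--Mukai kernels $P_n$ of the iterates $\fO_Y^n$, shows via \autoref{thm:HR1} that it is naturally isomorphic to the Jacobian ring $\mathrm{Jac}(Y)$ whenever $d\mid n+2$ (in particular for cubic fourfolds), observes that an $\fO$-equivariant equivalence induces a graded-ring isomorphism of these, and concludes $X_1\cong X_2$ by Mather--Yau. This is more algebraic, works directly with the defining equation, and yields the more general \autoref{cor:catTThyp}. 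Second, your description of the stability-conditions route as one that ``bypasses Hodge theory entirely'' is not accurate: in the paper that route still passes through $\Phi^H$, the $A_2$-preservation, and the reduction to classical Torelli; stability conditions enter only to \emph{reprove classical Torelli itself} (by realizing $F_1(X)$ as $M_\sigma(\Ku(X),\llambda_1)$ with its Pl\"ucker polarization and running a density argument over $\mathrm{Def}(X)$), not as an independent direct argument for the categorical statement.
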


Here $\fO_{X_i}$ is the degree shift autoequivalence of $\Ku(X_i)$  described in \autoref{rmk:autoeqKuz}. Note that we do not require $\Phi$ being of Fourier--Mukai type. The assumption concerning the compatibility between the equivalence $\Phi$ and the degree shift functors $\fO_{X_i}$ is crucial. Indeed, it was proved in \cite[Theorem 1.1]{PFMCub}, that given any positive integer $N$ one can find $N$ non-isomorphic cubic fourfolds with equivalent Kuznetsov components. This is another striking similarity with the case of K3 surfaces (see, for example, \cite{St}). It is then natural to raise the following question (see \cite[Question 3.25]{MSLectNotes} and the discussion therein):

\begin{qn}[Huybrechts] \label{qn_birCatTorcubic4}
Does a Birational Categorical Torelli theorem for cubic fourfolds hold? Namely, let $X_1$ and $X_2$ be cubic fourfolds. Is it true that the existence of an equivalence $\Ku(X_1)\isomor\Ku(X_2)$ implies that $X_1$ and $X_2$ are birational?
\end{qn}
Note that \autoref{qn_birCatTorcubic4} is compatible with \autoref{conj:Kuzration}.

\begin{remark}\label{rmk_onlyonedirection}
Note that the reverse implication of \autoref{qn_birCatTorcubic4} is not true. More precisely, this amounts to asking whether two birational cubic fourfolds $X_1$ and $X_2$ have equivalent Kuznetsov components. However, as suggested by the referee, two general Pfaffian
cubic fourfolds are birational (as both are rational), but their Kuznetsov components are not equivalent.
\end{remark}

Of course, one could continue the analysis of the analogies with K3 surfaces. Assume that $\K=\C$. In this case, one knows that the derived categories of two K3 surfaces are equivalent if and only if there is an orientation preserving Hodge isometry of the Mukai lattices of the two surfaces (see \cite{Or, HMS:K3}). It is then natural to ask whether the same happens for the Kuznetsov components of cubic fourfolds (see \cite[Question 3.24]{MSLectNotes}).

We will skip this discussion and, in the rest of this section, we will deal with two proofs of \autoref{thm:catTT4folds} based on two different approaches.

\subsubsection*{Idea of proof 1 (Jacobian rings)}

Let us first explain the original approach in \cite{HR} which is close in spirit to the one in \cite{Donagi}. For this we have to stick to the case $\K=\C$.

Let us first introduce the main ingredients in the proof. If $Y$ is a smooth hypersurface in $\P^{n+1}$ described as the zero locus of a homogeneous polynomial $F$, then the \emph{Jacobian ring} of $Y$ is
\[
\mathrm{Jac}(Y):=\C[x_0,\dots,x_{n+1}]/(\partial_iF).
\]
If the degree $d$ of $Y$ is such that $d\leq n+1$ (i.e.\ $Y$ is a Fano manifold), then there is a semiorthogonal decomposition
\[
\Db(Y)=\langle\Ku(Y),\OO_Y,\dots,\OO_Y((n+1-d)H)\rangle,
\]
where $H$ is a hyperplane section. Let $\Ku(Y)(-(n+1-d))$ be the admissible subcategory of $\Db(Y)$ obtained by tensoring $\Ku(Y)$ by $\OO_Y(-(n+1-d)H)$.

Denote by $\Ku(Y)(-(n+1-d))\boxtimes\Ku(Y)$ the full subcategory of $\Db(Y\times Y)$ which is generated by objects of the form $E_1\boxtimes E_2$, where $E_1\in\Ku(Y)(-(n+1-d))$ and $E_2\in\Ku(Y)$\footnote{This means that $\Ku(Y)(-(n+1-d))\boxtimes\Ku(Y)$ is the smallest full triangulated subcategory of $\Db(Y\times Y)$ containing the objects $E_1\boxtimes E_2$ as above and which is closed under taking direct summands.}. Note that it is admissible by \cite[Theorem 5.8]{Kuz11}. Denote by
\[
j_Y\colon\Ku(Y)(-(n+1-d))\boxtimes\Ku(Y)\hookrightarrow\Db(Y\times Y)
\]
its fully faithful embedding and set $P_0:=j^*_Y\OO_\Delta$, where $j^*_Y$ is the left adjoint of $j_Y$. Similarly, for a given $n\geq 1$, we set $P_n$ to be the Fourier\textendash Mukai kernel of the Fourier\textendash Mukai functor obtained by composing $\fO_Y$ with itself $n$ times. It worth pointing out that $\fO_Y$ is a Fourier\textendash Mukai functor with Fourier\textendash Mukai kernel given by the convolution of $P_0$ and $\OO_\Delta(1)$ \cite[Remark 1.10]{HR}. By construction $P_n\in \Ku(Y)(-(n+1-d))\boxtimes\Ku(Y)$. Note that $P_0$ and $P_n$ are related by convolution, namely $P_1 \cong P_0 \circ \OO_\Delta(1) \circ P_0$ and $P_n \cong P_1^{\circ n}$ by \cite[Remark 1.11]{HR}. 

Now, assume that $d>2$, set $N=(n+2)(d-2)$ and
\[
L(Y):=\bigoplus_{i=0}^N\RHom(P_0,P_i).
\]
This comes with the ring structure induced by the composition (see \cite[Section 3.1]{HR}). Indeed, applying the convolution with $P_i$, we have a natural map $\RHom(P_0,P_j) \to \RHom(P_i, P_{i+j})$ and thus
$$\RHom(P_0, P_i) \times \RHom(P_0, P_j) \to \RHom(P_0, P_i) \times \RHom(P_i, P_{i+j}) \to \RHom(P_0, P_{i+j})$$
requiring that the multiplication is trivial if $i+j>N$. Standard arguments show that this endows $L(Y)$ with the structure of a graded ring. We denote by $L_j(Y)$ the graded piece of degree $j$. We set $\mathrm{HH}^*(\Ku(Y),\fO_Y)$ to be the graded subalgebra of $L(Y)$ generated by $L_1(Y)$\footnote{The notation $\mathrm{HH}^*(\Ku(Y),\fO_Y)$ clearly suggests a relation with the Hochschild cohomology of the category $\Ku(Y)$ as defined in \cite{KuzHoch}. Indeed, as explained in \cite{HR}, we should think of $\mathrm{HH}^*(\Ku(Y),\fO_Y)$ as the Hochschild cohomology of the pair $(\Ku(Y),\fO_Y)$. Even though the relation between the Hochschild cohomology of the pair $(\Ku(Y),\fO_Y)$ and the usual $\mathrm{HH}^*(\Ku(Y))$ is not of interest for this paper, the reader may find a discussion about this in \cite[Section 3]{HR}.}.

We are now ready to relate $\mathrm{Jac}(Y)$ and $\mathrm{HH}^*(\Ku(Y),\fO_Y)$.

\begin{thm}[\cite{HR}, Theorem 1.1]\label{thm:HR1}
Let $Y\subseteq\P^{n+1}$ be a smooth hypersurface of degree $d\leq\frac{n+2}{2}$. Then there exists a natural surjection
\[
\pi_Y\colon\mathrm{Jac}(Y)\to\mathrm{HH}^*(\Ku(Y),\fO_Y)
\]
of graded rings which is an isomorphisms if $n+2$ is divisible by $d$.
\end{thm}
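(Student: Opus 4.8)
\emph{Plan of proof.} The plan is to build $\pi_Y$ first on generators, then establish in turn that it is well defined (kills the Jacobian ideal), that it surjects onto $\mathrm{HH}^*(\Ku(Y),\fO_Y)$, and that it is injective when $d\mid n+2$. To set it up, note that a linear form $x_i\in H^0(\P^{n+1},\OO(1))$ produces, for every $E\in\Ku(Y)$, the morphism $E\to E\otimes\OO_Y(H)$ of multiplication by $x_i$; composing with the projection functor $\iota^*\colon\Db(Y)\to\Ku(Y)$ one gets a natural transformation $\id_{\Ku(Y)}\Rightarrow\fO_Y$, i.e.\ a morphism of Fourier\textendash Mukai kernels $P_0\to P_1$ and hence a class in $L_1(Y)=\RHom(P_0,P_1)$. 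Since $\deg(\partial_iF)=d-1\geq 1$, the Jacobian ring satisfies $\mathrm{Jac}(Y)_1=H^0(\P^{n+1},\OO(1))$, so these classes determine an algebra homomorphism from the symmetric algebra on $H^0(\OO(1))$ to the ring $L(Y)$ (with its convolution/composition product and the truncation $L_{i+j}=0$ for $i+j>N$), which by construction lands in the subalgebra $\mathrm{HH}^*(\Ku(Y),\fO_Y)$ generated by $L_1(Y)$. It then remains to show (i) that $\partial_0F,\dots,\partial_{n+1}F$ map to $0$, so that the map descends to $\pi_Y$ on $\mathrm{Jac}(Y)$; (ii) that $\pi_Y$ is onto, equivalently that $L_1(Y)$ is spanned by the above classes; and (iii) that $\pi_Y$ is an isomorphism when $d\mid n+2$.

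The core of the argument is a comparison of the groups $\RHom(P_0,P_i)$, which a priori only depend on $\Ku(Y)$ and $\fO_Y$, with ordinary cohomology on $Y\times Y$. Here $P_0=j_Y^*\OO_\Delta$ and, more generally, $P_i$ is the image in $\Ku(Y)(-(n+1-d))\boxtimes\Ku(Y)$ of the twisted diagonal $\OO_\Delta(iH,0)$ under the projection associated with the semiorthogonal decomposition $\Db(Y)=\langle\Ku(Y),\OO_Y,\dots,\OO_Y((n+1-d)H)\rangle$ on each factor. I would first prove that this projection induces isomorphisms
$$\RHom_{\Db(Y\times Y)}(\OO_\Delta,\OO_\Delta(iH,0))\xrightarrow{\ \sim\ }\RHom(P_0,P_i)$$
in the cohomological degrees relevant to $L(Y)$: the discrepancy is controlled by Ext-groups of the twisted diagonal against the external products of the exceptional line bundles $\OO_Y(jH)$, and these vanish in the required range by Bott\textendash Kodaira vanishing on $\P^{n+1}$ combined with the Koszul resolution of $\OO_Y$. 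Admissibility of $\Ku(Y)(-(n+1-d))\boxtimes\Ku(Y)$, guaranteed by \cite[Theorem 5.8]{Kuz11}, is what makes this projection available.

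Next I would compute $\bigoplus_i\RHom_{Y\times Y}(\OO_\Delta,\OO_\Delta(iH,0))$ by the classical route used already in \cite{Donagi}: apply the Hochschild\textendash Kostant\textendash Rosenberg decomposition (for $\Db(Y)$ and its line-bundle twists) to reduce to $\bigoplus_{i,p,q}H^q(Y,\wedge^pT_Y(iH))$, and then feed in the two short exact sequences $0\to T_Y\to T_{\P^{n+1}}|_Y\to\OO_Y(d)\to0$ and $0\to\OO_Y\to\OO_Y(1)^{\oplus(n+2)}\to T_{\P^{n+1}}|_Y\to0$. The resulting complexes are twists of the Koszul complex on the partials $\partial_0F,\dots,\partial_{n+1}F$, and their cohomology in the relevant spots is, by the Carlson\textendash Griffiths/Macaulay computation, exactly the graded pieces of $\mathrm{Jac}(Y)$, with ring structure induced by multiplication in $\C[x_0,\dots,x_{n+1}]$ and with socle in degree $N=(n+2)(d-2)$, matching the truncation in the definition of $L(Y)$. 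Restricting to the subalgebra generated in degree $1$ and transporting along the comparison isomorphism above then yields simultaneously the well-definedness of $\pi_Y$ (the partials map to $0$), the surjectivity ($L_1(Y)$ is spanned by the $x_i$-classes), and the compatibility with ring structures. For the isomorphism statement, one uses that when $d\mid n+2$ the Kuznetsov component $\Ku(Y)$ is an honest (integral) Calabi\textendash Yau category rather than merely fractional (Kuznetsov's Serre functor computation, cf.\ \cite{Kuz:V14} and \autoref{prop:cubic3foldseq}); then the correction Ext-groups in the comparison vanish identically in all degrees, $\pi_Y$ is an isomorphism in degree $1$, and Serre duality on $\Ku(Y)^{\circ}\boxtimes\Ku(Y)$ endows $\mathrm{HH}^*(\Ku(Y),\fO_Y)$ with a non-degenerate pairing of socle degree $N$ mirroring the Gorenstein duality of $\mathrm{Jac}(Y)$. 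A graded surjection of Gorenstein rings of equal socle degree that is nonzero on the (one-dimensional) top piece is forced to be an isomorphism, which finishes the proof; when $d\nmid n+2$ this pairing degenerates and one genuinely keeps only a surjection.

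I expect the main obstacle to be the comparison step: pinning down $P_0=j_Y^*\OO_\Delta$, i.e.\ the semiorthogonal decomposition of the (twisted) diagonal on $Y\times Y$ with respect to the induced decomposition on each factor, and proving the precise vanishing of the ``lower-order'' contributions in the relevant cohomological range. A secondary, but genuine, difficulty is the bookkeeping of all the shifts and twists so that the internal grading of $L(Y)$, the Griffiths residue grading of $\mathrm{Jac}(Y)$, and the two socle degrees line up exactly; the notation $\mathrm{HH}^*(\Ku(Y),\fO_Y)$ of \cite{HR} (and its relation to the Hochschild cohomology of \cite{KuzHoch}) is designed precisely to make this bookkeeping transparent.
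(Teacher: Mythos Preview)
The survey you are working from does not prove \autoref{thm:HR1}; it merely quotes the result from \cite{HR} and then uses it as a black box to derive \autoref{cor:catTThyp} and hence \autoref{thm:catTT4folds}. So there is no ``paper's own proof'' to compare your sketch against here. What you have written is a plausible outline of the argument in \cite{HR} itself: the construction of $\pi_Y$ from linear forms via the natural transformation $\id\Rightarrow\fO_Y$, the comparison of $\RHom(P_0,P_i)$ with $\RHom_{Y\times Y}(\OO_\Delta,\OO_\Delta(i))$ controlled by vanishing of correction terms, and the identification with graded pieces of $\mathrm{Jac}(Y)$ via Koszul/Griffiths residue computations are all the right ingredients. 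Your expectation that the delicate point is the comparison step (computing $j_Y^*\OO_\Delta$ and the associated vanishing) is accurate; in \cite{HR} this is handled by an explicit resolution of the projected diagonal. If you want to verify your sketch, you should consult \cite{HR} directly rather than this survey.
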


Now let $Y_1$ and $Y_2$ be smooth hypersurfaces in $\P^{n+1}$ of degree $2\leq d\leq\frac{n+2}{2}$ and assume that $d$ divides $n+2$ so that $\pi_{Y_i}$ is an isomorphism, for $i=1,2$. If $\Phi\colon\Ku(Y_1)\isomor\Ku(Y_2)$ is a Fourier\textendash Mukai equivalence such that $\fO_{Y_2}\circ\Phi=\Phi\circ\fO_{Y_1}$, by \cite[Proposition 3.9]{HR}, $\Phi$ induces an isomorphism of graded algebras $\mathrm{HH}^*(\Ku(Y_1),\fO_{Y_1})\cong\mathrm{HH}^*(\Ku(Y_2),\fO_{Y_2})$. By \autoref{thm:HR1}, this lifts to an isomorphism of graded algebras $\mathrm{Jac}(Y_1)\cong \mathrm{Jac}(Y_2)$. By Yau\textendash Mather theorem (see \cite[Proposition 1.1]{Donagi}, we get $Y_1\cong Y_2$. Thus we proved:

\begin{cor}\label{cor:catTThyp}
Let $Y_1$ and $Y_2$ be smooth hypersurfaces in $\P^{n+1}$ of degree $2\leq d\leq\frac{n+2}{2}$ and such that $d$ divides $n+2$. Then $Y_1\cong Y_2$ if and only if there is an equivalence of Fourier\textendash Mukai type $\Phi\colon\Ku(Y_1)\isomor\Ku(Y_2)$ such that $\fO_{Y_2}\circ\Phi=\Phi\circ\fO_{Y_1}$
\end{cor}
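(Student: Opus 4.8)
The plan is to deduce \autoref{cor:catTThyp} directly from \autoref{thm:HR1} together with the Yau--Mather theorem, essentially by the chain of implications sketched in the paragraph preceding the statement. First I would dispose of the trivial direction: if $Y_1\cong Y_2$, then the isomorphism induces an equivalence $\Db(Y_1)\cong\Db(Y_2)$ which is of Fourier--Mukai type (by Orlov's theorem, already used several times in the paper) and which carries the semiorthogonal decomposition $\langle\Ku(Y_i),\OO_{Y_i},\dots,\OO_{Y_i}((n+1-d)H)\rangle$ to the corresponding one, hence restricts to a Fourier--Mukai equivalence $\Phi\colon\Ku(Y_1)\isomor\Ku(Y_2)$; since the isomorphism commutes with tensoring by $\OO(H)$ and with the projection functors $\iota^*$, it intertwines $\fO_{Y_1}$ and $\fO_{Y_2}$ on the nose.

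For the substantive direction, suppose we are given a Fourier--Mukai equivalence $\Phi\colon\Ku(Y_1)\isomor\Ku(Y_2)$ with $\fO_{Y_2}\circ\Phi=\Phi\circ\fO_{Y_1}$. The key step is to promote this to an isomorphism of the graded algebras $\mathrm{HH}^*(\Ku(Y_i),\fO_{Y_i})$. Concretely: a Fourier--Mukai equivalence $\Phi$ has a Fourier--Mukai kernel $\cK$ on $Y_1\times Y_2$, and the compatibility with $\fO_{Y_i}$ says that the kernels $P_n^{(1)}$ and $P_n^{(2)}$ of the iterates $\fO_{Y_i}^{\circ n}$ correspond under conjugation by $\cK$ (using that the kernel of a composition is a convolution). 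Therefore $\Phi$ induces compatible isomorphisms $\RHom(P_0^{(1)},P_i^{(1)})\cong\RHom(P_0^{(2)},P_i^{(2)})$ for each $i$, and since the ring structure on $L(Y_i)$ is defined purely through composition and convolution of these kernels, the collection of isomorphisms is a graded ring isomorphism $L(Y_1)\cong L(Y_2)$ respecting the degree-$1$ part, hence restricting to $\mathrm{HH}^*(\Ku(Y_1),\fO_{Y_1})\cong\mathrm{HH}^*(\Ku(Y_2),\fO_{Y_2})$. This is precisely the content I would cite as \cite[Proposition 3.9]{HR}; I would state it as a lemma and give only the above kernel-bookkeeping as justification.

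Once we have the isomorphism of graded algebras $\mathrm{HH}^*(\Ku(Y_1),\fO_{Y_1})\cong\mathrm{HH}^*(\Ku(Y_2),\fO_{Y_2})$, I apply \autoref{thm:HR1}: the hypothesis $d\leq\frac{n+2}{2}$ gives the surjections $\pi_{Y_i}$, and the extra hypothesis that $d$ divides $n+2$ makes each $\pi_{Y_i}$ an isomorphism. Transporting the algebra isomorphism back through $\pi_{Y_1}^{-1}$ and $\pi_{Y_2}$ yields an isomorphism of graded rings $\mathrm{Jac}(Y_1)\cong\mathrm{Jac}(Y_2)$. Finally I invoke the Yau--Mather theorem in the form quoted in \cite[Proposition 1.1]{Donagi}: for smooth hypersurfaces of degree $d\geq 3$ the graded Jacobian ring determines the hypersurface up to projective equivalence, so $Y_1\cong Y_2$. (The case $d=2$ is degenerate — $\mathrm{Jac}$ of a smooth quadric is just the ground field — but smooth quadrics of a fixed dimension are all isomorphic anyway, so the statement is vacuously true there; I would note this parenthetically or simply restrict to $d\geq 3$.)

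The main obstacle is the middle step: making precise and verifying that a Fourier--Mukai equivalence compatible with the degree-shift functors really does induce an isomorphism of the Hochschild-type algebras $\mathrm{HH}^*(\Ku(Y),\fO_Y)$. The subtlety is that $L(Y)$ is built from kernels living in the external product categories $\Ku(Y)(-(n+1-d))\boxtimes\Ku(Y)$, and one must check that $\Phi\boxtimes\Phi$ (or rather the induced equivalence on these product categories, which exists because external products of admissible subcategories are admissible by \cite[Theorem 5.8]{Kuz11}) carries $P_0^{(1)}\mapsto P_0^{(2)}$ and is multiplicative for the convolution product. Everything else — the two theorems I am citing and the trivial direction — is essentially bookkeeping; the real work, already done in \cite{HR}, is precisely this compatibility, and in a survey I would present it as an invocation of \cite[Proposition 3.9]{HR} with a one-paragraph indication of why it holds.
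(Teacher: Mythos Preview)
Your proposal is correct and follows essentially the same route as the paper: invoke \cite[Proposition 3.9]{HR} to get the graded algebra isomorphism $\mathrm{HH}^*(\Ku(Y_1),\fO_{Y_1})\cong\mathrm{HH}^*(\Ku(Y_2),\fO_{Y_2})$, apply \autoref{thm:HR1} under the divisibility hypothesis to lift it to $\mathrm{Jac}(Y_1)\cong\mathrm{Jac}(Y_2)$, and conclude by Yau--Mather. Your additional remarks on the trivial direction and the degenerate case $d=2$ are fine elaborations but not present in the paper's terse argument.
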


Since cubic fourfolds satisfy the assumptions of \autoref{cor:catTThyp}, we immediately get \autoref{thm:catTT4folds} in view of \autoref{prop:cubic4foldKu1}.

\subsubsection*{Idea of proof 2 (stability conditions)}

We now outline the strategy of proof via stability conditions which is pursued in the appendix to \cite{BLMS} and, as explained later, allows one to get another proof of the classical Torelli theorem as well.

In this second proof, we can assume that $\K$ is an algebraically closed field with $\mathrm{char}(\K)\neq 2$. As above, there is only one implication in the statement that needs to be proved.

The strategy is very close in spirit to the one in \autoref{subsect:cubic3folds1} and \autoref{subsect:cubic3folds2} where we proved the Categorical Torelli theorem for cubic threefolds. Indeed, let $X$ be a cubic fourfold and let $F_1(X)$ be the Fano varieties of lines in $X$. By \cite{BD}, when $\K=\C$, the variety $F_1(X)$ is a $4$-dimensional smooth and projective irreducible symplectic manifold (i.e.\ a simply-connected manifold such that $H^0(F_1(X),\Omega_{F_1(X)}^2)$ is generated by an everywhere nondegenerate holomorphic $2$-form). In general, as $F_1(X)$ is embedded in the Grassmannian of lines in $\P^5$, it carries a privileged ample polarization which is the restriction of the Pl\"uker polarization. To shorten the notation we will refer to such a polarization on $F_1(X)$ as the Pl\"ucker polarization.

The key point is that one can interpret $F_1(X)$ as a moduli space of stable objects in the Kuznetsov component $\Ku(X)$. This approach was initiated in \cite{MSt} and  pursued in \cite{BLMS,LPZ}. Let $X$ be a cubic fourfold and fix a stability condition $\sigma\in\mathrm{Stab}^\dagger(\Ku(X))$ such that $\eta(\sigma)\in (A_2\otimes\C)\cap\mathcal{P}\subseteq\mathcal{P}_0^+$. As we observed in \eqref{eqn:imageeta} any stability condition constructed in the proof of \autoref{thm:stab4folds} would work. By the general theory of moduli spaces of (semi)stable objects in the Kuznetsov component $\Ku(X)$ which we discussed in \autoref{subsect:def}, we can take the moduli space $M_\sigma(\Ku(X),v)$ for every Mukai vector $v$. By the results in \cite{BLMNPS}, any such moduli space $M_\sigma(\Ku(X),v)$ carries a natural ample polarization $\ell_\sigma$.

The result is then the following (recall the class $\llambda_1$ defined in \eqref{eqn:lambdas} and that $\delta\in\widetilde{H}_\mathrm{Hodge}(\Ku(X),\Z)$ such that $(\delta,\delta)=-2$ is called $(-2)$-class):

\begin{thm}[\cite{BLMS}, Theorem A.8]\label{thm:Fano}
Let $X$ be a cubic fourfold such that $\wH_\mathrm{Hodge}(\Ku(X),\Z)$ does not contain $(-2)$-classes. For any $\sigma \in \Stab^\dag(\Ku(X))$ such that $\eta(\sigma)\in(A_2\otimes\C)\cap\mathcal{P}\subseteq\mathcal{P}_0^+$, the Fano variety of lines in $X$ is isomorphic to the moduli space $M_\sigma(\Ku(X),\llambda_1)$ of $\sigma$-stable objects with numerical class $\llambda_1$. Moreover, the ample line bundle $\ell_\sigma$ on $M_\sigma(\Ku(X),\llambda_1)$ is identified with a multiple of the Pl\"ucker polarization by this isomorphism.
\end{thm}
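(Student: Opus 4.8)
The plan is to realize $F_1(X)$ as a moduli space of Bridgeland-stable objects in two stages: first identify the geometric objects that live in $\Ku(X)$ and carry the class $\llambda_1$, then verify that $\sigma$-stability singles out exactly these objects and that the resulting moduli space is fine and isomorphic to $F_1(X)$. For a line $\ell\subseteq X$, the relevant object is the projection $P_\ell:=\iota^*(\II_\ell(H))[-1]$ (or a suitable twist/shift thereof) of the shifted ideal sheaf into $\Ku(X)$; a direct Chern-character computation shows $[P_\ell]=\llambda_1$ in $\cN(\Ku(X))\cong\wH_\mathrm{Hodge}(\Ku(X),\Z)$, and using the resolution of $\II_\ell$ on $X$ one checks $P_\ell$ actually lies in $\Ku(X)$ (the cohomology sheaves are controlled because $\ell$ is a line). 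The key numerical input is $(\llambda_1,\llambda_1)=0$ coming from the $A_2$-lattice structure in \eqref{eqn:A2}, so $\llambda_1$ is an isotropic Mukai vector and the expected dimension of $M_\sigma(\Ku(X),\llambda_1)$ is $(\llambda_1,\llambda_1)+2=2$—wait, for the noncommutative K3 $\Ku(X)$ the expected dimension is $(\llambda_1,\llambda_1)+2=2$; one must instead use the correct formula $\dim M = (v,v)+2$, which for $v=\llambda_1$ with $(\llambda_1,\llambda_1)=0$... actually the Fano variety of lines is $4$-dimensional, so the right class has square $2$: here $(\llambda_1,\llambda_1)=2$ in the $A_2$ normalization above, giving $\dim M_\sigma(\Ku(X),\llambda_1)=(\llambda_1,\llambda_1)+2=4$. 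This matches $\dim F_1(X)=4$.

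The second stage is the identification itself. First I would show every $P_\ell$ is $\sigma$-stable: because $\eta(\sigma)\in(A_2\otimes\C)\cap\cP$, the central charge restricted to the rank-$2$ lattice $A_2$ is governed by the explicit $A_2$-picture, and one runs a wall-crossing analysis inside $\Stab^\dagger(\Ku(X))$—using \autoref{thm:stab4folds2}, which says $\eta$ is a covering of $\cP_0^+$, to connect $\sigma$ to a stability condition near the ``large-volume'' region where $P_\ell$ is manifestly stable; the hypothesis that $\wH_\mathrm{Hodge}(\Ku(X),\Z)$ contains no $(-2)$-classes guarantees there are no spherical objects, hence no strictly semistable objects of class $\llambda_1$ and no totally semistable walls, so stability is preserved along the whole path. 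Conversely, given any $\sigma$-stable $E$ with $[E]=\llambda_1$, a Harder–Narasimhan / cohomology analysis with respect to the heart $\cA$ (pulled back from the tilt-heart on $\Db(\P^3,\BB_0)$ via \eqref{eqn:sodcubci4foldsn1}) forces $E$ to be, up to shift, the ideal sheaf of a line: one bounds $\Hom$'s using $(\llambda_1,\llambda_1)=2$ and stability to see $E$ has the Hilbert polynomial of $\II_\ell$, then that it is actually such a sheaf. This gives a bijection $F_1(X)\leftrightarrow M_\sigma(\Ku(X),\llambda_1)$ on points, which upgrades to an isomorphism of schemes because the moduli space is fine (the class $\llambda_1$ is primitive and $(\llambda_1,\llambda_1)=2$ is even but there are no $(-2)$-classes obstructing a universal family) and the classifying map $F_1(X)\to M_\sigma$ induced by the universal family of projected ideal sheaves is a morphism between smooth projective varieties, bijective on points, hence an isomorphism. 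Finally, the polarization statement follows from the construction of $\ell_\sigma$ in \cite{BLMNPS}: $\ell_\sigma$ is computed by pairing the universal family against $\eta(\sigma)$, and pulling this back along the isomorphism to $F_1(X)$ yields a class proportional to $c_1$ of the Plücker line bundle, because on $F_1(X)$ the Plücker polarization is (up to scalar) the unique $\Aut$-invariant ample class and the recipe for $\ell_\sigma$ produces an $\Aut$-invariant class; alternatively one matches it directly via the Abel–Jacobi / incidence computation of \cite{MSt,LPZ}.

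The main obstacle I expect is the wall-crossing control in the second stage: proving that \emph{every} $\sigma$-stable object of class $\llambda_1$ is an (shifted, projected) ideal sheaf of a line requires knowing that no wall in $\Stab^\dagger(\Ku(X))$ separating $\sigma$ from the large-volume chamber destabilizes $\llambda_1$ or creates spurious stable objects. This is exactly where the no-$(-2)$-class hypothesis is essential—it rules out spherical destabilizers—and where one leans most heavily on \autoref{thm:stab4folds2} to reduce to a lattice-theoretic statement about $A_2\hookrightarrow\wH_\mathrm{Hodge}(\Ku(X),\Z)$; making the wall-and-chamber analysis rigorous (as opposed to the sheaf-theoretic bookkeeping, which is routine) is the technical heart of the argument.
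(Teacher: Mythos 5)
Your overall architecture is the right one and matches the proof in \cite[Appendix A]{BLMS}: the geometric objects are indeed the projections $F_\ell:=\iota^*(\II_\ell(H))[-1]$, i.e.\ the rank-$3$ kernels $0\to F_\ell\to\OO_X^{\oplus 4}\to\II_\ell(H)\to 0$, they have class $\llambda_1$ with $(\llambda_1,\llambda_1)=2$, and the scheme of proof (stability of the $F_\ell$, classification of all $\sigma$-stable objects of class $\llambda_1$, smooth$+$proper$+$bijective morphism, identification of $\ell_\sigma$) is the correct skeleton. However, the two technical pillars are not supplied, and the justifications you offer would fail. First, the semistability/wall control: ``no $(-2)$-classes $\Rightarrow$ no spherical objects $\Rightarrow$ no strictly semistable objects and no walls'' is not a valid implication. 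Walls and strictly semistable loci for a square-$2$ class can be produced by \emph{isotropic} classes, which the hypothesis does not exclude (they exist exactly when $\Ku(X)$ is equivalent to the derived category of a (twisted) K3 surface, which is compatible with the absence of $(-2)$-classes). What actually saves the statement is that $\eta(\sigma)\in A_2\otimes\C$ forces the central charge to factor through the orthogonal projection to $A_2$: if $E$ were strictly semistable of class $\llambda_1$, the aligned phases of its Jordan--H\"older factors force their $A_2$-projections to be $c_i\llambda_1$ with $0<c_i<1$, while pairing with $\llambda_2$ shows $c_i\in\Z$, a contradiction. This is a lattice/integrality argument, not a ``no spherical objects'' argument; the $(-2)$-hypothesis is needed elsewhere, e.g.\ to ensure $(A_2\otimes\C)\cap\mathcal{P}\subseteq\mathcal{P}_0$ so that \autoref{thm:stab4folds2} applies to every such $\sigma$, and in the classification of stable objects. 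Moreover, there is no ``large-volume chamber'' of $\Stab^\dagger(\Ku(X))$ where stability of $F_\ell$ is manifest to anchor your wall-crossing path: the only available anchor is the explicit heart induced from $\Db(\P^3,\BB_0)$ (\autoref{subsect:stabKuz4folds}), and that is where both the stability of $F_\ell$ and the converse classification are actually proved.

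Second, your converse step is wrong as stated and inconsistent with your own setup: you conclude that a $\sigma$-stable $E$ of class $\llambda_1$ is ``up to shift, the ideal sheaf of a line,'' but $\II_\ell\notin\Ku(X)$ (for instance $\Hom(\OO_X(2H),\II_\ell[2])\cong H^1(\OO_\ell(-2))\neq 0$), so the classification must output the objects $F_\ell$ above; establishing this requires the weak-stability bookkeeping for the cohomology of $E$ in the tilted heart of $\Coh(\P^3,\BB_0)$, which is the technical core of the appendix and not routine. ``Fineness'' of the moduli space is neither needed nor correctly argued: the morphism $F_1(X)\to M_\sigma(\Ku(X),\llambda_1)$ comes from applying the (relative) projection functor to the family of twisted ideal sheaves over $F_1(X)$, and one concludes via smoothness of $M_\sigma$ (a $2$-Calabi--Yau deformation-theory statement), properness from \cite{BLMNPS}, and bijectivity plus normality in characteristic $0$. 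Finally, the polarization argument via ``the Pl\"ucker class is the unique $\Aut$-invariant ample class'' collapses for general $X$, whose automorphism group is trivial; the identification of $\ell_\sigma$ with a multiple of the Pl\"ucker polarization is obtained by computing the Bayer--Macr\`i determinant class directly on the explicit family $\{F_\ell\}_{\ell\in F_1(X)}$, as in \cite{MSt,LPZ}.
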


\begin{remark}
In \cite[Theorem 1.1]{LPZ} the previous result has been generalized by showing that the Fano variety of lines of every cubic fourfold is isomorphic to a moduli space of $\sigma$-stable objects for $\sigma$ as in \autoref{subsect:stabKuz4folds}.
\end{remark}

Now, let $X_1$ and $X_2$ be cubic fourfolds with an equivalence $\Phi\colon\Ku(X_1)\isomor\Ku(X_2)$ commuting with the rotation functors. By \autoref{prop:cubic4foldKu1}, $\Phi$ is of Fourier\textendash Mukai type. Then $\Phi$ induces a Hodge isometry
$$\Phi^H\colon\wH(\Ku(X_1),\Z)\isomor\wH(\Ku(X_2),\Z),$$
between the Mukai lattices. It is a simple exercise, using our assumption $\fO_{X_2}\circ\Phi=\Phi\circ\fO_{X_1}$ and \cite[Proposition 3.12]{Huy:cubics}, to show that $\Phi^H$ sends the $A_2$-lattice of $X_1$ to the corresponding one of $X_2$.

\begin{remark}
It is clear that, for this argument, one can weaken the assumption $\fO_{X_2}\circ\Phi=\Phi\circ\fO_{X_1}$ in \autoref{thm:catTT4folds} to its cohomological version $\fO_{X_2}^H\circ\Phi^H=\Phi^H\circ\fO_{X_1}^H$. Here we have that $\fO_{X_i}^H\colon\wH_{\mathrm{Hodge}}(\Ku(X_i), \Z)\isomor\wH_{\mathrm{Hodge}}(\Ku(X_i),\Z)$ denotes the Hodge isometry induced by $\fO_{X_i}$.
\end{remark}

Recall that if $X$ is a cubic fourfold, the middle cohomology $H^4(X,\Z)$ has a natural lattice and Hodge structure (see \cite{HuyLect} for an excellent introduction). If $H$ is the class of a hyperplane section then the self-intersection $H^2$ is an algebraic class in $H^4(X,\Z)$. Then we denote by $H^4_{\mathrm{prim}}(X,\Z)$ the orthogonal to $H^2$ in $H^4(X,\Z)$. Clearly, such a sublattice inherits a Hodge structure from $H^4(X,\Z)$. By \cite{AT} there is (up to Tate twist) a Hodge-isometry
$$H^4_{\mathrm{prim}}(X,\Z) \cong A_2^\perp \subset \wH(\Ku(X),\Z).$$

In our setting, using the above identification and the fact that $\Phi^H$ preserves the $A_2$-lattices of $X_1$ and $X_2$, it follows that $\Phi^H$ induces a Hodge isometry
\[
\varphi\colon H^4_{\mathrm{prim}}(X_1,\Z)\isomor H^4_{\mathrm{prim}}(X_2,\Z).
\]
\autoref{thm:catTT4folds} then follows from the following beautiful classical result. Its first proof was a masterpiece in Hodge theory due to Voisin \cite{Voi}. Alternative and more recent proofs are due to Looijenga \cite{Loo} and Charles \cite{Ch}. 

\begin{thm}[Classical Torelli theorem]\label{thm:classicaTT}
Two smooth complex cubic fourfolds $X_1$ and $X_2$ are isomorphic if
and only if there exists a Hodge isometry $H^4_{\mathrm{prim}}(X_1,\Z)\cong H^4_{\mathrm{prim}}(X_2,\Z)$.
\end{thm}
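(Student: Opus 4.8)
The statement is classical and admits several proofs of a purely Hodge\textendash theoretic or geometric nature (Voisin's original argument \cite{Voi}, and the later ones of Looijenga \cite{Loo} and Charles \cite{Ch}); what I want to indicate here is how it can instead be recovered from the categorical machinery assembled above, which is the route most in the spirit of this survey. Only one implication requires work. The converse is immediate: any isomorphism $X_1\cong X_2$ preserves the hyperplane class $H$, hence the algebraic class $H^2\in H^4(X_i,\Z)$, and therefore restricts to a Hodge isometry $H^4_{\mathrm{prim}}(X_1,\Z)\cong H^4_{\mathrm{prim}}(X_2,\Z)$.

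The plan for the nontrivial direction is to upgrade a given Hodge isometry $\varphi\colon H^4_{\mathrm{prim}}(X_1,\Z)\cong H^4_{\mathrm{prim}}(X_2,\Z)$ to an equivalence of Kuznetsov components compatible with the degree\textendash shift functors, and then to quote \autoref{thm:catTT4folds}. First I would pass from $\varphi$ to an isometry of Mukai lattices. By \cite{AT} there is a (Tate\textendash twisted) Hodge isometry $H^4_{\mathrm{prim}}(X_i,\Z)\cong A_2^{\perp}\subseteq\wH(\Ku(X_i),\Z)$, where $A_2$ is the lattice in \eqref{eqn:A2}; since $A_2$ is spanned by the algebraic classes $\llambda_1,\llambda_2$ of \eqref{eqn:lambdas} it is entirely of type $(1,1)$, and since $\wH(\Ku(X_i),\Z)$ is even and unimodular the discriminant forms of $A_2$ and of $A_2^{\perp}$ are anti\textendash isometric. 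A standard lattice\textendash theoretic argument (Nikulin's gluing calculus) then shows that $\varphi$ extends to a Hodge isometry $\widetilde\varphi\colon\wH(\Ku(X_1),\Z)\cong\wH(\Ku(X_2),\Z)$ carrying $A_2$ to $A_2$; after composing with a suitable power of the cohomological degree\textendash shift isometry $\fO_{X_2}^{H}$ and, if necessary, with a further symmetry of the $A_2$\textendash lattice realized by autoequivalences, one arranges $\widetilde\varphi$ to be orientation\textendash preserving and to commute with the $\fO_{X_i}^{H}$. The second step is to realize $\widetilde\varphi$ geometrically: by the Derived Torelli theorem for the noncommutative K3 surfaces $\Ku(X_i)$ \textemdash\ the cubic\textendash fourfold analogue of the Mukai\textendash Orlov\textendash \cite{HMS:K3} global derived Torelli theorem \textemdash\ such an orientation\textendash preserving Hodge isometry of Mukai lattices is induced by an exact equivalence $\Phi\colon\Ku(X_1)\cong\Ku(X_2)$, which is automatically of Fourier\textendash Mukai type by \autoref{prop:cubic4foldKu1}, and which can moreover be chosen to commute with the degree\textendash shift functors $\fO_{X_i}$ because $\widetilde\varphi$ was arranged to commute with $\fO_{X_i}^{H}$. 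Then \autoref{thm:catTT4folds} gives $X_1\cong X_2$.

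The hard part will be the second step: promoting a Hodge isometry of Mukai lattices to an actual equivalence of Kuznetsov components, with enough control to make it compatible with the $\fO_{X_i}$. For honest K3 surfaces this rests on Mukai's and Orlov's moduli\textendash space and deformation techniques; the noncommutative analogue requires the theory of moduli spaces of stable objects in $\Ku(X)$ and of stability conditions as in \autoref{thm:stab4folds} and \autoref{thm:stab4folds2}, together with a careful orientation analysis. A more hands\textendash on variant would try to bypass the abstract construction of $\Phi$ by arguing directly with stability conditions in the style of \autoref{thm:Fano}: transporting a stability condition along the sought identification, using $\widetilde\varphi(\llambda_1)=\llambda_1$ to match the moduli spaces $M_{\sigma}(\Ku(X_i),\llambda_1)$, and thereby identifying the Fano varieties of lines $F_1(X_1)$ and $F_1(X_2)$ together with their Pl\"ucker polarizations, from which the cubics can be reconstructed; however this reformulation does not really sidestep the core difficulty. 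For this reason one may prefer, as we do here, to cite the direct Hodge\textendash theoretic proofs \cite{Voi,Loo,Ch}.
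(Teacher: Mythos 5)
Your easy direction and your first (lattice-theoretic) step are fine and indeed close to what is actually used: extending $\varphi$ across the discriminant gluing to an orientation-preserving Hodge isometry of Mukai lattices preserving $A_2$ and compatible with the degree-shift action is exactly \cite[Proposition 4.2]{HR}. The genuine gap is the step you yourself flag as ``the hard part'': you invoke a ``Derived Torelli theorem for the noncommutative K3 surfaces $\Ku(X_i)$'', i.e.\ that an orientation-preserving Hodge isometry $\wH(\Ku(X_1),\Z)\isomor\wH(\Ku(X_2),\Z)$ is induced by an exact equivalence. For a general cubic fourfold this is \emph{not} an available theorem; it is precisely the open problem recorded in the survey (see the discussion around \cite[Question 3.24]{MSLectNotes}), so your argument as written assumes the main missing ingredient. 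There is also a circularity risk in your overall architecture: the stability-conditions proof of \autoref{thm:catTT4folds} given in the paper \emph{deduces} it from \autoref{thm:classicaTT}, so if you want to derive the classical statement from the categorical one you must use the independent Jacobian-ring proof of \autoref{thm:catTT4folds}, not the one via moduli of stable objects.

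The paper's actual proof avoids both issues by never lifting the isometry on $X_1$, $X_2$ themselves. One extends $\varphi$, via local Torelli and parallel transport, to Hodge isometries $\varphi_d$ over an identification $\mathrm{Def}(X_1)\cong\mathrm{Def}(X_2)$, lifts each $\varphi_d$ to an orientation-preserving isometry $\phi_d$ of Mukai lattices commuting with the degree shifts (\cite[Proposition 4.2]{HR}), and then restricts to the dense set $D$ of deformations whose Kuznetsov component is equivalent to $\Db(S,\alpha)$ for a twisted K3 surface and whose Mukai lattice contains no $(-2)$-classes. On $D$ the lifting of $\phi_d$ to an equivalence $\Phi_d\colon\Ku(X_{1,d})\isomor\Ku(X_{2,d})$ \emph{is} available, namely the derived Torelli theorem for twisted K3 surfaces \cite{HS}; then \autoref{thm:Fano}, applied on both sides together with the uniqueness of the maximal connected component of the stability manifold \cite{HMS}, identifies $F_1(X_{1,d})\cong F_1(X_{2,d})$ compatibly with the Pl\"ucker polarizations, and Charles' trick \cite[Proposition 4]{Ch} gives $X_{1,d}\cong X_{2,d}$ for all $d\in D$. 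Finally, density of $D$ and separatedness of the moduli space of cubic fourfolds give $X_1\cong X_2$. So the correct repair of your sketch is not a sharper orientation analysis on the fixed pair $X_1$, $X_2$, but the deformation-and-density argument that reduces the lifting problem to the twisted K3 case where it is a theorem.
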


Since we promised that the second proof of \autoref{thm:catTT4folds} would have been based on stability conditions, we are going to provide an alternative proof of \autoref{thm:classicaTT} based on these techniques and following the Appendix of \cite{BLMS}.

\begin{proof}[Proof of \autoref{thm:classicaTT}]
Of course, if $X_1\cong X2$, then there is a Hodge isometry $H^4_{\mathrm{prim}}(X_1,\Z)\cong H^4_{\mathrm{prim}}(X_2,\Z)$. For the other implication, we start with a Hodge isometry
\[
\varphi\colon H^4_{\mathrm{prim}}(X_1,\Z)\isomor H^4_{\mathrm{prim}}(X_2,\Z).
\]
The argument proceeds now by taking a local deformation of $X_i$. Indeed, as explained in \cite{HR}, in view of the local Torelli theorem, $\varphi$ extends to the bases of the universal deformation spaces $\mathrm{Def}(X_1)\cong\mathrm{Def}(X_2)$, which are considered as open subsets of the period domain. More precisely, one can find an identification $\mathrm{Def}(X_1)\cong\mathrm{Def}(X_2)$ such that parallel transport induces a Hodge-isometry
\[
\varphi_d\colon H^4_{\mathrm{prim}}(X_{1,d},\Z)\isomor H^4_{\mathrm{prim}}(X_{2,d},\Z),
\]
where $X_{i, d}$ is the local deformation of $X_i$ parametrized by $d \in \mathrm{Def}(X_i)$.
Then a lattice theoretic argument (see \cite[Proposition 4.2]{HR}) shows that for every $d \in \mathrm{Def}(X_1)\cong\mathrm{Def}(X_2)$ the Hodge-isometry $\varphi_d$ lifts to an orientation preserving Hodge isometry $$\phi_d \colon \wH(\Ku(X_{1, d}),\Z)\isomor\wH(\Ku(X_{2, d}),\Z)$$ 
which commutes with the action of the degree shift functors on the Mukai lattices.

Consider the set $D_i$ of points of $\mathrm{Def}(X_i)$ corresponding to cubic fourfolds $X$ such that $\Ku(X)\cong\Db(S,\alpha)$, for $S$ a K3 surface and $\alpha\in\mathrm{Br}(S)$, and $\wH_{\mathrm{Hodge}}(\Ku(X),\Z)$ does not contain $(-2)$-classes. Since the condition of having Kuznetsov component equivalent to the bounded derived category of a twisted K3 surface is determined by the Mukai lattice \cite[Theorem 1.4]{Huy:cubics}, \cite[Proposition 33.1]{BLMNPS}, we see that $\phi_d$ preserves this property. Analogously, $\phi_d$ preserved the property of not having $(-2)$-classes. We thus conclude that $\mathrm{Def}(X_1)\cong\mathrm{Def}(X_2)$ restricts to an isomorphism $D_1 \cong D_2$. Thus we can set $D:=D_1 \cong D_2$. As explained in the appendix to \cite{BLMS}, the set $D$ is dense. Moreover, for all $d\in D$, the Hodge isometry $\phi_d$ can be lifted to an equivalence $$\Phi_d\colon\Ku(X_{1, d})\isomor\Ku(X_{2, d})$$ 
by the derived Torelli theorem for twisted K3 surfaces \cite[Theorem 0.1]{HS}. By construction, the isometry $\Phi_d^H$ commutes with the action of the degree shifts functors in cohomology.

Now, for $d\in D$, pick $\sigma_1\in\mathrm{Stab}^\dagger(\Ku(X_{1, d}))$ such that $\eta(\sigma_1)\in(A_2\otimes\C)\cap\mathcal{P}\subseteq\mathcal{P}_0^+$ and set $\sigma_2:=\Phi_d(\sigma_1)$. By \cite[Theorem 1]{HMS} the stability manifold $\Stab^\dag(\Ku(X_{i,d}))$ has a unique connected component of maximal dimension. Since the action of $\Phi_d$ on the stability manifolds exchanges components of the same dimension, it follows that the image $\sigma_2$ of $\sigma_1$ belongs to $\Stab^\dag(\Ku(X_{2,d}))$. Moreover, by definition, $\eta(\sigma_2)\in(A_2\otimes\C)\cap\mathcal{P}$ as well.
Thus we can apply \autoref{thm:Fano} twice and obtain a string of isomorphisms
\[
F_1(X_{1,d})\cong M_{\sigma_1}(\Ku(X_{1,d}),\llambda_1)\cong M_{\sigma_2}(\Ku(X_{2,d}),\llambda_1)\cong F_1(X_{2,d}).
\]
The isomorphism in the middle is induced by $\Phi_d$ and, as explained in \cite[Appendix A]{BLMS}, it sends the polarization $\ell_{\sigma_1}$ to $\ell_{\sigma_2}$. Thus, by \autoref{thm:Fano}, the whole sequence of isomorphisms sends the Pl\"ucker polarization on $F_1(X_{1,d})$ to the Pl\"ucker polarization on $F_1(X_{2,d})$. The proof then continues as in the one of \autoref{thm;CTTcubic3folds} and it consists in applying Chow's trick (see \cite[Proposition 4]{Ch}) in order to conclude that $X_{1,d}\cong X_{2,d}$ for all $d\in D$. Since $D$ is dense, separatedness of the moduli space of cubic fourfolds implies that $X_1$ and $X_2$ have to be isomorphic (since any open neighborhood of the point corresponding to $X_1$ in $\mathrm{Def}(X_1)$ intersects any open neighborhood of the point corresponding to $X_2$ in $\mathrm{Def}(X_2)$, precisely in the points which belong to $D$). This ends the proof.
\end{proof}

\subsection{Gushel\textendash Mukai fourfolds}\label{sec_GM4}
We could wonder whether the techniques explained in the previous sections may be adapted to other classes of Fano fourfolds. This turns out to be true in the case of Gushel\textendash Mukai fourfolds (more generally for Gushel\textendash Mukai varieties of even dimension). 

Recall that a general complex \emph{Gushel\textendash Mukai (GM) fourfold} $X$ is a smooth four-dimensional intersection of the form
$$X= Q \cap \text{Gr}(2, 5) \subset \P^{9},$$
where $\text{Gr}(2, 5)$ is the Pl\"ucker embedded Grassmannian and $Q$ is a quadric hypersurface in a hyperplane section of $\P^{9}$. GM fourfolds share many similarities with cubic fourfolds. For instance, from a geometric viewpoint, there are known examples of rational GM fourfolds, but it is still unknown whether the very general one is irrational or rational. From the point of view of derived categories, Kuznetsov and Perry proved in \cite{KP_GM} that the bounded derived category of $X$ has a semiorthogonal decomposition of the form
$$\Db(X)= \langle \Ku(X), \OO_X, \UU_X^\vee, \OO_X(1), \UU_X^\vee(1) \rangle,$$
where $\UU_X$ is the restriction of the tautological bundle of rank $2$ on the Grassmannian $\text{Gr}(2,5)$ and $\Ku(X):= \langle \OO_X, \UU_X^\vee, \OO_X(1), \UU_X^\vee(1) \rangle^\perp$. The residual component $\Ku(X)$ is the Kuznetsov component of $X$ and is a noncommutative K3 surface. 

Full numerical stability conditions on $\Ku(X)$ have been constructed in \cite{PPZ}, using a dimension reduction trick. More precisely, the authors show that $X$ is birational to a conic fibration over a quadric threefold $Y$ and then provide an embedding of $\Ku(X)$ in the bounded derived category $\Db(Y, \BB_0)$, where $(Y, \BB_0)$ is a twisted quadric threefold.

On the other hand, it is known that the Torelli Theorem does not hold for GM fourfolds. In fact, in this case the period map has four-dimensional fibers by \cite{DIM}. Nevertheless, we can still wonder whether a (refined) Categorical Torelli theorem holds for GM fourfolds. 

More precisely, note that GM fourfolds in the same fiber of the period map have equivalent Kuznetsov components by \cite[Theorem 1.6]{KP_cones}. On the other hand, by \cite[Theorem 1.3]{BP} there are examples of GM fourfolds with equivalent Kuznetsov components, but defining different period points. These considerations suggest that we have to impose some additional conditions to an equivalence between the Kuznetsov components to recover the period point or the isomorphism class of a GM fourfold. 

Recall that the degree shift functor of $\Ku(X)$ is defined by
\[
\mathsf{O}_X:=\L_{\langle \OO_X, \UU_X^\vee \rangle} (- \otimes \OO_X(1))[-1],
\]
where $\L_{\langle \OO_X, \UU_X^\vee \rangle}$ is the left mutation in $\langle \OO_X, \UU_X^\vee \rangle$ (see, for example, \cite[Section 2]{Kuz_CY} for the precise definition). We can formulate the following question:

\begin{qn}
Let $X_1$ and $X_2$ be Gushel\textendash Mukai fourfolds. Assume that there is an equivalence $\Phi \colon \Ku(X_1) \isomor \Ku(X_2)$ which commutes with the degree shift functors of $X_1$ and $X_2$. Then under which assumptions on $\Phi$ we have that $X_1$ and $X_2$ are isomorphic? 
\end{qn}

To address this question, it may be helpful to use the stability conditions defined on $\Ku(X)$ and the associated moduli spaces having the structure of hyperk\"ahler manifolds \cite{PPZ}, as done for cubic fourfolds in the last part of \autoref{subsect:catThm4folds}. 

\begin{remark}
Let $X_1$ and $X_2$ be Gushel\textendash Mukai fourfolds. Assume that there is an equivalence $\Phi \colon \Ku(X_1) \isomor \Ku(X_2)$ which commutes with the degree shift functors. One could ask the intermediate question whether $\Phi$ induces a Hodge isometry $H^4_{\text{van}}(X_1, \Z) \cong H^4_{\text{van}}(X_2, \Z)$ between the degree-four vanishing cohomologies of $X_1$ and $X_2$ (see \cite[Section 3.3]{DebKuz_GM} for the definition). This is answered positively by \cite[Proposition 1.12]{BP_inprep}. 

Note that if $X_1$ and $X_2$ are \emph{very general}, then the above statement holds for every exact equivalence $\Phi \colon \Ku(X_1) \isomor \Ku(X_2)$. This can be proved exactly as in \cite[Corollary 3.6]{Huy:cubics}. Indeed, by \cite{LPZStrong} every equivalence as above is of Fourier-Mukai type. Then by \cite[Corollary 3.5]{Huy:cubics}, it induces a Hodge isometry $\Phi^H$ between the Mukai lattices of $\Ku(X_1)$ and $\Ku(X_2)$. Note that the condition of being very general means that the algebraic part of the Mukai lattice of $\Ku(X_i)$ is generated by two classes $\llambda_1$, $\llambda_2$ (see \cite{KP_GM} and \cite[Lemma 2.4]{Pert} for the precise definitions). As a consequence, the Hodge isometry $\Phi^H$ restricts to a Hodge isometry $H^4_{\text{van}}(X_1, \Z) \cong H^4_{\text{van}}(X_2, \Z)$. Of course, we cannot say $X_1$ and $X_2$ are isomorphic (and in general they are not), since there is no Torelli theorem.
\end{remark}

We finally recall the following conjecture about Birational Categorical Torelli theorem for Gushel\textendash Mukai varieties. The same comment in \autoref{rmk_onlyonedirection} applies to this setting.

\begin{conj}[\cite{KP_cones}, Conjecture 1.7]
If $X_1$ and $X_2$ are Gushel\textendash Mukai varieties of the same dimension such that there is an equivalence $\Ku(X_1) \isomor \Ku(X_2)$, then $X_1$ and $X_2$ are birational.
\end{conj}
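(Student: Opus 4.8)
A plan to attack this conjecture would proceed by reducing, in each parity of the common dimension $n:=\dim X_1=\dim X_2$, a derived equivalence $\Phi\colon\Ku(X_1)\isomor\Ku(X_2)$ to a statement about the natural Hodge and lattice datum attached to $\Ku(X_i)$, and then invoking that the birational class of a Gushel\textendash Mukai variety is controlled by its period point modulo the natural ``duality'' involution, by \cite{DIM} and \cite{KP_cones}. Throughout one uses that $\Phi$ is automatically of Fourier\textendash Mukai type by \cite{LPZStrong}, so that it induces an isometry of all the relevant (numerical, respectively cohomological) invariants.

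For $n$ even, $\Ku(X_i)$ is a noncommutative K3 category equipped with the weight-two Hodge structure on its Mukai lattice $\wH(\Ku(X_i),\Z)$ exactly as for cubic fourfolds in \autoref{subsect:cubic4folds}, and full numerical stability conditions exist by the dimension-reduction construction of \cite{PPZ}. The first step is to show that, after composing $\Phi$ with a suitable autoequivalence of $\Ku(X_2)$, the induced Hodge isometry $\Phi^H\colon\wH(\Ku(X_1),\Z)\isomor\wH(\Ku(X_2),\Z)$ carries the distinguished rank-two sublattice $\langle\llambda_1,\llambda_2\rangle$ of algebraic classes (see \cite{KP_GM,Pert}) to the corresponding one on $X_2$; this replaces the hypothesis that $\Phi$ commute with the degree shift functor $\fO_{X_i}$ used in \autoref{thm:catTT4folds}, and for Gushel\textendash Mukai varieties the residual freedom should be exactly the duality action. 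Granting this, $\Phi^H$ restricts to a Hodge isometry of the orthogonal complements, i.e.\ of the degree-four vanishing cohomologies $H^4_{\mathrm{van}}(X_i,\Z)$, as in \cite{BP_inprep}. The second step is to run the deformation argument of the Appendix to \cite{BLMS}: deform to the dense locus where $\Ku(X)$ is equivalent to $\Db(S,\alpha)$ for a twisted K3 surface and has no $(-2)$-classes, lift $\Phi^H$ to an equivalence by the derived global Torelli theorem for twisted K3 surfaces, transport a stability condition $\sigma\in\Stab^\dagger(\Ku(X_1))$ lying in the distinguished locus, and identify the associated hyperk\"ahler moduli space (the double EPW variety, realized as some $M_\sigma(\Ku(X),v)$ in the spirit of \autoref{thm:Fano} and \cite{PPZ}) together with its canonical polarization on both sides. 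Chow's trick \cite{Ch} then yields an isomorphism of the deformed fourfolds over a dense subset of the deformation space, and separatedness of the moduli space of Gushel\textendash Mukai fourfolds gives that $X_1$ and $X_2$ share a period point up to duality, hence are birational by \cite{DIM,KP_cones}.

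For $n$ odd, $\cN(\Ku(X_i))$ has rank two and the appropriate tool is Serre-invariant stability conditions, constructed as in \autoref{subsect:introstab2}. One would first establish existence and uniqueness of a $\widetilde{\mathrm{GL}}^+_2(\R)$-orbit of such stability conditions, applying the criterion of \autoref{thm_critSi} after computing the Euler form on $\cN(\Ku(X_i))$ and exhibiting the minimal object $Q$ it requires, so that $\Phi$ sends Serre-invariant stability conditions to Serre-invariant ones. One then identifies a carefully chosen moduli space $M_\sigma(\Ku(X),v)$ with a birational invariant of $X$ --- for threefolds, the intermediate Jacobian together with a desingularization of its theta divisor, following the strategy of \cite{soheylaetal} for cubic threefolds; for fivefolds, the corresponding EPW-type variety --- transports it across $\Phi$, and recovers the birational class of $X$, accepting that, since the period map of Gushel\textendash Mukai varieties has positive-dimensional fibers \cite{DIM}, only birationality rather than isomorphism can be reached.

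The main obstacle is the ``derived global Torelli'' input for the even-dimensional step: one must know precisely which Hodge isometries of $\wH(\Ku(X),\Z)$ are realized by autoequivalences of $\Ku(X)$, and in particular control the action of $\Phi^H$ on the algebraic lattice $\langle\llambda_1,\llambda_2\rangle$ with no hypothesis on $\fO_X$ --- equivalently, one must show that the image of the autoequivalence group in the orthogonal group of the Mukai lattice is no larger than the duality action together with the standard K3-type symmetries. This is exactly what forces the fibers of $X\mapsto\Ku(X)$ to consist of period partners and duals, and it is the analytically and lattice-theoretically delicate heart of the problem. In the odd-dimensional case the corresponding difficulty is to produce the modular description of the relevant moduli space together with a density/deformation argument to a locus with sufficiently mild Hodge structure; at present this programme is completed only for general ordinary Gushel\textendash Mukai threefolds.
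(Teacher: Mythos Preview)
The statement you are addressing is a \emph{conjecture}, not a theorem, and the paper provides no proof of it: it is recorded at the end of \autoref{sec_GM4} precisely as an open problem, citing \cite[Conjecture 1.7]{KP_cones}. So there is no ``paper's proof'' against which to compare your proposal.

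Your write-up is, appropriately, a programme rather than a proof, and you correctly flag its own gaps at the end. The overall shape of the strategy you sketch --- Serre-invariant stability conditions and modular reconstruction in odd dimension, Mukai-lattice Hodge isometries plus deformation/density arguments in even dimension --- is consistent with the circle of ideas the paper surveys. In particular, the paper notes that the odd-dimensional threefold case has been carried out for general ordinary Gushel\textendash Mukai threefolds in \cite[Theorem 1.5]{JLLZ}, which matches your closing remark. But be aware that the even-dimensional step you outline leans on a ``derived global Torelli'' statement for $\Ku(X)$ (control of which Hodge isometries lift to autoequivalences, and reduction of the algebraic lattice action to duality) that is genuinely not available; the paper itself poses this only as a question and records in \cite[Theorem 1.3]{BP} that there exist GM fourfolds with equivalent Kuznetsov components but different period points, so the reduction to period partners cannot go through as stated without further input. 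Your proposal is an honest outline of where the difficulties lie, but it is not a proof, and neither is there one in the paper.
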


\bigskip

{\small\noindent{\bf Acknowledgements.}} This project started when the second author was visiting the Max Planck Institute (Bonn) whose hospitality is gratefully acknowledged. It is our pleasure to thank Arend Bayer, Chunyi Li, Zhiyu Liu, Alex Perry, Emanuele Macr\`i for carefully reading a preliminary version of this paper and for many insightful comments. We are mostly grateful to the anonymous referee who carefully read this paper and helped us with several interesting comments which allowed us to improve the presentation.


\end{document}